\DeclareMathAlphabet{\EuRm}{U}{eur}{m}{n}
\SetMathAlphabet{\EuRm}{bold}{U}{eur}{b}{n}
\renewcommand\l@subsection{\@tocline{2}{0pt}{2pc}{5pc}{}}
\newtheoremstyle{slplain}
 {.5\baselineskip\@plus.2\baselineskip\@minus.2\baselineskip}
 {.5\baselineskip\@plus.2\baselineskip\@minus.2\baselineskip}
 {\slshape}
 {}
 {\bfseries}
 {.}
 { }
 {}
\numberwithin{equation}{section}
\theoremstyle{slplain}
\newtheorem{thm}[equation]{Theorem}  
\newtheorem{cor}[equation]{Corollary}     
\newtheorem{lem}[equation]{Lemma}         
\newtheorem{prop}[equation]{Proposition}  
\theoremstyle{definition}
\newtheorem{defn}[equation]{Definition} 
\theoremstyle{remark}
\newtheorem{rem}[equation]{Remark}      
\newtheorem{ex}[equation]{Example}      
\newcommand{\thmref}{Theorem~\ref}
\newcommand{\propref}{Proposition~\ref}
\newcommand{\lemref}{Lemma~\ref}
\newcommand{\defref}{Definition~\ref}
\newcommand{\exref}{Example~\ref}
\newcommand{\diagref}{Diagram~\ref}
\newcommand{\secref}{Section~\ref}
\newcommand*{\bs}{\boldsymbol}
\newcommand*{\bD}{\boldsymbol\Delta}
\newcommand*{\bDrest}{\boldsymbol\Delta_{\mathrm{rest}}}
\newcommand{\cat}[1]{\mathcal{#1}}
\newcommand{\op}{^{\mathrm{op}}}
\newcommand{\LKan}{\mathbf{L}}
\newcommand{\RKan}{\mathbf{R}}
\newcommand{\F}{\mathrm{F}}
\newcommand{\diag}[1]{\bs{#1}}
\DeclareMathOperator{\diagon}{diag}
\newcommand{\adj}[4]{#1\negmedspace: #2\rightleftarrows #3:\negmedspace #4}
\DeclareMathOperator{\degree}{degree}
\newcommand{\latch}{\mathrm{L}}
\newcommand{\match}{\mathrm{M}}
\newcommand{\latchcat}[2]{\mathchoice
  {\bdry\overcat{\drc{#1}}{#2}}
  {\bdry\overcat{\drc{#1}}{#2}}
  {\bdry\overcat{\drc{#1}}{#2}}
  {\bdry\overcat{\drc{#1}}{#2}}}
\newcommand{\matchcat}[2]{\mathchoice
  {\bdry\undercat{\inv{#1}}{#2}}
  {\bdry\undercat{\inv{#1}}{#2}}
  {\bdry\undercat{\inv{#1}}{#2}}
  {\bdry\undercat{\inv{#1}}{#2}}}
\newcommand{\drc}[1]{\overrightarrow{#1}}
\newcommand{\inv}[1]{\overleftarrow{#1}}
\newcommand\invfact[3]{\mathrm{Fact}_{\inv{#1}}(#2,#3)}
\newcommand\drcfact[3]{\mathrm{Fact}_{\drc{#1}}(#2,#3)}
\newcommand{\undercat}[2]{%
  {(\mathord{#2}\nonscript\,\mathord\downarrow\nonscript\,\mathord{#1})}}
\newcommand{\overcat}[2]{%
  {(\mathord{#1}\nonscript\,\mathord\downarrow\nonscript\,\mathord{#2})}}
\newcommand{\bovercat}[2]{%
  {\bigl(\mathord{#1}\nonscript\,\mathord\downarrow
             \nonscript\,\mathord{#2}\bigr)}}
\newcommand{\Top}{\mathrm{Top}}
\newcommand{\skel}[1]{\mathrm{sk}_{#1}}
\newcommand{\coskel}[1]{\mathrm{cosk}_{#1}}
\newcommand{\N}{\mathrm{N}}
\DeclareMathOperator{\Ob}{Ob}
\newcommand{\bdry}{\partial}
\DeclareMathOperator*{\colim}{colim}
\newcommand{\iso}{\approx}
\newcommand{\pushout}[3]{#1\mathbin{\mathord{\amalg}_{#2}}#3}
\newcommand{\pullback}[3]{#1\mathbin{\mathord{\times}_{\!#2}}#3}
\newcommand{\hhat}{\hat h}
\newcommand{\Comma}{\rlap{,}}
\newcommand{\Period}{\rlap{.}}
\newcommand{\Semicolon}{\rlap{;}}
\begin{document}


\title{Functors between Reedy model categories of diagrams}

\author{Philip S. Hirschhorn}

\address{Department of Mathematics\\
   Wellesley College\\
   Wellesley, Massachusetts 02481}

\email{psh@math.mit.edu}

\urladdr{http://www-math.mit.edu/\textasciitilde psh}

\author{Ismar Voli\'c}

\thanks{The second author was supported in part by the Simons
  Foundation.}

\address{Department of Mathematics\\
   Wellesley College\\
   Wellesley, Massachusetts 02481}

\email{ivolic@wellesley.edu}

\urladdr{http://ivolic.wellesley.edu}


\date{February 20, 2019}

\subjclass[2010]{Primary 55U35, 18G55}


\begin{abstract}
  If $\cat D$ is a Reedy category and $\cat M$ is a model category,
  the category $\cat M^{\cat D}$ of $\cat D$-diagrams in $\cat M$ is a
  model category under the Reedy model category structure.  If $\cat C
  \to \cat D$ is a Reedy functor between Reedy categories, then there
  is an induced functor of diagram categories $\cat M^{\cat D} \to
  \cat M^{\cat C}$.  Our main result is a characterization of the
  Reedy functors $\cat C \to \cat D$ that induce right or left Quillen
  functors $\cat M^{\cat D} \to \cat M^{\cat C}$ for every model
  category $\cat M$.  We apply these results to various situations,
  and in particular show that certain important subdiagrams of a
  fibrant multicosimplicial object are fibrant.
\end{abstract}


\maketitle

\tableofcontents

\section{Introduction}
\label{sec:intro}

The interesting functors between model categories are the \emph{left
  Quillen functors} and \emph{right Quillen functors} (see
\cite{MCATL}*{Def.~8.5.2}).
In this paper, we study Quillen functors between diagram categories
with the Reedy model category structure (see \thmref{thm:RFib}).

In more detail, if $\cat C$ is a Reedy category (see
\defref{def:ReedyCat}) and $\cat M$ is a model category, then there is
a \emph{Reedy model category structure} on the category
$\cat M^{\cat C}$ of $\cat C$-diagrams in $\cat M$ (see
\defref{def:DiagCat} and \thmref{thm:RFib}).  The original (and most
well known) examples of Reedy model category structures are the model
categories of \emph{cosimplicial objects in a model category} and of
\emph{simplicial objects in a model category} (see
\secref{sec:(multi)(co)simplicial}).

Any functor $G\colon \cat C \to \cat D$ between Reedy categories
induces a functor
\begin{displaymath}
  G^{*}\colon \cat M^{\cat D} \longrightarrow \cat M^{\cat C}
\end{displaymath}
of diagram categories (see \defref{def:InducedDiag}), and it is
important to know when such a functor $G^{*}$ is a left or a right
Quillen functor, since, for example, a right Quillen functor takes
fibrant objects to fibrant objects, and takes weak equivalences
between fibrant objects to weak equivalences (see
\propref{prop:QuillenNice}).  The results in this paper provide a
complete characterization of the Reedy functors (functors between
Reedy categories that preserve the structure; see
\defref{def:subreedy}) between diagram categories for which this is
the case for all model categories $\cat M$.

To be clear, we point out that for any Reedy functor $G\colon \cat C
\to \cat D$ there exist model categories $\cat M$ such that the
induced functor $G^{*}\colon \cat M^{\cat D} \to \cat M^{\cat C}$ is a
(right or left) Quillen functor.  For example, if $\cat M$ is a model
category in which the weak equivalences are the isomorphisms of $\cat
M$ and all maps of $\cat M$ are both cofibrations and fibrations, then
\emph{every} Reedy functor $G\colon \cat C \to \cat D$ induces a right
Quillen functor $G^{*}\colon \cat M^{\cat D} \to \cat M^{\cat C}$
(which is also a left Quillen functor).  In this paper, we
characterize those Reedy functors that induce right Quillen functors
for \emph{all} model categories $\cat M$.  More precisely, we have:

\begin{thm}
  \label{thm:GoodisGood}
  If $G\colon \cat C \to \cat D$ is a Reedy functor (see
  \defref{def:subreedy}), then the induced functor of diagram
  categories $G^{*}\colon \cat M^{\cat D} \to \cat M^{\cat C}$ is a
  right Quillen functor for every model category $\cat M$ if and only
  if $G$ is a fibering Reedy functor (see \defref{def:goodsub}).
\end{thm}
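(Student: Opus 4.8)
The plan is to reduce the statement to a question about preservation of Reedy fibrations and then analyze the behavior of $G^{*}$ on matching objects. First I would note that $G^{*}\colon\cat M^{\cat D}\to\cat M^{\cat C}$ is always a right adjoint: its left adjoint is the left Kan extension $\LKan_{G}$ along $G$, which exists because model categories are cocomplete. Since the weak equivalences in both Reedy model category structures are the objectwise weak equivalences (see \thmref{thm:RFib}) and $(G^{*}f)_{c}=f_{G(c)}$, the functor $G^{*}$ automatically preserves all weak equivalences. Consequently $G^{*}$ is a right Quillen functor if and only if it preserves Reedy fibrations, and in that case it also preserves trivial fibrations for free (a trivial fibration is a fibration that is a weak equivalence, and both properties are preserved). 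Thus the theorem reduces to the claim that $G^{*}$ preserves Reedy fibrations for every $\cat M$ if and only if $G$ is a fibering Reedy functor (see \defref{def:goodsub}).

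For the direction asserting that a fibering $G$ yields a right Quillen $G^{*}$, the key is the comparison of matching objects. For each object $c$ of $\cat C$, the Reedy functor $G$ carries $\inv{\cat C}$ into $\inv{\cat D}$ and hence induces a functor of matching categories $\matchcat{\cat C}{c}\to\matchcat{\cat D}{G(c)}$; passing to limits this gives a natural map $\match_{G(c)}Z\to\match_{c}(G^{*}Z)$ for every $Z\in\cat M^{\cat D}$. Given a Reedy fibration $f\colon X\to Y$, I would factor the relative matching map of $G^{*}f$ at $c$,
\begin{displaymath}
  X_{G(c)}\longrightarrow\pullback{Y_{G(c)}}{\match_{c}(G^{*}Y)}{\match_{c}(G^{*}X)}\Comma
\end{displaymath}
as the relative matching map of $f$ at $G(c)$ followed by the comparison map
\begin{displaymath}
  \pullback{Y_{G(c)}}{\match_{G(c)}Y}{\match_{G(c)}X}\longrightarrow
  \pullback{Y_{G(c)}}{\match_{c}(G^{*}Y)}{\match_{c}(G^{*}X)}\Period
\end{displaymath}
The first map is a fibration because $f$ is a Reedy fibration, so it suffices to see that the second map is a fibration; this is exactly what the fibering condition of \defref{def:goodsub} is designed to supply for every $\cat M$. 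A composite of fibrations being a fibration, $G^{*}f$ is then Reedy fibrant at every $c$, so $G^{*}$ preserves fibrations.

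For the converse I would argue by contraposition, constructing an explicit witness. It is enough to work with fibrant objects: applying $G^{*}$ to the Reedy fibration $X\to *$ shows that a right Quillen $G^{*}$ must carry every Reedy fibrant $X\in\cat M^{\cat D}$ to a Reedy fibrant object of $\cat M^{\cat C}$. Assuming $G$ is not fibering, the comparison functor $\matchcat{\cat C}{c}\to\matchcat{\cat D}{G(c)}$ fails the defining condition at some object $c$, and I would exploit this to build a model category $\cat M$ (for instance $\cat M=\SiS$) together with a Reedy fibrant diagram $X$ for which the induced matching map $X_{G(c)}\to\match_{c}(G^{*}X)$ is not a fibration, so that $G^{*}X$ is not Reedy fibrant. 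This exhibits a Reedy fibration $X\to *$ whose image under $G^{*}$ is not a Reedy fibration, contradicting the right Quillen hypothesis.

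The main obstacle is the precise matching of the combinatorial fibering condition with the model-categorical fibration condition on the comparison maps, in both directions. On the ``if'' side the work is to show directly, from the structure of the matching categories recorded in \defref{def:goodsub}, that the comparison map on matching objects is a fibration for an arbitrary $\cat M$; on the ``only if'' side the work is to engineer the diagram $X$ over a sufficiently sensitive $\cat M$ so that the failure of the fibering condition is realized as a genuine failure of the matching map to be a fibration. Establishing that these two conditions coincide is the heart of the argument, with the bookkeeping of latching and matching categories under $G$ being the main technical burden.
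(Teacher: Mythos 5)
Your overall strategy matches the paper's in outline (reduce to preservation of fibrations, factor relative matching maps through a comparison of matching objects, prove the converse by contraposition with an explicit fibrant witness), but your ``if'' direction contains a genuine gap: the comparison functor $\matchcat{\cat C}{\alpha}\to\matchcat{\cat D}{G\alpha}$, and hence the natural map $\match_{G\alpha}\diag Z\to\match_{\alpha}(G^{*}\diag Z)$, does not exist in general. Matching categories consist of \emph{non-identity} maps, and a Reedy functor --- even a fibering one --- may send a non-identity map $\nu\colon\alpha\to\gamma$ of $\inv{\cat C}$ to the identity of $G\alpha$; the fibering condition only forces the category of inverse $\cat C$-factorizations of $(\alpha,1_{G\alpha})$ to be empty \emph{or connected}, and the nonempty connected case really occurs (the example in \secref{sec:PrfMatchIso} is exactly such a fibering functor). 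For such an object of $\matchcat{\cat C}{\alpha}$ one would need a natural map $\match_{G\alpha}\diag Z\to\diag Z_{G\gamma}=\diag Z_{G\alpha}$ to feed into the limit defining $\match_{\alpha}(G^{*}\diag Z)$, and no such map exists (the matching map points the other way). Consequently the factorization you propose for the relative matching map of $G^{*}f$ at $\alpha$ simply is not defined in this case. The paper must split into two cases: when some non-identity map of $\inv{\cat C}$ at $\alpha$ is killed by $G$, it proves instead that the matching map of $G^{*}\diag X$ at $\alpha$ is an \emph{isomorphism} (\propref{prop:MatchIso}), via the ``controlled objects'' induction (\propref{prop:AllControlled}), which itself uses the fibering hypothesis essentially; only in the complementary case (\propref{prop:CatFacts}) does your comparison map exist and your factorization go through.

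Beyond this, the two steps you defer as ``the main obstacle'' are where essentially all of the content lies, so the proposal is an outline rather than a proof. That the comparison map of pullbacks $P^{\cat D}_{G\alpha}\to P^{\cat C}_{\alpha}$ is a fibration is \propref{prop:MatchFib}, whose proof filters $\matchcat{\cat D}{G\alpha}$ by degree, shows $G_{*}$ is left cofinal onto the bottom stage (this is where fibering enters, via \thmref{thm:CofinalIso}), and climbs the filtration using Reedy's patching lemma (\lemref{lem:reedy}); the fibering hypothesis is used a second time at each stage (\propref{prop:isoprime}). For the converse, the paper builds a concrete interval-valued fibrant $\cat D$-diagram in $\Top$ whose matching object over $\cat C$ is a product of intervals indexed by path components of nerves of factorization categories (\propref{prop:MatchProd}); disconnectedness then forces the matching map to factor through a diagonal $I\to I\times I$, hence to fail surjectivity, hence to fail to be a fibration. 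Your sketch anticipates the shape of this argument but supplies neither the construction nor the computation of the matching object, which is where the failure of the fibering condition actually gets converted into a failure of fibrancy.
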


In fact, we show that if $G\colon \cat C \to \cat D$ is a Reedy
functor that is not fibering, then the induced functor of diagram
categories $G^{*}\colon \cat M^{\cat D} \to \cat M^{\cat C}$ fails to
be a right Quillen functor when $\cat M$ is the standard model
category of topological spaces (see \thmref{thm:RtGdNec}).

We also have a dual result:
\begin{thm}
  \label{thm:MainCofibering}
  If $G\colon \cat C \to \cat D$ is a Reedy functor, then the induced
  functor of diagram categories $G^{*}\colon \cat M^{\cat D} \to \cat
  M^{\cat C}$ is a left Quillen functor for every model category $\cat
  M$ if and only if $G$ is a cofibering Reedy functor (see
  \defref{def:goodsub}).
\end{thm}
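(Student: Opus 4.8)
The plan is to deduce this from \thmref{thm:GoodisGood} by passing to opposite categories, exploiting the standard duality of Reedy model structures rather than redoing the argument from scratch. Recall that if $\cat{M}$ is a model category then so is $\cat{M}\op$, with cofibrations and fibrations interchanged, so that a functor is a left Quillen functor precisely when the opposite functor is a right Quillen functor between the opposite model categories. Likewise, if $\cat{C}$ is a Reedy category then $\cat{C}\op$ is a Reedy category with the direct subcategory $\drc{\cat{C}}$ and the inverse subcategory $\inv{\cat{C}}$ interchanged, and a Reedy functor $G\colon \cat{C} \to \cat{D}$ (see \defref{def:subreedy}) induces a Reedy functor $G\op\colon \cat{C}\op \to \cat{D}\op$, since the conditions defining a Reedy functor are symmetric in the direct and inverse subcategories.

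The key structural fact I would establish first is the natural isomorphism of model categories
\begin{equation*}
  \bigl(\cat{M}^{\cat{C}}\bigr)\op \cong \bigl(\cat{M}\op\bigr)^{\cat{C}\op}
\end{equation*}
given on objects by $X \mapsto X\op$, under which the two Reedy model structures correspond: the latching object functor built from $\drc{\cat{C}}$ becomes the matching object functor built from $\inv{\cat{C}\op}$ and vice versa, so a Reedy cofibration in $\cat{M}^{\cat{C}}$ is exactly a Reedy fibration in $(\cat{M}\op)^{\cat{C}\op}$ when read through this isomorphism, and similarly for the trivial classes. I would then check that the induced functor is compatible with these identifications, i.e.\ that $(G^{*})\op$ agrees with $(G\op)^{*}$ under the corresponding isomorphisms for $\cat{C}$ and $\cat{D}$. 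This is a short unwinding of \defref{def:InducedDiag}: since $G^{*}$ is precomposition with $G$, one has $(X \circ G)\op = X\op \circ G\op$, so precomposition is self-dual.

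With these identifications in place, $G^{*}\colon \cat{M}^{\cat{D}} \to \cat{M}^{\cat{C}}$ is a left Quillen functor if and only if $(G\op)^{*}\colon (\cat{M}\op)^{\cat{D}\op} \to (\cat{M}\op)^{\cat{C}\op}$ is a right Quillen functor. As $\cat{M}$ ranges over all model categories exactly when $\cat{M}\op$ does, \thmref{thm:GoodisGood} applied to $G\op$ shows that this holds for every $\cat{M}$ if and only if $G\op$ is a fibering Reedy functor. It then remains to verify that $G$ is a cofibering Reedy functor if and only if $G\op$ is a fibering Reedy functor. I expect this last equivalence of combinatorial conditions to be the point requiring the most care: the two notions in \defref{def:goodsub} are phrased in terms of the latching categories $\latchcat{\cat{C}}{c}$ and the matching categories $\matchcat{\cat{C}}{c}$ respectively, and passing to $\cat{C}\op$ interchanges these along with the over- and undercategories and the boundary constructions, so one must confirm that the defining conditions transform into one another term by term rather than merely dually in spirit.
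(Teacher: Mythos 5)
Your proposal is correct and is essentially the paper's own proof: the paper likewise derives \thmref{thm:MainCofibering} from \thmref{thm:GoodisGood} by dualizing, using the model-category isomorphism $(\cat M^{\cat C})\op \cong (\cat M\op)^{\cat C\op}$ (\propref{prop:OppositeReedy}), the fact that a functor is left Quillen exactly when its opposite is right Quillen (\propref{prop:OpQuillen}), and the observation that every model category arises as an opposite so that quantifying over all $\cat M$ is the same as quantifying over all $\cat M\op$. The final combinatorial equivalence you flag as needing care---$G$ is cofibering if and only if $G\op$ is fibering---is exactly the paper's \propref{prop:OpFiberingCofibering}, which is verified term by term in \lemref{lem:FactOpFact} by identifying the opposite of the category of inverse $\cat C$-factorizations of $(\alpha,\sigma)$ with the category of direct $\cat C\op$-factorizations of $(\alpha,\sigma\op)$.
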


In an attempt to make these results accessible to a more general
audience, we've included a description of some background material
that is well known to the experts.  The structure of the paper is as
follows: We provide some background on Reedy categories and functors
in \secref{sec:background}, including discussions of filtrations,
opposites, Quillen functors, and cofinality. The only new content for
this part is in \secref{sec:ReedyFunc}, where we define inverse and
direct $\cat C$-factorizations and (co)fibering Reedy functors, and
prove some results about them.  We then discuss several examples and
applications of \thmref{thm:GoodisGood} and
\thmref{thm:MainCofibering} in \secref{sec:Examples}.  More precisely,
we look at the subdiagrams given by truncations, diagrams defined as
skeleta, and three kinds of subdiagrams determined by (co)simplicial
and multi(co)simplicial diagrams: restricted (co)simplicial objects,
diagonals of multi(co)simplicial objects, and slices of
multi(co)simplicial objects.  We then finally present the proofs of
\thmref{thm:GoodisGood} and \thmref{thm:MainCofibering} in
\secref{sec:Proof}.  \thmref{thm:GoodisGood} will follow immediately
from \thmref{thm:FiberingThree}, which is its slight elaboration.
\thmref{thm:MainCofibering} can be proved by dualizing the proof of
\thmref{thm:GoodisGood}, but we will instead derive it in
\secref{sec:PrfCofibering} from \thmref{thm:GoodisGood} and a careful
discussion of opposite categories.

Lastly, it should be noted that, upon completing this paper, the authors learned that Theorem \ref{thm:GoodisGood} also appears as \cite[Theorem 3.22]{barwick}.  However, the methods presented in this paper are different, and the proof that appears here is more elementary.  This paper additionally provides examples  that make the material digestible for the
reader, as well as a number of applications.  In particular, we
study how the main result applies to the various subdiagrams of multicosimplicial
objects (restricted multicosimplicial objects, diagonals of
multicosimplicial objects, and slices of multicosimplicial objects)
that figure heavily in recent applications of functor calculus to the
study of links.

\section{Reedy model category structures}
\label{sec:background}

In this section, we give the definitions and results needed for the
statements and proofs of our theorems.  We assume the reader is
familiar with the basic language of model categories.  The material
here is standard, with the exception of \secref{sec:ReedyFunc} where
the key notions for characterizing Quillen functors between Reedy
model categories are introduced (\defref{def:CFactors} and
\defref{def:goodsub}).

\subsection{Reedy categories and their diagram categories}
\label{sec:ReedyCat}

\begin{defn}
  \label{def:ReedyCat}
  A \emph{Reedy category} is a small category $\cat C$ together with
  two subcategories $\drc{\cat C}$ (the \emph{direct subcategory}) and
  $\inv{\cat C}$ (the \emph{inverse subcategory}), both of which
  contain all the objects of $\cat C$, in which every object can be
  assigned a nonnegative integer (called its \emph{degree}) such that
  \begin{enumerate}
  \item Every non-identity map of $\drc{\cat C}$ raises degree.
  \item Every non-identity map of $\inv{\cat C}$ lowers degree.
  \item Every map $g$ in $\cat C$ has a unique factorization $g = \drc
    g \inv g$ where $\drc g$ is in $\drc{\cat C}$ and $\inv g$ is in
    $\inv{\cat C}$.
  \end{enumerate}
\end{defn}

\begin{rem} The function that assigns to every object of a Reedy
  category its degree is not a part of the structure, but we will
  generally assume that such a degree function has been chosen.
\end{rem}

\begin{defn}
  \label{def:DiagCat}
  Let $\cat C$ be a Reedy category and let $\cat M$ be a model
  category.
  \begin{enumerate}
  \item A \emph{$\cat C$-diagram in $\cat M$} is a functor from $\cat
    C$ to $\cat M$.
  \item The category $\cat M^{\cat C}$ of $\cat C$-diagrams in $\cat
    M$ is the category with objects the functors from $\cat C$ to
    $\cat M$ and with morphisms the natural transformations of such
    functors.
  \end{enumerate}
\end{defn}

In order to describe the \emph{Reedy model category structure} on the
diagram category $\cat M^{\cat C}$ in \thmref{thm:RFib}, we first
define the \emph{latching maps} and
\emph{matching maps} of a $\cat C$-diagram in $\cat M$ as follows.

\begin{defn}
  \label{def:Matchobj}
  Let $\cat C$ be a Reedy category, let $\cat M$ be a model category,
  let $\diag X$ and $\diag Y$ be $\cat C$-diagrams in $\cat M$, let
  $f\colon \diag X \to \diag Y$ be a map of diagrams, and let $\alpha$
  be an object of $\cat C$.
  \begin{enumerate}
  \item The \emph{latching category} $\latchcat{\cat C}{\alpha}$ of
    $\cat C$ at $\alpha$ is the full subcategory of
    $\overcat{\drc{\cat C}}{\alpha}$ (the category of objects of
    $\drc{\cat C}$ over $\alpha$; see \cite{MCATL}*{Def.~11.8.1})
    containing all of the objects except the identity map of $\alpha$.
  \item The \emph{latching object} of $\diag X$ at $\alpha$ is
    \begin{displaymath}
      \latch_{\alpha}\diag X =
      \colim_{\latchcat{\cat C}{\alpha}} \diag X
    \end{displaymath}
    and the \emph{latching map} of $\diag X$ at $\alpha$ is the
    natural map
    \begin{displaymath}
      \latch_{\alpha}\diag X \longrightarrow
      \diag X_{\alpha} \Period
    \end{displaymath}
    We will use $\latch_{\alpha}^{\cat C}\diag X$ to denote the
    latching object if the indexing category is not obvious.
  \item The \emph{relative latching map} of $f\colon \diag X \to \diag
    Y$ at $\alpha$ is the natural map
    \begin{displaymath}
      \pushout{\diag X_{\alpha}}{\latch_{\alpha} \diag X}
      {\latch_{\alpha} \diag Y} \longrightarrow
      \diag Y_{\alpha} \Period
    \end{displaymath}
  \item The \emph{matching category} $\matchcat{\cat C}{\alpha}$ of
    $\cat C$ at $\alpha$ is the full subcategory of
    $\undercat{\inv{\cat C}}{\alpha}$ (the category of objects of
    $\inv{\cat C}$ under $\alpha$; see \cite{MCATL}*{Def.~11.8.3})
    containing all of the objects except the identity map of $\alpha$.
  \item The \emph{matching object} of $\diag X$ at $\alpha$ is
    \begin{displaymath}
      \match_{\alpha}\diag X =
      \lim_{\matchcat{\cat C}{\alpha}} \diag X
    \end{displaymath}
    and the \emph{matching map} of $\diag X$ at $\alpha$ is the
    natural map
    \begin{displaymath}
      \diag X_{\alpha} \longrightarrow \match_{\alpha}\diag X \Period
    \end{displaymath}
    We will use $\match_{\alpha}^{\cat C}\diag X$ to denote the
    matching object if the indexing category is not obvious.
  \item The \emph{relative matching map} of $f\colon \diag X \to \diag
    Y$ at $\alpha$ is the map
    \begin{displaymath}
      \diag X_{\alpha} \longrightarrow
      \pullback{\diag Y_{\alpha}}{\match_{\alpha}\diag Y}
      \match_{\alpha}{\diag X} \Period
    \end{displaymath}
  \end{enumerate}
\end{defn}

\begin{thm}[\cite{MCATL}*{Def.~15.3.3 and Thm.~15.3.4}]
  \label{thm:RFib}
  Let $\cat C$ be a Reedy category and let $\cat M$ be a model
  category.  There is a model category structure on the category $\cat
  M^{\cat C}$ of $\cat C$-diagrams in $\cat M$, called the \emph{Reedy
    model category structure}, in which a map $f\colon \diag X \to
  \diag Y$ of $\cat C$-diagrams in $\cat M$ is
  \begin{itemize}
  \item a \emph{weak equivalence} if for every object $\alpha$ of
    $\cat C$ the map $f_{\alpha}\colon \diag X_{\alpha} \to \diag
    Y_{\alpha}$ is a weak equivalence in $\cat M$,
  \item a \emph{cofibration} if for every object $\alpha$ of $\cat C$
    the relative latching map $\pushout{\diag
      X_{\alpha}}{\latch_{\alpha}\diag X}{\latch_{\alpha}\diag Y} \to
    \diag Y_{\alpha}$ (see \defref{def:Matchobj}) is a cofibration in
    $\cat M$, and
  \item a \emph{fibration} if for every object $\alpha$ of $\cat C$
    the relative matching map $\diag X_{\alpha} \to \pullback{\diag
      Y_{\alpha}}{\match_{\alpha} \diag Y}{\match_{\alpha} \diag X}$
    (see \defref{def:Matchobj}) is a fibration in $\cat M$.
  \end{itemize}
\end{thm}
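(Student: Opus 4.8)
The plan is to verify the model category axioms for the three classes of maps described in the statement, treating the objectwise axioms as routine and concentrating the work on the factorization and lifting axioms, each of which I would establish by induction on degree, reducing the diagram-level problem to a problem in $\cat M$ one object at a time. First I would dispose of the formal axioms. Since $\cat M$ is complete and cocomplete and limits and colimits of $\cat C$-diagrams are computed objectwise, $\cat M^{\cat C}$ is complete and cocomplete. The weak equivalences satisfy the two-out-of-three property because they are defined objectwise and weak equivalences in $\cat M$ do. Closure under retracts holds because a retract of $f\colon \diag X \to \diag Y$ induces at each object $\alpha$ a retract of the relative latching map and of the relative matching map (the constructions $\latch_\alpha$ and $\match_\alpha$ being a colimit and a limit, hence functorial in the diagram), and the three classes in $\cat M$ are retract-closed.

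The substance of the proof is the interaction between relative latching and relative matching maps. The key observation is that $\latch_\alpha \diag X$ is a colimit over $\latchcat{\cat C}{\alpha}$, all of whose objects have degree strictly less than $\degree \alpha$ (since every non-identity map of $\drc{\cat C}$ raises degree), and dually that $\match_\alpha \diag X$ depends only on the values of $\diag X$ at objects of degree less than $\degree \alpha$. This lets me build maps of diagrams one degree at a time. For the lifting axiom, given a Reedy cofibration $\diag A \to \diag B$ and a Reedy trivial fibration $\diag X \to \diag Y$ (or, dually, a Reedy trivial cofibration and a Reedy fibration) together with a commuting square between them, I would construct a lift by induction on degree: having defined it below degree $n$, the data already constructed assemble, for each object $\alpha$ of degree $n$, into a square in $\cat M$ whose left edge is the relative latching map of $\diag A \to \diag B$ at $\alpha$ and whose right edge is the relative matching map of $\diag X \to \diag Y$ at $\alpha$; the required map $\diag B_\alpha \to \diag X_\alpha$ is a lift in that square, which exists because the left edge is a cofibration and the right edge is a trivial fibration (respectively a trivial cofibration and a fibration) in $\cat M$, and by construction it is compatible with the lower-degree data.

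For the factorization axiom I would likewise proceed by induction on degree. To factor $f\colon \diag X \to \diag Y$ through a diagram $\diag Z$ built degree by degree, at an object $\alpha$ of degree $n$ I would factor the canonical map
\begin{displaymath}
  \pushout{\diag X_\alpha}{\latch_\alpha \diag X}{\latch_\alpha \diag Z}
  \longrightarrow
  \pullback{\diag Y_\alpha}{\match_\alpha \diag Y}{\match_\alpha \diag Z}
\end{displaymath}
in $\cat M$ as a cofibration followed by a trivial fibration (or as a trivial cofibration followed by a fibration), and declare the intermediate object to be $\diag Z_\alpha$. By construction the two factors are precisely the relative latching map of $\diag X \to \diag Z$ and the relative matching map of $\diag Z \to \diag Y$ at $\alpha$, so $\diag X \to \diag Z$ is a Reedy cofibration and the relative matching maps of $\diag Z \to \diag Y$ are trivial fibrations.

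The hard part, and the step that forces the entire argument into a single interlocking induction on degree, is the characterization of the acyclic classes: that $f$ is simultaneously a Reedy cofibration and an objectwise weak equivalence if and only if every relative latching map is a trivial cofibration in $\cat M$, and dually for fibrations. Without this I could not conclude that the second factor produced above is actually a weak equivalence (hence a Reedy trivial fibration), nor that the two classes arising in the lifting step fit together as required. I would prove it by factoring $\diag X_\alpha \to \diag Y_\alpha$ as the pushout corner map $\diag X_\alpha \to \pushout{\diag X_\alpha}{\latch_\alpha \diag X}{\latch_\alpha \diag Y}$ followed by the relative latching map, and then invoking two-out-of-three. The delicate input, which I expect to be the main obstacle, is showing that the latching map $\latch_\alpha \diag X \to \latch_\alpha \diag Y$ of a Reedy trivial cofibration is again a trivial cofibration, since this requires controlling the colimit defining $\latch_\alpha$ and is exactly the statement that must be carried along in the inductive hypothesis.
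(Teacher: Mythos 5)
The first thing to say is that the paper does not prove this theorem at all: it is imported by citation from \cite{MCATL}*{Def.~15.3.3 and Thm.~15.3.4}, so the only meaningful comparison is with the proof in that reference. Against that standard, your outline has the same architecture as the cited proof: completeness/cocompleteness, two-out-of-three, and retract-closure handled objectwise (using functoriality of $\latch_{\alpha}$ and $\match_{\alpha}$ in the diagram); the lifting and factorization axioms built by induction on degree, with the degree-$n$ step reduced to a single lifting problem, respectively a single factorization, in $\cat M$ whose two sides are a relative latching map and a relative matching map; and everything resting on the characterization of the acyclic classes (a map is a Reedy cofibration and objectwise weak equivalence if and only if all its relative latching maps are trivial cofibrations, and dually). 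All of the steps you spell out are correct, including the identification of exactly where the acyclic-class characterization gets consumed.

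The one point at which your proposal is not yet a proof is the point you flag yourself: the lemma that if the relative latching maps of $f\colon \diag X \to \diag Y$ at objects of degree less than $n$ are trivial cofibrations, then the induced map $\latch_{\alpha}\diag X \to \latch_{\alpha}\diag Y$ on latching objects is a trivial cofibration for $\alpha$ of degree $n$. Naming this lemma and placing it in the inductive hypothesis is the right move, but ``controlling the colimit'' is precisely the mathematical content of the theorem, and it is not routine. The standard argument filters the latching category $\latchcat{\cat C}{\alpha}$ by degree and observes that, for an object $\beta \to \alpha$ of $\latchcat{\cat C}{\alpha}$ with $\beta$ of degree $k$, the lower-degree objects mapping to it form a copy of $\latchcat{\cat C}{\beta}$; consequently, passing from one filtration stage of the (relative) latching object to the next is a cobase change of a coproduct, over the degree-$k$ objects of $\latchcat{\cat C}{\alpha}$, of the relative latching maps of $f$ at those $\beta$. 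Since trivial cofibrations in $\cat M$ are closed under coproducts, cobase change, and finite composition, the lemma follows, and then both of your inductions close up. So: correct structure, identical in outline to the cited proof, but the technical heart --- this filtration argument for latching objects --- is asserted rather than proved.
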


We also record the following standard result, which can also be
obtained from the Yoneda lemma (see \cite{McL:categories}*{p.~61}); we
will have use for it in the proof of \propref{prop:MatchIso}.
\begin{prop}
  \label{prop:DetectIso}
  If $\cat M$ is a category and $f\colon X \to Y$ is a map in $\cat
  M$, then $f$ is an isomorphism if and only if it induces an
  isomorphism of the sets of maps $f_{*}\colon \cat M(W,X) \to \cat
  M(W,Y)$ for every object $W$ of $\cat M$.
\end{prop}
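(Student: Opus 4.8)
The plan is to prove this as the standard Yoneda-style consequence that a morphism is determined, up to the existence of an inverse, by what it does to hom-sets. The forward direction is immediate: if $f$ is an isomorphism with inverse $g\colon Y \to X$, then post-composition with $g$ furnishes a two-sided inverse $g_{*}\colon \cat M(W,Y) \to \cat M(W,X)$ to $f_{*}$ for every $W$, so $f_{*}$ is a bijection.

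For the converse---the only direction with content---I would assume that $f_{*}$ is a bijection for every object $W$ and manufacture an inverse to $f$ by feeding the identity morphisms into the hypothesis at two well-chosen values of $W$. First I would take $W = Y$: since $f_{*}\colon \cat M(Y,X) \to \cat M(Y,Y)$ is surjective, there is a map $g\colon Y \to X$ with $f_{*}(g) = f \circ g = \mathrm{id}_{Y}$, so $g$ is a right inverse to $f$. Next I would take $W = X$ and use injectivity: the computation $f \circ (g \circ f) = (f \circ g) \circ f = \mathrm{id}_{Y} \circ f = f = f \circ \mathrm{id}_{X}$ shows that $g \circ f$ and $\mathrm{id}_{X}$ have the same image under $f_{*}\colon \cat M(X,X) \to \cat M(X,Y)$, whence $g \circ f = \mathrm{id}_{X}$. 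Thus $g$ is a two-sided inverse and $f$ is an isomorphism.

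There is no genuine obstacle here; the entire argument is bookkeeping with identity morphisms, and the only point requiring attention is invoking the hypothesis twice---once for surjectivity at $W = Y$ to produce the candidate inverse, and once for injectivity at $W = X$ to verify it on the other side. In fact the full strength of the hypothesis is not needed: surjectivity of $f_{*}$ at $W = Y$ together with injectivity of $f_{*}$ at $W = X$ already suffice.
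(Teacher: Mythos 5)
Your proposal is correct and is essentially identical to the paper's proof: both directions proceed the same way, using surjectivity of $f_{*}$ at $W = Y$ to produce a right inverse $g$ and injectivity of $f_{*}$ at $W = X$ to conclude $gf = 1_{X}$ from $f_{*}(gf) = f = f_{*}(1_{X})$. Your closing observation that only surjectivity at $W=Y$ and injectivity at $W=X$ are needed is a harmless refinement implicit in the paper's argument as well.
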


\begin{proof}
  If $g\colon Y \to X$ is an inverse for $f$, then $g_{*}\colon \cat
  M(W,Y) \to \cat M(W,X)$ is an inverse for $f_{*}$.

  Conversely, if $f_{*}\colon \cat M(W,X) \to \cat M(W,Y)$ is an
  isomorphism for every object $W$ of $\cat M$, then $f_{*}\colon \cat
  M(Y,X) \to \cat M(Y,Y)$ is an epimorphism, and so there is a map
  $g\colon Y \to X$ such that $fg = 1_{Y}$.  We then have two maps
  $gf,1_{X}\colon X \to X$, and
  \begin{displaymath}
    f_{*}(gf) = fgf = 1_{Y}f = f = f_{*}(1_{X}) \Period
  \end{displaymath}
  Since $f_{*}\colon \cat M(X,X) \to \cat M(X,Y)$ is a monomorphism,
  this implies that $gf = 1_{X}$.
\end{proof}

\subsection{Filtrations of Reedy categories}
\label{sec:ReedyFilt}

The notion of a filtration of a Reedy category will be used in the
proof of \thmref{thm:RtGdNec}.

\begin{defn}
  \label{def:filtration}
  If $\cat C$ is a Reedy category (with a chosen degree function) and
  $n$ is a nonnegative integer, the \emph{$n$'th filtration}
  $\F^{n}\cat C$ of $\cat C$ (also called the \emph{$n$'th truncation}
  $\cat C^{\le n}$ of $\cat C$) is the full subcategory of $\cat C$
  with objects the objects of $\cat C$ of degree at most $n$.
\end{defn}

The following is a direct consequence of the definitions.

\begin{prop}
  \label{prop:SeqFilt}
  If $\cat C$ is a Reedy category then each of its filtrations
  $\F^{n}\cat C$ is a Reedy category with $\drc{\F^{n}\cat C} =
  \drc{\cat C} \cap \F^{n}\cat C$ and $\inv{\F^{n}\cat C} = \inv{\cat
    C} \cap \F^{n}\cat C$, and $\cat C$ equals the union of the
  increasing sequence of subcategories $\F^{0}\cat C \subset
  \F^{1}\cat C \subset \F^{2}\cat C \subset \cdots$.
\end{prop}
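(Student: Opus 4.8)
The plan is to verify directly that $\F^{n}\cat C$, equipped with the restriction of the chosen degree function of $\cat C$ and with the two subcategories $\drc{\F^{n}\cat C} = \drc{\cat C} \cap \F^{n}\cat C$ and $\inv{\F^{n}\cat C} = \inv{\cat C} \cap \F^{n}\cat C$, satisfies the three conditions of \defref{def:ReedyCat}. Since $\F^{n}\cat C$ is a full subcategory of the small category $\cat C$, it is itself small, and restricting the degree function assigns a nonnegative integer to each of its objects. Because $\drc{\cat C}$ and $\inv{\cat C}$ each contain all the objects of $\cat C$, their intersections with $\F^{n}\cat C$ contain all the objects of $\F^{n}\cat C$, so the two proposed subcategories have the objects required of a direct and an inverse subcategory.

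Conditions (1) and (2) are then inherited for free: a non-identity map of $\drc{\F^{n}\cat C}$ is in particular a non-identity map of $\drc{\cat C}$ and hence raises degree, and likewise a non-identity map of $\inv{\F^{n}\cat C}$ lies in $\inv{\cat C}$ and hence lowers degree. One also checks that $\drc{\F^{n}\cat C}$ and $\inv{\F^{n}\cat C}$ are closed under the composition inherited from $\cat C$, which is immediate since $\drc{\cat C}$ and $\inv{\cat C}$ are subcategories and the composites stay among objects of degree at most $n$.

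The only point requiring a moment's thought is the factorization condition (3), which I expect to be the crux of the argument. Given a map $g$ of $\F^{n}\cat C$, say from $\alpha$ to $\beta$ with $\degree\alpha,\degree\beta \le n$, factor it in $\cat C$ as $g = \drc g\,\inv g$ with $\inv g\colon \alpha \to \gamma$ in $\inv{\cat C}$ and $\drc g\colon \gamma \to \beta$ in $\drc{\cat C}$. Here I would observe that $\inv g$ is either an identity or lowers degree, so $\degree\gamma \le \degree\alpha \le n$; thus the intermediate object $\gamma$ lies in $\F^{n}\cat C$, and both factors are maps of $\F^{n}\cat C$, lying respectively in $\inv{\F^{n}\cat C}$ and $\drc{\F^{n}\cat C}$. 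This furnishes the required factorization inside $\F^{n}\cat C$, and confirming that the factorization does not escape $\F^{n}\cat C$ is exactly where the degree-lowering property of the inverse subcategory is used. Uniqueness is automatic: any factorization of $g$ in $\F^{n}\cat C$ is in particular a factorization in $\cat C$, so it must coincide with the unique factorization guaranteed there.

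Finally, for the union statement I would note that the object sets are visibly nested, giving the inclusions $\F^{0}\cat C \subset \F^{1}\cat C \subset \cdots$. Every object of $\cat C$ has some degree $n$ and so lies in $\F^{n}\cat C$, and any map $g\colon \alpha \to \beta$ lies in $\F^{N}\cat C$ for $N = \max(\degree\alpha,\degree\beta)$, since $\F^{N}\cat C$ is a full subcategory; hence $\cat C = \bigcup_{n}\F^{n}\cat C$.
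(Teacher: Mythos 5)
Your proof is correct: the paper offers no argument at all, stating the proposition as ``a direct consequence of the definitions,'' and your verification is precisely the routine check being alluded to. The one point with any content---that the intermediate object $\gamma$ of the factorization $g = \drc{g}\,\inv{g}$ stays in $\F^{n}\cat C$ because $\inv{g}$ does not raise degree, with uniqueness inherited from $\cat C$---is exactly right.
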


The following will be used in the proof of \thmref{thm:RtGdNec} (which
is one direction of \thmref{thm:GoodisGood}).

\begin{prop}[\cite{MCATL}*{Thm.~15.2.1 and Cor.~15.2.9}]
  \label{prop:ConstructFilt}
  For $n > 0$, extending a diagram $\diag X$ on $\F^{n-1}\cat D$ to
  one on $\F^{n}\cat D$ consists of choosing, for every object
  $\gamma$ of degree $n$, an object $\diag X_{\gamma}$ and a
  factorization $\latch_{\gamma} \diag X \to \diag X_{\gamma} \to
  \match_{\gamma}\diag X$ of the natural map $\latch_{\gamma} \diag X
  \to \match_{\gamma}\diag X$ from the latching object of $\diag X$ at
  $\gamma$ to the matching object of $\diag X$ at $\gamma$.
\end{prop}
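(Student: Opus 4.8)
The plan is to exhibit a bijection between the extensions of $\diag X|_{\F^{n-1}\cat D}$ to $\F^{n}\cat D$ and the collections, indexed by the degree-$n$ objects $\gamma$, consisting of an object $\diag X_{\gamma}$ together with a factorization $\latch_{\gamma}\diag X \to \diag X_{\gamma} \to \match_{\gamma}\diag X$ of the natural map. The first point to record is that both $\latch_{\gamma}\diag X$ and $\match_{\gamma}\diag X$, as well as the natural map between them, are already determined by the restriction of $\diag X$ to $\F^{n-1}\cat D$: the latching category $\latchcat{\cat D}{\gamma}$ consists of the non-identity maps of $\drc{\cat D}$ into $\gamma$, whose sources necessarily have degree strictly less than $n$, and dually the matching category $\matchcat{\cat D}{\gamma}$ involves only objects of degree less than $n$. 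Moreover the natural map $\latch_{\gamma}\diag X \to \match_{\gamma}\diag X$ has, on the summand indexed by a pair $(\sigma\colon a \to \gamma,\ \tau\colon \gamma \to b)$, the component $\diag X(\tau\sigma)$, which is defined in $\F^{n-1}\cat D$ since $a$ and $b$ have degree less than $n$.

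For the forward direction I would start from an extension of $\diag X$ over $\F^{n}\cat D$. For each degree-$n$ object $\gamma$, the maps $\diag X(\sigma)\colon \diag X_{a} \to \diag X_{\gamma}$ indexed by the objects $\sigma\colon a \to \gamma$ of $\latchcat{\cat D}{\gamma}$ assemble, by the universal property of the colimit, into the latching map $\latch_{\gamma}\diag X \to \diag X_{\gamma}$, and dually the maps $\diag X(\tau)$ assemble into the matching map $\diag X_{\gamma} \to \match_{\gamma}\diag X$. Functoriality of $\diag X$ then forces the composite of these two maps to agree with the natural map, since on the component indexed by $(\sigma,\tau)$ both are equal to $\diag X(\tau\sigma)$. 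This produces the required factorization, and different objects $\gamma$ are handled independently.

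For the reverse direction, given such factorization data I would reconstruct $\diag X$ on $\F^{n}\cat D$: set its value at a degree-$n$ object $\gamma$ to be the chosen $\diag X_{\gamma}$, define it on a non-identity direct map $a \to \gamma$ into a degree-$n$ object as the corresponding component of the chosen latching map, on a non-identity inverse map $\gamma \to b$ out of a degree-$n$ object as the corresponding component of the chosen matching map, and on an arbitrary morphism $g$ by $\diag X(g) = \diag X(\drc g)\,\diag X(\inv g)$ using the unique Reedy factorization (every new morphism decomposes this way, since non-identity direct or inverse maps between two degree-$n$ objects cannot exist). The main obstacle is verifying that this assignment is a functor. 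Preservation of identities is immediate, and a case analysis on the degrees of the source, target, and middle object, combined with the uniqueness of Reedy factorizations, shows that the only relation needed beyond functoriality on $\F^{n-1}\cat D$ and the defining properties of the (co)limits is the identity $\diag X(\tau)\,\diag X(\sigma) = \diag X(\tau\sigma)$ for a direct map $\sigma$ into a degree-$n$ object immediately followed by an inverse map $\tau$ out of it. This is exactly the assertion that the composite $\latch_{\gamma}\diag X \to \diag X_{\gamma} \to \match_{\gamma}\diag X$ equals the natural map, which holds by hypothesis. Finally I would observe that the two constructions are mutually inverse, which is clear from the universal properties used to assemble and disassemble the latching and matching maps.
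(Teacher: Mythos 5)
The paper does not prove this proposition at all: it is quoted from Hirschhorn's book (\cite{MCATL}*{Thm.~15.2.1 and Cor.~15.2.9}), so there is no internal proof to compare against. Your argument is correct and is essentially the standard one found in the cited source: you observe that $\latch_{\gamma}\diag X$, $\match_{\gamma}\diag X$, and the natural map between them involve only objects of degree less than $n$ and hence are determined by $\diag X\rest{\F^{n-1}\cat D}$; you extract the factorization from an extension via the universal properties; and you reconstruct an extension from factorization data using the uniqueness of Reedy factorizations, correctly identifying that the only non-formal relation needed for functoriality is $\diag X(\tau)\diag X(\sigma)=\diag X(\tau\sigma)$ for a non-identity direct map $\sigma$ into a degree-$n$ object followed by a non-identity inverse map $\tau$ out of it, which is exactly the hypothesis that the chosen composite $\latch_{\gamma}\diag X \to \diag X_{\gamma} \to \match_{\gamma}\diag X$ is the natural map. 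The one place your write-up is thinnest is the asserted case analysis for functoriality (compositions of direct maps into, and inverse maps out of, degree-$n$ objects are handled by the cocone and cone compatibilities of the latching and matching objects, and the remaining cases reduce to $\F^{n-1}\cat D$ by uniqueness of factorizations), but the reduction you state is the right one and the omitted verifications are routine.
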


\subsection{Reedy functors}
\label{sec:ReedyFunc}

In \defref{def:subreedy} we introduce the notion of a \emph{Reedy
  functor} between Reedy categories; this is a functor that preserves
the Reedy structure.

\begin{defn}
  \label{def:subreedy}
  If $\cat C$ and $\cat D$ are Reedy categories, then a \emph{Reedy
    functor} $G\colon \cat C \to \cat D$ is a functor that takes
  $\drc{\cat C}$ into $\drc{\cat D}$ and takes $\inv{\cat C}$ into
  $\inv{\cat D}$.  If $\cat D$ is a Reedy category, then a \emph{Reedy
    subcategory} of $\cat D$ is a subcategory $\cat C$ of $\cat D$
  that is a Reedy category for which the inclusion functor $\cat C \to
  \cat D$ is a Reedy functor.
\end{defn}

Note that a Reedy functor is \emph{not} required to respect the
filtrations on the Reedy categories $\cat C$ and $\cat D$ (see
\defref{def:filtration}).  Thus, a Reedy functor might take
non-identity maps to identity maps (see, e.g.,
\propref{prop:MatchIso}).

\begin{defn}
  \label{def:InducedDiag}
  If $G\colon \cat C \to \cat D$ is a Reedy functor between Reedy
  categories and $\cat M$ is a model category, then $G$ induces a
  functor of diagram categories $G^{*}\colon \cat M^{\cat D} \to \cat
  M^{\cat C}$ under which
  \begin{itemize}
  \item a functor $\diag X\colon \cat D \to \cat M$ goes to the
    functor $G^{*}\diag X\colon \cat C \to \cat M$ that is the
    composition
    $\cat C \xrightarrow{G} \cat D \xrightarrow{\diag X} \cat M$ (so
    that for an object $\alpha$ of $\cat C$ we have
    $(G^{*}\diag X)_{\alpha} = \diag X_{G\alpha}$) and
  \item a natural transformation of $\cat D$-diagrams
    $f\colon \diag X \to \diag Y$ goes to the natural transformation
    of $\cat C$-diagrams $G^{*}f$ that on an object $\alpha$ of
    $\cat C$ is the map
    $f_{G\alpha}\colon \diag X_{G\alpha} \to \diag Y_{G\alpha}$ in
    $\cat M$.
  \end{itemize}
\end{defn}

The main results of this paper (\thmref{thm:GoodisGood} and
\thmref{thm:MainCofibering}) determine when the functor $G^{*}\colon
\cat M^{\cat D} \to \cat M^{\cat C}$ is either a left Quillen functor
or a right Quillen functor for all model categories $\cat M$.  The
characterizations will depend on the notions of the \emph{category of
  inverse $\cat C$-factorizations} of a map in $\inv{\cat D}$ and the
\emph{category of direct $\cat C$-factorizations} of a map in
$\drc{\cat D}$.

\begin{defn}
  \label{def:CFactors}
  Let $G\colon \cat C \to \cat D$ be a Reedy functor between Reedy
  categories, let $\alpha$ be an object of $\cat C$, and let $\beta$
  be an object of $\cat D$.
  \begin{enumerate}
  \item If $\sigma\colon G\alpha \to \beta$ is a map in $\inv{\cat
      D}$, then the \emph{category of inverse $\cat C$-factorizations}
    of $(\alpha, \sigma)$ is the category $\invfact{\cat
      C}{\alpha}{\sigma}$ in which
    \begin{itemize}
    \item an object is a pair
      \begin{displaymath}
        \bigl((\nu\colon \alpha \to \gamma),
        (\mu\colon G\gamma\to \beta)\bigr)
      \end{displaymath}
      consisting of a non-identity map $\nu\colon \alpha \to \gamma$
      in $\inv{\cat C}$ and a map $\mu\colon G\gamma \to \beta$ in
      $\inv{\cat D}$ such that the diagram
      \begin{displaymath}
        \xymatrix@=.6em{
          {G\alpha} \ar[rr]^{G\nu} \ar[dr]_{\sigma}
          && {G\gamma} \ar[dl]^{\mu}\\
          & {\beta}
        }
      \end{displaymath}
      commutes, and
    \item a map from $\bigl((\nu\colon \alpha \to \gamma), (\mu\colon
      G\gamma\to \beta)\bigr)$ to $\bigl((\nu'\colon \alpha \to
      \gamma'), (\mu'\colon G\gamma'\to \beta)\bigr)$ is a map
      $\tau\colon \gamma \to \gamma'$ in $\inv{\cat C}$ such that the
      triangles
        \begin{displaymath}
          \vcenter{
            \xymatrix@=.6em{
              &{\alpha} \ar[dl]_{{\nu}} \ar[dr]^{\nu'}\\
              {\gamma} \ar[rr]_{\tau}
              && {\gamma'}
            }
          }
          \qquad\text{and}\qquad
          \vcenter{
            \xymatrix@=.6em{
              {G\gamma} \ar[rr]^{G\tau} \ar[dr]_{\mu}
              && {G\gamma'} \ar[dl]^{\mu'}\\
              & {\beta}
            }
          }
        \end{displaymath}
        commute.
    \end{itemize}
    We will often refer just to the map $\sigma$ when the object
    $\alpha$ is obvious.  In particular, when $G\colon \cat C \to \cat
    D$ is the inclusion of a subcategory the object $\alpha$ is
    determined by the morphism $\sigma$, and we will often refer to
    the \emph{category of inverse $\cat C$-factorizations of $\sigma$}.
  \item If $\sigma\colon \beta \to G\alpha$ is a map in $\drc{\cat
      D}$, then the \emph{category of direct $\cat C$-factorizations}
    of $(\alpha, \sigma)$ is the category $\drcfact{\cat
      C}{\alpha}{\sigma}$ in which
    \begin{itemize}
    \item an object is a pair
      \begin{displaymath}
        \bigl((\nu\colon \gamma \to \alpha),
        (\mu\colon \beta \to G\gamma)\bigr)
      \end{displaymath}
      consisting of a non-identity map $\nu\colon \gamma \to \alpha$
      in $\drc{\cat C}$ and a map $\mu\colon \beta \to G\gamma$ in
      $\drc{\cat D}$ such that the diagram
      \begin{displaymath}
        \xymatrix@=.6em{
          {\beta} \ar[rr]^{\mu} \ar[dr]_{\sigma}
          && {G\gamma} \ar[dl]^{G\nu}\\
          & {G\alpha}
        }
      \end{displaymath}
      commutes, and
    \item a map from $\bigl((\nu\colon \gamma \to \alpha), (\mu\colon
      \beta \to G\gamma)\bigr)$ to $\bigl((\nu'\colon \gamma' \to
      \alpha), (\mu'\colon \beta \to G\gamma')\bigr)$ is a map
      $\tau\colon \gamma \to \gamma'$ in $\drc{\cat C}$ such that the
      triangles
      \begin{displaymath}
        \vcenter{
          \xymatrix@=.8em{
            & {\alpha}\\
            {\gamma} \ar[rr]_{\tau} \ar[ur]^{\nu}
            && {\gamma'} \ar[ul]_{\gamma'}
          }
        }
        \qquad\text{and}\qquad
        \vcenter{
          \xymatrix@=.8em{
            {G\gamma} \ar[rr]^{G\tau}
            && {G\gamma'}\\
            & {\beta} \ar[ul]^{\mu} \ar[ur]_{\mu'}
          }
        }
      \end{displaymath}
      commute.
    \end{itemize} 
    We will often refer just to the map $\sigma$ when the object
    $\alpha$ is obvious.  In particular, when $G\colon \cat C \to \cat
    D$ is the inclusion of a subcategory the object $\alpha$ is
    determined by the morphism $\sigma$, and we will often refer to
    the \emph{category of direct $\cat C$-factorizations of $\sigma$}.
  \end{enumerate}
\end{defn}

\begin{prop}
  \label{prop:CategoryOfFacts}
  Let $G\colon \cat C \to \cat D$ be a Reedy functor between Reedy
  categories, let $\alpha$ be an object of $\cat C$, and let $\beta$
  be an object of $\cat D$.
  \begin{enumerate}
  \item If $\sigma\colon G\alpha \to \beta$ is a map in $\inv{\cat
      D}$, then we have an induced functor
    \begin{displaymath}
      G_{*}\colon \matchcat{\cat C}{\alpha} \longrightarrow
      \undercat{\inv{\cat D}}{G\alpha}
    \end{displaymath}
    from the matching category of $\cat C$ at $\alpha$ to the category
    of objects of $\inv{\cat D}$ under $G\alpha$ that takes the object
    $\alpha \to \gamma$ of $\matchcat{\cat C}{\alpha}$ to the object
    $G\alpha \to G\gamma$ of $\undercat{\inv{\cat D}}{G\alpha}$, and
    the category $\invfact{\cat C}{\alpha}{\sigma}$ of inverse $\cat
    C$-factorizations of $(\alpha, \sigma)$ (see
    \defref{def:CFactors}) is the category $\overcat{G_{*}}{\sigma}$
    of objects of $\matchcat{\cat C}{\alpha}$ over $\sigma$.
  \item If $\sigma\colon \beta \to G\alpha$ is a map in $\drc{\cat
      D}$, then we have an induced functor
    \begin{displaymath}
      G_{*}\colon \latchcat{\cat C}{\alpha} \longrightarrow
      \overcat{\drc{\cat D}}{G\alpha}
    \end{displaymath}
    from the latching category of $\cat C$ at $\alpha$ to the category
    of objects of $\drc{\cat D}$ over $G\alpha$ that takes the object
    $\gamma \to \alpha$ of $\latchcat{\cat C}{\alpha}$ to the object
    $G\gamma \to G\alpha$ of $\overcat{\drc{\cat D}}{G\alpha}$, and
    the category $\drcfact{\cat C}{\alpha}{\sigma}$ of direct $\cat
    C$-factorizations of $(\alpha, \sigma)$ is the category
    $\undercat{G_{*}}{\sigma}$ of objects of $\latchcat{\cat
      C}{\alpha}$ under $\sigma$.
  \end{enumerate}
\end{prop}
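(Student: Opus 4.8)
The plan is to prove part~(1) by directly unwinding the definition of the comma category $\overcat{G_{*}}{\sigma}$ and matching it, object for object and morphism for morphism, with the definition of $\invfact{\cat C}{\alpha}{\sigma}$; part~(2) is then formally dual. First I would check that $G_{*}$ is a well-defined functor. Since $G$ is a Reedy functor it carries $\inv{\cat C}$ into $\inv{\cat D}$, so a non-identity map $\nu\colon \alpha \to \gamma$ of $\inv{\cat C}$ (an object of $\matchcat{\cat C}{\alpha}$) is sent to $G\nu\colon G\alpha \to G\gamma$ in $\inv{\cat D}$, which is an object of $\undercat{\inv{\cat D}}{G\alpha}$; a morphism $\tau\colon \gamma \to \gamma'$ of $\matchcat{\cat C}{\alpha}$ (a map of $\inv{\cat C}$ with $\tau\nu = \nu'$) is sent to $G\tau$, and $(G\tau)(G\nu) = G(\nu')$ by functoriality, so $G\tau$ is a morphism of $\undercat{\inv{\cat D}}{G\alpha}$. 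Functoriality of $G_{*}$ is then inherited from that of $G$.

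The one point requiring care---and the reason the target is the full under-category $\undercat{\inv{\cat D}}{G\alpha}$ rather than the matching category $\matchcat{\cat D}{G\alpha}$---is that $G$ may send a non-identity map to an identity map (as noted after \defref{def:subreedy}). Thus $G\nu$ need not be a non-identity map, the identity $1_{G\alpha}$ must be available as an object of the target, and $G_{*}$ genuinely need not land in $\matchcat{\cat D}{G\alpha}$. This is the only subtlety in the argument; there is no substantial obstacle.

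With $G_{*}$ in hand I would unwind $\overcat{G_{*}}{\sigma}$. By definition an object is an object of $\matchcat{\cat C}{\alpha}$, i.e.\ a non-identity map $\nu\colon \alpha \to \gamma$ in $\inv{\cat C}$, together with a morphism $G_{*}(\nu) \to \sigma$ in $\undercat{\inv{\cat D}}{G\alpha}$, i.e.\ a map $\mu\colon G\gamma \to \beta$ in $\inv{\cat D}$ with $\mu\,(G\nu) = \sigma$; this is precisely an object $\bigl((\nu\colon \alpha \to \gamma),(\mu\colon G\gamma \to \beta)\bigr)$ of $\invfact{\cat C}{\alpha}{\sigma}$, the commuting triangle there being exactly the relation $\mu\,(G\nu) = \sigma$. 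Likewise a morphism of $\overcat{G_{*}}{\sigma}$ is a map $\tau\colon \gamma \to \gamma'$ of $\matchcat{\cat C}{\alpha}$ (so $\tau$ lies in $\inv{\cat C}$ and $\tau\nu = \nu'$) for which $\mu'(G\tau) = \mu$, and these two conditions are exactly the two commuting triangles demanded of a morphism in $\invfact{\cat C}{\alpha}{\sigma}$. Since objects, morphisms, identities, and composites correspond on the nose, the two categories coincide, proving~(1).

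For part~(2) I would run the identical argument with $\drc{\cat C}$ and $\drc{\cat D}$ in place of $\inv{\cat C}$ and $\inv{\cat D}$, the latching category $\latchcat{\cat C}{\alpha}$ in place of the matching category, the over-category $\overcat{\drc{\cat D}}{G\alpha}$ in place of the under-category, and the under-category $\undercat{G_{*}}{\sigma}$ of $G_{*}$ in place of its over-category; every arrow reverses and the verification is word-for-word dual.
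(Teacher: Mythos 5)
Your proposal is correct and follows essentially the same route as the paper's own proof: a direct unwinding of the comma category $\overcat{G_{*}}{\sigma}$, matching its objects and morphisms on the nose with those of $\invfact{\cat C}{\alpha}{\sigma}$, with part~(2) dismissed as dual. Your additional remarks (the well-definedness of $G_{*}$ and the observation that $G$ may collapse non-identity maps to identities, which is why the target must be $\undercat{\inv{\cat D}}{G\alpha}$ rather than $\matchcat{\cat D}{G\alpha}$) are correct and, if anything, slightly more careful than the paper, which leaves those points implicit.
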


\begin{proof}
  We will prove part~1; the proof of part~2 is similar.  An object of
  $\overcat{G_{*}}{\sigma}$ is a pair $\bigl((\nu\colon \alpha \to
  \gamma), (\mu\colon G\gamma \to \beta)\bigr)$ where $\nu\colon
  \alpha \to \gamma$ is an object of $\matchcat{\cat C}{\alpha}$ and
  $\mu\colon G\gamma \to \beta$ is a map in $\inv{\cat D}$ that makes
  the triangle
  \begin{displaymath}
    \xymatrix@=.6em{
      &{G\alpha} \ar[dl]_{{G\nu}} \ar[dr]^{\sigma}\\
      {G\gamma} \ar[rr]_{\mu}
      && {\beta}
    }
  \end{displaymath}
  commute.  A map from $\bigl((\nu\colon \alpha \to \gamma),
  (\mu\colon G\gamma \to \beta)\bigr)$ to $\bigl((\nu'\colon \alpha
  \to \gamma'), (\mu'\colon G\gamma' \to \beta)\bigr)$ is a map
  $\tau\colon \gamma \to \gamma'$ in $\inv{\cat C}$ that makes the
  triangles
  \begin{displaymath}
    \vcenter{
      \xymatrix@=.8em{
        &{\alpha} \ar[dl]_{\nu} \ar[dr]^{\nu'}\\
        {\gamma} \ar[rr]_{\tau}
        && {\gamma'}
      }
    }
    \qquad\text{and}\qquad
    \vcenter{
      \xymatrix@=.6em{
        {G\gamma} \ar[rr]^{G\tau} \ar[dr]_{\mu}
        && {G\gamma'} \ar[dl]^{\mu'}\\
        & {\beta}
      }
    }
  \end{displaymath}
  commute. This is exactly the definition of the category of inverse
  $\cat C$-factorizations of $(\alpha, \sigma)$.
\end{proof}

\begin{prop}
  \label{prop:CatFacts}
  Let $\cat C$ and $\cat D$ be Reedy categories, let $G\colon \cat C
  \to \cat D$ be a Reedy functor, and let $\alpha$ be an object of
  $\cat C$.
  \begin{enumerate}
  \item If $G$ takes every non-identity map $\alpha \to \gamma$ in
    $\inv{\cat C}$ to a non-identity map in $\inv{\cat D}$, then there
    is an induced functor of matching categories
    \begin{displaymath}
      G_{*}\colon \matchcat{\cat C}{\alpha}
      \to \matchcat{\cat D}{G\alpha}
    \end{displaymath}
    (see \defref{def:Matchobj}) that takes the object
    $\eta\colon \alpha \to \gamma$ of $\matchcat{\cat C}{\alpha}$ to
    the object $G\eta\colon G\alpha \to G\gamma$ of
    $\matchcat{\cat D}{G\alpha}$.  If $\beta$ is an object of $\cat D$
    and $\sigma\colon G\alpha \to \beta$ is a map in $\inv{\cat D}$,
    then the category $\invfact{\cat C}{\alpha}{\sigma}$ of inverse
    $\cat C$-factorizations of $(\alpha, \sigma)$ (see
    \defref{def:CFactors}) is the category $\overcat{G_{*}}{\sigma}$
    of objects of $\matchcat{\cat C}{\alpha}$ over $\sigma$.
  \item If $G$ takes every non-identity map $\gamma \to \alpha$ in
    $\drc{\cat C}$ to a non-identity map in $\drc{\cat D}$, then there
    is an induced functor of latching categories
    \begin{displaymath}
      G_{*}\colon \latchcat{\cat C}{\alpha}
      \to \latchcat{\cat D}{G\alpha}
    \end{displaymath}
    (see \defref{def:Matchobj}) that takes the object
    $\eta\colon \gamma \to \alpha$ of $\latchcat{\cat C}{\alpha}$ to
    the object $G\eta\colon G\gamma \to G\alpha$ of
    $\latchcat{\cat D}{G\alpha}$.  If $\beta$ is an object of $\cat D$
    and $\sigma\colon \beta \to G\alpha$ is a map in $\drc{\cat D}$,
    then the category $\drcfact{\cat C}{\alpha}{\sigma}$ of direct
    $\cat C$-factorizations of $(\alpha, \sigma)$ is the category
    $\undercat{G_{*}}{\sigma}$ of objects of
    $\latchcat{\cat C}{\alpha}$ under $\sigma$.
  \end{enumerate}
\end{prop}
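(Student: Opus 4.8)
The plan is to deduce this from \propref{prop:CategoryOfFacts}, whose functor $G_{*}$ is defined for any Reedy functor $G$; here the extra hypothesis serves only to refine the codomain of that functor from $\undercat{\inv{\cat D}}{G\alpha}$ down to $\matchcat{\cat D}{G\alpha}$. I will carry out part~1; part~2 is entirely dual, replacing $\inv{\cat C},\inv{\cat D}$ and matching categories by $\drc{\cat C},\drc{\cat D}$ and latching categories and invoking part~2 of \propref{prop:CategoryOfFacts}.

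By part~1 of \propref{prop:CategoryOfFacts}, sending an object $(\alpha \to \gamma)$ of $\matchcat{\cat C}{\alpha}$ to $(G\alpha \to G\gamma)$ and a morphism $\tau$ to $G\tau$ already defines a functor $G_{*}\colon \matchcat{\cat C}{\alpha} \to \undercat{\inv{\cat D}}{G\alpha}$, and already identifies $\invfact{\cat C}{\alpha}{\sigma}$ with the comma category $\overcat{G_{*}}{\sigma}$. The key step is therefore to check that, under the hypothesis of part~1, every object in the image of $G_{*}$ actually lies in the full subcategory $\matchcat{\cat D}{G\alpha}$. Indeed, an object of $\matchcat{\cat C}{\alpha}$ is a \emph{non-identity} map $\alpha \to \gamma$ in $\inv{\cat C}$, so by hypothesis its image $G\alpha \to G\gamma$ is a non-identity map in $\inv{\cat D}$; since $\matchcat{\cat D}{G\alpha}$ is obtained from $\undercat{\inv{\cat D}}{G\alpha}$ by deleting precisely the object $1_{G\alpha}$, and a non-identity map out of $G\alpha$ is never $1_{G\alpha}$, this image does lie in $\matchcat{\cat D}{G\alpha}$. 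Because $\matchcat{\cat D}{G\alpha}$ is a \emph{full} subcategory, the morphisms $G\tau$ automatically remain inside it and functoriality is inherited from $G$, so $G_{*}$ factors through $\matchcat{\cat D}{G\alpha}$, giving the asserted functor.

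It then remains to transport the identification $\invfact{\cat C}{\alpha}{\sigma} = \overcat{G_{*}}{\sigma}$ across this codomain refinement. Since the inclusion $\matchcat{\cat D}{G\alpha} \hookrightarrow \undercat{\inv{\cat D}}{G\alpha}$ is fully faithful and $G_{*}$ is unchanged on objects and morphisms, the comma category over $\sigma$ is described by exactly the same data as in \propref{prop:CategoryOfFacts}, so the identification is inherited verbatim. The only real subtlety I anticipate is bookkeeping around the identity map: one must observe that ``non-identity in $\inv{\cat D}$'' excludes exactly the deleted object $1_{G\alpha}$, and that if $\sigma$ itself happens to be $1_{G\alpha}$ then $\overcat{G_{*}}{\sigma}$ is still to be read in the ambient undercategory $\undercat{\inv{\cat D}}{G\alpha}$ (where, like $\invfact{\cat C}{\alpha}{\sigma}$, it is empty), consistently with \propref{prop:CategoryOfFacts}.
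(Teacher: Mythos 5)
Your proposal is correct and takes essentially the same route as the paper, whose proof likewise consists of observing that the argument of \propref{prop:CategoryOfFacts} goes through unchanged once the hypothesis on non-identity maps guarantees that $G_{*}$ factors through the full subcategory $\matchcat{\cat D}{G\alpha}$ of $\undercat{\inv{\cat D}}{G\alpha}$ (dually for latching categories). Your additional bookkeeping --- that fullness of the inclusion makes the comma-category identification $\invfact{\cat C}{\alpha}{\sigma} = \overcat{G_{*}}{\sigma}$ transfer verbatim, and that the case $\sigma = 1_{G\alpha}$ is handled consistently since both categories are then empty --- just makes explicit what the paper leaves implicit.
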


\begin{proof}
  This is identical to the proof of \propref{prop:CategoryOfFacts},
  except that the requirement that certain non-identity maps go to
  non-identity maps ensures (in part~1) that the functor $G_{*}\colon
  \matchcat{\cat C}{\alpha} \to \undercat{\inv{\cat D}}{G\alpha}$
  factors through the subcategory $\matchcat{\cat D}{G\alpha}$ of
  $\undercat{\inv{\cat D}}{G\alpha}$ and (in part~2) that the functor
  $G_{*}\colon \latchcat{\cat C}{\alpha} \to \overcat{\drc{\cat
      D}}{G\alpha}$ factors through the subcategory $\latchcat{\cat
    D}{G\alpha}$ of $\overcat{\drc{\cat D}}{G\alpha}$.
\end{proof}

The following is the main definition of this section; it is used in
the statements of our main theorems (\thmref{thm:GoodisGood} and
\thmref{thm:MainCofibering}).

\begin{defn}
  \label{def:goodsub}
  Let $G\colon \cat C \to \cat D$ be a Reedy functor between Reedy
  categories.
  \begin{enumerate}
  \item The Reedy functor $G$ is a \emph{fibering Reedy functor} if
    for every object $\alpha$ in $\cat C$, every object $\beta$ in
    $\cat D$, and every map $\sigma\colon G\alpha \to \beta$ in
    $\inv{\cat D}$, the nerve of $\invfact{\cat C}{\alpha}{\sigma}$,
    the category of inverse $\cat C$-factorizations of $(\alpha,
    \sigma)$, (see \defref{def:CFactors}) is either empty or
    connected.

    If $\cat C$ is a Reedy subcategory of $\cat D$ and if the
    inclusion is a fibering Reedy functor, then we will call $\cat C$
    a \emph{fibering Reedy subcategory} of $\cat D$.
  \item The Reedy functor $G$ is a \emph{cofibering Reedy functor} if
    for every object $\alpha$ in $\cat C$, every object $\beta$ in
    $\cat D$, and every map $\sigma\colon \beta \to G\alpha$ in
    $\drc{\cat D}$, the nerve of $\drcfact{\cat C}{\alpha}{\sigma}$,
    the category of direct $\cat C$-factorizations of $(\alpha,
    \sigma)$, (see \defref{def:CFactors}) is either empty or
    connected.

    If $\cat C$ is a Reedy subcategory of $\cat D$ and if the
    inclusion is a cofibering Reedy functor, then we will call $\cat
    C$ a \emph{cofibering Reedy subcategory} of $\cat D$.
  \end{enumerate}
\end{defn}

Examples of fibering Reedy functors and of cofibering Reedy functors
(and of Reedy functors that are not fibering and Reedy functors that
are not cofibering) are given in \secref{sec:Examples}.

\subsection{Opposites}
\label{sec:Opposites}

The results in this section will be used in the proof of
\thmref{thm:MainCofibering}, which can be found in
\secref{sec:PrfCofibering}.

\begin{prop}
  \label{prop:OpReedy}
  If $\cat C$ is a Reedy category, then the opposite category $\cat
  C\op$ is a Reedy category in which $\drc{\cat C\op} = (\inv{\cat
    C})\op$ and $\inv{\cat C\op} = (\drc{\cat C})\op$.
\end{prop}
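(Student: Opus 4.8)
The plan is to verify directly that the three defining conditions of \defref{def:ReedyCat} hold for $\cat C\op$ once we declare $\drc{\cat C\op} = (\inv{\cat C})\op$ and $\inv{\cat C\op} = (\drc{\cat C})\op$. First I would fix a degree function on $\cat C$ and assign to each object of $\cat C\op$ the \emph{same} degree it has in $\cat C$ (recall that objects of $\cat C\op$ are literally the objects of $\cat C$). Since $\cat C$ is small, $\cat C\op$ is small, and the two proposed subcategories plainly contain all objects, so those structural requirements are immediate. The content is in checking the three numbered axioms.

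For conditions (1) and (2), I would unwind the bookkeeping of opposites: a non-identity map in $\drc{\cat C\op} = (\inv{\cat C})\op$ is the opposite $g\op\colon \beta \to \alpha$ of a non-identity map $g\colon \alpha \to \beta$ in $\inv{\cat C}$. Axiom (2) for $\cat C$ says such a $g$ lowers degree, i.e.\ $\degree\beta < \degree\alpha$; reading this in $\cat C\op$, the map $g\op$ goes from $\beta$ to $\alpha$ and hence \emph{raises} degree, which is exactly axiom (1) for $\cat C\op$. The symmetric computation, starting from axiom (1) for $\cat C$, gives axiom (2) for $\cat C\op$. The only genuine care needed here is that passing to opposites preserves the identity/non-identity distinction and simply reverses source and target, so that the inequalities flip in the desired direction.

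For the factorization axiom (3), I would take an arbitrary map $h\colon \alpha \to \beta$ in $\cat C\op$, which is $g\op$ for a unique $g\colon \beta \to \alpha$ in $\cat C$. Axiom (3) in $\cat C$ gives a unique factorization $g = \drc g\,\inv g$ with $\drc g \in \drc{\cat C}$ and $\inv g \in \inv{\cat C}$. Applying $(-)\op$ yields $h = g\op = (\inv g)\op\,(\drc g)\op$, where $(\inv g)\op$ lies in $(\inv{\cat C})\op = \drc{\cat C\op}$ and $(\drc g)\op$ lies in $(\drc{\cat C})\op = \inv{\cat C\op}$; thus $h$ factors as a direct-followed-by-inverse map in $\cat C\op$, as required. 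Uniqueness transfers because $(-)\op$ is a bijection on morphism sets that respects composition (contravariantly), so any competing factorization of $h$ in $\cat C\op$ would push forward under $(-)\op$ to a competing factorization of $g$ in $\cat C$, contradicting the uniqueness already guaranteed there.

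None of the steps presents a real obstacle; the single point demanding attention is \textbf{the order of the factors under $(-)\op$}. In $\cat C$ the direct part $\drc g$ is applied \emph{after} the inverse part $\inv g$ (reading $g = \drc g\,\inv g$ right-to-left), but dualization reverses composition, so in $\cat C\op$ the direct part $(\inv g)\op$ is applied \emph{first}—and one must confirm this matches the convention ``$h = \drc h\,\inv h$'' with $\drc h\in\drc{\cat C\op}$ and $\inv h\in\inv{\cat C\op}$ rather than the reverse. Tracking this carefully is what makes the identifications $\drc{\cat C\op} = (\inv{\cat C})\op$ and $\inv{\cat C\op} = (\drc{\cat C})\op$ (and not the swapped version) the correct ones, and it is the place where a sign-style error could creep in.
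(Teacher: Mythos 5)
Your proof is correct and takes essentially the same route as the paper's: keep the degree function of $\cat C$, note that reversing arrows turns degree-lowering non-identity maps into degree-raising ones (and vice versa), and transport the unique factorization $g = \drc g\,\inv g$ to $g\op = (\inv g)\op\,(\drc g)\op$ with $(\inv g)\op \in \drc{\cat C\op}$ and $(\drc g)\op \in \inv{\cat C\op}$, uniqueness transferring because $(-)\op$ is a composition-reversing bijection on morphisms. One wording slip in your closing paragraph: under the application-order reading you set up there, the direct part $(\inv g)\op$ of the factorization in $\cat C\op$ is applied \emph{last}, not first (it is merely \emph{written} leftmost, which is exactly what matches the convention $h = \drc h\,\inv h$); your displayed factorization is nonetheless the correct one.
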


\begin{proof}
  A degree function for $\cat C$ will serve as a degree function for
  $\cat C\op$, and factorizations $\sigma = \tau\mu$ in $\cat C$ with
  $\mu \in \inv{\cat C}$ and $\tau \in \drc{\cat C}$ correspond to
  factorizations $\sigma\op = \mu\op \tau\op$ in $\cat C\op$ with
  $\mu\op \in (\inv{\cat C})\op = \drc{\cat C\op}$ and $\tau\op \in
  (\drc{\cat C})\op = \inv{\cat C\op}$.
\end{proof}

\begin{prop}
  \label{prop:OpReedyFunc}
  If $\cat C$ and $\cat D$ are Reedy categories, then a functor
  $G\colon \cat C \to \cat D$ is a Reedy functor if and only if its
  opposite $G\op\colon \cat C\op \to \cat D\op$ is a Reedy functor.
\end{prop}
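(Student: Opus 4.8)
The plan is to reduce the statement to an unwinding of \defref{def:subreedy} using the identifications of the direct and inverse subcategories of the opposite categories provided by \propref{prop:OpReedy}. By symmetry (replacing $G$ by $G\op$ and using that $(\cat C\op)\op = \cat C$ and $(G\op)\op = G$), it suffices to prove one implication, say that if $G$ is a Reedy functor then so is $G\op$.

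The key observation I would record first is that the opposite functor $G\op$ acts on morphisms exactly as $G$ does: for a morphism $f$ of $\cat C$ the corresponding morphism $f\op$ of $\cat C\op$ is sent by $G\op$ to $(Gf)\op$. Consequently, for any subcategory $\cat A$ of $\cat C$ we have $G\op(\cat A\op) = \bigl(G(\cat A)\bigr)\op$, where $\cat A\op$ denotes the image of $\cat A$ in $\cat C\op$.

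With this in hand I would simply translate the two defining conditions. By \propref{prop:OpReedy} we have $\drc{\cat C\op} = (\inv{\cat C})\op$ and $\inv{\cat C\op} = (\drc{\cat C})\op$, and likewise for $\cat D$. Therefore the requirement that $G\op$ take $\drc{\cat C\op}$ into $\drc{\cat D\op}$ reads
\begin{displaymath}
  G\op\bigl((\inv{\cat C})\op\bigr) = \bigl(G(\inv{\cat C})\bigr)\op \subseteq (\inv{\cat D})\op,
\end{displaymath}
which holds precisely when $G(\inv{\cat C}) \subseteq \inv{\cat D}$. Similarly, the requirement that $G\op$ take $\inv{\cat C\op}$ into $\inv{\cat D\op}$ is equivalent to $G(\drc{\cat C}) \subseteq \drc{\cat D}$. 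Both of these hold by the assumption that $G$ is a Reedy functor, so $G\op$ is a Reedy functor; the reverse implication then follows by applying this to $G\op$.

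There is no genuine obstacle here; the only point requiring care is the bookkeeping of keeping straight that passing to opposites interchanges the roles of the direct and inverse subcategories, so that the \emph{direct} condition for $G\op$ corresponds to the \emph{inverse} condition for $G$ and vice versa.
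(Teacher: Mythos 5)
Your proof is correct and takes essentially the same route as the paper, whose entire proof is the one-line observation that the statement follows from \propref{prop:OpReedy}; you have simply spelled out the bookkeeping (that $G\op$ acts on morphisms as $G$ does, and that passing to opposites interchanges $\drc{\cat C}$ and $\inv{\cat C}$) that the paper leaves implicit. No gaps.
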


\begin{proof}
  This follows from \propref{prop:OpReedy}.
\end{proof}

\begin{lem}
  \label{lem:FactOpFact}
  Let $G\colon \cat C \to \cat D$ be a Reedy functor between Reedy
  categories, let $\alpha$ be an object of $\cat C$, and let $\beta$
  be an object of $\cat D$.
  \begin{enumerate}
  \item If $\sigma\colon G\alpha \to \beta$ is a map in $\inv{\cat
      D}$, then the opposite of the category of inverse $\cat
    C$-factorizations of $(\alpha, \sigma)$ is the category of direct
    $\cat C\op$-factorizations of $(\alpha, \sigma\op\colon \beta \to
    G\alpha)$ in $\drc{\cat D\op}$.
  \item If $\sigma\colon \beta \to G\alpha$ is a map in $\drc{\cat
      D}$, then the opposite of the category of direct $\cat
    C$-factorizations of $(\alpha, \sigma)$ is the category of inverse
    $\cat C\op$-factorizations of $(\alpha, \sigma\op\colon G\alpha
    \to \beta)$ in $\inv{\cat D\op}$.
  \end{enumerate}
\end{lem}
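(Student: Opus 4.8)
The plan is to prove part~1 by exhibiting an explicit isomorphism of categories
\begin{displaymath}
  \bigl(\invfact{\cat C}{\alpha}{\sigma}\bigr)\op
  \cong \drcfact{\cat C\op}{\alpha}{\sigma\op}\Comma
\end{displaymath}
and then to deduce part~2 by applying part~1 to the Reedy functor $G\op\colon \cat C\op \to \cat D\op$ (which is a Reedy functor by \propref{prop:OpReedyFunc}) together with the identity $(\cat C\op)\op = \cat C$. Before doing either, I would record that both sides make sense: by \propref{prop:OpReedy} we have $\drc{\cat C\op} = (\inv{\cat C})\op$ and $\drc{\cat D\op} = (\inv{\cat D})\op$, so a map $\sigma$ in $\inv{\cat D}$ has opposite $\sigma\op$ in $\drc{\cat D\op}$, and $\drcfact{\cat C\op}{\alpha}{\sigma\op}$ is defined using $G\op$.

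The construction is a direct unwinding of \defref{def:CFactors}. On objects, I would send an object $\bigl((\nu\colon \alpha \to \gamma), (\mu\colon G\gamma \to \beta)\bigr)$ of $\invfact{\cat C}{\alpha}{\sigma}$ to the pair $\bigl((\nu\op\colon \gamma \to \alpha), (\mu\op\colon \beta \to G\gamma)\bigr)$. Here $\nu\op$ is a non-identity map in $\drc{\cat C\op} = (\inv{\cat C})\op$ and $\mu\op$ is a map in $\drc{\cat D\op} = (\inv{\cat D})\op$, and taking opposites turns the commuting triangle $\sigma = \mu \circ G\nu$ into $\sigma\op = G\op\nu\op \circ \mu\op$ (remembering that $G\op\nu\op$ is the image of $\nu\op$ under $G\op$ and that opposition reverses the order of composition). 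This is exactly the data of an object of $\drcfact{\cat C\op}{\alpha}{\sigma\op}$, and since $(-)\op$ is an involution on objects the assignment is a bijection.

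On morphisms, a morphism of $\bigl(\invfact{\cat C}{\alpha}{\sigma}\bigr)\op$ from $A$ to $B$ is, by definition of the opposite category, a morphism $\tau$ of $\invfact{\cat C}{\alpha}{\sigma}$ from $B$ to $A$, i.e.\ a map $\tau$ in $\inv{\cat C}$ making the two triangles for inverse factorizations commute. I would check that $\tau\op$, a map in $\drc{\cat C\op}$, makes the two triangles for direct $\cat C\op$-factorizations commute: the triangle over $\alpha$ reverses under $(-)\op$ to the required commuting triangle purely in $\cat C\op$, and the triangle under $\beta$ reverses to the required one after applying $G\op$ in $\cat D\op$. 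Compatibility with composition and identities is immediate because $(-)\op$ is a contravariant involution, so this assignment is an isomorphism of categories, and part~2 then follows by the reduction above.

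The only real work is bookkeeping, and that is also where I expect the main obstacle to lie: three separate direction reversals interact here, namely the swap $\drc{(-)} \leftrightarrow \inv{(-)}$ under passage to opposite Reedy categories (\propref{prop:OpReedy}), the reversal of composition order when opposing a commuting triangle, and the reversal of the morphisms of the factorization category itself that is built into forming $\bigl(\invfact{\cat C}{\alpha}{\sigma}\bigr)\op$. Keeping these straight—in particular matching a morphism $B \to A$ on the inverse side with a morphism from the image of $A$ to the image of $B$ on the direct side—is the one place where it is easy to get the variance backwards, so I would verify the two triangle conditions explicitly rather than assert them.
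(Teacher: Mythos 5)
Your proposal is correct and follows essentially the same route as the paper's proof: both unwind \defref{def:CFactors}, identify the objects and morphisms of $\bigl(\invfact{\cat C}{\alpha}{\sigma}\bigr)\op$ with those of $\drcfact{\cat C\op}{\alpha}{\sigma\op}$ using $\drc{\cat C\op} = (\inv{\cat C})\op$ and $\drc{\cat D\op} = (\inv{\cat D})\op$, and deduce part~2 by applying part~1 in the opposite categories and using $(\cat C\op)\op = \cat C$. If anything, your explicit tracking of the variance (a morphism $B \to A$ on the inverse side corresponding to a morphism from the image of $A$ to the image of $B$ on the direct side) is stated more carefully than in the paper, which glosses over this direction reversal.
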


\begin{proof}
  We will prove part~(1); part~(2) will then follow from applying
  part~(1) to $\sigma\op\colon G\alpha \to \beta$ in $\cat C\op$ and
  remembering that $(\cat C\op)\op = \cat C$ and $(\cat D\op)\op =
  \cat D$.

  Let $\sigma\colon G\alpha \to \beta$ be a map in $\inv{\cat D}$.
  Recall from \defref{def:CFactors} that
  \begin{itemize}
  \item an object of the category of inverse $\cat C$-factorizations
    of $(\alpha, \sigma\colon G\alpha \to \beta)$ is a pair
    \begin{displaymath}
      \bigl((\nu\colon \alpha \to \gamma),
      (\mu\colon G\gamma\to \beta)\bigr)
    \end{displaymath}
    consisting of a non-identity map $\nu\colon \alpha \to \gamma$
    in $\inv{\cat C}$ and a map $\mu\colon G\gamma \to \beta$ in
    $\inv{\cat D}$ such that the composition $G\alpha
    \xrightarrow{G\nu} G\gamma \xrightarrow{\mu} \beta$ equals
    $\sigma$, and
  \item a map from $\bigl((\nu\colon \alpha \to \gamma), (\mu\colon
    G\gamma\to \beta)\bigr)$ to $\bigl((\nu'\colon \alpha \to
    \gamma'), (\mu'\colon G\gamma'\to \beta)\bigr)$ is a map
    $\tau\colon \gamma \to \gamma'$ in $\inv{\cat C}$ such that the
    triangles
      \begin{displaymath}
        \vcenter{
          \xymatrix@=.6em{
            &{\alpha} \ar[dl]_{{\nu}} \ar[dr]^{\nu'}\\
            {\gamma} \ar[rr]_{\tau}
            && {\gamma'}
          }
        }
        \qquad\text{and}\qquad
        \vcenter{
          \xymatrix@=.6em{
            {G\gamma} \ar[rr]^{G\tau} \ar[dr]_{\mu}
            && {G\gamma'} \ar[dl]^{\mu'}\\
            & {\beta}
          }
        }
      \end{displaymath}
      commute.
  \end{itemize}
  The opposite of this category has the same objects, but
  \begin{itemize}
  \item a non-identity map $\nu\colon \alpha \to \gamma$ in $\inv{\cat
      C}$ is equivalently a non-identity map $\nu\op\colon \gamma \to
    \alpha$ in $(\inv{\cat C})\op = \drc{\cat C\op}$, and
  \item a factorization $G\alpha \xrightarrow{G\nu} G\gamma
    \xrightarrow{\mu} \beta$ of $\sigma$ such that $\mu \in \inv{\cat
      D}$ is equivalently a factorization $\beta \xrightarrow{\mu\op}
    G\gamma \xrightarrow{G\nu\op} G\alpha$ of $\sigma\op\colon \beta
    \to G\alpha$ in $(\inv{\cat D})\op = \drc{\cat D\op}$
  \end{itemize}
  Thus, the opposite category can be described as the category in
  which
  \begin{itemize}
  \item An object is a pair
    \begin{displaymath}
      \bigl((\nu\op\colon \gamma \to \alpha),
      (\mu\op\colon \beta \to G\gamma)\bigr)
    \end{displaymath}
    consisting of a non-identity map $\nu\op\colon \gamma \to \alpha$
    in $(\inv{\cat C})\op = \drc{\cat C\op}$ and a map $\mu\op\colon
    \beta \to G\gamma$ in $(\inv{\cat D})\op = \drc{\cat D\op}$ such
    that the composition $\beta \xrightarrow{\mu\op} G\gamma
    \xrightarrow{G\nu\op} G\alpha$ equals $\sigma\op$, and
  \item a map from $\bigl((\nu\op\colon \gamma \to \alpha),
    (\mu\op\colon \beta \to G\gamma)\bigr)$ to $\bigl(((\nu')\op\colon
    \gamma' \to \alpha), ((\mu')\op\colon \beta \to G\gamma')\bigr)$
    is a map $\tau\op\colon \gamma' \to \gamma$ in $(\inv{\cat C})\op
    = \drc{\cat C\op}$ such that the triangles
    \begin{displaymath}
      \vcenter{
        \xymatrix@=.6em{
          & {\alpha}\\
          {\gamma} \ar[ur]^{\nu\op}
          && {\gamma'} \ar[ll]^{\tau\op} \ar[ul]_{(\nu')\op}
        }
      }
      \qquad\text{and}\qquad
      \vcenter{
        \xymatrix@=.6em{
          {G\gamma}
          && {G\gamma'} \ar[ll]_{G\tau\op}\\
          & {\beta} \ar[ul]^{\mu\op} \ar[ur]_{(\mu')\op}
        }
      }
    \end{displaymath}
    commute.
  \end{itemize}
  This is exactly the category of direct $\cat C\op$-factorizations of
  $(\alpha, \sigma\op\colon \beta \to G\alpha)$ in $\drc{\cat D\op}$.
\end{proof}

\begin{prop}
  \label{prop:OpFiberingCofibering}
  If $G\colon \cat C \to \cat D$ is a Reedy functor between Reedy
  categories, then $G$ is a fibering Reedy functor if and only if
  $G\op\colon \cat C\op \to \cat D\op$ is a cofibering Reedy functor.
\end{prop}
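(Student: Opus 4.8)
The plan is to reduce Proposition~\ref{prop:OpFiberingCofibering} entirely to the identification of categories of factorizations already established in Lemma~\ref{lem:FactOpFact}, together with the elementary fact that a category has connected (or empty) nerve if and only if its opposite does. First I would recall from Proposition~\ref{prop:OpReedyFunc} that $G\op$ is a Reedy functor, so that it is legitimate to ask whether it is cofibering. The definition of $G$ being fibering (see \defref{def:goodsub}) quantifies over every object $\alpha$ of $\cat C$, every object $\beta$ of $\cat D$, and every map $\sigma\colon G\alpha \to \beta$ in $\inv{\cat D}$, requiring that $\invfact{\cat C}{\alpha}{\sigma}$ have empty or connected nerve; dually, $G\op$ being cofibering quantifies over the same objects and over maps $\sigma\op\colon \beta \to G\alpha$ in $\drc{\cat D\op}$, requiring that $\drcfact{\cat C\op}{\alpha}{\sigma\op}$ have empty or connected nerve.

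The key observation is that these two quantifications range over precisely the same data: since $\inv{\cat D\op} = (\drc{\cat D})\op$ and $\drc{\cat D\op} = (\inv{\cat D})\op$ by Proposition~\ref{prop:OpReedy}, a map $\sigma\colon G\alpha \to \beta$ in $\inv{\cat D}$ is the same thing as a map $\sigma\op\colon \beta \to G\alpha$ in $\drc{\cat D\op}$, and the correspondence $\sigma \leftrightarrow \sigma\op$ is a bijection between the indexing sets of the two universal statements. So I would fix $\alpha$, $\beta$, and $\sigma\colon G\alpha \to \beta$ in $\inv{\cat D}$, and invoke part~(1) of Lemma~\ref{lem:FactOpFact}, which identifies $\drcfact{\cat C\op}{\alpha}{\sigma\op}$ as the opposite category $\bigl(\invfact{\cat C}{\alpha}{\sigma}\bigr)\op$.

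The final step is to note that the nerve of a category and the nerve of its opposite category have the same underlying topological space up to homeomorphism (the opposite simply reverses the direction of simplices), and in particular one is empty if and only if the other is, and one is connected if and only if the other is. Hence $\invfact{\cat C}{\alpha}{\sigma}$ has empty or connected nerve if and only if $\drcfact{\cat C\op}{\alpha}{\sigma\op}$ does. Running this equivalence across all choices of $\alpha$, $\beta$, and $\sigma$ (equivalently $\sigma\op$) shows that $G$ satisfies the fibering condition if and only if $G\op$ satisfies the cofibering condition, which is the claim.

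I do not anticipate a genuine obstacle here, as essentially all the content has been front-loaded into Lemma~\ref{lem:FactOpFact}; the only point requiring a word of care is the passage from the combinatorial identification of the opposite category to the statement about nerves. For that I would simply remark that the simplicial set $\N(\cat E\op)$ is obtained from $\N(\cat E)$ by the simplicial opposite, which does not change connectedness or emptiness, so the property ``empty or connected nerve'' is self-dual. With that remark in place the proposition follows formally, and the proof is short.
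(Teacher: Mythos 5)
Your proof is correct and takes essentially the same approach as the paper: the paper's own proof of Proposition~\ref{prop:OpFiberingCofibering} is precisely the combination of Lemma~\ref{lem:FactOpFact} with the observation that a category has empty (respectively, connected) nerve if and only if its opposite does. Your write-up only adds explicit bookkeeping — the bijection $\sigma \leftrightarrow \sigma\op$ between the data quantified over in the two definitions — which the paper leaves implicit.
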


\begin{proof}
  Since the nerve of a category is empty or connected if and only if
  the nerve of the opposite category is, respectively, empty or
  connected, this follows from \lemref{lem:FactOpFact}.
\end{proof}

\begin{lem}
  \label{lem:OpMatch}
  Let $\diag X$ be a $\cat C$-diagram in $\cat M$ (which can also be
  viewed as a $\cat C\op$-diagram in $\cat M\op$), and let $\alpha$ be
  an object of $\cat C$.
  \begin{enumerate}
  \item The latching object $\latch^{\cat C}_{\alpha}$ of $\diag X$ as
    a $\cat C$-diagram in $\cat M$ at $\alpha$ is the matching object
    $\match^{\cat C\op}_{\alpha}$ of $\diag X$ as a $\cat
    C\op$-diagram in $\cat M\op$ at $\alpha$, and the opposite of the
    latching map $\latch^{\cat C}_{\alpha}\diag X \to \diag X$ of
    $\diag X$ as a $\cat C$-diagram in $\cat M$ at $\alpha$ is the
    matching map $\diag X \to \latch^{\cat C}_{\alpha}\diag X =
    \match^{\cat C\op}_{\alpha}\diag X$ of $\diag X$ as a $\cat
    C\op$-diagram in $\cat M\op$ at $\alpha$.
  \item The matching object $\match^{\cat C}_{\alpha}$ of $\diag X$ as
    a $\cat C$-diagram in $\cat M$ at $\alpha$ is the latching object
    $\latch^{\cat C\op}_{\alpha}$ of $\diag X$ as a $\cat
    C\op$-diagram in $\cat M\op$ at $\alpha$, and the opposite of the
    matching map $\diag X \to \match^{\cat C}_{\alpha}\diag X$ of
    $\diag X$ as a $\cat C$-diagram in $\cat M$ at $\alpha$ is the
    latching map $ \latch^{\cat C\op}_{\alpha}\diag X = \match^{\cat
      C}_{\alpha}\diag X \to \diag X$ of $\diag X$ as a $\cat
    C\op$-diagram in $\cat M\op$ at $\alpha$.
  \end{enumerate}
\end{lem}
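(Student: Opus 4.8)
The plan is to prove part~(1) directly and then obtain part~(2) for free by applying part~(1) with $\cat C$ and $\cat M$ replaced by $\cat C\op$ and $\cat M\op$, using $(\cat C\op)\op = \cat C$ and $(\cat M\op)\op = \cat M$ (exactly as part~(2) of \lemref{lem:FactOpFact} was deduced from its part~(1)): the latching object of $\diag X$ as a $\cat C\op$-diagram in $\cat M\op$ is then the matching object of $\diag X$ as a $\cat C$-diagram in $\cat M$, which is the assertion of part~(2).

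For part~(1), the first step is to identify the two indexing categories. By \propref{prop:OpReedy} we have $\inv{\cat C\op} = (\drc{\cat C})\op$, so an object of $\undercat{\inv{\cat C\op}}{\alpha}$ is a map $\alpha \to \gamma$ in $(\drc{\cat C})\op$, equivalently the opposite of a map $\gamma \to \alpha$ in $\drc{\cat C}$, i.e.\ an object of $\overcat{\drc{\cat C}}{\alpha}$; a morphism in $\undercat{\inv{\cat C\op}}{\alpha}$ likewise reverses to a morphism in $\overcat{\drc{\cat C}}{\alpha}$. Discarding the identity object on each side, this identifies the matching category $\matchcat{\cat C\op}{\alpha}$ of $\cat C\op$ at $\alpha$ with the opposite $\bigl(\latchcat{\cat C}{\alpha}\bigr)\op$ of the latching category of $\cat C$ at $\alpha$.

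The second step uses the standard duality between limits and colimits: for a functor $\F\colon \cat J \to \cat M$, the limit in $\cat M\op$ of the induced functor $\cat J\op \to \cat M\op$ is the same object as the colimit in $\cat M$ of $\F$, and the legs of the limiting cone are the opposites of the legs of the colimiting cocone. Applying this with $\cat J = \latchcat{\cat C}{\alpha}$ and $\F$ the restriction of $\diag X$, and invoking the identification of the indexing categories from the first step, gives
\[
  \match^{\cat C\op}_{\alpha}\diag X
  = \lim_{\matchcat{\cat C\op}{\alpha}} \diag X
  = \colim_{\latchcat{\cat C}{\alpha}} \diag X
  = \latch^{\cat C}_{\alpha}\diag X,
\]
where the first limit is computed in $\cat M\op$ and the last colimit in $\cat M$. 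Under this identification the cocone leg $\latch^{\cat C}_{\alpha}\diag X \to \diag X_{\alpha}$, which is the latching map of $\diag X$ as a $\cat C$-diagram in $\cat M$, dualizes to the cone leg $\diag X_{\alpha} \to \match^{\cat C\op}_{\alpha}\diag X$, which is the matching map of $\diag X$ as a $\cat C\op$-diagram in $\cat M\op$; this is exactly the asserted statement about maps.

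I do not expect a genuine obstacle here, since the content is entirely bookkeeping. The only points that require care are (a) verifying that removing the identity object is compatible on both sides, so that $\matchcat{\cat C\op}{\alpha}$ really is $\bigl(\latchcat{\cat C}{\alpha}\bigr)\op$ rather than merely equivalent to it, and (b) tracking the directions of the cone and cocone legs, so that the latching map and the matching map are literally opposite to one another and not just maps with interchanged source and target.
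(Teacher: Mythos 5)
Your proposal is correct and takes essentially the same route as the paper: part~(1) is proved by identifying the matching category of $\cat C\op$ at $\alpha$ with $\bigl(\latchcat{\cat C}{\alpha}\bigr)\op$ and applying limit/colimit duality (the paper does this concretely by writing out the colimit diagram for $\latch^{\cat C}_{\alpha}\diag X$ and observing that it is literally a limit diagram in $\cat M\op$), and part~(2) then follows by applying part~(1) to $\cat C\op$ and $\cat M\op$, exactly as in the paper. One terminological nit: the latching map $\latch^{\cat C}_{\alpha}\diag X \to \diag X_{\alpha}$ is not a leg of the colimiting cocone but the map induced by the universal property from the cocone $\{\diag X_{\beta} \to \diag X_{\alpha}\}$, which is how the paper phrases the map-level statement; your duality argument applies verbatim to this universal map, so the slip is harmless.
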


\begin{proof}
  We will prove part~1; part~2 then follows by applying part~1 to the
  $\cat C\op$-diagram $\diag X$ in $\cat M\op$ and remembering that
  $(\cat C\op)\op = \cat C$ and $(\cat M\op)\op = \cat M$.

  The latching object $\latch^{\cat C}_{\alpha} \diag X$ of $\diag X$
  at $\alpha$ is the colimit of the diagram in $\cat M$ with an object
  $\diag X_{\beta}$ for every non-identity map $\sigma\colon \beta \to
  \alpha$ in $\drc{\cat C}$ and a map $\mu_{*}\colon \diag X_{\beta}
  \to \diag X_{\gamma}$ for every commutative triangle
  \begin{displaymath}
    \xymatrix@=.6em{
      & {\alpha}\\
      {\beta} \ar[rr]_{\mu} \ar[ur]^-{\sigma}
      && {\gamma} \ar[ul]_-{\tau}
    }
  \end{displaymath}
  in $\drc{\cat C}$ in which $\sigma$ and $\tau$ are non-identity
  maps.  Thus, $\latch^{\cat C}_{\alpha}\diag X$ can also be described
  as the limit of the diagram in $\cat M\op$ with one object $\diag
  X_{\beta}$ for every non-identity map $\sigma\op\colon \alpha \to
  \beta$ in $(\drc{\cat C})\op = \inv{\cat C\op}$ and a map
  $(\mu\op)_{*}\colon \diag X_{\gamma} \to \diag X_{\beta}$ for every
  commutative triangle
  \begin{displaymath}
    \xymatrix@=.6em{
      & {\alpha} \ar[dl]_{\sigma\op} \ar[dr]^{\tau\op}\\
      {\beta}
      && {\gamma} \ar[ll]^{\mu\op}
    }
  \end{displaymath}
  in $(\drc{\cat C})\op = \inv{\cat C\op}$ in which $\sigma\op$ and
  $\tau\op$ are non-identity maps.  Thus, $\latch^{\cat C}_{\alpha}
  \diag X = \match^{\cat C\op}_{\alpha} \diag X$.

  The latching map $\latch^{\cat C}_{\alpha}\diag X \to \diag
  X_{\alpha}$ is the unique map in $\cat M$ such that for every
  non-identity map $\sigma\colon \beta \to \alpha$ in $\drc{\cat C}$
  the triangle
  \begin{displaymath}
    \xymatrix@=.8em{
      & {\diag X_{\alpha}}\\
      {\diag X_{\beta}} \ar[r] \ar[ur]^{\sigma_{*}}
      & {\latch^{\cat C}_{\alpha}\diag X} \ar[u]
    }
  \end{displaymath}
  commutes, and so the opposite of the latching map is the unique map
  $\diag X_{\alpha} \to \latch^{\cat C}_{\alpha}\diag X = \match^{\cat
    C\op}_{\alpha}\diag X$ in $\cat M\op$ such that for every
  non-identity map $\sigma\op\colon \alpha \to \beta$ in $(\drc{\cat
    C})\op = \inv{\cat C\op}$ the triangle
  \begin{displaymath}
    \xymatrix@=.8em{
      & {\diag X_{\alpha}} \ar[d] \ar[dl]_{(\sigma\op)_{*}}\\
      {\diag X_{\beta}}
      & {\match^{\cat C\op}_{\alpha}\diag X} \ar[l]
    }
  \end{displaymath}
  commutes, i.e., the opposite of the latching map of $\diag X$ at
  $\alpha$ in $\cat C$ is the matching map of $\diag X$ at $\alpha$ in
  $\cat C\op$.
\end{proof}

\begin{lem}
  \label{lem:OpRelMatch}
  Let $f\colon \diag X \to \diag Y$ be a map of $\cat C$-diagrams in
  $\cat M$ and let $\alpha$ be an object of $\cat C$.
  \begin{enumerate}
  \item The opposite of the relative latching map (see
    \defref{def:Matchobj}) of $f$ at $\alpha$ is the relative matching
    map of the map $f\op\colon \diag Y \to \diag X$ of $\cat
    C\op$-diagrams in $\cat M\op$ at $\alpha$.
  \item The opposite of the relative matching map (see
    \defref{def:Matchobj}) of $f$ at $\alpha$ is the relative latching
    map of the map $f\op\colon \diag Y \to \diag X$ of $\cat
    C\op$-diagrams in $\cat M\op$ at $\alpha$.
  \end{enumerate}
\end{lem}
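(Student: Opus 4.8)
The plan is to prove part~(1) directly and then obtain part~(2) by applying part~(1) to the map $f\op\colon \diag Y \to \diag X$ of $\cat C\op$-diagrams in $\cat M\op$, using $(\cat C\op)\op = \cat C$ and $(\cat M\op)\op = \cat M$, exactly as in the proof of \lemref{lem:OpMatch}. For part~(1), I would first recall from \defref{def:Matchobj} that the relative latching map of $f$ at $\alpha$ is the map out of the pushout
\begin{displaymath}
  \pushout{\diag X_{\alpha}}{\latch_{\alpha}\diag X}{\latch_{\alpha}\diag Y} \longrightarrow \diag Y_{\alpha},
\end{displaymath}
formed in $\cat M$ from the latching map $\latch_{\alpha}\diag X \to \diag X_{\alpha}$ together with the map $\latch_{\alpha}\diag X \to \latch_{\alpha}\diag Y$ induced by $f$. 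On the other side, the relative matching map of $f\op\colon \diag Y \to \diag X$ at $\alpha$ in $\cat M\op$ is the map into the pullback
\begin{displaymath}
  \diag Y_{\alpha} \longrightarrow \pullback{\diag X_{\alpha}}{\match^{\cat C\op}_{\alpha}\diag X}{\match^{\cat C\op}_{\alpha}\diag Y},
\end{displaymath}
formed in $\cat M\op$ from the matching map $\diag X_{\alpha} \to \match^{\cat C\op}_{\alpha}\diag X$ together with the map $\match^{\cat C\op}_{\alpha}\diag Y \to \match^{\cat C\op}_{\alpha}\diag X$ induced by $f\op$.

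Next I would invoke \lemref{lem:OpMatch}(1) to identify these two diagrams. It gives $\match^{\cat C\op}_{\alpha}\diag X = \latch^{\cat C}_{\alpha}\diag X$ and $\match^{\cat C\op}_{\alpha}\diag Y = \latch^{\cat C}_{\alpha}\diag Y$, and identifies the matching map $\diag X_{\alpha} \to \match^{\cat C\op}_{\alpha}\diag X$ in $\cat M\op$ with the opposite of the latching map $\latch^{\cat C}_{\alpha}\diag X \to \diag X_{\alpha}$ in $\cat M$. Since the latching object is the colimit over the fixed indexing category $\latchcat{\cat C}{\alpha}$ (equivalently, the matching object over $\cat C\op$ is the limit over the opposite of this category), this identification is natural in the diagram; consequently the map $\match^{\cat C\op}_{\alpha}\diag Y \to \match^{\cat C\op}_{\alpha}\diag X$ induced by $f\op$ is the opposite of the map $\latch^{\cat C}_{\alpha}\diag X \to \latch^{\cat C}_{\alpha}\diag Y$ induced by $f$. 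Thus the cospan in $\cat M\op$ whose limit defines the target of the relative matching map is exactly the opposite of the span in $\cat M$ whose colimit defines the source of the relative latching map.

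Finally, I would use the elementary fact that a pushout square in $\cat M$ is the same datum as a pullback square in $\cat M\op$. Hence $\pullback{\diag X_{\alpha}}{\match^{\cat C\op}_{\alpha}\diag X}{\match^{\cat C\op}_{\alpha}\diag Y}$, computed in $\cat M\op$, is the object $\pushout{\diag X_{\alpha}}{\latch^{\cat C}_{\alpha}\diag X}{\latch^{\cat C}_{\alpha}\diag Y}$ computed in $\cat M$, and the universal map into this pullback in $\cat M\op$ is precisely the opposite of the universal map out of the pushout in $\cat M$. This identifies the relative matching map of $f\op$ at $\alpha$ with the opposite of the relative latching map of $f$ at $\alpha$, which is part~(1).

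The step I expect to require the most care is the naturality claim in the middle paragraph, namely that the induced map between latching objects dualizes to the induced map between matching objects, since \lemref{lem:OpMatch} is phrased objectwise for a single diagram rather than as an equality of functors. This is, however, immediate from the explicit description of the (co)limit over the common indexing category given in the proof of \lemref{lem:OpMatch}: the map induced by $f$ (resp.\ $f\op$) is assembled from the component maps $f_{\gamma}$ over the non-identity maps $\beta \to \alpha$ (resp.\ $\alpha \to \beta$), and these are visibly opposite to one another. Everything else reduces to matching the pushout and pullback universal properties across the opposite-category duality.
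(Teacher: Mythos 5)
Your proposal is correct and follows essentially the same route as the paper's proof: part~(2) by applying part~(1) to $f\op$, and part~(1) by taking the opposite of the defining pushout diagram and invoking \lemref{lem:OpMatch} to identify it with the defining pullback diagram of the relative matching map in $\cat M\op$. The naturality point you flag (that the induced map $\latch^{\cat C}_{\alpha}\diag X \to \latch^{\cat C}_{\alpha}\diag Y$ dualizes to the induced map between matching objects) is handled implicitly in the paper by dualizing the entire commutative diagram at once, so your extra care there is sound but not a departure.
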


\begin{proof}
  We will prove part~(1); part~(2) then follows by applying part~(1) to the
  map of $\cat C\op$-diagrams $f\op\colon \diag Y \to \diag X$ in
  $\cat M\op$ and remembering that $(\cat C\op)\op = \cat C$ and
  $(\cat M\op)\op = \cat M$.

  If $P = \pushout{\diag X_{\alpha}}{\latch^{\cat C}_{\alpha}\diag
    X}{\latch^{\cat C}_{\alpha}\diag Y}$, then the relative latching
  map is the unique map $P \to \diag Y_{\alpha}$ that makes the diagram
  \begin{displaymath}
    \xymatrix@=.8em{
      {\latch^{\cat C}_{\alpha}\diag X} \ar[rr] \ar[dd]
      && {\latch^{\cat C}_{\alpha}\diag Y} \ar[dd] \ar@{..>}[dl]\\
      & {P} \ar[dr]\\
      {\diag X_{\alpha}} \ar[rr] \ar@{..>}[ur]
      && {\diag Y_{\alpha}}
    }
  \end{displaymath}
  commute.  The opposite of that diagram is the diagram
  \begin{displaymath}
    \xymatrix@=.8em{
      {\match^{\cat C\op}_{\alpha}\diag X}
      && {\match^{\cat C\op}_{\alpha}\diag X} \ar[ll]\\
      & {P} \ar@{..>}[ur] \ar@{..>}[dl]\\
      {\diag X_{\alpha}} \ar[uu]
      && {\diag Y_{\alpha}} \ar[uu] \ar[ll] \ar[ul]
    }
  \end{displaymath}
  in $\cat M\op$ (see \lemref{lem:OpMatch}), in which $P =
  \pullback{\diag X_{\alpha}}{\match^{\cat C\op}_{\alpha}\diag
    X}{\match^{\cat C\op}_{\alpha}\diag Y}$, and the opposite of the
  relative latching map is the unique map in $\cat M\op$ that makes
  this diagram commute, i.e., it is the relative matching map.
\end{proof}

\begin{prop}
  \label{prop:OppositeReedy}
  If $\cat M$ is a model category and $\cat C$ is a Reedy category,
  then the opposite $(\cat M^{\cat C})\op$ of the Reedy model category
  $\cat M^{\cat C}$ (see \defref{def:DiagCat}) is naturally isomorphic
  as a model category to the Reedy model category $(\cat M\op)^{\cat
    C\op}$.
\end{prop}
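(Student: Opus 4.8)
The plan is to prove this in two stages: first identify the underlying categories via the canonical isomorphism that sends a map to its opposite, and then check that this isomorphism matches up the three distinguished classes of maps in the two Reedy model structures. For the underlying categories, recall that a functor $\diag X\colon \cat C \to \cat M$ is literally the same data as a functor $\diag X\colon \cat C\op \to \cat M\op$ (reverse every arrow in both source and target), and that a natural transformation $f\colon \diag X \to \diag Y$ of $\cat C$-diagrams in $\cat M$ becomes, after reversing, a natural transformation $f\op\colon \diag Y \to \diag X$ of $\cat C\op$-diagrams in $\cat M\op$. The assignment $f \mapsto f\op$ is the identity on objects and reverses the direction of morphisms, so it defines an isomorphism of categories $(\cat M^{\cat C})\op \xrightarrow{\cong} (\cat M\op)^{\cat C\op}$; since it involves no choices, it is natural.

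It then remains to verify that under this isomorphism $f$ is a weak equivalence (resp.\ cofibration, fibration) in $\cat M^{\cat C}$ if and only if $f\op$ is a weak equivalence (resp.\ fibration, cofibration) in $(\cat M\op)^{\cat C\op}$; this is precisely what it means for the isomorphism to respect the model structures, since by definition the weak equivalences, cofibrations, and fibrations of $\cat M\op$ are the opposites of the weak equivalences, fibrations, and cofibrations of $\cat M$. For weak equivalences this is immediate: by \thmref{thm:RFib}, $f$ is a weak equivalence if and only if each $f_{\alpha}$ is a weak equivalence in $\cat M$, equivalently each $(f\op)_{\alpha} = (f_{\alpha})\op$ is a weak equivalence in $\cat M\op$, equivalently $f\op$ is a weak equivalence in $(\cat M\op)^{\cat C\op}$. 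For the cofibration–fibration correspondence, \thmref{thm:RFib} says $f$ is a cofibration if and only if for every object $\alpha$ of $\cat C$ the relative latching map of $f$ at $\alpha$ is a cofibration in $\cat M$. By \lemref{lem:OpRelMatch}, part~(1), the opposite of this relative latching map is the relative matching map of $f\op$ at $\alpha$ in $(\cat M\op)^{\cat C\op}$, and a map is a cofibration in $\cat M$ exactly when its opposite is a fibration in $\cat M\op$; hence the latching condition holds for all $\alpha$ if and only if $f\op$ is a fibration in $(\cat M\op)^{\cat C\op}$. The dual assertion, that $f$ is a fibration if and only if $f\op$ is a cofibration, follows in the same way from \lemref{lem:OpRelMatch}, part~(2).

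Combining these three correspondences shows that $f \mapsto f\op$ carries weak equivalences, cofibrations, and fibrations of $\cat M^{\cat C}$ to weak equivalences, fibrations, and cofibrations of $(\cat M\op)^{\cat C\op}$ and conversely, so it is an isomorphism of model categories, which is the assertion of the proposition. I do not expect any serious obstacle here: the genuine content—the identification of latching/matching objects and of relative latching/matching maps under passage to opposites—has already been isolated in \lemref{lem:OpMatch} and \lemref{lem:OpRelMatch}, so once those are invoked the remainder is a direct bookkeeping check against \thmref{thm:RFib} and the definition of the opposite model structure.
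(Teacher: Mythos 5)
Your proposal is correct and follows essentially the same route as the paper's proof: identify the underlying categories of $(\cat M^{\cat C})\op$ and $(\cat M\op)^{\cat C\op}$, then match the weak equivalences objectwise and use \lemref{lem:OpRelMatch} together with \thmref{thm:RFib} to exchange cofibrations and fibrations. The only cosmetic difference is that the paper also explicitly cites \propref{prop:OpReedy} to justify that $(\cat M\op)^{\cat C\op}$ carries a Reedy model structure at all, a point you use implicitly.
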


\begin{proof}
  The opposite $(\cat M^{\cat C})\op$ of $\cat M^{\cat C}$ is a model
  category in which
  \begin{itemize}
  \item the cofibrations of $(\cat M^{\cat C})\op$ are the opposites
    of the fibrations of $\cat M^{\cat C}$,
  \item the fibrations of $(\cat M^{\cat C})\op$ are the opposites of
    the cofibrations of $\cat M^{\cat C}$, and
  \item the weak equivalences of $(\cat M^{\cat C})\op$ are the
    opposites of the weak equivalences of $\cat M^{\cat C}$.
  \end{itemize}
  \propref{prop:OpReedy} implies that we have a Reedy model category
  structure on $(\cat M\op)^{\cat C\op}$.  The objects and maps of
  $(\cat M^{\cat C})\op$ coincide with those of $(\cat M\op)^{(\cat
    C\op)}$, and so we need only show that the model category
  structures coincide.  This follows because the opposites of the
  objectwise weak equivalences of $\cat M^{\cat C}$ are the objectwise
  weak equivalences of $(\cat M\op)^{\cat C\op}$, and
  \lemref{lem:OpRelMatch} implies that the opposites of the
  cofibrations of $\cat M^{\cat C}$ are the fibrations of $(\cat
  M\op)^{\cat C\op}$ and that the opposites of the fibrations of $\cat
  M^{\cat C}$ are the cofibrations of $(\cat M\op)^{\cat C\op}$ (see
  \thmref{thm:RFib}).
\end{proof}

\subsection{Quillen functors}
\label{sec:QuillenFunc}

\begin{defn}
  \label{def:QuilFunc}
  Let $\cat M$ and $\cat N$ be model categories and let $\adj{G}{\cat
    M}{\cat N}{U}$ be a pair of adjoint functors.  The functor $G$ is
  a \emph{left Quillen functor} and the functor $U$ is a \emph{right
    Quillen functor} if
  \begin{itemize}
  \item the left adjoint $G$ preserves both cofibrations and trivial
    cofibrations, and
  \item the right adjoint $U$ preserves both fibrations and trivial
    fibrations.
  \end{itemize}
\end{defn}

\begin{prop}
  \label{prop:QuilFunc}
  If $\cat M$ and $\cat N$ are model categories and $\adj{G}{\cat
    M}{\cat N}{U}$ is a pair of adjoint functors, then the following
  are equivalent:
  \begin{enumerate}
  \item The left adjoint $G$ is a left Quillen functor and the right
    adjoint $U$ is a right Quillen functor.
  \item The left adjoint $G$ preserves both cofibrations and trivial
    cofibrations.
  \item The right adjoint $U$ preserves both fibrations and trivial
    fibrations.
  \end{enumerate}
\end{prop}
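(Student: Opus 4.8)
The plan is to reduce the whole statement to two standard ingredients: the characterization of each of the four distinguished classes of maps by a lifting property, and the bijection between lifting problems induced by the adjunction $\adj{G}{\cat M}{\cat N}{U}$. Recall (see \cite{MCATL}*{Prop.~7.2.3}) that in any model category a map is a cofibration exactly when it has the left lifting property with respect to every trivial fibration, a trivial cofibration exactly when it has the left lifting property with respect to every fibration, a fibration exactly when it has the right lifting property with respect to every trivial cofibration, and a trivial fibration exactly when it has the right lifting property with respect to every cofibration.

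First I would observe that statement~(1) is by definition the conjunction of (2) and (3), so that (1) implies each of (2) and (3) trivially; it therefore suffices to prove the two implications (2)$\Rightarrow$(3) and (3)$\Rightarrow$(2), since each of these, combined with its own hypothesis, already yields (1).

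To prove (2)$\Rightarrow$(3), assume $G$ preserves cofibrations and trivial cofibrations. Let $p\colon X \to Y$ be a fibration in $\cat N$; to see that $Up$ is a fibration in $\cat M$ it is enough, by the characterization above, to solve every lifting problem of a trivial cofibration $i\colon A \to B$ of $\cat M$ against $Up$. The adjunction identifies such a commutative square with a commutative square of $Gi$ against $p$ in $\cat N$, and identifies solutions of one lifting problem with solutions of the other (a diagonal filler $GB \to X$ corresponds to a filler $B \to UX$, and the transposition preserves the commutativity of both triangles). Since $G$ carries the trivial cofibration $i$ to a trivial cofibration $Gi$ and $p$ is a fibration, the transposed problem has a solution, hence so does the original; thus $Up$ is a fibration. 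Replacing ``trivial cofibration'' and ``fibration'' by ``cofibration'' and ``trivial fibration'' throughout gives, by the identical argument, that $U$ carries trivial fibrations to trivial fibrations. This establishes~(3).

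The implication (3)$\Rightarrow$(2) is entirely dual: assuming $U$ preserves fibrations and trivial fibrations, one tests whether $Gi$ is a (trivial) cofibration by solving its lifting problems against the trivial fibrations (resp.\ fibrations) $p$ of $\cat N$, transposes each such problem across the adjunction into a problem of $i$ against $Up$, and uses that $Up$ is then a trivial fibration (resp.\ fibration) together with the hypothesis that $i$ is a (trivial) cofibration to produce the filler. I do not expect any genuine obstacle here: the entire content is the transposition of lifting squares under the adjunction, and the only point requiring care is the bookkeeping of matching each of the four classes with the correct lifting characterization and of checking that the adjunction isomorphism carries commutative squares and their diagonal fillers to commutative squares and diagonal fillers.
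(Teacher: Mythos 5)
Your proof is correct, but it is worth noting that it takes a different route from the paper only in the sense that the paper takes no route at all: the paper's entire proof of this proposition is the citation \cite{MCATL}*{Prop.~8.5.3}. What you have written out is the standard argument behind that cited result, and your bookkeeping is accurate throughout: the reduction of the equivalence to the two implications $(2)\Rightarrow(3)$ and $(3)\Rightarrow(2)$ is legitimate because, with the paper's \defref{def:QuilFunc}, statement~(1) is literally the conjunction of (2) and (3); the four lifting characterizations are matched to the correct classes (fibrations tested against trivial cofibrations, trivial fibrations against cofibrations, and dually); and the transposition of lifting squares and their fillers across the adjunction is exactly the naturality of the isomorphism $\cat N(G A, X) \cong \cat M(A, U X)$ in both variables. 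The trade-off between the two treatments is the usual one: the paper's citation keeps the exposition short and defers to a reference the reader is assumed to have, while your argument makes the statement self-contained and makes visible that the only model-category input needed is the characterization of each distinguished class by a lifting property, with everything else being pure adjunction formalism.
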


\begin{proof}
  This is \cite{MCATL}*{Prop.~8.5.3}.
\end{proof}

\begin{prop}
  \label{prop:QuillenNice}
  Let $\cat M$ and $\cat N$ be model categories and let $\adj{G}{\cat
    M}{\cat N}{U}$ be a pair of adjoint functors.
  \begin{enumerate}
  \item If $G$ is a left Quillen functor, then $G$ takes cofibrant
    objects of $\cat M$ to cofibrant objects of $\cat N$ and takes
    weak equivalences between cofibrant objects in $\cat M$ to weak
    equivalences between cofibrant objects of $\cat N$.
  \item If $U$ is a right Quillen functor, then $U$ takes fibrant
    objects of $\cat N$ to fibrant objects of $\cat M$ and takes weak
    equivalences between fibrant objects in $\cat N$ to weak
    equivalences between fibrant objects of $\cat M$.
  \end{enumerate}
\end{prop}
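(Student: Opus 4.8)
The plan is to reduce everything to two facts---that a left Quillen functor preserves both the initial object (being a left adjoint) and trivial cofibrations, together with the classical argument of Ken Brown---and then to obtain part~(2) from part~(1) by passing to opposite categories. For the first assertion of part~(1), recall that an object $X$ of $\cat M$ is cofibrant exactly when the map $\emptyset \to X$ from the initial object is a cofibration. Since $G$ is a left adjoint it preserves all colimits, and in particular it carries the initial object of $\cat M$ to the initial object of $\cat N$; since $G$ is a left Quillen functor it preserves cofibrations. Applying $G$ to $\emptyset \to X$ therefore yields a cofibration $\emptyset \to GX$ in $\cat N$, so $GX$ is cofibrant.

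For the second assertion of part~(1), I would use Ken Brown's argument. Let $f\colon X \to Y$ be a weak equivalence between cofibrant objects of $\cat M$, form the map $(f, 1_Y)\colon X \amalg Y \to Y$, and factor it as a cofibration $j\colon X \amalg Y \to C$ followed by a trivial fibration $p\colon C \to Y$. Because $X$ and $Y$ are cofibrant, each coproduct inclusion $X \to X \amalg Y$ and $Y \to X \amalg Y$ is a cofibration (being a pushout of $\emptyset \to Y$, respectively $\emptyset \to X$), so composing with $j$ gives cofibrations $q_X\colon X \to C$ and $q_Y\colon Y \to C$ with $pq_X = f$ and $pq_Y = 1_Y$. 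The two-out-of-three property then shows that $q_X$ and $q_Y$ are weak equivalences, hence trivial cofibrations. Now apply $G$: since $G$ preserves trivial cofibrations, $Gq_X$ and $Gq_Y$ are weak equivalences; since $(Gp)(Gq_Y) = 1_{GY}$, two-out-of-three shows $Gp$ is a weak equivalence; and $Gf = (Gp)(Gq_X)$ is then a composite of weak equivalences, hence a weak equivalence.

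Part~(2) I would deduce by duality rather than repeating the argument. The opposite functor $U\op\colon \cat N\op \to \cat M\op$ is a left adjoint (to $G\op$), and it is a left Quillen functor because the fibrations and trivial fibrations of $\cat N$ preserved by $U$ become the cofibrations and trivial cofibrations of $\cat M\op$ preserved by $U\op$. The fibrant objects of $\cat N$ are precisely the cofibrant objects of $\cat N\op$, and a weak equivalence between fibrant objects of $\cat N$ is a weak equivalence between cofibrant objects of $\cat N\op$; so part~(1) applied to $U\op$ yields exactly the two statements of part~(2). The one delicate point in the whole proof is Ken Brown's lemma in part~(1): one must use cofibrancy of $X$ and $Y$ to know the coproduct inclusions are cofibrations, and keep careful track of which of $q_X$, $q_Y$, $p$ are trivial cofibrations as opposed to merely weak equivalences. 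Everything else follows directly from the defining properties of Quillen functors and the two-out-of-three axiom.
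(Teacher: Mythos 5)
Your proof is correct and takes essentially the same approach as the paper: the identical initial-object argument shows preservation of cofibrant objects, Ken Brown's lemma handles the statement about weak equivalences, and part~(2) follows by duality. The only difference is packaging: you spell out Ken Brown's factorization argument and the passage to opposite categories explicitly, whereas the paper cites Corollary~7.7.2 of \cite{MCATL} for the former and runs the dual argument verbally for the latter.
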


\begin{proof}
  Since left adjoints take initial objects to initial objects, if the
  left adjoint $G$ takes cofibrations to cofibrations then it takes
  cofibrant objects to cofibrant objects.  The statement about weak
  equivalences follows from \cite{MCATL}*{Cor.~7.7.2}.

  Dually, since right adjoints take terminal objects to terminal
  objects, if the right adjoint $U$ takes fibrations to fibrations
  then it takes fibrant objects to fibrant objects.  The statement
  about weak equivalences follows from \cite{MCATL}*{Cor.~7.7.2}.
\end{proof}

\begin{prop}
  \label{prop:OpQuillen}
  A functor between model categories $G\colon \cat M \to \cat N$ is a
  left Quillen functor if and only if its opposite $G\op\colon \cat
  M\op \to \cat N\op$ is a right Quillen functor.
\end{prop}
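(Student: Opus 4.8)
The plan is to reduce the statement to the characterization of Quillen functors by preservation of (trivial) (co)fibrations (\defref{def:QuilFunc}), combined with two pieces of standard bookkeeping about opposites: the way an adjunction dualizes, and the description of the opposite model category structure (the one invoked at the start of the proof of \propref{prop:OppositeReedy}, in which cofibrations and fibrations are interchanged and weak equivalences are preserved).

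First I would recall that, by \defref{def:QuilFunc}, $G$ is a left Quillen functor precisely when it is a left adjoint --- say $G$ is left adjoint to some $U\colon \cat N \to \cat M$ --- that preserves cofibrations and trivial cofibrations. Taking opposites of the adjunction isomorphisms $\cat N(Gm,n) \cong \cat M(m,Un)$ gives isomorphisms $\cat N\op(n, G\op m) \cong \cat M\op(U\op n, m)$, so that $U\op$ is left adjoint to $G\op$; in particular $G\op\colon \cat M\op \to \cat N\op$ is a right adjoint. Hence $G$ is a left adjoint if and only if $G\op$ is a right adjoint. This is the one point that repays care: under dualization the \emph{left} adjoint $G$ becomes the \emph{right} adjoint $G\op$, so $G\op$ is correctly positioned to be tested as a right Quillen functor rather than a left one.

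Next I would use the description of the opposite model structure: a map $i$ in $\cat M$ is a cofibration, a trivial cofibration, a fibration, or a trivial fibration if and only if its opposite $i\op$ in $\cat M\op$ is, respectively, a fibration, a trivial fibration, a cofibration, or a trivial cofibration, and likewise for $\cat N$. Since $G\op(i\op) = (Gi)\op$, the map $Gi$ is a cofibration (resp.\ trivial cofibration) in $\cat N$ if and only if $(Gi)\op = G\op(i\op)$ is a fibration (resp.\ trivial fibration) in $\cat N\op$. Because $i \mapsto i\op$ is a bijection from the maps of $\cat M$ to the maps of $\cat M\op$ carrying cofibrations to fibrations and trivial cofibrations to trivial fibrations, it follows that $G$ preserves cofibrations and trivial cofibrations if and only if $G\op$ preserves fibrations and trivial fibrations.

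Combining these equivalences with \defref{def:QuilFunc} gives the result: $G$ is a left Quillen functor (a left adjoint preserving cofibrations and trivial cofibrations) exactly when $G\op$ is a right Quillen functor (a right adjoint preserving fibrations and trivial fibrations); since every step is a biconditional, no separate converse argument is needed. I do not expect a genuine obstacle, as the proof is entirely a matter of matching the preservation conditions across the opposite, the only subtle point being the interchange of left and right adjoints noted above.
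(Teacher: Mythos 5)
Your proof is correct and takes essentially the same approach as the paper: the paper's own one-sentence proof is precisely your second step, namely that the (trivial) cofibrations and (trivial) fibrations of $\cat M\op$ and $\cat N\op$ are the opposites of the (trivial) fibrations and (trivial) cofibrations of $\cat M$ and $\cat N$, so the preservation conditions of \defref{def:QuilFunc} correspond under $i \mapsto i\op$. The adjunction bookkeeping you spell out (that dualizing makes $G\op$ the \emph{right} adjoint of $U\op$) is left implicit in the paper, but it is the same argument, just made explicit.
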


\begin{proof}
  This follows because the cofibrations and trivial cofibrations of
  $\cat M\op$ are the opposites of the fibrations and trivial
  fibrations, respectively, of $\cat M$ and the fibrations and trivial
  fibrations of $\cat M\op$ are the opposites of the cofibrations and
  trivial cofibrations, respectively, of $\cat M$ (with a similar
  statement for $\cat N$).
\end{proof}

\subsection{Cofinality}
\label{sec:Cofinality}

\begin{defn}
  \label{def:cofinal}
  Let $\cat A$ and $\cat B$ be small categories and let $G\colon \cat
  A \to \cat B$ be a functor.
  \begin{itemize}
  \item The functor $G$ is \emph{left cofinal} (or \emph{initial}) if
    for every object $\alpha$ of $\cat B$ the nerve
    $\N\overcat{G}{\alpha}$ of the overcategory $\overcat{G}{\alpha}$
    is non-empty and connected.  If in addition $G$ is the inclusion
    of a subcategory, then we will say that $\cat A$ is a \emph{left
      cofinal subcategory} (or \emph{initial subcategory}) of $\cat B$.
  \item The functor $G$ is \emph{right cofinal} (or \emph{terminal})
    if for every object $\alpha$ of $\cat B$ the nerve
    $\N\undercat{G}{\alpha}$ of the undercategory
    $\undercat{G}{\alpha}$ is non-empty and connected.  If in addition
    $G$ is the inclusion of a subcategory, then we will say that $\cat
    A$ is a \emph{right cofinal subcategory} (or \emph{terminal
      subcategory}) of $\cat B$.
  \end{itemize}
\end{defn}

For the proof of the following, see \cite{McL:categories}*{IX.3} or
\cite{MCATL}*{Thm.~14.2.5}.

\begin{thm}
  \label{thm:CofinalIso}
  Let $\cat A$ and $\cat B$ be small categories and let $G\colon \cat
  A \to \cat B$ be a functor.
  \begin{enumerate}
  \item The functor $G$ is left cofinal if and only if for every
    complete category $\cat M$ (i.e., every category in which all small
    limits exist) and every diagram $\diag X\colon \cat B \to \cat M$
    the natural map $\lim_{\cat B}\diag X \to \lim_{\cat A} G^{*}\diag
    X$ is an isomorphism.
  \item The functor $G$ is right cofinal if and only if for every
    cocomplete category $\cat M$ (i.e., every category in which all
    small colimits exist) and every diagram $\diag X\colon \cat B \to
    \cat M$ the natural map $\colim_{\cat A} G^{*}\diag X \to
    \colim_{\cat B}\diag X$ is an isomorphism.
  \end{enumerate}
\end{thm}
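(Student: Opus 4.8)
The two parts are interchanged by passing to opposite categories, so the plan is to prove part~(1) and obtain part~(2) by applying it to $G\op\colon\cat A\op\to\cat B\op$: the functor $G$ is left cofinal exactly when $G\op$ is right cofinal (because $\overcat{G}{\alpha}$ in $\cat B$ is $\undercat{G\op}{\alpha}$ in $\cat B\op$, and a nerve is nonempty and connected precisely when its opposite is), while $\lim_{\cat B}\diag X$ becomes $\colim_{\cat B\op}\diag X\op$ and $\cat M$ is complete exactly when $\cat M\op$ is cocomplete. For part~(1) the first step is to reduce to set-valued diagrams. By \propref{prop:DetectIso} the natural map $\lim_{\cat B}\diag X\to\lim_{\cat A}G^{*}\diag X$ is an isomorphism if and only if it becomes a bijection after applying $\cat M(W,-)$ for every object $W$; since $\cat M(W,-)$ commutes with limits, $\cat M(W,-)$ carries this map to the comparison map for the set-valued diagram $\cat M(W,\diag X-)\colon\cat B\to\Set$. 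As every set-valued diagram arises this way (take $\cat M=\Set$ and $W=\ast$), it suffices to show: for every $\diag Y\colon\cat B\to\Set$ the map $\lim_{\cat B}\diag Y\to\lim_{\cat A}G^{*}\diag Y$ is a bijection if and only if $G$ is left cofinal.

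For the ``if'' direction I would use the description of a set-valued limit as the set of compatible families (cones): an element of $\lim_{\cat B}\diag Y$ is a family $(x_{\beta})_{\beta}$ with $x_{\beta}\in\diag Y_{\beta}$ and $\diag Y(g)(x_{\beta})=x_{\beta'}$ for every $g\colon\beta\to\beta'$, and the comparison map is restriction along $G$. Injectivity follows from the nonemptiness half of left cofinality: for any $\beta$ choose an object $(\alpha,\phi\colon G\alpha\to\beta)$ of $\overcat{G}{\beta}$, so that $x_{\beta}=\diag Y(\phi)(x_{G\alpha})$ is determined by the restricted family. Surjectivity follows from connectedness: given a compatible family $(y_{\alpha})$ over $\cat A$, set $x_{\beta}=\diag Y(\phi)(y_{\alpha})$ using any object $(\alpha,\phi)$ of $\overcat{G}{\beta}$; a morphism $\psi\colon(\alpha,\phi)\to(\alpha',\phi')$ in $\overcat{G}{\beta}$ satisfies $\phi'\circ G\psi=\phi$ and $\diag Y(G\psi)(y_{\alpha})=y_{\alpha'}$, so the value is unchanged along $\psi$ and hence, by connectedness of $\N\overcat{G}{\beta}$, independent of the choice. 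One then checks directly that $(x_{\beta})$ is a cone restricting to $(y_{\alpha})$.

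For the ``only if'' direction I would test the comparison against representable functors, which is cleanest in the dual (colimit) picture: for the corepresentable $\cat B(\beta_{0},-)\colon\cat B\to\Set$ one computes $\colim_{\cat B}\cat B(\beta_{0},-)\cong\ast$, since $\beta_{0}\downarrow\cat B$ has an initial object, while $\colim_{\cat A}G^{*}\cat B(\beta_{0},-)\cong\pi_{0}\undercat{G}{\beta_{0}}$, an element of the colimit being exactly a path component of the undercategory. The comparison is then the canonical map $\pi_{0}\undercat{G}{\beta_{0}}\to\ast$, a bijection if and only if $\undercat{G}{\beta_{0}}$ is nonempty and connected; feeding this through the opposite-category duality of the first paragraph yields the assertion about $\overcat{G}{\beta}$ and limits required for part~(1). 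The main obstacle is precisely this converse: one must locate the correct test diagrams and identify the comparison map on path-components, and one must check that the reduction to $\Set$ is compatible with passing to opposites so that the clean corepresentable computation may legitimately be imported. The forward direction, by contrast, is a routine manipulation of cones once left cofinality has been unwound.
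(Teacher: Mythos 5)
Your proof is correct, but there is no argument in the paper to measure it against: the paper does not prove \thmref{thm:CofinalIso} at all, and instead quotes it from \cite{MCATL}*{Thm.~14.2.5}. What you give is a complete and essentially standard proof of that cited result. Its skeleton --- pass to opposite categories to exchange the two parts; use \propref{prop:DetectIso} and the fact that $\cat M(W,-)$ preserves limits to reduce to $\Set$-valued diagrams; prove sufficiency of left cofinality by the explicit cone argument (nonemptiness of the overcategories forces injectivity of the restriction map, connectedness makes the extension of a cone over $\cat A$ to a cone over $\cat B$ well defined); prove necessity by testing against corepresentables, identifying $\colim_{\cat A}G^{*}\cat B(\beta_{0},-)$ with $\pi_{0}\undercat{G}{\beta_{0}}$ via the category of elements, so that the comparison map becomes $\pi_{0}\undercat{G}{\beta_{0}}\to\ast$ --- is sound, and the steps you leave as checks (that the constructed family $(x_{\beta})$ is a cone restricting to $(y_{\alpha})$, and that the isomorphism $\cat M(W,\lim_{\cat B}\diag X)\iso\lim_{\cat B}\cat M(W,\diag X-)$ identifies the two comparison maps) are genuinely routine. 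Your use of the colimit picture for the necessity half of part~(1) is not circular, since the duality in your first paragraph converts each implication separately: proving necessity for colimits/right cofinality and transporting it back yields necessity for limits/left cofinality. The only phrasing to tighten in a final write-up is the claim that $\overcat{G}{\beta}$ ``is'' $\undercat{G\op}{\beta}$; they are opposites of one another, which suffices precisely because emptiness and connectedness of the nerve are invariant under passing to opposites, as you note.
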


\section{Examples}
\label{sec:Examples}

In this section, we present various examples to illustrate
\thmref{thm:GoodisGood} and \thmref{thm:MainCofibering}.

\subsection{A Reedy functor that is not fibering}
\label{sec:notfibering}

The following is an example of a Reedy subcategory that is not
fibering.

\begin{ex}
  \label{ex:NotGood}
  Let $\cat D$ be the category
  \begin{displaymath}
    \vcenter{
      \xymatrix@=1ex{
        & {\alpha} \ar[dl]_{p} \ar[dr]^{r}\\
        {\gamma} \ar[dr]_{q}
        && {\delta} \ar[dl]^{s}\\
        & {\beta}
      }
    }
    \qquad
    \text{in which $qp = sr$.}
  \end{displaymath}
  \begin{itemize}
  \item Let $\alpha$ be of degree $2$,
  \item let $\gamma$ and $\delta$ be of degree $1$, and
  \item let $\beta$ be of degree 0.
  \end{itemize}
  $\cat D$ is then a Reedy category in which $\inv{\cat D} = \cat D$
  and $\drc{\cat D}$ has only identity maps.

  Let $\cat C$ be the full subcategory of $\cat D$ on the objects
  $\{\alpha, \gamma, \delta\}$, and let $\cat C$ have the structure of
  a Reedy category that makes it a Reedy subcategory of $\cat D$.
  Although $\cat C$ is a Reedy subcategory of $\cat D$, it is not a
  fibering Reedy subcategory because the map
  $qp\colon \alpha \to \beta$ in $\inv{\cat D}$ has only two
  factorizations in which the first map is in $\inv{\cat C}$ and is
  not an identity map and the second is in $\inv{\cat D}$, $q\circ p$
  and $s\circ r$, and neither of those factorizations maps to the
  other; thus the nerve of the category of such factorizations is
  nonempty and not connected.  \thmref{thm:GoodisGood} thus implies
  that there is a model category $\cat M$ such that the restriction
  functor $\cat M^{\cat D} \to \cat M^{\cat C}$ is not a right Quillen
  functor.  This is actually proved in \thmref{thm:RtGdNec}, which
  constructs a fibrant $\cat D$-diagram in the standard model category
  of topological spaces for which the induced $\cat C$-diagram is not
  fibrant.  For the categories $\cat C$ and $\cat D$ of
  \exref{ex:NotGood}, that $\cat D$-diagram is the functor that takes
  every object of $\cat D$ to $I$, the unit interval, and takes every
  morphism of $\cat D$ to the identity map.  This is a fibrant diagram
  because every matching map is a homeomorphism, and is thus a
  fibration.  The induced $\cat C$-diagram is not fibrant, though,
  because the matching map at $\alpha$ is the diagonal map
  $I \to I\times I$, which is not a fibration.
\end{ex}

\subsection{A Reedy functor that is not cofibering}
\label{sec:notcofibering}

\propref{prop:OpFiberingCofibering} implies that the opposite of
\exref{ex:NotGood} is a Reedy subcategory that is not cofibering.

\subsection{Truncations}

\begin{prop}
  \label{prop:TruncFib}
  If $\cat C$ is a Reedy category and $n \ge 0$, then the inclusion
  functor $G\colon \cat C^{\le n} \to \cat C$ (see
  \defref{def:filtration}) is both a fibering Reedy functor and a
  cofibering Reedy functor.
\end{prop}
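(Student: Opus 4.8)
The plan is to prove that $G$ is a fibering Reedy functor and then deduce the cofibering statement formally from the fibering one. For the reduction, note that a degree function for $\cat C$ also serves as a degree function for $\cat C\op$ (see \propref{prop:OpReedy}), so that $(\cat C^{\le n})\op = (\cat C\op)^{\le n}$, and the opposite of the inclusion $G\colon \cat C^{\le n}\to\cat C$ is exactly the inclusion $(\cat C\op)^{\le n}\to\cat C\op$. Hence, once we know that the truncation inclusion of an \emph{arbitrary} Reedy category is fibering, \propref{prop:OpFiberingCofibering} applied to $\cat C\op$ shows that $G$ is cofibering. So it suffices to prove the fibering claim.

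To that end, fix an object $\alpha$ of $\cat C^{\le n}$, an object $\beta$ of $\cat C$, and a map $\sigma\colon \alpha\to\beta$ in $\inv{\cat C}$ (recall that $G\alpha=\alpha$ since $G$ is an inclusion). I must show that the nerve of $\invfact{\cat C^{\le n}}{\alpha}{\sigma}$ is empty or connected (see \defref{def:goodsub} and \defref{def:CFactors}). The first point to record is that the truncation imposes no genuine restriction on the intermediate objects: any object of $\invfact{\cat C^{\le n}}{\alpha}{\sigma}$ involves a \emph{non-identity} map $\nu\colon\alpha\to\gamma$ in $\inv{\cat C^{\le n}}$, which strictly lowers degree, so the degree of $\gamma$ is strictly less than that of $\alpha$ and hence at most $n-1$. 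Thus every such $\gamma$ automatically lies in $\cat C^{\le n}$, and by \propref{prop:SeqFilt} the maps $\nu$ and $\mu$ may be taken freely in $\inv{\cat C}$.

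The main point is that $\invfact{\cat C^{\le n}}{\alpha}{\sigma}$ is empty or has a terminal object. Suppose it is nonempty, say it contains $\bigl((\nu\colon\alpha\to\gamma),(\mu\colon\gamma\to\beta)\bigr)$. Since $\nu$ strictly lowers degree, so does $\sigma=\mu\nu$; in particular $\sigma$ is a non-identity map and the degree of $\beta$ is strictly less than that of $\alpha$, whence $\beta\in\cat C^{\le n}$ and the pair $\bigl((\sigma\colon\alpha\to\beta),(\mathrm{id}_\beta\colon\beta\to\beta)\bigr)$ is itself an object of $\invfact{\cat C^{\le n}}{\alpha}{\sigma}$. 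This object is terminal: for any object $\bigl((\nu\colon\alpha\to\gamma),(\mu\colon\gamma\to\beta)\bigr)$ the only candidate for a morphism to it is $\tau=\mu\colon\gamma\to\beta$, which satisfies both required identities $\tau\nu=\mu\nu=\sigma$ and $\mathrm{id}_\beta\circ\tau=\mu$, and the second identity forces $\tau=\mu$, giving uniqueness. A category with a terminal object has connected (indeed contractible) nerve, so $\invfact{\cat C^{\le n}}{\alpha}{\sigma}$ is empty or connected, completing the fibering claim and hence the proposition. I expect no serious obstacle here; the only substantive observations are that a non-identity inverse map out of $\alpha$ keeps the intermediate object in $\cat C^{\le n}$ (so truncation discards no relevant factorizations), and that a nonempty factorization category always contains the tautological terminal factorization $\bigl(\sigma,\mathrm{id}_\beta\bigr)$.
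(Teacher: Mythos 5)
Your proof is correct, and its core coincides with the paper's: both arguments identify the pair $\bigl((\sigma\colon\alpha\to\beta),1_{\beta}\bigr)$ as a terminal object of the category of inverse $\cat C^{\le n}$-factorizations whenever that category could be nonempty, so that its nerve is empty or connected. You differ from the paper in two minor but genuine ways. First, the paper case-splits on whether $\sigma$ is an identity map and compresses the degree bookkeeping into the observation that $G$ induces an isomorphism of undercategories $G_{*}\colon \undercat{\inv{\cat C^{\le n}}}{\alpha} \to \undercat{\inv{\cat C}}{\alpha}$; you instead start from nonemptiness of the factorization category and deduce that $\sigma$ must strictly lower degree, hence is a non-identity map with target in $\cat C^{\le n}$ --- a slightly cleaner organization of the same facts, since it makes explicit why truncation discards no factorizations and why $1_{\beta}$ is a legitimate second component. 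Second, for the cofibering half the paper merely says the proof is ``similar'' (i.e., dualize by hand), whereas you derive it formally from the fibering half via the identification $(\cat C^{\le n})\op = (\cat C\op)^{\le n}$ together with \propref{prop:OpReedy} and \propref{prop:OpFiberingCofibering}. This duality reduction is exactly the device the paper itself uses in later examples (e.g., \thmref{thm:RestCosFib} and \thmref{thm:DiagFibr}); it buys an actual proof of the dual statement rather than an appeal to similarity, at the small cost of checking that truncation commutes with taking opposites, which you do.
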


\begin{proof}
  We will prove that the inclusion is a fibering Reedy functor; the
  proof that it is a cofibering Reedy functor is similar.

  If $\degree(\alpha) \le n$, then the inclusion functor $G\colon \cat
  C^{\le n} \to \cat C$ induces an isomorphism of undercategories
  $G_{*}\colon \undercat{\inv{\cat C^{\le n}}}{\alpha} \to
  \undercat{\inv{\cat C}}{\alpha}$.  Let $\sigma\colon \alpha \to
  \beta$ be a map in $\inv{\cat C}$.  If $\sigma$ is the identity map,
  then the category of inverse $\cat C$-factorizations of $\sigma$ is
  empty; if $\sigma$ is not an identity map, then the object
  $\bigl((\sigma\colon \alpha \to \beta), 1_{\beta}\bigr)$ is a
  terminal object of the category of inverse $\cat C$-factorizations
  of $\sigma$, and so the nerve of the category of inverse $\cat
  C$-factorizations of $\sigma$ is connected.  Thus, $G$ is fibering.
\end{proof}

\begin{prop}
  \label{prop:LRQuil}
  If $\cat M$ is a model category, $\cat C$ is a Reedy category, and
  $n \ge 0$, then the restriction functor $\cat M^{\cat C} \to \cat
  M^{\cat C^{\le n}}$ (see \defref{def:filtration}) is both a left
  Quillen functor and a right Quillen functor.
\end{prop}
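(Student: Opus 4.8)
The plan is to combine the two main theorems of the paper with the result just proved. By \propref{prop:TruncFib}, the inclusion functor $G\colon \cat C^{\le n} \to \cat C$ is both a fibering Reedy functor and a cofibering Reedy functor. This is exactly the hypothesis needed to invoke the two characterization theorems.

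First I would apply \thmref{thm:GoodisGood}: since $G$ is a fibering Reedy functor, the induced functor of diagram categories $G^{*}\colon \cat M^{\cat C} \to \cat M^{\cat C^{\le n}}$ is a right Quillen functor for every model category $\cat M$. This induced functor $G^{*}$ is precisely the restriction functor (see \defref{def:InducedDiag}), since restricting a $\cat C$-diagram $\diag X$ along the inclusion $G$ yields the composite $\cat C^{\le n} \xrightarrow{G} \cat C \xrightarrow{\diag X} \cat M$, which is the restriction of $\diag X$ to the subcategory $\cat C^{\le n}$.

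Dually, I would apply \thmref{thm:MainCofibering}: since $G$ is a cofibering Reedy functor, the same induced functor $G^{*}\colon \cat M^{\cat C} \to \cat M^{\cat C^{\le n}}$ is a left Quillen functor for every model category $\cat M$. Thus the restriction functor is simultaneously a left Quillen functor and a right Quillen functor, which is the claim.

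There is essentially no obstacle here: the proposition is a direct corollary, and all the work has been done in establishing \propref{prop:TruncFib} and in the two main theorems. The only point requiring a word of care is the identification of the abstractly-defined induced functor $G^{*}$ with the concrete restriction functor $\cat M^{\cat C} \to \cat M^{\cat C^{\le n}}$; but this identification is immediate from \defref{def:InducedDiag} once one observes that $G$ is an inclusion of a (full) subcategory.
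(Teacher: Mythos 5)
Your proposal is correct and is exactly the paper's proof: both cite \propref{prop:TruncFib} together with \thmref{thm:GoodisGood} and \thmref{thm:MainCofibering}. Your extra remark identifying the induced functor $G^{*}$ of \defref{def:InducedDiag} with the restriction functor is a harmless elaboration of what the paper leaves implicit.
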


\begin{proof}
  This follows from \propref{prop:TruncFib}, \thmref{thm:GoodisGood},
  and \thmref{thm:MainCofibering}.
\end{proof}

\propref{prop:LRQuil} extends to products of Reedy categories as
follows.

\begin{prop}
  \label{prop:TruncOne}
  If $\cat C$ and $\cat D$ are Reedy categories, $\cat M$ is a model
  category, and $n \ge 0$, then the restriction functor $\cat M^{\cat
    C \times \cat D} \to \cat M^{(\cat C^{\le n}\times \cat D)}$ (see
  \defref{def:filtration}) is both a left Quillen functor and a right
  Quillen functor.
\end{prop}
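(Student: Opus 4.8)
The plan is to realize the restriction functor as $H^{*}$ for a single Reedy functor $H$ and then apply \thmref{thm:GoodisGood} and \thmref{thm:MainCofibering}, so that the whole proposition reduces to checking that $H$ is both fibering and cofibering. I would give $\cat C \times \cat D$ the product Reedy structure, with $\drc{\cat C \times \cat D} = \drc{\cat C} \times \drc{\cat D}$, $\inv{\cat C \times \cat D} = \inv{\cat C} \times \inv{\cat D}$, and $\degree(\alpha, \beta) = \degree(\alpha) + \degree(\beta)$. With this structure $\cat C^{\le n} \times \cat D$ is a Reedy subcategory (using \propref{prop:SeqFilt} in the first factor), and the restriction functor in question is $H^{*}$ for the inclusion $H = G \times 1_{\cat D} \colon \cat C^{\le n} \times \cat D \to \cat C \times \cat D$, where $G \colon \cat C^{\le n} \to \cat C$ is the truncation inclusion of \propref{prop:TruncFib}. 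Thus it suffices to prove that $H$ is both a fibering and a cofibering Reedy functor.

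To check that $H$ is fibering, I would fix an object $(\alpha, \beta)$ of $\cat C^{\le n} \times \cat D$ (so $\degree(\alpha) \le n$) and a map $\sigma \colon (\alpha, \beta) \to (\alpha', \beta')$ in $\inv{\cat C \times \cat D}$, and analyze $\invfact{\cat C^{\le n} \times \cat D}{(\alpha, \beta)}{\sigma}$. The key observation is that the degree bound prevents any genuine truncation: since $\degree(\alpha) \le n$ and maps of $\inv{\cat C}$ never raise degree, every object $\alpha \to \gamma_{1}$ of $\undercat{\inv{\cat C}}{\alpha}$ already lies in $\cat C^{\le n}$, so the inclusion induces an isomorphism $\undercat{\inv{\cat C^{\le n}}}{\alpha} \cong \undercat{\inv{\cat C}}{\alpha}$ exactly as in the proof of \propref{prop:TruncFib}. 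Consequently the category of inverse $\cat C^{\le n} \times \cat D$-factorizations of $\sigma$ is isomorphic to the category of inverse $\cat C \times \cat D$-factorizations of $\sigma$ for the \emph{identity} functor, and I would finish by checking that the latter is empty or connected. If $\sigma$ is an identity map it is empty, because a non-identity map of $\inv{\cat C \times \cat D}$ strictly lowers total degree and cannot be followed by a map of $\inv{\cat C \times \cat D}$ returning to degree $\degree(\alpha, \beta)$; if $\sigma$ is not an identity map then $\bigl((\sigma \colon (\alpha, \beta) \to (\alpha', \beta')), 1_{(\alpha', \beta')}\bigr)$ is a terminal object (for any object $(\nu, \mu)$ the map $\mu$ is the unique morphism to it), so its nerve is connected. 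Hence $H$ is fibering, and \thmref{thm:GoodisGood} shows $H^{*}$ is a right Quillen functor for every $\cat M$.

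For cofibering I would run the mirror argument, replacing $\undercat{\inv{\cat C^{\le n}}}{\alpha} \cong \undercat{\inv{\cat C}}{\alpha}$ by $\overcat{\drc{\cat C^{\le n}}}{\alpha} \cong \overcat{\drc{\cat C}}{\alpha}$ and exhibiting a terminal object of $\drcfact{\cat C^{\le n} \times \cat D}{(\alpha, \beta)}{\sigma}$; alternatively, since truncation commutes with passage to opposites (so that $H\op$ is again an inclusion of the same form, now for $\cat C\op \times \cat D\op$), the fibering case already proved shows $H\op$ is fibering, whence $H$ is cofibering by \propref{prop:OpFiberingCofibering}. Either way \thmref{thm:MainCofibering} shows $H^{*}$ is a left Quillen functor for every $\cat M$, completing the proof. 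The one pitfall I would flag is that the latching and matching categories of a product Reedy category are \emph{not} products of the corresponding categories in the two factors, so a naive ``apply \propref{prop:LRQuil} one variable at a time'' is not directly available; I avoid this entirely by arguing with the categories of $\cat C$-factorizations, where the bound $\degree(\alpha) \le n$ forces every factorization to stay inside $\cat C^{\le n}$ and collapses the fibering and cofibering conditions to the trivial ones for an identity functor.
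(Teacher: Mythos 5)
Your proof is correct, but it takes a genuinely different route from the paper's. The paper's proof is a two-line reduction: it invokes the isomorphism of model categories $\cat M^{\cat C\times\cat D} \cong (\cat M^{\cat D})^{\cat C}$ (citing \cite{MCATL}*{Thm.~15.5.2}), under which the restriction functor in question becomes the restriction $(\cat M^{\cat D})^{\cat C} \to (\cat M^{\cat D})^{\cat C^{\le n}}$, and then applies \propref{prop:LRQuil} with $\cat M^{\cat D}$ playing the role of the model category --- that is, exactly the ``one variable at a time'' argument you flagged as unavailable, made legitimate by the cited theorem identifying the product Reedy structure with the iterated one. You instead verify the fibering and cofibering conditions directly for the inclusion $H = G\times 1_{\cat D}$: the degree bound $\degree(\alpha)\le n$ forces every inverse (resp.\ direct) factorization to stay inside $\cat C^{\le n}\times\cat D$, so the factorization categories agree with those for the identity functor, which are empty or have a terminal (resp.\ initial) object; then \thmref{thm:GoodisGood} and \thmref{thm:MainCofibering} finish. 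Your approach buys self-containedness --- no appeal to the external Thm.~15.5.2 --- and is a genuine generalization of the proof of \propref{prop:TruncFib} to the product setting, at the cost of redoing the combinatorial check that the paper's citation absorbs. One small slip: in the cofibering mirror argument, the canonical object $\bigl((\sigma),1\bigr)$ of the direct factorization category is \emph{initial}, not terminal (it maps \emph{to} every other object, as in the proof of \thmref{thm:RestCosFib}); this does not affect connectivity of the nerve, and your alternative derivation via $H\op$ and \propref{prop:OpFiberingCofibering} is in any case airtight.
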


\begin{proof}
  The category $\cat M^{\cat C\times \cat D}$ of $(\cat C\times\cat
  D)$-diagrams in $\cat M$ is isomorphic as a model category to the
  category $(\cat M^{\cat D})^{\cat C}$ of $\cat C$-diagrams in $\cat
  M^{\cat D}$ (see \cite{MCATL}*{Thm.~15.5.2}), and so the result
  follows from  \propref{prop:LRQuil}.
\end{proof}

\begin{prop}
  \label{prop:TruncAll}
  If $\cat M$ is a model category, $m$ is a positive integer, and for
  $1 \le i \le m$ we have a Reedy category $\cat C_{i}$ and a
  nonnegative integer $n_{i}$, then the restriction functor
  \begin{displaymath}
    \cat M^{\cat C_{1}\times \cat C_{2}\times \cdots \cat C_{m}}
    \longrightarrow
    \cat M^{\cat C_{1}^{\le n_{1}}\times \cat C_{2}^{\le n_{2}}\times
      \cdots \cat C_{m}^{\le n_{m}}}
  \end{displaymath}
  (see \defref{def:filtration}) is both a left Quillen functor and a
  right Quillen functor.
\end{prop}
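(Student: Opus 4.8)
The plan is to prove the proposition by induction on $m$, exhibiting the restriction functor as a composite of $m$ restriction functors, each of which truncates a single factor while leaving the remaining factors untouched, and then applying \propref{prop:TruncOne} together with the observation that a composite of left (respectively right) Quillen functors is again a left (respectively right) Quillen functor. The latter is immediate from \defref{def:QuilFunc}, since the property of preserving cofibrations and trivial cofibrations (respectively fibrations and trivial fibrations) is closed under composition.

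To set this up, I would first record the elementary fact that a finite product of Reedy categories is again a Reedy category: the degree of an object is the sum of the degrees of its coordinates, the direct and inverse subcategories are the corresponding products $\drc{(\cat A \times \cat B)} = \drc{\cat A} \times \drc{\cat B}$ and $\inv{(\cat A \times \cat B)} = \inv{\cat A} \times \inv{\cat B}$, and the required unique factorization of a map is obtained coordinatewise. Moreover the product is symmetric, so reordering factors gives an isomorphism of Reedy categories, and hence an isomorphism of the associated Reedy model categories of diagrams. In particular, for each $i$ the category $\cat C_{1}^{\le n_{1}} \times \cdots \times \cat C_{i-1}^{\le n_{i-1}} \times \cat C_{i+1} \times \cdots \times \cat C_{m}$ is itself a Reedy category, which I will call $\cat D_{i}$.

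Next I would factor the inclusion of Reedy categories $\cat C_{1}^{\le n_{1}} \times \cdots \times \cat C_{m}^{\le n_{m}} \to \cat C_{1} \times \cdots \times \cat C_{m}$ as the composite of the $m$ inclusions that truncate one factor at a time, so that the restriction functor of the proposition is the composite (in the reverse order, since the induced functor of a composite $G_{2}G_{1}$ is $G_{1}^{*}G_{2}^{*}$) of the corresponding single-factor restriction functors. The $i$-th such restriction functor, after using the symmetry of the product to move the $i$-th coordinate into the first position, is exactly the restriction functor $\cat M^{\cat C_{i} \times \cat D_{i}} \to \cat M^{\cat C_{i}^{\le n_{i}} \times \cat D_{i}}$ of \propref{prop:TruncOne}, with $\cat D_{i}$ in the role of $\cat D$; hence it is both a left and a right Quillen functor.

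Composing these single-factor restriction functors, each of which is simultaneously left and right Quillen, then shows that the full restriction functor is both a left and a right Quillen functor, completing the induction. I expect the only real subtlety to be bookkeeping: keeping the factors in the correct position so that each step matches the precise form of \propref{prop:TruncOne}, and resisting the (false) temptation to write the truncated product as a product of truncations. We truncate individual factors, not the product itself, so each intermediate category is genuinely a product of Reedy categories (a mix of truncated and untruncated ones) to which \propref{prop:TruncOne} applies. No analytic content arises; the argument is entirely formal.
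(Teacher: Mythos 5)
Your proposal is correct and follows essentially the same route as the paper: the paper also factors the restriction functor as the composite of $m$ single-factor truncation restrictions and invokes \propref{prop:TruncOne} at each stage. The extra bookkeeping you supply (products of Reedy categories are Reedy, and the symmetry isomorphism needed to put the truncated factor in the first position so that \propref{prop:TruncOne} applies verbatim) is left implicit in the paper's one-line proof, so your version is simply a more careful writing of the same argument.
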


\begin{proof}
  The restriction functor is the composition of the restriction
  functors
  \begin{multline*}
    \cat M^{\cat C_{1}\times \cat C_{2}\times \cdots \cat C_{m}}
    \longrightarrow
    \cat M^{\cat C_{1}^{\le n_{1}}\times \cat C_{2}\times \cdots \cat
      C_{m}}\\
    \longrightarrow
    \cat M^{\cat C_{1}^{\le n_{1}}\times \cat C_{2}^{\le n_{2}}\times
      \cdots \cat C_{m}} \longrightarrow \cdots \longrightarrow
    \cat M^{\cat C_{1}^{\le n_{1}}\times \cat C_{2}^{\le n_{2}}\times
      \cdots \cat C_{m}^{\le n_{m}}}
  \end{multline*}
  and so the result follows from \propref{prop:TruncOne}.
\end{proof}

\subsection{Skeleta}
\label{sec:skeleta}

\begin{defn}
  \label{def:skeleton}
  Let $\cat C$ be a Reedy category, let $n \ge 0$, and let $\cat M$ be
  a model category.
  \begin{enumerate}
  \item Since $\cat M$ is cocomplete, the restriction functor $\cat
    M^{\cat C} \to \cat M^{\cat C^{\le n}}$ has a left adjoint
    $\LKan\colon \cat M^{\cat C^{\le n}} \to \cat M^{\cat C}$ (see
    \cite{borceux-I}*{Thm.~3.7.2}), and we define the
    \emph{$n$-skeleton functor} $\skel{n}\colon \cat M^{\cat C} \to
    \cat M^{\cat C}$ to be the composition
    \begin{displaymath}
      \xymatrix@C=5em{
        {\cat M^{\cat C}} \ar[r]^-{\text{restriction}}
        & {\cat M^{\cat C^{\le n}}} \ar[r]^-{\LKan}
        & {\cat M^{\cat C} \Period}
      }
    \end{displaymath}
  \item Since $\cat M$ is complete, the restriction functor $\cat
    M^{\cat C} \to \cat M^{\cat C^{\le n}}$ has a right adjoint
    $\RKan\colon \cat M^{\cat C^{\le n}} \to \cat M^{\cat C}$ (see
    \cite{borceux-I}*{Thm.~3.7.2}), and we define the
    \emph{$n$-coskeleton functor} $\coskel{n}\colon \cat M^{\cat C} \to
    \cat M^{\cat C}$ to be the composition
    \begin{displaymath}
      \xymatrix@C=5em{
        {\cat M^{\cat C}} \ar[r]^-{\text{restriction}}
        & {\cat M^{\cat C^{\le n}}} \ar[r]^-{\RKan}
        & {\cat M^{\cat C} \Period}
      }
    \end{displaymath}
  \end{enumerate}
\end{defn}

\begin{prop}
  \label{prop:SkelQuil}
  If $\cat C$ is a Reedy category, $n \ge 0$, and $\cat M$ is a model
  category, then
  \begin{enumerate}
  \item the $n$-skeleton functor $\skel{n}\colon \cat M \to \cat M$ is
    a left Quillen functor, and
  \item the $n$-coskeleton functor $\coskel{n}\colon \cat M \to \cat
    M$ is a right Quillen functor.
  \end{enumerate}
\end{prop}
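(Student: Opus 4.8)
The plan is to exploit the fact that both $\skel{n}$ and $\coskel{n}$ are built from the restriction functor $\cat M^{\cat C} \to \cat M^{\cat C^{\le n}}$, which \propref{prop:LRQuil} has already shown to be simultaneously a left and a right Quillen functor. So the entire content will be a bookkeeping argument combining \propref{prop:LRQuil} with \propref{prop:QuilFunc}.

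For part~(1), recall that $\skel{n}$ is the composite of the restriction functor followed by $\LKan$. First I would observe that $\LKan$ is, by its very definition in \defref{def:skeleton}, the left adjoint of the restriction functor $\cat M^{\cat C} \to \cat M^{\cat C^{\le n}}$; since \propref{prop:LRQuil} tells us this restriction functor is a \emph{right} Quillen functor, \propref{prop:QuilFunc} immediately yields that its left adjoint $\LKan$ is a left Quillen functor. The restriction functor is itself a left Quillen functor, again by \propref{prop:LRQuil}. It then remains to note that a composite of left Quillen functors is a left Quillen functor: a composite of left adjoints is a left adjoint, and a functor preserving cofibrations and trivial cofibrations, followed by another such functor, again preserves cofibrations and trivial cofibrations. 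Hence $\skel{n}$ is a left Quillen functor.

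Part~(2) is entirely dual. Here $\coskel{n}$ is the composite of the restriction functor followed by $\RKan$, where $\RKan$ is the right adjoint of the restriction functor. Since that restriction functor is a left Quillen functor by \propref{prop:LRQuil}, \propref{prop:QuilFunc} gives that its right adjoint $\RKan$ is a right Quillen functor; as the restriction is also a right Quillen functor, the composite $\coskel{n}$ is a right Quillen functor by the same composition argument, now applied to right adjoints and to the preservation of fibrations and trivial fibrations.

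I expect no genuine obstacle: the substance is already packaged in \propref{prop:LRQuil} and \propref{prop:QuilFunc}. The only points requiring a word of care are (a) correctly matching each Kan-extension adjoint to the \emph{opposite} Quillen property of the restriction functor---the left adjoint $\LKan$ is a left Quillen functor because the restriction is a \emph{right} Quillen functor, and symmetrically for $\RKan$---and (b) the elementary observation that Quillen functors of a given handedness are closed under composition.
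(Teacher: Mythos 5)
Your proof is correct and takes essentially the same route as the paper: both deduce from \propref{prop:LRQuil} and \propref{prop:QuilFunc} that $\LKan$ (resp.\ $\RKan$), being the left (resp.\ right) adjoint of the restriction functor, is a left (resp.\ right) Quillen functor, and then compose with the restriction functor, which is Quillen of both handedness. Your only addition is spelling out explicitly that Quillen functors of a given handedness are closed under composition, a fact the paper uses implicitly.
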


\begin{proof}
  Since the restriction functor is a right Quillen functor (see
  \propref{prop:LRQuil}), its left adjoint is a left Quillen functor
  (see \propref{prop:QuilFunc}).  Since the restriction is also a left
  Quillen functor (see \propref{prop:LRQuil}), its composition with
  its left adjoint is a left Quillen functor.  Similarly, the
  composition of restriction with its right adjoint is a right Quillen
  functor.
\end{proof}

\subsection{(Multi)cosimplicial and (multi)simplicial objects}
\label{sec:(multi)(co)simplicial}

In this section we consider simplicial and cosimplicial diagrams, as
well as their multidimensional versions, $m$-cosimplicial and
$m$-simplicial diagrams (see \defref{def:Delta}).  Simplicial and
cosimplicial diagrams are standard tools in homotopy theory, while
$m$-simplicial and $m$-cosimplicial ones have seen an increase in
usage in recent years, most notably through their appearance in the
calculus of functors (see \cites{cosimplcalc, FTHoLinks}).

The important questions are whether the restrictions to various
subdiagrams of $m$-simplicial and $m$-cosimplicial diagrams are
Quillen functors (and the answer will be yes in all cases that we
consider here).  The subdiagrams we will look at are the restricted
(co)simplicial objects, diagonals of $m$-(co)simplicial objects, and
slices of $m$-(co)simplicial objects.  These are considered in
Sections \ref{sec:Restricted}, \ref{sec:diagonal}, and
\ref{sec:slice}, respectively.  In particular, the fibrancy of the
slices of a fibrant $m$-dimensional cosimplicial object is needed to
justify taking its totalization one dimension at a time, as is done in
both \cite{cosimplcalc} and \cite{FTHoLinks}.  This and some further
results about totalizations of $m$-cosimplicial objects will be
addressed in future work.

We begin by recalling the definitions:
\begin{defn}
  \label{def:Delta}
  For every nonnegative integer $n$, we let $[n]$ denote the ordered
  set $(0, 1, 2, \ldots, n)$.
  \begin{enumerate}
  \item The \emph{cosimplicial indexing category} $\bD$ is the
    category with objects the $[n]$ for $n \ge 0$ and with
    $\bD\bigl([n],[k]\bigr)$ the set of weakly monotone functions $[n]
    \to [k]$.
  \item A \emph{cosimplicial object} in a category $\cat M$ is a
    functor from $\bD$ to $\cat M$.
  \item If $m$ is a positive integer, then an \emph{$m$-cosimplicial
      object} in $\cat M$ is a functor from $\bD^{m}$ to $\cat M$.
  \item The \emph{simplicial indexing category} $\bD\op$, the opposite
    category of $\bD$.
  \item A \emph{simplicial object} in a category $\cat M$ is a functor
    from $\bD\op$ to $\cat M$.
  \item If $m$ is a positive integer, then an \emph{$m$-simplicial
      object} in $\cat M$ is a functor from $(\bD^{m})\op =
    (\bD\op)^{m}$ to $\cat M$.
  \end{enumerate}
\end{defn}

\begin{defn}
  \label{def:CosReSt}
  The \emph{standard Reedy category structure} on the cosimplicial
  indexing category $\bD$ (see \defref{def:Delta}) is the one in which
  \begin{itemize}
  \item the direct subcategory $\drc{\bD}$ consists of the injective
    functions and
  \item the inverse subcategory $\inv{\bD}$ consists of the surjective
    functions,
  \end{itemize}
  and the \emph{standard degree function} assigns the object $[n]$
  degree $n$.
\end{defn}

\subsubsection{Restricted cosimplicial objects and restricted simplicial
  objects}
\label{sec:Restricted}

For examples of fibering Reedy subcategories and cofibering Reedy
subcategories that include all of the objects, we consider the
restricted cosimplicial (or semi-cosimplicial) and restricted
simplicial (or semi-simplicial) indexing categories.

\begin{defn}
  \label{def:DeltaRest}
  For $n$ a nonnegative integer, let $[n]$ denote the ordered set
  $(0, 1, 2, \ldots, n)$.
  \begin{enumerate}
  \item The \emph{restricted cosimplicial indexing category} $\bDrest$
    is the category with objects the ordered sets $[n]$ for $n \ge 0$
    and with $\bDrest\bigl([n], [k]\bigr)$ the \emph{injective} order
    preserving maps $[n] \to [k]$.
    
    The category $\bDrest$ is thus a subcategory of $\bD$, the
    cosimplicial indexing category (see \defref{def:Delta}).  In fact,
    $\bDrest = \drc{\bD}$, the direct subcategory of $\bD$ (see
    \defref{def:CosReSt}).
  \item The \emph{restricted simplicial indexing category}
    $\bDrest\op$ is the opposite of the restricted cosimplicial
    indexing category.
  \item If $\cat M$ is a category, then a \emph{restricted
      cosimplicial object} in $\cat M$ is a functor from $\bDrest$ to
    $\cat M$.
  \item If $\cat M$ is a category, a \emph{restricted simplicial
      object} in $\cat M$ is a functor from $(\bDrest)\op$ to $\cat
    M$.
  \end{enumerate}
\end{defn}

If we let $G\colon \bDrest \to \bD$ be the inclusion, then for $\diag
X$ a cosimplicial object in $\cat M$ the induced diagram $G^{*}\diag
X$ is a restricted cosimplicial object in $\cat M$, called the
\emph{underlying restricted cosimplicial object} of $\diag X$; it is
obtained from $\diag X$ by ``forgetting the codegeneracy operators''.
Similarly, if we let $G\colon \bDrest\op \to \bD\op$ be the inclusion,
then for $\diag Y$ a simplicial object in $\cat M$ the induced diagram
$G^{*}\diag Y$ is a restricted simplicial object in $\cat M$, called
the \emph{underlying restricted simplicial object} of $\diag Y$,
obtained from $\diag Y$ by ``forgetting the degeneracy operators''.

\begin{prop}
  \label{prop:DrcFib}
  Let $\cat D$ be a Reedy category and let $\cat C = \drc{D}$, the
  direct subcategory of $\cat D$.
  \begin{enumerate}
  \item The inclusion $\cat C \to \cat D$ is both a fibering Reedy
    functor and a cofibering Reedy functor.
  \item The inclusion $\cat C\op \to \cat D\op$ is both a fibering
    Reedy functor and a cofibering Reedy functor.
  \end{enumerate}
\end{prop}

\begin{proof}
  We will prove part~1; part~2 will then follow from
  \propref{prop:OpFiberingCofibering}.

  We first prove that the inclusion $\cat C \to \cat D$ is the
  inclusion of a cofibering Reedy subcategory.  Let
  $\sigma\colon \beta \to \alpha$ be a map in $\cat D$.  If $\sigma$
  is an identity map, then the category of direct
  $\cat C$-factorizations of $\sigma$ is empty.  If $\sigma$ is not an
  identity map, then
  $\bigl((\sigma\colon \beta \to \alpha), 1_{\beta}\bigr)$ is an
  object of the category of direct $\cat C$-factorizations of $\sigma$
  that maps to every other object of that category, and so the nerve
  of that category is connected.

  We now prove that the inclusion $\cat C \to \cat D$ is the inclusion
  of a fibering Reedy subcategory.  Let
  $\sigma\colon \alpha \to \beta$ be a map in $\inv{\cat D}$.  Since
  there are no non-identity maps in $\cat C$, the category of inverse
  $\cat C$-factorizations of $\sigma$ is empty.
\end{proof}

\begin{thm}
  \label{thm:RestCosFib}
  \leavevmode
  \begin{enumerate}
  \item The inclusion $\bDrest \to \bD$ of the restricted cosimplicial
    indexing category into the cosimplicial indexing category is both
    a fibering Reedy functor and a cofibering Reedy functor.
  \item The inclusion $\bDrest\op \to \bD\op$ of the restricted
    simplicial indexing category into the simplicial indexing category
    is both a fibering Reedy functor and a cofibering Reedy functor.
  \end{enumerate}
\end{thm}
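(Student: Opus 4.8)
The plan is to prove part~(1) directly and then deduce part~(2) from it using the opposite-category machinery developed in \secref{sec:Opposites}. Since $\bD$ is a Reedy category with $\drc{\bD}$ the injective order-preserving maps (the coface operators) and $\inv{\bD}$ the surjective order-preserving maps (the codegeneracy operators), and $\bDrest$ consists exactly of the objects together with the injective maps, the inclusion $G\colon \bDrest \to \bD$ sends $\drc{\bDrest} = \bDrest$ into $\drc{\bD}$ and sends $\inv{\bDrest}$ (which contains only identity maps) into $\inv{\bD}$. Thus $G$ is a Reedy functor, and it remains to verify the fibering and cofibering conditions of \defref{def:goodsub}.

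First I would check that $G$ is a fibering Reedy functor. The key observation is that $\inv{\bDrest}$ contains only identity maps. Hence for any object $\alpha = [n]$ and any map $\sigma\colon [n] \to \beta$ in $\inv{\bD}$, an object of $\invfact{\bDrest}{\alpha}{\sigma}$ requires a \emph{non-identity} map $\nu\colon \alpha \to \gamma$ in $\inv{\bDrest}$ (see \defref{def:CFactors}), and no such map exists. Therefore the category $\invfact{\bDrest}{\alpha}{\sigma}$ is empty for every $\sigma$, so its nerve is empty, and the fibering condition of \defref{def:goodsub} is satisfied vacuously. The same reasoning handles the cofibering condition but requires genuine work: for a map $\sigma\colon \beta \to [n]$ in $\drc{\bD}$, an object of $\drcfact{\bDrest}{\alpha}{\sigma}$ is a non-identity injection $\nu\colon \gamma \to [n]$ in $\drc{\bDrest}$ together with an injection $\mu\colon \beta \to \gamma$ in $\drc{\bD}$ with $G\nu \circ \mu = \sigma$, i.e.\ a factorization of the injection $\sigma$ through a proper subobject. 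Here the nerve can be nonempty, so I must show it is connected.

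For the cofibering verification I would argue that $\drcfact{\bDrest}{\alpha}{\sigma}$ has a terminal object when nonempty, which immediately gives connectedness. Given a non-identity injection $\sigma\colon \beta \to [n]$ with $\beta = [k]$ and $k < n$, I would take the object $\bigl((\sigma\colon [k] \to [n]),\, 1_{[k]}\bigr)$, i.e.\ factor $\sigma$ as the identity on $[k]$ followed by $\sigma$ itself, viewed as a non-identity map $[k] \to [n]$ in $\drc{\bDrest}$. For any other factorization $\sigma = G\nu \circ \mu$ with $\nu\colon \gamma \to [n]$ and $\mu\colon \beta \to \gamma$, the map $\mu\colon [k] \to \gamma$ itself serves as the required morphism $\tau$ in $\drc{\bDrest}$ making the two triangles of \defref{def:CFactors} commute, and it is the unique such map because all the maps in sight are monomorphisms in $\bD$. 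Hence $\bigl((\sigma), 1_{[k]}\bigr)$ is terminal and the nerve is connected. The case where $\sigma$ is an identity map gives an empty category (no non-identity $\nu$ can factor it), which is also allowed.

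With part~(1) established, part~(2) follows formally. By \propref{prop:OpReedy} the opposite of $\bD$ is again Reedy with the roles of direct and inverse subcategories exchanged, and $\bDrest\op \to \bD\op$ is the opposite of the inclusion $G$. By \propref{prop:OpFiberingCofibering}, $G$ is fibering if and only if $G\op$ is cofibering, and $G$ is cofibering if and only if $G\op$ is fibering. Since part~(1) shows $G$ is both fibering and cofibering, $G\op\colon \bDrest\op \to \bD\op$ is both cofibering and fibering, which is exactly part~(2). I expect the main obstacle to be the careful bookkeeping in the cofibering case—specifically, confirming that the candidate terminal object is well-defined (that $\sigma$ really is a legitimate non-identity map of $\bDrest$) and that the connecting morphism $\tau$ is forced by the monomorphism property of injective order-preserving maps; the fibering case is essentially free because $\inv{\bDrest}$ is trivial.
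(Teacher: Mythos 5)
Your proposal is correct and takes essentially the same approach as the paper: the fibering condition holds vacuously because $\inv{\bDrest}$ contains only identity maps, the cofibering condition is verified by exhibiting $\bigl((\sigma\colon\beta\to\alpha),1_{\beta}\bigr)$ as an object of $\drcfact{\bDrest}{\alpha}{\sigma}$ admitting a map to every other object, and part~(2) is deduced from \propref{prop:OpFiberingCofibering}. One terminological slip: the object you construct is \emph{initial} rather than terminal (the morphisms $\tau=\mu$ you describe go \emph{from} it \emph{to} the other objects), but this is immaterial, since either an initial or a terminal object gives connectedness of the nerve.
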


\begin{proof}
  This follows from \propref{prop:DrcFib}.
\end{proof}

\begin{thm}
  \label{thm:RestCosQuil}
  Let $\cat M$ be a model category.
  \begin{enumerate}
  \item The functor $\cat M^{\bD} \to \cat M^{\bDrest}$ that ``forgets
    the codegeneracies'' of a cosimplicial object is both a left
    Quillen functor and a right Quillen functor.
  \item The functor $\cat M^{\bD\op} \to \cat M^{\bDrest\op}$ that
    ``forgets the degeneracies'' of a simplicial object is both a left
    Quillen functor and a right Quillen functor.
  \end{enumerate}
\end{thm}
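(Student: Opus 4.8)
The plan is to recognize both ``forgetful'' functors as the diagram-level functors $G^{*}$ associated (via \defref{def:InducedDiag}) to the inclusions of the restricted indexing categories, and then simply to combine \thmref{thm:RestCosFib} with the two main theorems. For part~1, I would take $G\colon \bDrest \to \bD$ to be the inclusion. As already noted in the text immediately preceding \thmref{thm:RestCosFib}, the induced functor $G^{*}\colon \cat M^{\bD} \to \cat M^{\bDrest}$ is exactly the functor that forgets the codegeneracies, so the identification of the functor in the statement with $G^{*}$ is immediate and requires no computation.

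With that identification in hand, I would invoke \thmref{thm:RestCosFib}(1), which says that $G$ is both a fibering Reedy functor and a cofibering Reedy functor. Applying \thmref{thm:GoodisGood} (fibering implies that $G^{*}$ is a right Quillen functor for every model category $\cat M$) shows that $G^{*}$ is a right Quillen functor, and applying \thmref{thm:MainCofibering} (cofibering implies that $G^{*}$ is a left Quillen functor for every $\cat M$) shows that $G^{*}$ is a left Quillen functor. This settles part~1. For part~2 the argument is word-for-word the same, now with $G\colon \bDrest\op \to \bD\op$ the inclusion, whose induced functor $G^{*}\colon \cat M^{\bD\op} \to \cat M^{\bDrest\op}$ forgets the degeneracies; one simply cites \thmref{thm:RestCosFib}(2) in place of \thmref{thm:RestCosFib}(1).

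I expect no genuine obstacle here, since this theorem is essentially a corollary. All of the substantive work has already been done: the combinatorial verification that the relevant categories of $\bDrest$-factorizations are empty or connected is carried out in the proof of \thmref{thm:RestCosFib}, and the passage from that emptiness-or-connectedness condition to the Quillen conclusion is precisely the content of \thmref{thm:GoodisGood} and \thmref{thm:MainCofibering}. The only point worth stating explicitly is the bookkeeping identification of the forgetful functor with $G^{*}$, which follows directly from \defref{def:InducedDiag}.
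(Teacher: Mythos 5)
Your proposal is correct and is exactly the paper's proof: the paper likewise deduces \thmref{thm:RestCosQuil} directly from \thmref{thm:RestCosFib}, \thmref{thm:GoodisGood}, and \thmref{thm:MainCofibering}, with the identification of the forgetful functor as $G^{*}$ for the inclusion already made in the text preceding \thmref{thm:RestCosFib}.
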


\begin{proof}
  This follows from \thmref{thm:RestCosFib}, \thmref{thm:GoodisGood},
  and \thmref{thm:MainCofibering}.
\end{proof}

\subsubsection{Diagonals of multicosimplicial and multisimplicial
  objects}
\label{sec:diagonal}

\begin{defn}
  \label{def:diag}
  Let $m$ be a positive integer.
  \begin{enumerate}
  \item The \emph{diagonal embedding} of the category $\bD$ into
    $\bD^{m}$ is the functor $D\colon \bD \to \bD^{m}$ that takes the
    object $[k]$ of $\bD$ to the object $\bigl(\underbrace{[k], [k],
      \ldots, [k]}_{\text{$m$ times}}\bigr)$ of $\bD^{m}$ and the
    morphism $\phi\colon [p] \to [q]$ of $\bD$ to the morphism
    $(\phi^{m})$ of $\bD^{m}$.
  \item If $\cat M$ is a category and $\diag X$ is an $m$-cosimplicial
    object in $\cat M$, then the \emph{diagonal} $\diagon\diag X$ of
    $\diag X$ is the cosimplicial object in $\cat M$ that is the
    composition 
    \begin{displaymath}
      \bD \xrightarrow{D} \bD^{m} \xrightarrow{\diag X} \cat M\Comma
    \end{displaymath}
    so that $(\diagon\diag X)^{k} = \diag X^{(k, k, \ldots, k)}$.
  \item If $\cat M$ is a category and $\diag X$ is an $m$-simplicial
    object in $\cat M$, then the \emph{diagonal} $\diagon\diag X$ of
    $\diag X$ is the simplicial object in $\cat M$ that is the
    composition 
    \begin{displaymath}
      \bD\op \xrightarrow{D\op} (\bD^{m})\op = (\bD\op)^{m}
      \xrightarrow{\diag X} \cat M\Comma
    \end{displaymath}
    so that $(\diagon\diag X)_{k} = \diag X_{(k, k, \ldots, k)}$.
  \end{enumerate}
\end{defn}

\begin{thm}
  \label{thm:DiagFibr}
  Let $m$ be a positive integer.
  \begin{enumerate}
  \item The diagonal embedding $D\colon \bD \to \bD^{m}$ is a fibering
    Reedy functor.
  \item The diagonal embedding $D\op\colon \bD\op \to (\bD^{m})\op =
    (\bD\op)^{m}$ is a cofibering Reedy functor.
  \end{enumerate}
\end{thm}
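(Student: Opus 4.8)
The plan is to prove part~(1) and then deduce part~(2) for free. Since the diagonal embedding $D\op\colon \bD\op \to (\bD\op)^m$ is precisely the opposite functor of $D\colon \bD \to \bD^m$, \propref{prop:OpFiberingCofibering} shows that $D\op$ is a cofibering Reedy functor the moment $D$ is a fibering Reedy functor. So all of the work is in part~(1).

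For part~(1), I would fix an object $[a]$ of $\bD$, an object $\beta = ([b_1], \dots, [b_m])$ of $\bD^m$, and a map $\sigma = (\sigma_1, \dots, \sigma_m)\colon D[a] \to \beta$ in $\inv{\bD^m}$, so that each $\sigma_i\colon [a] \to [b_i]$ is a surjective weakly monotone map. The first observation is that an object of $\invfact{\bD}{[a]}{\sigma}$ is essentially just a non-identity surjection $\nu\colon [a] \to [c]$ in $\inv{\bD}$ through which all of the $\sigma_i$ factor: because $\nu$ is an epimorphism, the required tuple $\mu = (\mu_1, \dots, \mu_m)$ with $\mu_i\nu = \sigma_i$ is uniquely determined by $\nu$, and each $\mu_i$ is automatically surjective since $\sigma_i$ is. The same epimorphism argument shows that a morphism $\nu \to \nu'$ in $\invfact{\bD}{[a]}{\sigma}$ is a surjection $\tau$ with $\tau\nu = \nu'$, that such a $\tau$ is unique when it exists, and that its compatibility with the $\mu$'s is automatic; in particular $\invfact{\bD}{[a]}{\sigma}$ is a poset.

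The combinatorial heart of the argument is to encode each surjection out of $[a]$ by its \emph{jump set}: a surjection $[a] \to [c]$ is determined by the set $J \subseteq \{0, 1, \dots, a-1\}$ of indices $j$ at which its value strictly increases, and $\sigma_i$ factors through $\nu$ exactly when $J_{\sigma_i} \subseteq J_\nu$. Consequently the objects of $\invfact{\bD}{[a]}{\sigma}$ are identified with the subsets $J$ satisfying $J_0 \subseteq J \subsetneq \{0, 1, \dots, a-1\}$, where $J_0 := \bigcup_i J_{\sigma_i}$, and a (necessarily unique) morphism $J \to J'$ exists iff $J' \subseteq J$. This category is empty exactly when $J_0 = \{0, 1, \dots, a-1\}$, i.e.\ when only the identity surjection factors all the $\sigma_i$; otherwise $J_0$ is itself a legitimate non-identity object, and every object $J$ admits a unique morphism $J \to J_0$, so $J_0$ is a terminal object. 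A category with a terminal object has connected nerve, so $\invfact{\bD}{[a]}{\sigma}$ is always empty or connected, which is exactly the fibering condition for $D$.

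The step I expect to require the most care is the jump-set dictionary: checking that factoring $\sigma_i$ through $\nu$ is equivalent to the containment $J_{\sigma_i} \subseteq J_\nu$, and confirming that the coarsest common factorization $\nu_0$ (with jump set $J_0$) really is a \emph{non-identity} map---hence a genuine object of the category---precisely when the category is nonempty. Once the category is recognized as the poset of subsets $J$ with $J_0 \subseteq J \subsetneq \{0, 1, \dots, a-1\}$ ordered by reverse inclusion, the terminal object $J_0$ and the resulting connectedness are immediate.
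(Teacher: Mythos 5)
Your proof is correct, and its overall skeleton matches the paper's: both deduce part~(2) from part~(1) via \propref{prop:OpFiberingCofibering}, and both settle part~(1) by exhibiting a terminal object of the category of inverse $\bD$-factorizations, or showing that category is empty. The difference lies in how that terminal object is produced. The paper simply cites \cite{diagn}*{Lem.~5.1}, which says that every map in $\inv{\bD^{m}}$ out of a diagonal object has a terminal factorization through a diagonal object; if that factorization is through the identity the category is empty, and otherwise the terminal factorization is a terminal object. You instead prove this fact from scratch: using that the maps of $\inv{\bD}$ are epimorphisms, you identify $\invfact{\bD}{[a]}{\sigma}$ with the poset of jump sets $J$ satisfying $J_{0} \subseteq J \subsetneq \{0,1,\dots,a-1\}$ under reverse inclusion, where $J_{0} = \bigcup_{i} J_{\sigma_{i}}$, and observe that $J_{0}$ is terminal (or, when $J_{0}$ is the full set, that the poset is empty). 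Your jump-set dictionary is sound: since all maps involved are monotone surjections, factoring $\sigma_{i}$ through $\nu$ is indeed equivalent to $J_{\sigma_{i}} \subseteq J_{\nu}$, the induced $\mu_{i}$ is automatically monotone and surjective, and the uniqueness and compatibility claims for morphisms follow from $\nu$ being an epimorphism. So your argument is a self-contained, more elementary replacement for the citation; it also yields slightly more information than the paper records (the factorization category is a poset with a terminal object, hence has contractible, not merely connected, nerve), at the cost of the combinatorial bookkeeping that the paper avoids by outsourcing the key lemma.
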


\begin{proof}
  We will prove part~1; part~2 will then follow from
  \propref{prop:OpFiberingCofibering}.

  We will identify $\bD$ with its image in $\bD^{m}$, so that the
  objects of $\bD$ are the $m$-tuples $\bigl([k],[k],\ldots,
  [k]\bigr)$.  If $(\alpha_{1},\alpha_{2}, \ldots, \alpha_{m})\colon
  \bigl([k],[k],\ldots, [k]\bigr) \to \bigl([p_{1}], [p_{2}], \ldots,
  [p_{m}]\bigr)$ is a map in $\inv{\bD^{m}}$, then
  \cite{diagn}*{Lem.~5.1} implies that it has a terminal factorization
  through a diagonal object of $\bD^{m}$.  If that terminal
  factorization is through the identity map of $\bigl([k],[k], \ldots,
  [k]\bigr)$, then the category of inverse $\bD$-factorizations of
  $(\alpha_{1},\alpha_{2},\ldots, \alpha_{m})$ is empty; if that
  terminal factorization is not through the identity map, then it is a
  terminal object of the category of inverse $\bD$-factorizations of
  $(\alpha_{1},\alpha_{2}, \ldots, \alpha_{m})$, and so the nerve of
  that category is connected.
\end{proof}

Part~1 of the following corollary appears in \cite{diagn}.

\begin{cor}
  \label{cor:DiagFibr}
  Let $m$ be a positive integer and let $\cat M$ be a model category.
  \begin{enumerate}
  \item The functor that takes an $m$-cosimplicial object in $\cat M$
    to its diagonal cosimplicial object is a right Quillen functor.
  \item The functor that takes an $m$-simplicial object in $\cat M$ to
    its diagonal simplicial object is a left Quillen functor.
  \end{enumerate}
\end{cor}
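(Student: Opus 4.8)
The plan is to observe that the diagonal functor is exactly the functor induced by the diagonal embedding, and then to quote the main theorems together with \thmref{thm:DiagFibr}. Concretely, for part~1, comparing \defref{def:diag} with \defref{def:InducedDiag} shows that the functor sending an $m$-cosimplicial object $\diag X$ to its diagonal $\diagon\diag X$ is precisely the induced functor $D^{*}\colon \cat M^{\bD^{m}} \to \cat M^{\bD}$ of the diagonal embedding $D\colon \bD \to \bD^{m}$: both take $\diag X$ to the composite $\bD \xrightarrow{D} \bD^{m} \xrightarrow{\diag X} \cat M$.

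Once this identification is made, the result is immediate. \thmref{thm:DiagFibr} part~1 states that $D$ is a fibering Reedy functor, so \thmref{thm:GoodisGood} gives that $D^{*}$ is a right Quillen functor for every model category $\cat M$, which is part~1. For part~2, the same comparison of \defref{def:diag} with \defref{def:InducedDiag} identifies the diagonal functor on $m$-simplicial objects with the induced functor $(D\op)^{*}\colon \cat M^{(\bD^{m})\op} \to \cat M^{\bD\op}$ of $D\op\colon \bD\op \to (\bD^{m})\op = (\bD\op)^{m}$; \thmref{thm:DiagFibr} part~2 says $D\op$ is a cofibering Reedy functor, so \thmref{thm:MainCofibering} gives that $(D\op)^{*}$ is a left Quillen functor.

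There is essentially no obstacle here: all of the real work has already been done in \thmref{thm:DiagFibr}, where \cite{diagn}*{Lem.~5.1} is used to produce a terminal factorization through a diagonal object and hence to show that the relevant factorization categories are either empty or have a terminal object (and so have connected nerves). The only thing to check is the bookkeeping identification of $\diagon$ with $D^{*}$ (respectively $(D\op)^{*}$), which is immediate from the definitions. Thus the corollary is a formal consequence of \thmref{thm:DiagFibr}, \thmref{thm:GoodisGood}, and \thmref{thm:MainCofibering}.
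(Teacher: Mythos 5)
Your proposal is correct and is essentially identical to the paper's proof, which simply cites \thmref{thm:DiagFibr}, \thmref{thm:GoodisGood}, and \thmref{thm:MainCofibering}; the identification of $\diagon$ with $D^{*}$ (respectively $(D\op)^{*}$) that you spell out is left implicit in the paper but is indeed immediate from \defref{def:diag} and \defref{def:InducedDiag}.
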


\begin{proof}
  This follows from \thmref{thm:DiagFibr}, \thmref{thm:GoodisGood},
  and \thmref{thm:MainCofibering}.
\end{proof}

\subsubsection{Slices of multicosimplicial and multisimplicial
  objects}
\label{sec:slice}

\begin{defn}
  \label{def:SliceCat}
  Let $n$ be a positive integer and for $1 \le i \le n$ let $\cat
  C_{i}$ be a category.  If $K$ is a subset of $\{1, 2, \ldots, n\}$,
  then a \emph{$K$-slice} of the product category $\prod_{i=1}^{n}
  \cat C_{i}$ is the category $\prod_{i \in K} \cat C_{i}$.  (If $K$
  consists of a single integer $j$, then we will use the term
  \emph{$j$-slice} to refer to the $K$-slice.)  An \emph{inclusion of
    the $K$-slice} is a functor $\prod_{i\in K} \cat C_{i} \to
  \prod_{i=1}^{n} \cat C_{i}$ defined by choosing an object
  $\alpha_{i}$ of $\cat C_{i}$ for $i \in \bigl(\{1, 2, \ldots,
  n\}-K\bigr)$ and inserting $\alpha_{i}$ into the $i$'th coordinate
  for $i \in \bigl(\{1, 2, \ldots, n\}-K\bigr)$.
\end{defn}

\begin{thm}
  \label{thm:SliceFibCofib}
  Let $n$ be a positive integer and for $1 \le i \le n$ let $\cat
  C_{i}$ be a Reedy category.  For every subset $K$ of $\{1, 2,
  \ldots, n\}$ both the product $\prod_{i=1}^{n}\cat C_{i}$ and the
  product $\prod_{i\in K}\cat C_{i}$ are Reedy categories (see
  \cite{MCATL}*{Prop.~15.1.6}), and every inclusion of a $K$-slice
  $\prod_{i\in K}\cat C_{i} \to \prod_{i=1}^{n}\cat C_{i}$ (see
  \defref{def:SliceCat}) is both a fibering Reedy functor and a
  cofibering Reedy functor.
\end{thm}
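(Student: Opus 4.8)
The plan is to prove the statement about $K$-slices by reducing it to the simplest case and then invoking results already established in the paper. The key observation is that an inclusion of a $K$-slice is a composition of elementary inclusions, each of which inserts a single fixed object $\alpha_i$ into one coordinate while leaving the other coordinates free. More precisely, if we order the coordinates in $\{1,2,\dots,n\}-K$ as $j_1, j_2, \dots, j_r$, then the inclusion $\prod_{i\in K}\cat C_i \to \prod_{i=1}^n\cat C_i$ factors as a sequence of functors each of the form $\cat E \to \cat E \times \cat C_{j_\ell}$ (up to reindexing the product), where the functor inserts the chosen object $\alpha_{j_\ell}$ into the new coordinate. Since the composition of fibering Reedy functors is a fibering Reedy functor (and similarly for cofibering), it suffices to handle a single such elementary insertion.

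First I would verify that the composition of (co)fibering Reedy functors is again (co)fibering, which is a routine check from \defref{def:goodsub} and the functoriality of the factorization categories in \defref{def:CFactors}; I would state this as a brief lemma or inline remark if it is not already available. Then I would reduce to the elementary case: an inclusion $\iota\colon \cat E \to \cat E \times \cat C'$ of the form $\epsilon \mapsto (\epsilon, \alpha')$ for a fixed object $\alpha'$ of a Reedy category $\cat C'$, where $\cat E = \prod_{i\in K'}\cat C_i$ is itself a Reedy category by \cite{MCATL}*{Prop.~15.1.6}. Here $K' = K \cup \{$previously inserted coordinates$\}$ grows at each stage, but each stage inserts exactly one new fixed object.

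The heart of the argument is therefore to show that such an elementary insertion $\iota\colon \cat E \to \cat E\times\cat C'$, $\epsilon\mapsto(\epsilon,\alpha')$, is both fibering and cofibering. I would prove it is fibering by analyzing the category of inverse $\cat E$-factorizations of a map $\sigma\colon \iota\alpha \to \beta$ in $\inv{(\cat E\times\cat C')}$. Writing $\alpha = (\alpha_E, \alpha')$ and $\beta = (\beta_E, \beta')$, a map in the inverse subcategory of a product factors coordinatewise, so $\sigma = (\sigma_E, \sigma')$ with $\sigma_E\in\inv{\cat E}$ and $\sigma'\colon \alpha'\to\beta'$ in $\inv{\cat C'}$. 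An object of $\invfact{\cat E}{\alpha}{\sigma}$ consists of a non-identity map $\nu\colon \alpha_E \to \gamma$ in $\inv{\cat E}$ together with a map in $\inv{(\cat E\times\cat C')}$ from $(\gamma,\alpha')$ to $\beta$ factoring $\sigma$; the second coordinate of this factoring map is forced to be exactly $\sigma'\colon \alpha'\to\beta'$, since $\iota$ fixes the $\cat C'$-coordinate and the $\cat C'$-component of $\iota\nu$ is the identity of $\alpha'$. Hence the category of inverse $\cat E$-factorizations of $\sigma$ is isomorphic to the category of inverse $\cat E$-factorizations of the $\cat E$-coordinate $\sigma_E$ under the \emph{trivial} functor $\cat E\to\cat E$, which is the identity; I would then observe that this category has a terminal object (namely $\bigl((\sigma_E\colon\alpha_E\to\beta_E),1_{\beta_E}\bigr)$ when $\sigma_E$ is a non-identity map) and is empty when $\sigma_E$ is an identity, so its nerve is empty or connected in either case. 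The cofibering direction follows by the dual argument applied to the direct factorizations, or by \propref{prop:OpFiberingCofibering} together with \propref{prop:OpReedy}.

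The main obstacle I anticipate is the bookkeeping in the reduction: the product is unordered, so to decompose the slice inclusion as an iterated elementary insertion I must be careful that each intermediate category is genuinely a product of Reedy categories and that the factorization of the inverse (or direct) subcategory of a product into coordinatewise components is applied correctly. The cleanest way around this is to prove the elementary-insertion case first as a self-contained claim and then assemble the general slice inclusion from it using the composition lemma, rather than trying to analyze the factorization categories of a general slice inclusion all at once. Once the coordinatewise splitting of morphisms in $\inv{\prod\cat C_i}$ and $\drc{\prod\cat C_i}$ is stated cleanly, the terminal-object argument is identical in spirit to the proofs of \propref{prop:TruncFib} and \thmref{thm:RestCosFib}, so no genuinely new difficulty arises beyond organizing the induction.
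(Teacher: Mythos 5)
Your analysis of a single elementary insertion $\iota\colon \cat E \to \cat E \times \cat C'$, $\epsilon \mapsto (\epsilon,\alpha')$, is correct: since $\iota\nu$ has identity $\cat C'$-coordinate, any factoring map out of $\iota\gamma = (\gamma,\alpha')$ is forced to have second coordinate $\sigma'$, so $\invfact{\cat E}{\alpha}{\sigma}$ collapses to the category of inverse factorizations of $\sigma_{E}$ under the identity functor of $\cat E$, which is empty when $\sigma_{E}$ is an identity and has the terminal object $\bigl((\sigma_{E}),(1_{\beta_{E}})\bigr)$ otherwise. This is essentially the paper's own argument, except that the paper runs it on the whole slice inclusion at once: writing $\sigma = (\sigma_{1},\ldots,\sigma_{n})$ and assuming not all $K$-coordinates are identities, the factorization that first applies the $K$-coordinates (padded by identities) and then applies the remaining coordinates is a terminal object of the factorization category, so no coordinate-by-coordinate induction is needed.

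The genuine gap is your composition lemma. You assert that "the composition of (co)fibering Reedy functors is again (co)fibering" is a routine check from \defref{def:goodsub}; it is not. Given fibering functors $G\colon \cat C \to \cat D$ and $H\colon \cat D \to \cat E$ and two objects $(\nu_{i},\mu_{i})$ of the factorization category of $(\alpha,\sigma)$ for $HG$, the natural move is to pass to the objects $(G\nu_{i},\mu_{i})$ of $\invfact{\cat D}{G\alpha}{\sigma}$ and use that $H$ is fibering to get a zig-zag there; but first, $G\nu_{i}$ may be an identity map (Reedy functors can collapse non-identity maps), so $(G\nu_{i},\mu_{i})$ need not be an object of that category at all, and second, even when it is, the connecting zig-zag passes through objects $G\alpha \to \epsilon$ of $\inv{\cat D}$ with $\epsilon$ not in the image of $G$, and there is no way to lift such a zig-zag back to $\cat C$ (the fibering hypothesis on $G$ gives connectivity of $\invfact{\cat C}{\alpha}{\delta}$ only when that category is nonempty, which can fail for the intermediate $\delta$'s). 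The lemma is nonetheless true, but the clean proof goes through the main theorems rather than the definitions: $(HG)^{*} = G^{*}\circ H^{*}$, compositions of right Quillen functors are right Quillen, so $(HG)^{*}$ is right Quillen for every $\cat M$, and then the converse direction of \thmref{thm:GoodisGood} (i.e., \thmref{thm:RtGdNec}) shows $HG$ is fibering; dually for cofibering. Repaired this way your proof is complete and non-circular (the main theorems do not depend on \thmref{thm:SliceFibCofib}), but it invokes the hard direction of the main theorem only to handle bookkeeping that your own terminal-object argument, applied to all inserted coordinates simultaneously as the paper does, disposes of directly.
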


\begin{proof}
  We will show that every inclusion is a fibering Reedy functor; the
  proof that it is a cofibering Reedy functor is similar (and also
  follows from applying the fibering case to the inclusion
  $\prod_{i\in K}\cat C_{i}\op \to \prod_{i=1}^{n}\cat C_{i}\op$; see
  \propref{prop:OpFiberingCofibering}).  We will assume that $K =
  \{1,2\}$; the other cases are similar.
 
  Let $(\beta_{1}, \beta_{2}, \alpha_{3}, \alpha_{4}, \ldots,
  \alpha_{n})$ be an object of $\prod_{i\in K}\cat C_{i}$ and let
  \begin{displaymath}
    (\sigma_{1},\sigma_{2},\ldots, \sigma_{n})\colon
    (\beta_{1},\beta_{2},\alpha_{3},\alpha_{4},\ldots, \alpha_{n})
    \longrightarrow
    (\gamma_{1},\gamma_{2},\ldots,\gamma_{n})
  \end{displaymath}
  be a map in $\inv{\prod_{i=1}^{n}\cat C_{i}}$.  Since
  $\inv{\prod_{i=1}^{n}\cat C_{i}} = \prod_{i=1}^{n}\inv{\cat C_{i}}$,
  each $\sigma_{i} \in \inv{\cat C_{i}}$.  If $\sigma_{1}$ and
  $\sigma_{2}$ are both identity maps, then the category of inverse
  $\prod_{i \in K}\cat C_{i}$-factorizations of
  $(\sigma_{1},\sigma_{2},\ldots, \sigma_{n})$ is empty.  Otherwise,
  the category of inverse $\prod_{i \in K}\cat C_{i}$-factorizations
  of $(\sigma_{1},\sigma_{2},\ldots, \sigma_{n})$ contains the object
  \begin{multline*}
    (\beta_{1},\beta_{2},\alpha_{3},\alpha_{4},\ldots,\alpha_{n})
    \xrightarrow{(\sigma_{1},\sigma_{2},1_{\alpha_{3}},
      1_{\alpha_{4}}, \ldots, 1_{\alpha_{n}})}
    (\gamma_{1},\gamma_{2},\alpha_{3},\alpha_{4},\ldots,\alpha_{n})\\
    \xrightarrow{(1_{\gamma_{1}},1_{\gamma_{2}},\sigma_{3},\sigma_{4},
      \ldots, \sigma_{n})} (\gamma_{1},\gamma_{2},\ldots, \gamma_{n})
  \end{multline*}
  and every other object of the category of inverse $\prod_{i \in
    K}\cat C_{i}$-factorizations of $(\sigma_{1},\sigma_{2},\ldots,
  \sigma_{n})$ maps to this one.  Thus the nerve of the category of
  inverse $\prod_{i \in K}\cat C_{i}$-factorizations of
  $(\sigma_{1},\sigma_{2},\ldots, \sigma_{n})$ is connected.
\end{proof}

\begin{thm}
  \label{thm:GenSliceQF}
  If $\cat M$ is a model category, $n$, $\cat C_{i}$ for $1 \le i \le
  n$, and $K$ are as in \thmref{thm:SliceFibCofib}, and the functor
  $\prod_{i\in K}\cat C_{i} \to \prod_{i=1}^{n}\cat C_{i}$ is the
  inclusion of a $K$-slice, then the restriction functor
  \begin{displaymath}
    \cat M^{(\prod_{i=1}^{n}\cat C_{i})} \longrightarrow
    \cat M^{(\prod_{i \in K}\cat C_{i})}
  \end{displaymath}
  is both a left Quillen functor and a right Quillen functor.
\end{thm}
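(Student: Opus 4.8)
The plan is to recognize this theorem as an immediate application of the two main theorems to the Reedy functor already analyzed in \thmref{thm:SliceFibCofib}. The essential observation is that the restriction functor $\cat M^{(\prod_{i=1}^{n}\cat C_{i})} \to \cat M^{(\prod_{i\in K}\cat C_{i})}$ appearing in the statement is nothing other than the induced functor $G^{*}$ of \defref{def:InducedDiag} associated to the $K$-slice inclusion $G\colon \prod_{i\in K}\cat C_{i} \to \prod_{i=1}^{n}\cat C_{i}$: precomposing a $(\prod_{i=1}^{n}\cat C_{i})$-diagram with $G$ evaluates it only on the chosen slice, which is exactly what ``restriction'' means here. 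So the first thing I would do is make this identification explicit, and thereafter work entirely with $G^{*}$.

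Granting that identification, I would run the argument in two parallel halves. Theorem~\thmref{thm:SliceFibCofib} has already established that the $K$-slice inclusion $G$ is \emph{both} a fibering Reedy functor and a cofibering Reedy functor. For the right Quillen conclusion I would invoke \thmref{thm:GoodisGood}: since $G$ is fibering, the induced functor $G^{*}$ is a right Quillen functor for every model category $\cat M$. Dually, for the left Quillen conclusion I would invoke \thmref{thm:MainCofibering}: since $G$ is cofibering, $G^{*}$ is a left Quillen functor for every model category $\cat M$. Combining the two gives precisely the asserted statement that the restriction functor is simultaneously a left and a right Quillen functor.

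Because all of the combinatorial work—analyzing the categories of inverse and direct $\prod_{i\in K}\cat C_{i}$-factorizations and exhibiting the terminal (resp.\ initial) objects that force connectedness—was carried out in the proof of \thmref{thm:SliceFibCofib}, I do not expect any genuine obstacle to remain at this stage; the theorem is a formal corollary of results already in hand. The only point requiring a moment's care is the bookkeeping identification of the opening paragraph, namely checking that plain restriction of diagrams to a slice agrees on the nose with the functor $G^{*}$ to which the main theorems apply. This is immediate from the definition of $G^{*}$ together with the product decompositions $\inv{\prod_{i=1}^{n}\cat C_{i}} = \prod_{i=1}^{n}\inv{\cat C_{i}}$ and its direct analogue, so the same induced functor governs both the fibering and the cofibering conclusions.
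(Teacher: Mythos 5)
Your proposal is correct and follows exactly the paper's own proof: the paper likewise deduces the theorem immediately from \thmref{thm:SliceFibCofib} (the slice inclusion is both fibering and cofibering) combined with \thmref{thm:GoodisGood} and \thmref{thm:MainCofibering}. Your additional remark identifying plain restriction with the induced functor $G^{*}$ of \defref{def:InducedDiag} is a correct piece of bookkeeping that the paper leaves implicit.
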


\begin{proof}
  This follows from \thmref{thm:GoodisGood},
  \thmref{thm:MainCofibering}, and \thmref{thm:SliceFibCofib}.
\end{proof}

\begin{defn}
  \label{def:slice}
  Let $\cat M$ be a model category and let $m$ be a positive integer.
  \begin{enumerate}
  \item If $\diag X$ is an $m$-cosimplicial object in $\cat M$, then a
    \emph{slice} of $\diag X$ is a cosimplicial object in $\cat M$
    defined by restricting all but one factor of $\bD^{m}$.
  \item If $\diag X$ is an $m$-simplicial object in $\cat M$, then a
    \emph{slice} of $\diag X$ is a simplicial object in $\cat M$
    defined by restricting all but one factor of $(\bD\op)^{m}$.
  \end{enumerate}
\end{defn}

\begin{thm}
  \label{thm:SliceQF}
  Let $\cat M$ be a model category and let $m$ be a positive integer.
  \begin{enumerate}
  \item The functor $\cat M^{\bD^{m}} \to \cat M^{\bD}$ that restricts
    a multicosimplicial object to a slice (see \defref{def:slice}) is
    a both a left Quillen functor and a right Quillen functor.
  \item The functor $\cat M^{(\bD\op)^{m}} \to \cat M^{\bD\op}$ that
    restricts a multisimplicial object to a slice is both a left
    Quillen functor and a right Quillen functor.
  \end{enumerate}
\end{thm}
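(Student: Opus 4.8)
The plan is to recognize the statement as a direct specialization of \thmref{thm:GenSliceQF}. The key observation is that a slice in the sense of \defref{def:slice} is precisely the inclusion of a $K$-slice in the sense of \defref{def:SliceCat} for which every factor category $\cat C_i$ equals $\bD$ and $K = \{j\}$ is a singleton. Indeed, ``restricting all but one factor of $\bD^m$'' means choosing an object $[k_i]$ of $\bD$ in each of the $m-1$ factors indexed by $i \ne j$ and inserting them into those coordinates, leaving the $j$'th factor free; this is exactly the data of a $j$-slice inclusion $\bD \to \bD^m$. Consequently the restriction functor $\cat M^{\bD^m} \to \cat M^{\bD}$ attached to a slice is the restriction functor along such a $j$-slice inclusion.

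First I would record that $\bD$ is a Reedy category, so that $\bD^m$ is a Reedy category by \cite{MCATL}*{Prop.~15.1.6}, and the hypotheses of \thmref{thm:SliceFibCofib} hold with $n = m$, each $\cat C_i = \bD$, and $K = \{j\}$. That theorem shows the slice inclusion is simultaneously a fibering and a cofibering Reedy functor, and \thmref{thm:GenSliceQF}—which packages this fibering/cofibering input with \thmref{thm:GoodisGood} and \thmref{thm:MainCofibering}—then yields part~1 at once: the restriction functor is a right Quillen functor because the inclusion is fibering, and a left Quillen functor because it is cofibering.

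For part~2, I would use that $(\bD^m)\op = (\bD\op)^m$ and that $\bD\op$ is a Reedy category by \propref{prop:OpReedy}. A slice of an $m$-simplicial object is then a $j$-slice inclusion with all factors equal to $\bD\op$, so the identical application of \thmref{thm:GenSliceQF} (now with $\cat C_i = \bD\op$) shows that $\cat M^{(\bD\op)^m} \to \cat M^{\bD\op}$ is both a left and a right Quillen functor; alternatively part~2 follows from part~1 by passing to opposites via \propref{prop:OpFiberingCofibering} and \propref{prop:OpQuillen}. Since the result is a direct corollary, there is no substantive obstacle; the only point demanding care is the bookkeeping that identifies the informal notion of ``restricting all but one factor'' with the formal $K$-slice inclusion for a singleton $K$, after which both parts are immediate.
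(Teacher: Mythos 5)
Your proposal is correct and follows exactly the paper's own route: the paper proves \thmref{thm:SliceQF} by citing \thmref{thm:GenSliceQF}, which is precisely your specialization of the $K$-slice inclusion to a singleton $K$ with every factor equal to $\bD$ (resp.\ $\bD\op$). The only difference is that you spell out the bookkeeping identifying \defref{def:slice} with \defref{def:SliceCat}, which the paper leaves implicit.
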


\begin{proof}
  This follows from \thmref{thm:GenSliceQF}.
\end{proof}

\begin{cor}
  \label{cor:SliceQF}
  Let $\cat M$ be a model category and let $m$ be a positive integer.
  \begin{enumerate}
  \item If $\diag X$ is a cofibrant $m$-cosimplicial object in
    $\cat M$, then every slice of $\diag X$ is a cofibrant
    cosimplicial object.
  \item If $\diag X$ is a fibrant $m$-cosimplicial object in $\cat M$,
    then every slice of $\diag X$ is a fibrant cosimplicial object.
  \item If $\diag X$ is a cofibrant $m$-simplicial object in $\cat M$,
    then every slice of $\diag X$ is a cofibrant simplicial object.
  \item If $\diag X$ is a fibrant $m$-simplicial object in $\cat M$,
    then every slice of $\diag X$ is a fibrant simplicial object.
  \end{enumerate}
\end{cor}

\begin{proof}
  This follows from \thmref{thm:SliceQF}.  
\end{proof}

\section{Proofs of the main theorems}
\label{sec:Proof}

Our main result, \thmref{thm:GoodisGood}, will follow immediately from
\thmref{thm:FiberingThree} below (the latter is an elaboration of the
former).  The proof of its dual, \thmref{thm:MainCofibering}, will use
\thmref{thm:GoodisGood} and can be found in
\secref{sec:PrfCofibering}.

\begin{thm}
  \label{thm:FiberingThree}
  If $G\colon \cat C \to \cat D$ is a Reedy functor between Reedy
  categories, then the following are equivalent:
  \begin{enumerate}
  \item The functor $G$ is a fibering Reedy functor (see
    \defref{def:goodsub}).
  \item For every model category $\cat M$ the induced functor of
    diagram categories $G^{*}\colon \cat M^{\cat D} \to \cat M^{\cat
      C}$ is a right Quillen functor.
  \item For every model category $\cat M$ the induced functor of
    diagram categories $G^{*}\colon \cat M^{\cat D} \to \cat M^{\cat
      C}$ takes fibrant objects of $\cat M^{\cat D}$ to fibrant
    objects of $\cat M^{\cat C}$.
  \end{enumerate}
\end{thm}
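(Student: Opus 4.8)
The plan is to establish the cycle of implications $(1)\Rightarrow(2)\Rightarrow(3)\Rightarrow(1)$. The implication $(2)\Rightarrow(3)$ is immediate from \propref{prop:QuillenNice}, since a right Quillen functor carries fibrant objects to fibrant objects. Thus the real content lies in the sufficiency $(1)\Rightarrow(2)$ and the necessity $(3)\Rightarrow(1)$, the latter being the only place where \propref{prop:ConstructFilt} is needed.

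For $(1)\Rightarrow(2)$ I would first note that, because $\cat M$ is cocomplete, $G^{*}$ has a left adjoint (left Kan extension along $G$), so by \propref{prop:QuilFunc} it is enough to show that $G^{*}$ preserves fibrations and trivial fibrations. By \thmref{thm:RFib} this reduces, for each object $\alpha$ of $\cat C$, to showing that the relative matching map of $G^{*}f$ at $\alpha$ is a (trivial) fibration whenever $f\colon\diag X\to\diag Y$ is one in $\cat M^{\cat D}$. The essential step is therefore to compute the matching object $\match^{\cat C}_{\alpha}(G^{*}\diag X)=\lim_{\matchcat{\cat C}{\alpha}}G^{*}\diag X$. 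I would observe that $G^{*}\diag X$ restricted to $\matchcat{\cat C}{\alpha}$ is the composite of the functor $G_{*}\colon\matchcat{\cat C}{\alpha}\to\undercat{\inv{\cat D}}{G\alpha}$ of \propref{prop:CategoryOfFacts} with the functor sending an object $(G\alpha\to\delta)$ to $\diag X_{\delta}$. Letting $J$ be the full subcategory of $\undercat{\inv{\cat D}}{G\alpha}$ on those $\sigma$ for which $\invfact{\cat C}{\alpha}{\sigma}$ is non-empty, and using that \propref{prop:CategoryOfFacts} identifies $\invfact{\cat C}{\alpha}{\sigma}$ with the overcategory $\overcat{G_{*}}{\sigma}$, the fibering hypothesis (\defref{def:goodsub}) says exactly that the corestricted functor $\matchcat{\cat C}{\alpha}\to J$ has non-empty, connected overcategories, i.e. is left cofinal; hence by \thmref{thm:CofinalIso} there is a natural isomorphism $\match^{\cat C}_{\alpha}(G^{*}\diag X)\cong\lim_{J}\diag X$, and I would use this to compare the relative matching map of $G^{*}f$ at $\alpha$ with the matching data of $f$ in $\cat D$.

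For $(3)\Rightarrow(1)$ I would argue by contraposition. If $G$ is not fibering, fix $\alpha$, $\beta$ and a map $\sigma\colon G\alpha\to\beta$ in $\inv{\cat D}$ for which $\invfact{\cat C}{\alpha}{\sigma}$ is non-empty and not connected. I would then build, in a suitably chosen model category $\cat M$, a Reedy fibrant $\cat D$-diagram $\diag X$ whose restriction $G^{*}\diag X$ fails to be Reedy fibrant at $\alpha$. The diagram $\diag X$ is constructed by induction up the filtration $\F^{0}\cat D\subset\F^{1}\cat D\subset\cdots$ (\propref{prop:SeqFilt}): at each object of $\cat D$ one uses \propref{prop:ConstructFilt} to choose the value and a factorization $\latch\to\diag X\to\match$ of the latching--matching map, taking these so that every matching map of $\diag X$ in $\cat D$ is a fibration (hence $\diag X$ is fibrant), while arranging the values at the objects reached by the two separate components of $\invfact{\cat C}{\alpha}{\sigma}$ so that the matching object $\match^{\cat C}_{\alpha}(G^{*}\diag X)$ records both components independently. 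The disconnectedness then makes the matching map $\diag X_{G\alpha}\to\match^{\cat C}_{\alpha}(G^{*}\diag X)$ fail to be a fibration in $\cat M$, so $G^{*}\diag X$ is not fibrant.

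The main obstacle is the sufficiency direction, and specifically the fact that \defref{def:goodsub} only requires the nerve to be \emph{empty or connected} rather than non-empty and connected. This is what forces the passage to the subcategory $J$ above, and it means that $\lim_{J}\diag X$ is in general a limit over a proper subcategory of $\matchcat{\cat D}{G\alpha}$, so it is not literally a matching object of $\diag X$ in $\cat M^{\cat D}$ and the relative matching map of $G^{*}f$ at $\alpha$ is not simply a relative matching map of $f$. One must also allow that $G$ may send non-identity maps of $\inv{\cat C}$ to identity maps of $\inv{\cat D}$: when this occurs at $\alpha$ the identity object $1_{G\alpha}$ lies in $J$, which then has an initial object, $\lim_{J}\diag X\cong\diag X_{G\alpha}$, and the relative matching map is an isomorphism. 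The remaining case, where $1_{G\alpha}\notin J$, is the heart of the argument: there one must show that the limit over $J$ is computed so that $\diag X_{G\alpha}\to\diag Y_{G\alpha}\times_{\lim_{J}\diag Y}\lim_{J}\diag X$ inherits the fibration property from the relative matching maps of $f$ in $\cat M^{\cat D}$, which is where the interplay between the Reedy structure on $\cat D$ and the combinatorics of the factorization categories must be exploited.
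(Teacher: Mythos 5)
Your overall architecture is the same as the paper's: the cycle $(1)\Rightarrow(2)\Rightarrow(3)\Rightarrow(1)$, with $(2)\Rightarrow(3)$ given by \propref{prop:QuillenNice}, $(1)\Rightarrow(2)$ playing the role of \thmref{thm:RightQuil}, and $(3)\Rightarrow(1)$ playing the role of \thmref{thm:RtGdNec}. Your reduction in $(1)\Rightarrow(2)$ is correct as far as it goes, and the subcategory $J$ is a genuinely nice repackaging: by \propref{prop:CategoryOfFacts} the overcategories of the corestricted functor $\matchcat{\cat C}{\alpha}\to J$ are exactly the categories $\invfact{\cat C}{\alpha}{\sigma}$, so the fibering hypothesis does make this functor left cofinal, and \thmref{thm:CofinalIso} gives $\match^{\cat C}_{\alpha}(G^{*}\diag X)\cong\lim_{J}\diag X$. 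In particular, when $1_{G\alpha}\in J$ (i.e.\ when $G$ takes some non-identity map of $\inv{\cat C}$ at $\alpha$ to an identity), $J$ has an initial object and the matching map is an isomorphism in one line; this cleanly subsumes what the paper proves at much greater length in \propref{prop:MatchIso} (via controlled objects, $G$-equivalence, and \propref{prop:AllControlled}).

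However, both of the hard steps are left unproven, so the proposal has genuine gaps. First, in $(1)\Rightarrow(2)$, the statement you defer---that $\diag X_{G\alpha}\to \diag Y_{G\alpha}\times_{\lim_{J}\diag Y}\lim_{J}\diag X$ is a fibration when $1_{G\alpha}\notin J$---is precisely the paper's \propref{prop:MatchFib}, and it is the technical core of the entire theorem; it does not follow formally from the cofinality identification. The paper proves it by filtering $\matchcat{\cat D}{G\alpha}$ by degree of target, interpolating categories $\cat A_{i}\subset\cat A'_{i+1}\subset\cat A_{i+1}$, and splitting the objects of each new degree into those whose factorization category is non-empty (adding them does not change the pullback, by a second cofinality argument, \propref{prop:isoprime}) and those whose factorization category is empty (adding them contributes relative matching maps of $f$ at those objects of $\cat D$, which are fibrations because $f$ is a Reedy fibration; this is where \lemref{lem:pbCtoCprime}, \lemref{lem:PBf}, \lemref{lem:lastfib}, and Reedy's lemma \lemref{lem:reedy} are needed to prove \propref{prop:IndFibr}). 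None of this inductive mechanism appears in your proposal; saying that the interplay ``must be exploited'' names the problem rather than solving it. Second, your $(3)\Rightarrow(1)$ is only a strategy statement: ``arranging the values so that the matching object records both components independently'' is exactly what must be constructed and verified. The paper does this concretely in $\Top$ by setting $\diag X_{\beta}=I$ and forcing all other values to be matching objects, then proving that each $\diag X_{\gamma}$ is a product of intervals indexed by $\inv{\cat D}(\gamma,\beta)$ (\propref{prop:ProdInts}), that $\match^{\cat C}_{\alpha}G^{*}\diag X$ is a product of intervals indexed by path components of the nerves of the factorization categories (\propref{prop:MatchProd}), and that the matching map then factors through a diagonal $I\to I\times I$, hence fails to be surjective onto a path-connected target and so is not a fibration (\propref{prop:NotFib}). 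Your sketch specifies neither the model category, nor the diagram, nor the argument that disconnectedness of the nerve obstructs fibrancy, so this direction is also incomplete.
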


\begin{proof} The proof will be completed by the proofs of
  \thmref{thm:RightQuil} and \thmref{thm:RtGdNec} below.  More
  precisely, we will have
  \begin{displaymath}
    \xymatrix@=7em{
      {(1)} \ar@{=>}[r]^{\text{\thmref{thm:RightQuil}}}
      & {(2)} \ar@{=>}[r]^{\text{\propref{prop:QuillenNice}}}
      & {(3)} \ar@{=>}[r]^{\text{\thmref{thm:RtGdNec}}}
      & {(1)}
    }\qedhere
  \end{displaymath}
\end{proof}

\begin{thm}
  \label{thm:RightQuil}
  If $G\colon \cat C \to \cat D$ is a fibering Reedy functor and $\cat
  M$ is a model category, then the induced functor of diagram
  categories $G^{*}\colon \cat M^{\cat D} \to \cat M^{\cat C}$ is a
  right Quillen functor.
\end{thm}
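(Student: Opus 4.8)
The plan is to verify directly that $G^{*}$ preserves fibrations and trivial fibrations. Since $\cat M$ is cocomplete, left Kan extension along $G$ is a left adjoint to $G^{*}$ (see \cite{borceux-I}*{Thm.~3.7.2}), so by \propref{prop:QuilFunc} it suffices to show that $G^{*}$ takes fibrations to fibrations and trivial fibrations to trivial fibrations. By \thmref{thm:RFib} (together with its standard combined form, in which a Reedy map is a trivial fibration exactly when all of its relative matching maps are trivial fibrations; see \cite{MCATL}), this amounts, for a fixed fibration (resp.\ trivial fibration) $f\colon \diag X \to \diag Y$ in $\cat M^{\cat D}$ and a fixed object $\alpha$ of $\cat C$, to showing that the relative matching map of $G^{*}f$ at $\alpha$,
\begin{displaymath}
  \diag X_{G\alpha} \longrightarrow
  \pullback{\diag Y_{G\alpha}}{\match_{\alpha}^{\cat C}G^{*}\diag Y}{\match_{\alpha}^{\cat C}G^{*}\diag X},
\end{displaymath}
is a fibration (resp.\ trivial fibration) in $\cat M$, given that every relative matching map of $f$ (at every object of $\cat D$) is one. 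Both classes behave identically in what follows---each is closed under composition, base change, and the inverse limits of fibrations used below, and each contains the isomorphisms---so I will argue only for fibrations.

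The crux is to compute $\match_{\alpha}^{\cat C}G^{*}\diag X = \lim_{\matchcat{\cat C}{\alpha}}G^{*}\diag X$ using the fibering hypothesis. By \propref{prop:CategoryOfFacts} the indexing diagram factors through the functor $G_{*}\colon \matchcat{\cat C}{\alpha} \to \undercat{\inv{\cat D}}{G\alpha}$ whose comma category over an object $\sigma\colon G\alpha \to \beta$ is exactly $\invfact{\cat C}{\alpha}{\sigma}$. A direct comparison of cones shows that \defref{def:goodsub} forces $\match_{\alpha}^{\cat C}G^{*}\diag X$ to be naturally isomorphic to the limit of $\diag X$ over the full subcategory $\cat B' \subseteq \undercat{\inv{\cat D}}{G\alpha}$ of those $\sigma$ for which $\invfact{\cat C}{\alpha}{\sigma}$ is nonempty: connectedness of each nonempty $\invfact{\cat C}{\alpha}{\sigma}$ makes the legs of a cone agree along its components, so the cone descends to one indexed by $\cat B'$, while the empty categories correspond to objects that impose no constraint. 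This is a relative form of the cofinality statement of \thmref{thm:CofinalIso}.

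Two cases arise, according to the warning after \defref{def:subreedy}. If $G$ sends some non-identity map $\alpha \to \gamma$ of $\inv{\cat C}$ to an identity map of $\inv{\cat D}$, then $\invfact{\cat C}{\alpha}{1_{G\alpha}}$ is nonempty, so the initial object $1_{G\alpha}$ of $\undercat{\inv{\cat D}}{G\alpha}$ lies in $\cat B'$; hence $\lim_{\cat B'}\diag X \cong \diag X_{G\alpha}$, the matching map of $G^{*}\diag X$ at $\alpha$ is an isomorphism, and so is the relative matching map of $G^{*}f$---trivially a fibration. (This is the degeneration recorded in \propref{prop:MatchIso}.) Otherwise $G$ sends non-identity inverse maps out of $\alpha$ to non-identity maps, so by \propref{prop:CatFacts} the functor $G_{*}$ lands in $\matchcat{\cat D}{G\alpha}$ and $\cat B' \subseteq \matchcat{\cat D}{G\alpha}$. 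A componentwise check shows the matching map of $G^{*}\diag X$ at $\alpha$ is the composite of the matching map of $\diag X$ at $G\alpha$ with the restriction $\match_{G\alpha}^{\cat D}\diag X \to \lim_{\cat B'}\diag X$; this lets me factor the relative matching map of $G^{*}f$ at $\alpha$ as the relative matching map of $f$ at $G\alpha$---a fibration by hypothesis---followed by a base change of the comparison map $\match_{G\alpha}^{\cat D}\diag X \to \pullback{\match_{G\alpha}^{\cat D}\diag Y}{\lim_{\cat B'}\diag Y}{\lim_{\cat B'}\diag X}$.

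It therefore remains to show that this comparison map is a fibration, and this is the main obstacle. I would settle it by the degree-filtration technique underlying \thmref{thm:RFib}: filter $\matchcat{\cat D}{G\alpha}$ by the degree in $\cat D$ of the target object and discard the objects outside $\cat B'$ one degree at a time. Each stage alters the limit by a base change of a relative matching map of $f$ at the object of $\cat D$ being dropped, which is a fibration because $f$ is a Reedy fibration; assembling these over the resulting tower exhibits the comparison map as a fibration, whence the relative matching map of $G^{*}f$ is a fibration. The trivial-fibration case is the same word for word. The delicate point---that discarding exactly the objects outside $\cat B'$ is legitimate and is matched by fibrations of $f$---is where both halves of the fibering hypothesis enter: emptiness lets us drop objects, and connectedness pins down the surviving limit.
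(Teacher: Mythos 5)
Your proof is correct, and its skeleton coincides with the paper's: the left adjoint exists by cocompleteness, so one only needs preservation of (trivial) fibrations; there is a case split on whether $G$ sends some non-identity map of $\inv{\cat C}$ out of $\alpha$ to an identity; in the remaining case the relative matching map of $G^{*}f$ at $\alpha$ is factored through the relative matching map of $f$ at $G\alpha$; and the leftover comparison map is handled by a degree-wise filtration of $\matchcat{\cat D}{G\alpha}$ in which the discarded objects are exactly those with empty categories of inverse $\cat C$-factorizations, absorbed by the Reedy-fibrancy of $f$ at the corresponding objects of $\cat D$ (your last paragraph only sketches this step, but it is a sketch of precisely the paper's \lemref{lem:pbCtoCprime}, \lemref{lem:PBf}, \lemref{lem:lastfib}, and \lemref{lem:reedy}, and the delicate point you flag does hold: a map from a retained object into a discarded one would yield an object of a supposedly empty factorization category). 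The genuine difference is your choice of intermediate category, and it buys a real simplification. The paper works with the image $\cat A_{-1}$ of $G_{*}$ inside $\matchcat{\cat D}{G\alpha}$, and consequently must (i) handle the first case by the long ``controlled objects'' induction of \secref{sec:PrfMatchIso} (\propref{prop:MatchIso}), and (ii) re-invoke connectedness at every filtration stage (\propref{prop:isoprime}) to absorb the degree-$(i+1)$ objects whose factorization categories are nonempty but which lie outside the image of $G_{*}$. You instead take $\cat B'$ to be the full subcategory of $\undercat{\inv{\cat D}}{G\alpha}$ on those $\sigma$ with $\invfact{\cat C}{\alpha}{\sigma}$ nonempty, so that $G_{*}\colon \matchcat{\cat C}{\alpha} \to \cat B'$ is left cofinal by a single application of \defref{def:goodsub}, \propref{prop:CategoryOfFacts}, and \thmref{thm:CofinalIso}. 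Then the paper's hard case collapses to the observation that $1_{G\alpha}$ is an initial object of $\cat B'$, so the matching map is an isomorphism---a few lines replacing all of \secref{sec:PrfMatchIso}---and your filtration steps drop only empty-factorization objects, so connectedness is used exactly once, up front, with no per-stage cofinality argument needed. The only cost is the check, which you correctly identify as the delicate point, that high-degree objects of $\cat B'$ admit no maps into the dropped objects; that is the same emptiness argument the paper makes for the image of $G_{*}$, and everything goes through.
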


\begin{thm}
  \label{thm:RtGdNec}
  If $G\colon \cat C \to \cat D$ is a Reedy functor that is not a
  fibering Reedy functor, then there is a fibrant $\cat D$-diagram of
  topological spaces for which the induced $\cat C$-diagram is not
  fibrant.
\end{thm}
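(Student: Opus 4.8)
The plan is to negate the fibering hypothesis, build one explicit fibrant $\cat D$-diagram in $\Top$, and show that its restriction violates the matching-map criterion of \thmref{thm:RFib} at a single object, with the violation forced by the disconnectedness of a category of inverse factorizations. Since $G$ is not fibering, \defref{def:goodsub} furnishes an object $\alpha$ of $\cat C$, an object $\beta$ of $\cat D$, and a map $\sigma\colon G\alpha \to \beta$ in $\inv{\cat D}$ for which $\N\invfact{\cat C}{\alpha}{\sigma}$ is nonempty and disconnected. Fix a path-connected space $K$ with more than one point, say $K = [0,1]$, and let $\diag X$ be the cofree $\cat D$-diagram cogenerated by $K$ at $\beta$, namely $\diag X_{\delta} = \Map(\cat D(\delta,\beta), K)$ (with $\cat D(\delta,\beta)$ discrete) and the evident functoriality; this is the right Kan extension of $K$ along the inclusion of the one-object subcategory on $\beta$. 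Evaluation at $\beta$ is a left Quillen functor for the Reedy structures, since Reedy cofibrations and Reedy trivial cofibrations are in particular objectwise (\cite{MCATL}), so its right adjoint -- the cofree functor producing $\diag X$ -- is a right Quillen functor and carries the fibrant space $K$ to a Reedy-fibrant diagram. (If one prefers to avoid this, the same $\diag X$ can be produced by hand as a fibrant diagram using \propref{prop:ConstructFilt}.)

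Next I would compute the matching map of $G^{*}\diag X$ at $\alpha$. Because $\Map(-,K)$ sends colimits to limits, $\match^{\cat C}_{\alpha}(G^{*}\diag X) = \lim_{\matchcat{\cat C}{\alpha}} \Map(\cat D(G(-),\beta),K) \cong \Map(L,K)$, where $L$ is the set of equivalence classes of pairs $(\nu,\phi)$ with $\nu\colon \alpha \to \gamma$ an object of $\matchcat{\cat C}{\alpha}$ and $\phi \in \cat D(G\gamma,\beta)$, under the relation generated by $(\nu',\phi') \sim (\nu, \phi'\circ G\tau)$ for every morphism $\tau\colon \nu \to \nu'$ of $\matchcat{\cat C}{\alpha}$. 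Unwinding the universal cone shows that the matching map $\diag X_{G\alpha} = \Map(\cat D(G\alpha,\beta),K) \to \Map(L,K)$ is $\Map(\lambda, K)$ for the map of sets $\lambda\colon L \to \cat D(G\alpha,\beta)$ given by $[(\nu,\phi)] \mapsto \phi \circ G\nu$; this is well defined because $\nu' = \tau\nu$ forces $\phi' \circ G\nu' = (\phi'\circ G\tau)\circ G\nu$. Crucially, each generating relation preserves the value of $\lambda$, so a zigzag realizing an equivalence cannot leave a fiber of $\lambda$, and each fiber of $\lambda$ is therefore computed as a set of equivalence classes inside that fiber.

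Then I would identify the fiber over $\sigma$. If $\phi\circ G\nu = \sigma$, then since $\sigma$ and $G\nu$ lie in $\inv{\cat D}$ the uniqueness of the Reedy factorization (\defref{def:ReedyCat}) forces $\phi \in \inv{\cat D}$; thus a pair lying over $\sigma$ is exactly an object of $\invfact{\cat C}{\alpha}{\sigma}$, and the generating relations between such pairs are exactly its morphisms. Hence $\lambda^{-1}(\sigma) \cong \pi_{0}\N\invfact{\cat C}{\alpha}{\sigma}$, which by hypothesis has at least two elements, so $\lambda$ is not injective. The image of $\Map(\lambda,K)$ consists of those functions $L \to K$ that are constant on the fibers of $\lambda$; since $K$ has more than one point and some fiber has more than one element, this image is a proper subset of $\Map(L,K)$, which is path-connected because $K$ is. As the domain $\diag X_{G\alpha}$ is nonempty, $\Map(\lambda,K)$ is a non-surjective map onto a path-connected space, hence not a fibration in $\Top$. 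By \thmref{thm:RFib} the diagram $G^{*}\diag X$ is not Reedy fibrant, which is the required counterexample.

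The step I expect to be the main obstacle is the identification $\lambda^{-1}(\sigma) \cong \pi_{0}\N\invfact{\cat C}{\alpha}{\sigma}$: one must check that passing to the quotient $L$ neither merges distinct connected components of the factorization category nor overlooks its morphisms. Both difficulties dissolve once one observes that $\lambda$ is constant on every generating relation -- which confines all zigzags to a single fiber -- and that membership in the fiber over $\sigma$ forces the second coordinate into $\inv{\cat D}$ by uniqueness of Reedy factorizations, so that the fiber is literally the object set of $\invfact{\cat C}{\alpha}{\sigma}$ equipped with its own relations.
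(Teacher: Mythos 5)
Your proof is correct, but it takes a genuinely different route from the paper's. The paper builds its counterexample by hand, inductively over the filtrations $\F^{n}\cat D$ (\propref{prop:ConstructFilt}): it sets $\diag X_{\beta} = I$, forces every other matching map to be an identity so that fibrancy is immediate (\propref{prop:FibD}), and then proves by induction that $\diag X_{\gamma} \cong \prod_{\inv{\cat D}(\gamma,\beta)} I$ (\propref{prop:ProdInts}) and that the \emph{entire} matching object $\match^{\cat C}_{\alpha}G^{*}\diag X$ is a product of intervals indexed by $\coprod_{\tau}\pi_{0}\N\invfact{\cat C}{\alpha}{\tau}$ (\propref{prop:MatchProd}). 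You instead take the cofree diagram $\diag X_{\delta} = \Map(\cat D(\delta,\beta),K)$, indexed by \emph{all} maps to $\beta$ rather than only those in $\inv{\cat D}$, and this buys two simplifications: fibrancy becomes formal (evaluation at $\beta$ is left Quillen since Reedy cofibrations and trivial cofibrations are objectwise, so its right adjoint, the cofree functor, preserves fibrant objects), and the matching object is computed in a single step as $\Map(L,K)$ with $L$ a colimit of hom-sets, with no induction over degrees. The price is that your $L$ contains classes of pairs $(\nu,\phi)$ with $\phi$ an arbitrary map of $\cat D$, so you need the extra observation---absent from the paper's argument, whose diagram only ever involves $\inv{\cat D}$-maps---that uniqueness of Reedy factorizations forces the second coordinate of any pair in $\lambda^{-1}(\sigma)$ to lie in $\inv{\cat D}$; together with the fact that the generating relations preserve $\lambda$, this is exactly what identifies that one fiber with $\pi_{0}\N\invfact{\cat C}{\alpha}{\sigma}$. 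Your verification of both points is correct, and it also covers the case $\sigma = 1_{G\alpha}$, which \defref{def:goodsub} does not exclude. Both proofs end identically: the matching map is constant across the components of $\N\invfact{\cat C}{\alpha}{\sigma}$ (your ``constant on fibers of $\lambda$,'' the paper's factorization of a projection through the diagonal $I \to I\times I$ in \propref{prop:NotFib}), hence is not surjective onto a nonempty path-connected matching object, hence is not a fibration. Your version is shorter and more conceptual; the paper's is more self-contained and explicit, describing the whole matching object rather than just the single fiber the contradiction requires.
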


The proof of \thmref{thm:RightQuil} is given in
\secref{sec:ProofRightQuil}, while the proof of \thmref{thm:RtGdNec}
can be found in \secref{sec:RtGdNec}.

In summary, the proofs of our main results, \thmref{thm:GoodisGood}
and \thmref{thm:MainCofibering}, thus have the following structure:
\begin{displaymath}
  \vcenter{
    \xymatrix@C=1.5em{
      \txt{\thmref{thm:RightQuil} \\
           (\secref{sec:ProofRightQuil})}
      \ar@{=>}[dr]
      \\
      & \text{\thmref{thm:FiberingThree}} \ar@{=>}[r]
      & \text{\thmref{thm:GoodisGood}} \ar@{=>}[d]
      \\
      \txt{\thmref{thm:RtGdNec} \\
           (\secref{sec:RtGdNec})}
      \ar@{=>}[ur]
     &
     &\txt{\thmref{thm:MainCofibering} \\
           (\secref{sec:PrfCofibering})}
   }
 }
\end{displaymath}

\subsection{Proof of \thmref{thm:RightQuil}}
\label{sec:ProofRightQuil}

We work backward, first giving the proof of the main result.  The
completion of that proof will depend on two key assertions,
\propref{prop:MatchFib} and \propref{prop:MatchIso}, whose proofs are
given in Sections \ref{sec:PrfMatchFib} and \ref{sec:PrfMatchIso}.
The assumption that we have a fibering Reedy functor is used only in
the proofs of \propref{prop:MatchFib} and \propref{prop:isoprime} (the
latter is used in the proof of the former).

\begin{proof}[Proof of \thmref{thm:RightQuil}]
  Since $\cat M$ is cocomplete, the left adjoint of $G^{*}$ exists
  (see \cite{borceux-I}*{Thm.~3.7.2} or
  \cite{McL:categories}*{p.~235}).  Thus, to show that the induced
  functor $\cat M^{\cat D} \to \cat M^{\cat C}$ is a right Quillen
  functor, we need only show that it preserves fibrations and trivial
  fibrations (see \propref{prop:QuilFunc}).  Since the weak
  equivalences in $\cat M^{\cat D}$ and $\cat M^{\cat C}$ are the
  objectwise ones, any weak equivalence in $\cat M^{\cat D}$ induces a
  weak equivalence in $\cat M^{\cat C}$.  Thus, if we show that the
  induced functor preserves fibrations, then we will also know that it
  takes maps that are both fibrations and weak equivalences to maps
  that are both fibrations and weak equivalences, i.e., that it also
  preserves trivial fibrations.

  To show that the induced functor $\cat M^{\cat D} \to \cat M^{\cat
    C}$ preserves fibrations, let $\diag X \to \diag Y$ be a fibration
  of $\cat D$-diagrams in $\cat M$; we will let $G^{*}\diag X$ and
  $G^{*}\diag Y$ denote the induced diagrams on $\cat C$.  For every
  object $\alpha$ of $\cat C$, the matching objects of $\diag X$ and
  $\diag Y$ at $\alpha$ in $\cat M^{\cat C}$ are
  \begin{displaymath}
    \match_{\alpha}^{\cat C} G^{*}\diag X =
    \lim_{\matchcat{\cat C}{\alpha}} G^{*}\diag X
    \qquad\text{and}\qquad
    \match_{\alpha}^{\cat C} G^{*}\diag Y =
    \lim_{\matchcat{\cat C}{\alpha}} G^{*}\diag Y
  \end{displaymath}
  and we define $P_{\alpha}^{\cat C}$ by letting the diagram
  \begin{equation}
    \label{diag:DefPB}
    \vcenter{
      \xymatrix{
        {P_{\alpha}^{\cat C}} \ar@{..>}[r] \ar@{..>}[d]
        & {(G^{*}\diag Y)_{\alpha}} \ar[d]\\
        {\match_{\alpha}^{\cat C} G^{*}\diag X} \ar[r]
        & {\match_{\alpha}^{\cat C} G^{*}\diag Y}
      }
    }
  \end{equation}
  be a pullback; we must show that the relative matching map
  $(G^{*}\diag X)_{\alpha} \to P_{\alpha}^{\cat C}$ is a fibration
  (see \thmref{thm:RFib}), and there are two cases:
  \begin{enumerate}
  \item There is a non-identity map $\alpha \to \gamma$ in $\inv{\cat
      C}$ that $G$ takes to the identity map of $G\alpha$.
  \item $G$ takes every non-identity map $\alpha \to \gamma$ in
    $\inv{\cat C}$ to a non-identity map in $\inv{\cat D}$.
  \end{enumerate}
  In the first case, \propref{prop:MatchIso} (in
  \secref{sec:PrfMatchIso} below) implies that the pullback
  \diagref{diag:DefPB} is isomorphic to the diagram
  \begin{displaymath}
    \xymatrix{
      {P^{\cat C}_{\alpha}} \ar[r] \ar[d]
      & {(G^{*}\diag Y)_{\alpha}}
      \ar[d]^{1_{(G^{*}\diag Y)_{\alpha}}}\\
      {(G^{*}\diag X)_{\alpha}} \ar[r]
      & {(G^{*}\diag Y)_{\alpha}}
    }
  \end{displaymath}
  in which the vertical map on the left is an isomorphism $P^{\cat
    C}_{\alpha} \iso (G^{*}\diag X)_{\alpha}$.  Thus, the composition
  of the relative matching map with that isomorphism is the identity
  map of $(G^{*}\diag X)_{\alpha}$, and so the relative matching map
  is an isomorphism $(G^{*}\diag X)_{\alpha} \to P^{\cat C}_{\alpha}$,
  and is thus a fibration.

  We are left with the second case, and so we can assume that $G$
  takes every non-identity map $\alpha \to \gamma$ in $\inv{\cat C}$
  to a non-identity map in $\inv{\cat D}$.  In this case, $G$ induces
  a functor $G_{*}\colon \matchcat{\cat C}{\alpha} \to \matchcat{\cat
    D}{G\alpha}$ that takes the object $f\colon \alpha \to \gamma$ of
  $\matchcat{\cat C}{\alpha}$ to the object $Gf\colon G\alpha \to
  G\gamma$ of $\matchcat{\cat D}{G\alpha}$ (see
  \propref{prop:CatFacts}).

  The matching objects of $\diag X$ and $\diag Y$ at $G\alpha$ in
  $\cat M^{\cat D}$ are
  \begin{displaymath}
    \match_{G\alpha}^{\cat D} \diag X =
    \lim_{\matchcat{\cat D}{G\alpha}} \diag X
    \qquad\text{and}\qquad
    \match_{G\alpha}^{\cat D} \diag Y =
    \lim_{\matchcat{\cat D}{G\alpha}} \diag Y
  \end{displaymath}
  and we define  $P_{G\alpha}^{\cat D}$ by letting the diagram
  \begin{displaymath}
    \xymatrix{
      {P_{G\alpha}^{\cat D}} \ar@{..>}[r] \ar@{..>}[d]
      & {\diag Y_{G\alpha}} \ar[d]\\
      {\match_{G\alpha}^{\cat D} \diag X}
      \ar[r]
      & {\match_{G\alpha}^{\cat D} \diag Y}
    }
  \end{displaymath}
  be a pullback.  The functor $G_{*}\colon \matchcat{\cat C}{\alpha}
  \to \matchcat{\cat D}{G\alpha}$ (see \propref{prop:CatFacts})
  induces natural maps
  \begin{align*}
    \match_{G\alpha}^{\cat D}\diag X =
    \lim_{\matchcat{\cat D}{G\alpha}} \diag X &\longrightarrow
    \lim_{\matchcat{\cat C}{\alpha}} G^{*}\diag X =
    \match_{\alpha}^{\cat C}G^{*}\diag X\\
    \match_{G\alpha}^{\cat D}\diag Y =
    \lim_{\matchcat{\cat D}{G\alpha}} \diag Y &\longrightarrow
    \lim_{\matchcat{\cat C}{\alpha}} G^{*}\diag Y =
    \match_{\alpha}^{\cat C}G^{*}\diag Y
  \end{align*}
  and so we have a map of pullbacks and relative matching maps
  \begin{displaymath}
    \xymatrix@=.8em{
      & {(G^{*}\diag X)_{\alpha}} \ar[dr]\\
      {\diag X_{G\alpha}} \ar@{=}[ur] \ar[dr]
      && {P^{\cat C}_{\alpha}} \ar[rr] \ar'[d][dd]
      && {(G^{*}\diag Y)_{\alpha}} \ar[dd]\\
      & {P^{\cat D}_{G\alpha}} \ar[ur] \ar[rr] \ar[dd]
      && {\diag Y_{G\alpha}} \ar[ur] \ar[dd]\\
      && {\match^{\cat C}_{\alpha}G^{*}\diag X} \ar'[r][rr]
      && {\match^{\cat C}_{\alpha}G^{*}\diag Y}\\
      & {\match^{\cat D}_{G\alpha}\diag X} \ar[ur] \ar[rr]
      && {\match^{\cat D}_{G\alpha}\diag Y} \ar[ur]
    }
  \end{displaymath}
  and our map $(G^{*}\diag X)_{\alpha} \to P_{\alpha}^{\cat C}$ equals
  the composition
  \begin{displaymath}
    (G^{*}\diag X)_{\alpha} =
    \diag X_{G\alpha} \longrightarrow P_{G\alpha}^{\cat D}
    \longrightarrow P_{\alpha}^{\cat C} \Period
  \end{displaymath}
  Since the map $\diag X \to \diag Y$ is a fibration in $\cat M^{\cat
    D}$, the relative matching map $\diag X_{G\alpha} \to
  P_{G\alpha}^{\cat D}$ is a fibration (see \thmref{thm:RFib}), and so
  it is sufficient to show that the natural map
  \begin{equation}
    \label{eq:Matches}
    P_{G\alpha}^{\cat D} \longrightarrow P_{\alpha}^{\cat C}
  \end{equation}
  is a fibration.  That statement is the content of
  \propref{prop:MatchFib} (in \secref{sec:PrfMatchFib}, below) which
  (along with \propref{prop:MatchIso} in \secref{sec:PrfMatchIso})
  will complete the proof of \thmref{thm:RightQuil}.
\end{proof}

\subsection{Statement and proof of \propref{prop:MatchFib}}
\label{sec:PrfMatchFib}

The purpose of this section is to state and prove the following
proposition, which (along with \propref{prop:MatchIso} in
\secref{sec:PrfMatchIso}) will complete the proof of
\thmref{thm:RightQuil}.

\begin{prop}
  \label{prop:MatchFib}
  For every object $\alpha$ of $\cat C$, the map 
  \begin{displaymath}
    P_{G\alpha}^{\cat D} \longrightarrow P_{\alpha}^{\cat C}
  \end{displaymath}
  from \eqref{eq:Matches} is a fibration.
\end{prop}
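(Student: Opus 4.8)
The plan is to analyze the comparison map \eqref{eq:Matches} entirely through the functor $G_{*}\colon \matchcat{\cat C}{\alpha} \to \matchcat{\cat D}{G\alpha}$ of \propref{prop:CatFacts} (available precisely because we are in the second case, where $G$ sends every non-identity map out of $\alpha$ in $\inv{\cat C}$ to a non-identity map in $\inv{\cat D}$). Both $P_{G\alpha}^{\cat D}$ and $P_{\alpha}^{\cat C}$ are pullbacks sharing the corner $\diag Y_{G\alpha} = (G^{*}\diag Y)_{\alpha}$, and the remaining structure maps are all induced by $G_{*}$, since $\match_{\alpha}^{\cat C}G^{*}\diag X = \lim_{\matchcat{\cat C}{\alpha}} G_{*}^{*}\diag X$ and similarly for $\diag Y$. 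Thus \eqref{eq:Matches} is determined by the maps $\match_{G\alpha}^{\cat D}\diag X \to \match_{\alpha}^{\cat C}G^{*}\diag X$ and its analogue for $\diag Y$, and I would first identify these and then show the induced map of pullbacks is a fibration.

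For the identification, let $\cat E$ be the full subcategory of $\matchcat{\cat D}{G\alpha}$ on the objects $\sigma$ for which $\invfact{\cat C}{\alpha}{\sigma} = \overcat{G_{*}}{\sigma}$ is nonempty. Post-composing a factorization shows that $\cat E$ is closed under the morphisms of $\matchcat{\cat D}{G\alpha}$ out of its objects, and $G_{*}$ factors through $\cat E$ (each $G\nu$ is factored by $(\nu, 1)$). By the fibering hypothesis every $\overcat{G_{*}}{\sigma}$ with $\sigma \in \cat E$ is nonempty and connected, so $G_{*}\colon \matchcat{\cat C}{\alpha} \to \cat E$ is left cofinal; \thmref{thm:CofinalIso} then yields natural isomorphisms $\lim_{\cat E}\diag X \cong \match_{\alpha}^{\cat C}G^{*}\diag X$ and $\lim_{\cat E}\diag Y \cong \match_{\alpha}^{\cat C}G^{*}\diag Y$ (this is the content I expect from \propref{prop:isoprime}). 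Under these isomorphisms the maps above become the restriction-of-limits maps along the inclusion $\cat E \hookrightarrow \matchcat{\cat D}{G\alpha}$.

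It then remains to show that the resulting map of pullbacks
\[
  \diag Y_{G\alpha} \times_{\lim_{\matchcat{\cat D}{G\alpha}}\diag Y}
  \lim_{\matchcat{\cat D}{G\alpha}}\diag X
  \longrightarrow
  \diag Y_{G\alpha} \times_{\lim_{\cat E}\diag Y} \lim_{\cat E}\diag X
\]
is a fibration. Here $\matchcat{\cat D}{G\alpha}$ is an inverse category (every non-identity morphism strictly lowers the degree of its target object) and $\cat E$ is a cosieve in it. I would prove the claim by the standard filtration argument for inverse categories: filtering $\matchcat{\cat D}{G\alpha}$ by the degrees of its objects and extending past the cosieve $\cat E$ one degree at a time, so that at each stage the limit changes by a pullback along a map that is a fibration because $\diag X \to \diag Y$ is a Reedy fibration (see \thmref{thm:RFib}). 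Since pullbacks and finite composites of fibrations are fibrations, the displayed map is a fibration; composing it with the relative matching map $\diag X_{G\alpha} \to P_{G\alpha}^{\cat D}$ recovers the relative matching map $(G^{*}\diag X)_{\alpha} \to P_{\alpha}^{\cat C}$ required in \thmref{thm:RightQuil}.

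The main obstacle is this last step: organizing the degreewise filtration of $\matchcat{\cat D}{G\alpha}$ beyond the cosieve $\cat E$ and checking that each layer contributes exactly a pullback of a relative matching map of the original Reedy fibration, so that the compatibility between the inverse-matching objects of $\matchcat{\cat D}{G\alpha}$ and the matching objects of $\diag X$ and $\diag Y$ over $\cat D$ is handled correctly. By contrast, the fibering hypothesis itself enters only through the cofinality of $G_{*}$ onto $\cat E$ in the second paragraph; once the $\cat C$-matching objects are reexpressed as limits over $\cat E \subseteq \matchcat{\cat D}{G\alpha}$, the fibration claim is a purely inverse-category computation requiring no further connectivity input.
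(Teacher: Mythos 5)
Your proposal is correct, and while it is built from the same ingredients as the paper's proof---the fibering hypothesis entering only through left cofinality, a degreewise filtration of $\matchcat{\cat D}{G\alpha}$, and pullbacks of relative matching maps of the Reedy fibration $\diag X \to \diag Y$---it organizes them by a genuinely different decomposition. The paper filters $\matchcat{\cat D}{G\alpha}$ by the categories $\cat A_{i}$ (the image of $G_{*}$ together with all objects of target degree at most $i$), so each step adds both the degree-$(i{+}1)$ objects whose category of inverse $\cat C$-factorizations is nonempty (the set $S_{i+1}$) and those for which it is empty (the set $T_{i+1}$); this forces an intermediate category $\cat A'_{i+1}$ and a separate cofinality argument at every degree (\propref{prop:isoprime}) alongside the fibration argument (\propref{prop:IndFibr}). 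You instead invoke the fibering hypothesis exactly once, for the full subcategory $\cat E$ of objects with nonempty factorization category (the union of the paper's $\cat A_{-1}$ with all the $S_{j}$), and this is legitimate: $\cat E$ is closed under postcomposition, $G_{*}$ lands in it, and for $\sigma$ in $\cat E$ the overcategory $\overcat{G_{*}}{\sigma}$ is $\invfact{\cat C}{\alpha}{\sigma}$ by \propref{prop:CatFacts}, nonempty by the definition of $\cat E$ and connected by hypothesis, so \thmref{thm:CofinalIso} applies. The step you flag as the main obstacle is exactly the content of the paper's \lemref{lem:pbCtoCprime}, \lemref{lem:PBf}, \lemref{lem:lastfib}, and \lemref{lem:reedy}, and it transfers verbatim to your coarser filtration $\cat B_{i} = \cat E \cup T_{0} \cup \cdots \cup T_{i}$: the closure property of $\cat E$ plus degree considerations show that $\cat B_{i+1}$ has no non-identity maps with codomain in $T_{i+1}$, while the non-identity maps out of an object $G\alpha \to \beta$ of $T_{i+1}$ land in $\cat B_{i}$ and are indexed by $\matchcat{\cat D}{\beta}$; hence $\lim_{\cat B_{i+1}}\diag Z$ is the pullback of $\lim_{\cat B_{i}}\diag Z \to \prod_{T_{i+1}}\match_{\beta}\diag Z \leftarrow \prod_{T_{i+1}}\diag Z_{\beta}$, each layer of your tower is a pullback of a product of relative matching maps of $\diag X \to \diag Y$, and Reedy's lemma closes the argument. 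Your organization buys a cleaner separation of concerns---connectivity is used once, after which everything is a formal inverse-category computation requiring no further input from $G$---whereas the paper's interleaved filtration does the same total work with the cofinality argument repeated at each degree; neither route loses anything substantive.
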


The proof of \propref{prop:MatchFib} is intricate, but it does not
require any new definitions.  To aid the reader, here is the structure
of the argument:
\begin{equation}
  \label{eq:RightQuilFlow}
  \vcenter{
    \xymatrix@C=1.5em{
      \text{\propref{prop:isoprime}}\ar@{=>}[r]
      & \text{\propref{prop:MatchFib}}
      & \\
      \text{\lemref{lem:reedy}}\ar@{=>}[r]
      & \text{\propref{prop:IndFibr}}\ar@{=>}[u]
      & \text{\lemref{lem:lastfib}}\ar@{=>}[l]\\
      & \txt{\lemref{lem:pbCtoCprime} \& \\
        \diagref{diag:bigcube}}\ar@{=>}[r]
      & \text{\lemref{lem:PBf}}\ar@{=>}[u] 
    }
  }
\end{equation}

We will start with the proof of \propref{prop:MatchFib} and then, as
in the proof of \thmref{thm:RightQuil}, we will work our way backward
from it.

\begin{proof}[Proof of \propref{prop:MatchFib}]
  If the degree of $\alpha$ is $k$, we define a nested sequence of
  subcategories of $\matchcat{\cat D}{G\alpha}$
  \begin{equation}
    \label{eq:MatchFibCats}
    \cat A_{-1} \subset \cat A_{0} \subset \cat A_{1} \subset \cdots
    \subset \cat A_{k-1} =  \matchcat{\cat D}{G\alpha} 
  \end{equation}
  by letting $\cat A_{i}$ for $-1 \le i \le k-1$ be the full
  subcategory of $\matchcat{\cat D}{G\alpha}$ with objects the union
  of
  \begin{itemize}
  \item the objects of $\matchcat{\cat D}{G\alpha}$ whose target is of
    degree at most $i$, and
  \item the image under $G_{*}\colon \matchcat{\cat C}{\alpha} \to
    \matchcat{\cat D}{G\alpha}$ (see \propref{prop:CatFacts}) of the
    objects of $\matchcat{\cat C}{\alpha}$.
  \end{itemize}
  The functor $G_{*}\colon \matchcat{\cat C}{\alpha} \to
  \matchcat{\cat D}{G\alpha}$ factors through $\cat A_{-1}$ and, since
  there are no objects of negative degree, this functor, which by
  abuse of notation we will also call $G_{*}\colon \matchcat{\cat
    C}{\alpha} \to \cat A_{-1}$, maps onto the objects of $\cat A_{-1}$.

  In fact, we claim that the functor $G_{*}\colon \matchcat{\cat
    C}{\alpha} \to \cat A_{-1}$ is left cofinal (see
  \defref{def:cofinal}) and thus induces isomorphisms
  \begin{displaymath}
  \lim_{\cat A_{-1}} \diag X \iso
  \lim_{\matchcat{\cat C}{\alpha}} G^{*}\diag X
  \qquad\text{and}\qquad
  \lim_{\cat A_{-1}}
  \diag Y \iso \lim_{\matchcat{\cat C}{\alpha}} G^{*}\diag Y
  \end{displaymath}
  (see \thmref{thm:CofinalIso}).  To see this, note that every object
  of $\cat A_{-1}$ is of the form $G\sigma\colon G\alpha \to G\beta$
  for some object $\sigma\colon \alpha \to \beta$ of
  $\matchcat{\cat C}{\alpha}$ and \propref{prop:CatFacts} implies that
  the overcategory
  $\bovercat{G_{*}}{(G\sigma\colon G\alpha \to G\beta)}$ is exactly
  the category of inverse $\cat C$-factorizations of
  $(\alpha, G\sigma)$, and so (since $G$ is a fibering Reedy functor)
  its nerve must be either empty or connected.  Since it is not empty
  (it contains the vertex
  $(\alpha \xrightarrow{\sigma} \beta, 1_{G\beta})$), it is connected,
  and so $G_{*}\colon \matchcat{\cat C}{\alpha} \to \cat A_{-1}$ is
  left cofinal.

  The sequence of inclusions of categories \eqref{eq:MatchFibCats}
  thus induces sequences of maps
  \begin{displaymath}
    \begin{gathered}
      \lim_{\matchcat{\cat D}{G\alpha}} \diag X = \lim_{\cat A_{k-1}}
      \diag X \to \lim_{\cat A_{k-2}} \diag X \to \cdots \to
      \lim_{\cat A_{0}} \diag X \to \lim_{\cat A_{-1}} \diag X \iso
      \lim_{\matchcat{\cat C}{\alpha}} G^{*}\diag X\\
      \lim_{\matchcat{\cat D}{G\alpha}} \diag Y = \lim_{\cat A_{k-1}}
      \diag Y \to \lim_{\cat A_{k-2}} \diag Y \to \cdots \to
      \lim_{\cat A_{0}} \diag Y \to \lim_{\cat A_{-1}} \diag Y \iso
      \lim_{\matchcat{\cat C}{\alpha}} G^{*}\diag Y\Period
    \end{gathered}
  \end{displaymath}
 For $-1 \le i \le k-1$ we let $P_{i}$ be the pullback
  \begin{displaymath}
    \xymatrix{
      {P_{i}} \ar@{..>}[r] \ar@{..>}[d]
      & {\diag Y_{G\alpha}} \ar[d]\\
      {\lim_{\cat A_{i}}\diag X} \ar[r]
      & {\lim_{\cat A_{i}}\diag Y \Period}
    }
  \end{displaymath}
  Since we have an evident map of diagrams
  \begin{displaymath}
    \Big(\lim_{\cat A_{i+1}}\diag X\to \lim_{\cat A_{i+1}}\diag
    Y\leftarrow \diag Y_{G\alpha}  \Big) \longrightarrow
    \Big(\lim_{\cat A_{i}}\diag X\to \lim_{\cat A_{i}}\diag
    Y\leftarrow \diag Y_{G\alpha}  \Big)
  \end{displaymath}
  we also get an induced map $P_{i+1}\to P_i$ of pullbacks.
  We thus have a factorization of
  \eqref{eq:Matches} as
  \begin{displaymath}
    P_{G\alpha}^{\cat D} = P_{k-1} \longrightarrow P_{k-2}
    \longrightarrow \cdots \longrightarrow
    P_{-1} \iso P_{\alpha}^{\cat C} \Comma
  \end{displaymath}
  and we will show that the map $P_{i+1} \to P_{i}$ is a fibration for
  $-1 \le i \le k-2$.

  The objects of $\cat A_{i+1}$ that are not in $\cat A_{i}$ are maps
  $G\alpha \to \beta$ where $\beta$ is of degree $i+1$, and this set
  of maps can be divided into two subsets:
  \begin{itemize}
  \item the set $S_{i+1}$ of maps $G\alpha\to\beta$ for which the
    category of inverse $\cat C$-factorizations of $(\alpha, G\alpha
    \to \beta)$ is nonempty, and
  \item the set $T_{i+1}$ of maps for which the category of inverse
    $\cat C$-factorizations of $(\alpha, G\alpha \to \beta)$ is empty.
  \end{itemize}
  We let $\cat A'_{i+1}$ be the full subcategory of $\matchcat{\cat
    D}{G\alpha}$ with objects the union of $S_{i+1}$ with the objects
  of $\cat A_{i}$, and define $P'_{i+1}$ as the pullback
  \begin{displaymath}
    \xymatrix{
      {P'_{i+1}} \ar@{..>}[r] \ar@{..>}[d]
      & {\diag Y_{G\alpha}} \ar[d]\\
      {\lim_{\cat A'_{i+1}}\diag X} \ar[r]
      & {\lim_{\cat A'_{i+1}}\diag Y \Period}
    }
  \end{displaymath}
  We have inclusions of categories $\cat A_{i} \subset \cat A'_{i+1}
  \subset \cat A_{i+1}$, and the maps
  \begin{displaymath}
    \lim_{\cat A_{i+1}} \diag X \longrightarrow
    \lim_{\cat A_{i}}\diag X
    \qquad\text{and}\qquad
    \lim_{\cat A_{i+1}} \diag Y \longrightarrow
    \lim_{\cat A_{i}}\diag Y
  \end{displaymath}
  factor as
  \begin{displaymath}
    \lim_{\cat A_{i+1}} \diag X \longrightarrow
    \lim_{\cat A'_{i+1}} \diag X \longrightarrow
    \lim_{\cat A_{i}}\diag X
    \qquad\text{and}\qquad
    \lim_{\cat A_{i+1}} \diag Y \longrightarrow
    \lim_{\cat A'_{i+1}} \diag Y \longrightarrow
    \lim_{\cat A_{i}}\diag Y \Period
  \end{displaymath}
  These factorizations induce a factorization
  \begin{equation}
    \label{eq:factorization}
    P_{i+1} \longrightarrow  P'_{i+1} \longrightarrow P_{i}
  \end{equation}
  of the map $P_{i+1} \to P_{i}$, and we have the commutative diagram
  \begin{displaymath}
    \xymatrix@=.8em{
      && {P_{i}} \ar[rrr] \ar'[d]'[dd][ddd]
      &&& {\diag Y_{G\alpha}} \ar[ddd]\\
      & {P'_{i+1}} \ar[ur] \ar[rrr] \ar'[d][ddd]
      &&& {\diag Y_{G\alpha}} \ar@{=}[ur] \ar[ddd]\\
      {P_{i+1}} \ar[ur] \ar[rrr] \ar[ddd]
      &&& {\diag Y_{G\alpha}} \ar@{=}[ur] \ar[ddd]\\
      && {\lim_{\cat A_{i}}\diag X} \ar'[r]'[rr][rrr]
      &&& {\lim_{\cat A_{i}}\diag Y}\\
      & {\lim_{\cat A'_{i+1}}\diag X} \ar[ur] \ar'[rr][rrr]
      &&& {\lim_{\cat A'_{i+1}}\diag Y} \ar[ur]\\
      {\lim_{\cat A_{i+1}}\diag X} \ar[ur] \ar[rrr]
      &&& {\lim_{\cat A_{i+1}}\diag Y} \ar[ur]
    }
  \end{displaymath}
  \propref{prop:isoprime} below asserts that the map $P'_{i+1} \to
  P_{i}$ is an isomorphism and \propref{prop:IndFibr} asserts that the
  map $P_{i+1} \to P'_{i+1}$ is a fibration.  Hence, the map
  $P_{G\alpha}^{\cat D}\to P_{\alpha}^{\cat C}$ is a fibration as
  well.
\end{proof}

\begin{prop}
  \label{prop:isoprime}
  For $-1 \le i \le k-2$, the map $P'_{i+1} \to P_{i}$ in
  \eqref{eq:factorization} is an isomorphism.
\end{prop}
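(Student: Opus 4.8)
The plan is to reduce the claim to a cofinality statement about the inclusion $J\colon \cat A_{i} \hookrightarrow \cat A'_{i+1}$. First I would observe that $P'_{i+1}$ and $P_{i}$ are the pullbacks of the cospans $\lim_{\cat A'_{i+1}}\diag X \to \lim_{\cat A'_{i+1}}\diag Y \leftarrow \diag Y_{G\alpha}$ and $\lim_{\cat A_{i}}\diag X \to \lim_{\cat A_{i}}\diag Y \leftarrow \diag Y_{G\alpha}$, and that the factorization \eqref{eq:factorization} realizes $P'_{i+1}\to P_{i}$ as the map of pullbacks induced by restriction along $J$ together with the identity on $\diag Y_{G\alpha}$. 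Hence it suffices to show that $J$ induces isomorphisms $\lim_{\cat A'_{i+1}}\diag X \xrightarrow{\iso} \lim_{\cat A_{i}}\diag X$ and $\lim_{\cat A'_{i+1}}\diag Y \xrightarrow{\iso} \lim_{\cat A_{i}}\diag Y$, since a map of cospans consisting of isomorphisms induces an isomorphism on pullbacks.

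By \thmref{thm:CofinalIso}, these limit maps will be isomorphisms once I show that $J$ is left cofinal (see \defref{def:cofinal}), i.e.\ that for every object $w$ of $\cat A'_{i+1}$ the overcategory $\overcat{J}{w}$ has nonempty and connected nerve. If $w$ lies in $\cat A_{i}$ this is immediate, since $(w,1_{w})$ is a terminal object of $\overcat{J}{w}$. The remaining objects are the maps $w\colon G\alpha \to \beta$ in $S_{i+1}$, with $\degree(\beta)=i+1$, and these are where the fibering hypothesis must enter.

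For such a $w$, an object of $\overcat{J}{w}$ is a pair $\bigl((a\colon G\alpha\to\delta),(f\colon\delta\to\beta)\bigr)$ with $a$ an object of $\cat A_{i}$, $f$ a map in $\inv{\cat D}$, and $fa=w$. The key is a degree count: if $\degree(\delta)\le i$ then, since $\degree(\delta)<\degree(\beta)$, the map $f$ would be a non-identity map of $\inv{\cat D}$ and would lower degree, forcing $\degree(\delta)>\degree(\beta)$---a contradiction. Thus $\delta$ has degree $>i$, so $a$ cannot be one of the low-degree objects of $\cat A_{i}$ and must instead lie in the image of $G_{*}\colon \matchcat{\cat C}{\alpha} \to \matchcat{\cat D}{G\alpha}$ (see \propref{prop:CatFacts}). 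Writing $a=G\nu$ for an object $\nu\colon\alpha\to\gamma$ of $\matchcat{\cat C}{\alpha}$ then exhibits $\bigl((\nu),(f)\bigr)$ as an object of $\invfact{\cat C}{\alpha}{w}$ (see \defref{def:CFactors}) lying over $(a,f)$. In other words, the functor $\invfact{\cat C}{\alpha}{w} \to \overcat{J}{w}$ sending $\bigl((\nu),(\mu)\bigr)$ to $\bigl((G\nu),(\mu)\bigr)$ is surjective on objects, and therefore surjective on connected components of nerves. Since $w\in S_{i+1}$ makes $\invfact{\cat C}{\alpha}{w}$ nonempty, and $G$ is a fibering Reedy functor (so that a nonempty such category is connected; see \defref{def:goodsub}), I conclude that $\overcat{J}{w}$ is nonempty and connected as well.

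The main obstacle---and the point where I expect the most care is needed---is precisely this identification of $\overcat{J}{w}$ with the factorization category. The degree count is what forces every object of $\overcat{J}{w}$ into the image of $G_{*}$, eliminating the low-degree objects of $\cat A_{i}$; and it is fortunate that mere surjectivity on objects suffices to transport nonemptiness and connectedness, since $G$ need not be full and so the comparison functor need not be an equivalence of categories.
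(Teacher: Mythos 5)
Your proof is correct and takes essentially the same route as the paper's: both reduce the claim to left cofinality of the inclusion $\cat A_{i} \subset \cat A'_{i+1}$, proved by a degree count forcing every object of the overcategory over a $\sigma \in S_{i+1}$ to factor through the image of $G_{*}$, then invoking the fibering hypothesis and \thmref{thm:CofinalIso}. If anything you are more careful than the paper at one point: the paper flatly identifies $\overcat{\cat A_{i}}{\sigma}$ with $\invfact{\cat C}{\alpha}{\sigma}$, whereas you correctly note that the comparison functor is in general only surjective on objects (since $G$ need be neither full nor injective on maps), and that this surjectivity is enough to transfer nonemptiness and connectedness of the nerve.
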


\begin{proof}
  We will show that for every element $\sigma\colon G\alpha \to \beta$
  of $\cat A_{i}$ the overcategory $\overcat{\cat A_{i}}{\sigma}$ is
  nonempty and connected, which will imply that the inclusion
  $\cat A_{i} \subset \cat A'_{i+1}$ is left cofinal (see
  \defref{def:cofinal}).  This will imply that the maps
  $\lim_{\cat A'_{i+1}} \diag X \to \lim_{\cat A_{i}}\diag X$ and
  $\lim_{\cat A'_{i+1}} \diag Y \to \lim_{\cat A_{i}}\diag Y$ are
  isomorphisms (see \thmref{thm:CofinalIso}), and so the induced map
  $P'_{i+1} \to P_{i}$ is an isomorphism.

  If $\sigma\colon G\alpha \to \beta$ is an element of $\cat A_{i}$,
  then the overcategory $\overcat{\cat A_{i}}{\sigma}$ has the
  terminal object $1_{\sigma}$ and is thus nonempty and connected.

  Now suppose that $\sigma\colon G\alpha \to \beta$ is an object of
  $\cat A'_{i+1}$ that is not in $\cat A_{i}$.  The objects of
  $\overcat{\cat A_{i}}{\sigma}$ are commutative diagrams
  \begin{displaymath}
    \xymatrix@=.6em{
      & {G\alpha} \ar[dl]_{\nu} \ar[dr]^{\sigma}\\
      {\gamma} \ar[rr]_{\mu}
      && {\beta}
    }
  \end{displaymath}
  where $\nu\colon G\alpha \to \gamma$ is in $\cat A_{i}$ and $\mu$ is
  in $\inv{\cat D}$.  Since $\beta$ is of degree $i+1$ and $\mu$
  lowers degree (because $\mu$ cannot be an identity map, since
  $\sigma$ isn't in $\cat A_{i}$), the degree of $\gamma$ must be
  greater than $i+1$, and so the map $\nu\colon G\alpha \to \gamma$
  must be of the form $G\nu'\colon G\alpha \to G\gamma'$ for some map
  $\nu'\colon \alpha \to \gamma'$ in $\matchcat{\cat C}{\alpha}$.
  Thus, the objects of $\overcat{\cat A_{i}}{\sigma}$ are pairs
  $\bigl((\nu'\colon \alpha\to \gamma'), (\mu\colon G\gamma' \to
  \beta)\bigr)$ where $\nu'\colon \alpha \to \gamma'$ is a
  non-identity map of $\inv{\cat C}$, $\mu\colon G\gamma' \to \beta$
  is in $\inv{\cat D}$, and $\mu\circ G\nu' = \sigma$, and
  $\overcat{\cat A_{i}}{\sigma}$ is the category of inverse
  $\cat C$-factorizations of $(\alpha, \sigma)$ (see
  \propref{prop:CatFacts}).  Since $G$ is a fibering Reedy functor,
  the nerve of the category of inverse $\cat C$-factorizations of
  $(\alpha, \sigma)$ is either empty or connected.  Since it is
  nonempty (because $\sigma\colon G\alpha \to \beta$ is an element of
  $S_{i+1}$), the nerve of the overcategory
  $\overcat{\cat A_{i}}{\sigma}$ is nonempty and connected.
\end{proof}

\begin{prop}
  \label{prop:IndFibr}
  For $-1 \le i \le k-2$, the map $P_{i+1} \to P'_{i+1}$ in
  \eqref{eq:factorization} is a fibration.
\end{prop}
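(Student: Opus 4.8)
The plan is to realize $P_{i+1}\to P'_{i+1}$ as a pullback of a product of relative matching maps of the Reedy fibration $\diag X\to\diag Y$, so that it is a fibration because fibrations are stable under pullback. The objects of $\cat A_{i+1}$ not in $\cat A'_{i+1}$ are exactly those of $T_{i+1}$, the maps $\sigma\colon G\alpha\to\beta$ in $\inv{\cat D}$ with $\beta$ of degree $i+1$ for which $\invfact{\cat C}{\alpha}{\sigma}$ is empty; write $\beta_{\sigma}$ for the target of such a $\sigma$. The first step is the structural observation that in $\cat A_{i+1}$ no object of $T_{i+1}$ receives a non-identity morphism. Indeed, a non-identity morphism of $\matchcat{\cat D}{G\alpha}$ with target $\sigma$ is a map $\mu\colon\delta\to\beta$ in $\inv{\cat D}$, with source some object $\tau\colon G\alpha\to\delta$, satisfying $\mu\tau=\sigma$; its being non-identity forces $\mu\ne 1_{\beta}$, so $\mu$ lowers degree and $\delta$ has degree $>i+1$. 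But every object of $\cat A_{i+1}$ whose target has degree $>i+1$ lies in the image of $G_{*}$, so $\tau=G\nu$ for a non-identity $\nu\colon\alpha\to\gamma$ in $\inv{\cat C}$ and $\delta=G\gamma$; then $\bigl((\nu),(\mu\colon G\gamma\to\beta)\bigr)$ is an object of $\invfact{\cat C}{\alpha}{\sigma}$, contradicting its emptiness. (Only the emptiness defining $T_{i+1}$ is used here, not the fibering hypothesis.) The same degree count shows every non-identity morphism out of $\sigma$ has target in $\cat A_{i}\subset\cat A'_{i+1}$, and that there are no morphisms between distinct objects of $T_{i+1}$.

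Next I would read off the limits. Because each $\sigma\in T_{i+1}$ maps only outward, into $\cat A_{i}$, and receives no non-identity morphism, adjoining $T_{i+1}$ to $\cat A'_{i+1}$ adds, for each $\sigma$, a single factor $\diag X_{\beta_{\sigma}}$ tied to the rest of the diagram only through the morphisms out of $\sigma$. The category of objects of $\cat A_{i}$ under $\sigma$ is moreover isomorphic to $\matchcat{\cat D}{\beta_{\sigma}}$ by $\mu\mapsto(\mu\sigma,\mu)$ — every non-identity $\mu\colon\beta_{\sigma}\to\delta$ in $\inv{\cat D}$ lowers degree, so $\mu\sigma$ has target of degree $\le i$ and hence lies in $\cat A_{i}$, and conversely — so the associated matching object is $\match^{\cat D}_{\beta_{\sigma}}\diag X$. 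Setting $N^{X}=\prod_{\sigma\in T_{i+1}}\diag X_{\beta_{\sigma}}$, $M^{X}=\prod_{\sigma}\match^{\cat D}_{\beta_{\sigma}}\diag X$, and likewise $N^{Y},M^{Y}$ for $\diag Y$, I obtain natural isomorphisms
\begin{displaymath}
  \lim_{\cat A_{i+1}}\diag X\iso
  \bigl(\lim_{\cat A'_{i+1}}\diag X\bigr)\times_{M^{X}}N^{X}
  \qquad\text{and}\qquad
  \lim_{\cat A_{i+1}}\diag Y\iso
  \bigl(\lim_{\cat A'_{i+1}}\diag Y\bigr)\times_{M^{Y}}N^{Y}\Comma
\end{displaymath}
in which the maps to $M^{X}$ and $M^{Y}$ are restriction to $\cat A_{i}$ followed by the matching projections.

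With these in hand I would assemble the pullback. Since $\diag X\to\diag Y$ is a fibration in $\cat M^{\cat D}$, each relative matching map $\diag X_{\beta_{\sigma}}\to\pullback{\diag Y_{\beta_{\sigma}}}{\match^{\cat D}_{\beta_{\sigma}}\diag Y}{\match^{\cat D}_{\beta_{\sigma}}\diag X}$ is a fibration (see \thmref{thm:RFib}), so their product $\phi\colon N^{X}\to N^{Y}\times_{M^{Y}}M^{X}$ is a fibration. From $P'_{i+1}$, the pullback of $\lim_{\cat A'_{i+1}}\diag X\to\lim_{\cat A'_{i+1}}\diag Y\leftarrow\diag Y_{G\alpha}$, there are natural maps $P'_{i+1}\to M^{X}$ (through $\lim_{\cat A'_{i+1}}\diag X$) and $P'_{i+1}\to N^{Y}$ (through $\diag Y_{G\alpha}$, using the maps $\sigma_{*}\colon\diag Y_{G\alpha}\to\diag Y_{\beta_{\sigma}}$); these agree in $M^{Y}$ precisely because the two legs of $P'_{i+1}$ have equal image in $\lim_{\cat A'_{i+1}}\diag Y$, so together they define a map $P'_{i+1}\to N^{Y}\times_{M^{Y}}M^{X}$. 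Substituting the two displayed isomorphisms into the square defining $P_{i+1}$ and unwinding the universal properties then identifies $P_{i+1}$ with the pullback $P'_{i+1}\times_{N^{Y}\times_{M^{Y}}M^{X}}N^{X}$ and the map $P_{i+1}\to P'_{i+1}$ with the pullback of $\phi$. As $\phi$ is a fibration, so is $P_{i+1}\to P'_{i+1}$.

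The one genuine idea is the structural claim that the objects of $T_{i+1}$ receive no non-identity morphisms, which is exactly where the emptiness of their categories of inverse $\cat C$-factorizations enters; everything after that is matching-object bookkeeping. I therefore expect the main difficulty to be carrying out that bookkeeping cleanly — justifying the two limit isomorphisms and then matching the resulting iterated fiber product against the defining squares of $P_{i+1}$ and $P'_{i+1}$ — rather than any single hard step. This is presumably the role of \lemref{lem:pbCtoCprime}, \diagref{diag:bigcube}, \lemref{lem:PBf}, and \lemref{lem:lastfib} in \eqref{eq:RightQuilFlow}.
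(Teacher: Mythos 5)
Your proof is correct, and its two essential inputs are the same as the paper's: first, the structural fact that the objects of $T_{i+1}$ receive no non-identity morphisms in $\cat A_{i+1}$ and map out only into $\cat A_{i}$ through copies of the matching categories $\matchcat{\cat D}{\beta_{\sigma}}$, which yields your two displayed limit isomorphisms (this is exactly the paper's \lemref{lem:pbCtoCprime}, stated there as a pullback square); and second, the fact that the induced map from $N^{X} = \prod_{\sigma\in T_{i+1}}\diag X_{\beta_{\sigma}}$ to $\pullback{N^{Y}}{M^{Y}}{M^{X}}$ is a product of relative matching maps of the Reedy fibration $\diag X \to \diag Y$, hence a fibration (your $\phi$ is precisely the paper's map $b\colon \prod_{T_{i+1}}\diag X_{\beta} \to R$ in \diagref{diag:bigcube}). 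Where you genuinely diverge is the endgame. The paper transfers the fibration property to $P_{i+1}\to P'_{i+1}$ in two stages: \lemref{lem:PBf} exhibits $\lim_{\cat A_{i+1}}\diag X$ as the pullback of $Q \to R \leftarrow N^{X}$, so that $\lim_{\cat A_{i+1}}\diag X \to Q$ is a fibration (\lemref{lem:lastfib}), and then Reedy's patching lemma (\lemref{lem:reedy}), applied to the cube formed by the defining pullbacks of $P_{i+1}$ and $P'_{i+1}$, concludes. You instead exhibit $P_{i+1}\to P'_{i+1}$ itself as a base change of $\phi$ along a map $P'_{i+1}\to \pullback{N^{Y}}{M^{Y}}{M^{X}}$, collapsing the paper's three lemmas into a single universal-property check and making both the auxiliary object $Q$ and Reedy's lemma unnecessary. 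That check is routine and does go through, though your justification of the agreement in $M^{Y}$ is slightly terse: besides the pullback property of $P'_{i+1}$ one also uses that the cone $\diag Y_{G\alpha}\to \lim_{\cat A'_{i+1}}\diag Y$, restricted along the functors $\matchcat{\cat D}{\beta_{\sigma}}\to\cat A_{i}$, coincides with the maps $\sigma_{*}$ followed by the matching maps of $\diag Y$ at $\beta_{\sigma}$. Your route is the more direct of the two; the paper's buys modularity, isolating \lemref{lem:reedy} as a reusable patching principle. One further point in your favor: the paper's proof of \lemref{lem:pbCtoCprime} merely asserts that no object of $T_{i+1}$ is the codomain of a non-identity map of $\cat A_{i+1}$, whereas your opening observation proves it, and correctly isolates that only the emptiness of $\invfact{\cat C}{\alpha}{\sigma}$ for $\sigma\in T_{i+1}$ (not the full fibering hypothesis) is what makes it true.
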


The proof of \propref{prop:IndFibr} is more intricate; the reader
might wish to refer to the chart \eqref{eq:RightQuilFlow} for its
structure.  Before we can present it, we will need several lemmas.
For the first one, the reader should recall the definition of the sets
$T_i$ from the proof of \propref{prop:MatchFib}.

\begin{lem}
  \label{lem:pbCtoCprime}
  For every $\cat D$-diagram $\diag Z$ in $\cat M$ there is a natural
  pullback square
  \begin{equation}
    \label{diag:pbCtoCprime}
    \vcenter{
      \xymatrix{
        {\lim_{\cat A_{i+1}} \diag Z} \ar[r] \ar[d]
        & {\lim_{\cat A'_{i+1}} \diag Z} \ar[d]\\
        {\prod_{(G\alpha\to\beta)\in T_{i+1}}
          \hspace{-1.5em}\diag Z_{\beta}} \ar[r]
        & {\prod_{(G\alpha\to\beta)\in T_{i+1}}
          \lim_{\matchcat{\cat D}{\beta}} \diag Z \Period}
      }
    }
  \end{equation}
\end{lem}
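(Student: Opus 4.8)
The plan is to compute $\lim_{\cat A_{i+1}}\diag Z$ directly from its universal property, by pinning down exactly which objects and morphisms are adjoined to $\cat A'_{i+1}$ in passing to $\cat A_{i+1}$. By the construction in the proof of \propref{prop:MatchFib}, the objects of $\cat A_{i+1}$ that are not in $\cat A'_{i+1}$ are precisely the maps $(G\alpha\to\beta)\in T_{i+1}$, i.e.\ the objects of degree $i+1$ whose category of inverse $\cat C$-factorizations is empty. So the first task is to describe all morphisms of $\cat A_{i+1}$ having such a $\beta$ as source or as target.

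First I would show that no object $\beta\in T_{i+1}$ receives a non-identity morphism in $\cat A_{i+1}$. Indeed, a non-identity morphism $\delta\to\beta$ in $\matchcat{\cat D}{G\alpha}$ is a non-identity map of $\inv{\cat D}$, so $\delta$ has degree strictly greater than $i+1$; but the only objects of $\cat A_{i+1}$ of degree exceeding $i+1$ lie in the image of $G_{*}$, so $\delta=G\gamma$ for some non-identity map $\nu\colon\alpha\to\gamma$ of $\inv{\cat C}$. Such a morphism is then exactly an object of the category of inverse $\cat C$-factorizations of $(\alpha, G\alpha\to\beta)$ (see \defref{def:CFactors}), contradicting $\beta\in T_{i+1}$; morphisms from the other degree-$(i+1)$ objects are forced to be identities by the Reedy degree axioms. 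Next I would identify the morphisms out of $\beta$: a non-identity morphism $\beta\to\delta'$ in $\matchcat{\cat D}{G\alpha}$ is a non-identity map of $\inv{\cat D}$, so $\delta'$ has degree at most $i$ and hence lies in $\cat A_{i}\subset\cat A'_{i+1}$. These morphisms are precisely the objects of the matching category $\matchcat{\cat D}{\beta}$, and the resulting functor $\matchcat{\cat D}{\beta}\to\cat A_{i}$ carries the restriction of $\diag Z$ to the diagram whose limit is the matching object $\lim_{\matchcat{\cat D}{\beta}}\diag Z$.

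With this bookkeeping done, the identification of the pullback is formal. A cone on $\diag Z|_{\cat A_{i+1}}$ with vertex $W$ amounts to a cone on $\diag Z|_{\cat A'_{i+1}}$, equivalently a map $W\to\lim_{\cat A'_{i+1}}\diag Z$, together with a map $W\to\prod_{(G\alpha\to\beta)\in T_{i+1}}\diag Z_{\beta}$, subject only to the compatibilities imposed by morphisms touching the new objects. Since each such $\beta$ has no non-identity incoming morphisms and there are no non-identity morphisms among degree-$(i+1)$ objects, the only remaining condition is that, for every $\beta\in T_{i+1}$, the composite $W\to\diag Z_{\beta}\to\lim_{\matchcat{\cat D}{\beta}}\diag Z$ through the matching map agree with the composite $W\to\lim_{\cat A'_{i+1}}\diag Z\to\lim_{\matchcat{\cat D}{\beta}}\diag Z$ obtained by restriction. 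This is exactly the data defining a map into the pullback of $\prod_{\beta}\diag Z_{\beta}\to\prod_{\beta}\lim_{\matchcat{\cat D}{\beta}}\diag Z\leftarrow\lim_{\cat A'_{i+1}}\diag Z$, so $\lim_{\cat A_{i+1}}\diag Z$ has the claimed universal property. Naturality in $\diag Z$ is immediate, as every arrow in the square is induced by restriction along a fixed functor.

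I expect the main obstacle to be the morphism analysis of the second paragraph, and in particular the clean deployment of the hypothesis defining $T_{i+1}$: the emptiness of the factorization category is precisely what rules out non-identity morphisms into $\beta$, and this must be combined with the observation that the only objects of $\cat A_{i+1}$ of degree greater than $i+1$ arise from the image of $G_{*}$. Once these structural facts are secured, rewriting the limit as the stated pullback is a routine verification of the universal property.
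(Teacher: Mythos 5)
Your proof is correct and follows essentially the same route as the paper's: identify the objects of $\cat A_{i+1}$ not in $\cat A'_{i+1}$ as the elements of $T_{i+1}$, show that they receive no non-identity morphisms and that their only outgoing morphisms are the objects of the matching categories $\matchcat{\cat D}{\beta}$ landing in $\cat A_{i}\subset \cat A'_{i+1}$, and then read the pullback off from the universal property of the limit. The only difference is that you explicitly justify, via the emptiness of the category of inverse $\cat C$-factorizations, why no element of $T_{i+1}$ receives a non-identity map from the image of $G_{*}$ --- a fact the paper asserts without proof --- which is a welcome addition rather than a deviation.
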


\begin{proof}
  For every element $\sigma\colon G\alpha\to\beta$ of $T_{i+1}$, every
  object of the matching category $\matchcat{\cat D}{\beta}$ is a map
  to an object of degree at most $i$, and so we have a functor
  $\matchcat{\cat D}{\beta} \to \cat A'_{i+1}$ that takes $\beta \to
  \gamma$ to the composition $G\alpha\xrightarrow{\sigma}
  \beta\to\gamma$; this induces the map $\lim_{\cat A'_{i+1}} \diag Z
  \longrightarrow \lim_{\matchcat{\cat D}{\beta}} \diag Z$ that is the
  projection of the right hand vertical map onto the factor indexed by
  $\sigma$.  We thus have a commutative square as in
  \diagref{diag:pbCtoCprime}.

  The objects of $\cat A_{i+1}$ are the objects of $\cat A'_{i+1}$
  together with the elements of $T_{i+1}$, and so a map to
  $\lim_{\cat A_{i+1}} \diag Z$ is determined by its postcompositions
  with the above maps to $\lim_{\cat A'_{i+1}} \diag Z$ and
  $\prod_{(G\alpha\to\beta)\in T_{i+1}} \diag Z_{\beta}$.  Since there
  are no non-identity maps in $\cat A_{i+1}$ with codomain an element
  of $T_{i+1}$ (because
  $\invfact{\cat C}{\alpha}{G\alpha \to \beta} = \emptyset$), and the
  only non-identity maps with domain an element $G\alpha\to\beta$ of
  $T_{i+1}$ are the objects of the matching category
  $\matchcat{\cat D}{\beta}$, maps to $\lim_{\cat A'_{i+1}} \diag Z$
  and to $\prod_{(G\alpha\to\beta)\in T_{i+1}} X_{\beta}$ determine a
  map to $\lim_{\cat A_{i+1}} \diag Z$ if and only if their
  compositions to
  $\prod_{(G\alpha\to\beta)\in T_{i+1}} \lim_{\matchcat{\cat
      D}{\beta}} \diag Z$ agree.  Thus, the diagram is a pullback
  square.
\end{proof}

Now define $Q$ and $R$ by letting the squares
\begin{equation}
  \label{diag:pullbacks}
  \vcenter{
    \xymatrix{
      {Q} \ar@{..>}[r] \ar@{..>}[d]
      & {\lim_{\cat A'_{i+1}} \diag X} \ar[d]\\
      {\lim_{\cat A_{i+1}} \diag Y} \ar[r]
      & {\lim_{\cat A'_{i+1}} \diag Y}
    }
  }
  \qquad\text{and}\quad
  \vcenter{
    \xymatrix@=1.7em{
      {R} \ar@{..>}[r] \ar@{..>}[d]
      & {\prod_{(G\alpha\to\beta)\in T_{i+1}}
        \lim_{\matchcat{\cat D}{\beta}} \diag X} \ar[d]\\
      {\prod_{(G\alpha\to\beta)\in T_{i+1}}
        \hspace{-1.5em}\diag Y_{\beta}} \ar[r]
      & {\prod_{(G\alpha\to\beta)\in T_{i+1}}
        \lim_{\matchcat{\cat D}{\beta}} \diag Y}
    }
  }
\end{equation}
be pullbacks, and consider the commutative diagram
\begin{equation}
  \label{diag:bigcube}
  \vcenter{
    \xymatrix@=.8em{
      {\lim_{\cat A_{i+1}} \diag X} \ar[rrr]^{s} \ar[drr]^(.7){a}
      \ar[ddr]_{\delta} \ar[ddd]_{u}
      &&& {\lim_{\cat A'_{i+1}} \diag X} \ar[ddr]^{\beta}
      \ar'[dd][ddd]^{v}\\
      && {Q} \ar[ur]_{c} \ar[dl]_{d} \ar'[d][ddd]^(.3){g}\\
      & {\lim_{\cat A_{i+1}}\diag Y} \ar[rrr]^(.7){s'}
      \ar[ddd]_(.75){u'}
      &&& {\lim_{\cat A'_{i+1}}\diag Y} \ar[ddd]^{v'}\\
      {\hspace{-1em}\prod_{(G\alpha\to\beta)\in T_{i+1}}
        \hspace{-1.5em}\diag X_{\beta}} \ar[ddr]_{\gamma}
      \ar[drr]|!{[ru];[rdd]}{\hole}^(.75){b}
      \ar[rrr]|!{[ruu];[rd]}{\hole}^{t}
      |!{[uurr];[rrd]}{\hole}
      &&& {\prod_{\substack{\phantom{X}\\(G\alpha\to\beta)\in T_{i+1}}}
        \hspace{-1.5em}\lim_{\matchcat{\cat D}{\beta}}\diag X} \ar[ddr]\\
      && {R} \ar[ur]_-{e}  \ar[dl]^(.3){f}\\
      & {\hspace{-1.5em}\prod_{(G\alpha\to\beta)\in T_{i+1}}
        \hspace{-1.5em}\diag Y_{\beta}} \ar[rrr]_-{t'}
      &&& {\hspace{-1.5em}
        \prod_{\substack{\phantom{X}\\(G\alpha\to\beta)\in T_{i+1}}}
        \hspace{-1.5em}\lim_{\matchcat{\cat D}{\beta}}\diag Y}
    }
  }
\end{equation}
\lemref{lem:pbCtoCprime} implies that the front and back rectangles
are pullbacks.

\begin{lem}
  \label{lem:PBf}
  The square
  \begin{equation}
    \label{diag:PBf}
    \vcenter{
      \xymatrix{
        {\lim_{\cat A_{i+1}}\diag X} \ar[r]^-{a} \ar[d]_{u}
        & {Q} \ar[d]^{g}\\
        {\prod_{(G\alpha\to\beta)\in T_{i+1}}
          \hspace{-1.5em}\diag X_{\beta}} \ar[r]_-{b}
        & {R}
      }
    }
  \end{equation}
  is a pullback.
\end{lem}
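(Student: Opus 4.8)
The plan is to deduce \lemref{lem:PBf} from the pullbacks already present in the cube \eqref{diag:bigcube} by repeated use of the standard pasting lemma for pullback squares. To keep the bookkeeping manageable I would abbreviate the four back vertices as $A = \lim_{\cat A_{i+1}}\diag X$, $B = \lim_{\cat A'_{i+1}}\diag X$, $C = \prod_{(G\alpha\to\beta)\in T_{i+1}}\diag X_{\beta}$, and $D = \prod_{(G\alpha\to\beta)\in T_{i+1}}\lim_{\matchcat{\cat D}{\beta}}\diag X$, and write $A',B',C',D'$ for the corresponding vertices built from $\diag Y$. By \lemref{lem:pbCtoCprime} applied to $\diag X$ and to $\diag Y$, the back rectangle $A = B\times_D C$ and the front rectangle $A' = B'\times_{D'}C'$ are pullbacks, while by construction (see \eqref{diag:pullbacks}) $Q = B\times_{B'}A'$ and $R = D\times_{D'}C'$.

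The crux is to show that the auxiliary square
\[
  \xymatrix{
    {Q} \ar[r]^{c} \ar[d]_{g} & {B} \ar[d]^{v}\\
    {R} \ar[r]_{e} & {D}
  }
\]
is a pullback. First I would paste the defining square $Q = B\times_{B'}A'$ onto the front-rectangle pullback $A' = B'\times_{D'}C'$ to identify $Q\cong B\times_{D'}C'$, the structure map $B\to D'$ being the common composite that may be computed either as $B\xrightarrow{v}D\to D'$ or along $B\to B'\xrightarrow{v'}D'$ (these agree because the right-hand face of \eqref{diag:bigcube} commutes). Since also $R = D\times_{D'}C'$ and $g$ is precisely the map induced by $v$ on the first factor, a second pasting gives $B\times_D R\cong B\times_{D'}C'\cong Q$, which is exactly the assertion that the displayed square is a pullback.

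With that square in hand I would finish by one further pasting. Placing the target square of \lemref{lem:PBf}
\[
  \xymatrix{
    {A} \ar[r]^{a} \ar[d]_{u} & {Q} \ar[d]^{g}\\
    {C} \ar[r]_{b} & {R}
  }
\]
to the left of the auxiliary pullback square above produces an outer rectangle whose top edge is $A\xrightarrow{a}Q\xrightarrow{c}B$ and whose bottom edge is $C\xrightarrow{b}R\xrightarrow{e}D$. Because $ca = s$ and $eb = t$, this outer rectangle is nothing but the back rectangle $A = B\times_D C$, which is a pullback. The pasting lemma---right square a pullback and outer rectangle a pullback force the left square to be a pullback---then yields that the target square is a pullback, as desired. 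Before invoking pasting I would check that the target square commutes, that is $bu = ga$: comparing the two components into the pullback $R = D\times_{D'}C'$, the $D$-component reduces to the back-rectangle identity $vs = tu$ and the $C'$-component to the identity $u'\delta = \gamma u$ supplied by the left face of the cube.

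I expect the only genuine difficulty to be organizational: \eqref{diag:bigcube} carries a great many arrows, and the argument repeatedly recognizes composite squares as pullbacks and matches up their structure maps---most delicately, that the two a priori different maps $B\to D'$ coincide and that $g$ really is induced by $v$. Each of these is a formal consequence of the commutativity of \eqref{diag:bigcube}, but each must be verified. As a diagram-chase-free alternative I would note that one may instead apply $\cat M(W,-)$ for every object $W$ of $\cat M$ (which preserves all the pullbacks in sight), reduce to the analogous statement in $\Set$, verify that by a short element chase, and conclude via \propref{prop:DetectIso} that the comparison map $A\to C\times_R Q$ is an isomorphism.
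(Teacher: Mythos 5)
Your proof is correct, but it is organized quite differently from the paper's. Both arguments start from the same four known pullbacks --- the back and front rectangles of \eqref{diag:bigcube} (that is, \lemref{lem:pbCtoCprime} applied to $\diag X$ and to $\diag Y$) and the defining squares \eqref{diag:pullbacks} of $Q$ and $R$ --- but the paper proceeds by a bare-hands verification of the universal property: for a test object $W$ and maps $h\colon W \to \prod_{(G\alpha\to\beta)\in T_{i+1}}\diag X_{\beta}$ and $k\colon W \to Q$ with $gk = bh$, it constructs $\phi$ from the back-rectangle pullback, proves $a\phi = k$ by testing against the projections $c,d$ of $Q$ and then against the projections of the front rectangle, and establishes uniqueness by the same kind of chase. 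You never introduce a test object; instead, writing $A,B,C,D$ and $A',B',C',D'$ as in your proposal, you recognize composite squares as pullbacks: pasting the defining square of $Q$ onto the front rectangle identifies $Q$ with $B \times_{D'} C'$, a second pasting against the defining square of $R$ shows that the square with vertices $Q,B,R,D$ and edges $c,g,v,e$ is a pullback, and a final pasting of the target square onto that auxiliary square reproduces the back rectangle, so the cancellation form of the pasting lemma (right square and outer rectangle pullbacks imply the left square is one) finishes the proof. The two routes rest on the same commutativity facts about \eqref{diag:bigcube} ($ca = s$, $eb = t$, $eg = vc$, $fg = u'd$, $u'\delta = \gamma u$, and the agreement of the two composites $B \to D'$); in the paper these appear inside the chase, while in yours they appear as the hypotheses matching up the pasted squares. (Note that the commutativity $ga = bu$ of the target square is already part of the assertion that \eqref{diag:bigcube} commutes, so your verification of it is a sound check but not logically required.) What your approach buys is modularity: three invocations of a standard lemma replace the explicit construction-and-uniqueness argument, and the intermediate fact that $(Q,B,R,D)$ is a pullback is of independent use. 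What the paper's approach buys is self-containedness, since it never needs to state or prove the pasting lemma. Your closing alternative (applying $\cat M(W,-)$, chasing elements in $\Set$, and concluding with \propref{prop:DetectIso}) is essentially the paper's argument in disguise.
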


\begin{proof}
  Let $W$ be an object of $\cat M$ and let $h\colon W \to
  \prod_{(G\alpha\to\beta)\in T_{i+1}} \diag X_{\beta}$ and $k\colon W
  \to Q$ be maps such that $gk = bh$; we will show that there is a
  unique map $\phi\colon W \to \lim_{\cat A_{i+1}} \diag X$ such that
  $a\phi = k$ and $u\phi = h$.
  \begin{displaymath}
    \xymatrix{
      {W} \ar@/^3ex/[drr]^{k} \ar@/_3ex/[ddr]_{h}
      \ar@{..>}[dr]^{\phi}\\
      &{\lim_{\cat A_{i+1}} \diag X} \ar[r]^-{a} \ar[d]_{u}
      & {Q} \ar[d]^{g}\\
      &{\prod_{(G\alpha\to\beta)\in T_{i+1}}
        \hspace{-1.5em}\diag X_{\beta}} \ar[r]_-{b}
      & {R}
    }
  \end{displaymath}
  The map $ck\colon W \to \lim_{\cat A'_{i+1}} \diag X$ has the
  property that $v(ck) = egk = ebh = th$, and since the back rectangle
  of \diagref{diag:bigcube} is a pullback, the maps $ck$ and $h$
  induce a map $\phi\colon W \to \lim_{\cat A_{i+1}} \diag X$ such
  that $u\phi = h$ and $s\phi = ck$.  We must show that $a\phi = k$,
  and since $Q$ is a pullback as in \diagref{diag:pullbacks}, this is
  equivalent to showing that $ca\phi = ck$ and $da\phi = dk$.

  Since $ck = s\phi = ca\phi$, we need only show that $da\phi = dk$.
  Since the front rectangle of \diagref{diag:bigcube} is a pullback,
  it is sufficient to show that $s'da\phi = s'dk$ and $u'da\phi =
  u'dk$.  For the first of those, we have
  \begin{displaymath}
    s'da\phi = s'\delta\phi = \beta s\phi = \beta ck = s'dk\\
  \end{displaymath}
  and for the second, we have
  \begin{displaymath}
    u'da\phi = u'\delta\phi = \gamma u\phi = fbu\phi = fbh = fgk =
    u'dk \Period
  \end{displaymath}
  Thus, the map $\phi$ satisfies $a\phi = k$.

  To see that $\phi$ is the unique such map, let $\psi\colon W \to
  \lim_{\cat A_{i+1}} \diag X$ be another map such that $a\psi = k$
  and $u\psi = h$.  We will show that $s\psi = s\phi$ and $u\psi =
  u\phi$; since the back rectangle of \diagref{diag:bigcube} is a
  pullback, this will imply that $\psi = \phi$.

  Since $u\psi = h = u\phi$, we need only show that $s\psi = s\phi$,
  which follows because $s\psi = ca\psi = ck = s\phi$.
\end{proof}

\begin{lem}
  \label{lem:lastfib}
  If $\diag X \to \diag Y$ is a fibration of $\cat D$-diagrams, then
  the natural map
  \begin{displaymath}
    \lim_{\cat A_{i+1}} \diag X \longrightarrow
    Q = \pullback{\lim_{\cat A'_{i+1}}\diag X}
    {(\lim_{\cat A'_{i+1}}\diag Y)}{\lim_{\cat A_{i+1}}\diag Y}
  \end{displaymath}
  is a fibration.
\end{lem}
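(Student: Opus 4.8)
The plan is to deduce this immediately from \lemref{lem:PBf} together with the Reedy fibration criterion of \thmref{thm:RFib}. \lemref{lem:PBf} exhibits \diagref{diag:PBf} as a pullback square, so the map $a\colon \lim_{\cat A_{i+1}}\diag X \to Q$ is the base change of $b\colon \prod_{(G\alpha\to\beta)\in T_{i+1}}\diag X_{\beta} \to R$ along $g\colon Q \to R$. Since fibrations in any model category are closed under pullback, it suffices to prove that $b$ is a fibration. The whole argument therefore reduces to understanding the single map $b$ out of the cube \diagref{diag:bigcube}.

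First I would identify $R$ and $b$ explicitly. Because limits commute with one another, the product of the pullbacks defining $R$ in \diagref{diag:pullbacks} is the pullback of the products, so, writing $\lim_{\matchcat{\cat D}{\beta}}\diag X = \match^{\cat D}_{\beta}\diag X$ and likewise for $\diag Y$,
\begin{displaymath}
  R \cong \prod_{(G\alpha\to\beta)\in T_{i+1}}
  \pullback{\diag Y_{\beta}}{\match^{\cat D}_{\beta}\diag Y}{\match^{\cat D}_{\beta}\diag X}\Period
\end{displaymath}
Under this identification the two projections of $b$ out of $\prod_{T_{i+1}}\diag X_{\beta}$ — namely the map $\gamma$ to $\prod_{T_{i+1}}\diag Y_{\beta}$ induced by $\diag X \to \diag Y$, and the map $t$ to $\prod_{T_{i+1}}\match^{\cat D}_{\beta}\diag X$ assembled from the matching maps of $\diag X$ — are exactly the two legs of the relative matching map of $\diag X \to \diag Y$ at each $\beta$ (see \defref{def:Matchobj}). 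Hence $b$ is the product, over the objects $\beta$ occurring as targets of elements of $T_{i+1}$, of the relative matching maps of $\diag X \to \diag Y$ at $\beta$.

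With $b$ so identified the conclusion is formal. Since $\diag X \to \diag Y$ is a fibration of $\cat D$-diagrams, \thmref{thm:RFib} guarantees that each relative matching map $\diag X_{\beta} \to \pullback{\diag Y_{\beta}}{\match^{\cat D}_{\beta}\diag Y}{\match^{\cat D}_{\beta}\diag X}$ is a fibration in $\cat M$. A product of fibrations is again a fibration (a lifting problem for the product against a trivial cofibration is solved coordinatewise, since each factor has the right lifting property), so $b$ is a fibration, and therefore so is its base change $a$.

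The only genuine work is the bookkeeping in the middle step: tracing through the cube \diagref{diag:bigcube} to verify that $f\circ b = \gamma$ and $e\circ b = t$, so that $b$ really is the product of the relative matching maps rather than merely a map into $R$. Once that identification is pinned down, the conceptual content — that fibrancy of a Reedy map at each object is exactly fibrancy of its relative matching maps — is carried entirely by \thmref{thm:RFib}, and the remaining stability of fibrations under products and pullbacks is routine.
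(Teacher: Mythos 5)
Your proposal is correct and takes essentially the same approach as the paper: both use \lemref{lem:PBf} to exhibit the map as a pullback of $b\colon \prod_{(G\alpha\to\beta)\in T_{i+1}}\diag X_{\beta} \to R$, recognize $b$ as a product of relative matching maps of the Reedy fibration $\diag X \to \diag Y$ (hence a fibration by \thmref{thm:RFib}), and conclude by stability of fibrations under pullback. The only difference is that you spell out the identification of $R$ with a product of pullbacks and of $b$ with the product of relative matching maps, which the paper compresses into the single phrase ``is a product of fibrations and is thus a fibration.''
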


\begin{proof}
  \lemref{lem:PBf} gives us the pullback square in \diagref{diag:PBf}
  where $Q$ and $R$ are defined by the pullbacks in
  \diagref{diag:pullbacks}.  Since $\diag X \to \diag Y$ is a
  fibration of $\cat D$-diagrams, the map
  $\prod_{(G\alpha\to\beta)\in T_{i+1}} \diag X_{\beta} \to R$ is a
  product of fibrations (see \defref{def:Matchobj}~(6)) and is thus a
  fibration, and so the map
  $\lim_{\cat A_{i+1}}\diag X \to Q = \pullback{\lim_{\cat
      A'_{i+1}}\diag X} {(\lim_{\cat A'_{i+1}}\diag Y)}{\lim_{\cat
      A_{i+1}}\diag Y}$ is a pullback of a fibration and is thus a
  fibration.
\end{proof}

\begin{lem}[Reedy]
  \label{lem:reedy}
  If both the front and back squares in the diagram
  \begin{displaymath}
    \xymatrix@C=5ex@R=1em{
      {A} \ar[rr] \ar[dr]_{f_{A}} \ar[dd]
      && {B} \ar[dr]^{f_{B}} \ar'[d][dd]\\
      & {A'} \ar[rr] \ar[dd]
      && {B'} \ar[dd]\\
      {C} \ar[dr]_{f_{C}} \ar'[r][rr]
      && {D} \ar[dr]^{f_{D}}\\
      & {C'} \ar[rr]
      && {D'}
    }
  \end{displaymath}
  are pullbacks and both $f_{B}\colon B \to B'$ and $C \to
  \pullback{C'}{D'}{D}$ are fibrations, then $f_{A}\colon A \to A'$ is
  a fibration.
\end{lem}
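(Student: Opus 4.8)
The plan is to exploit the two pullback hypotheses to realize $A$ and $A'$ as fibered products, and then to factor $f_A$ as a composite of two maps, each obtained as a base change of one of the two maps assumed to be fibrations. The only model-category input needed is that fibrations are closed under base change (pullback) and under composition.

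First I would record, from the pullback hypotheses, the identifications $A \cong \pullback{B}{D}{C}$ (from the back square) and $A' \cong \pullback{B'}{D'}{C'}$ (from the front square). I then introduce the intermediate object $E = \pullback{B}{D'}{C'}$, where the map $B \to D'$ is the common composite $B \to D \to D'$ (equivalently $B \to B' \to D'$, the two agreeing by commutativity of the cube). Using that $A \to B$ and $f_A\colon A \to A'$ agree after mapping to $B'$ (commutativity of the top square on $A,B,A',B'$), the map $f_A$ factors as $A \to E \to A'$.

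For the second map, I would verify the identification $E \cong \pullback{A'}{B'}{B}$, which exhibits the projection $E \to A'$ as the base change of $f_B\colon B \to B'$ along $A' \to B'$; since $f_B$ is a fibration, $E \to A'$ is a fibration. For the first map, I would check that $\pullback{B}{D}{\bigl(\pullback{C'}{D'}{D}\bigr)} \cong E$, so that $A = \pullback{B}{D}{C} \to E$ is precisely the base change, along $B \to D$, of the map $C \to \pullback{C'}{D'}{D}$ assumed to be a fibration; hence $A \to E$ is a fibration. Both identifications rely on commutativity of the bottom square on $C,D,C',D'$, which is also what makes the map $C \to \pullback{C'}{D'}{D}$ well defined. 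Composing $A \to E$ with $E \to A'$ then shows $f_A$ is a fibration.

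The main obstacle is purely bookkeeping: verifying the two pullback identifications $E \cong \pullback{A'}{B'}{B}$ and $\pullback{B}{D}{\bigl(\pullback{C'}{D'}{D}\bigr)} \cong E$ via the pasting law for pullbacks, and confirming that the resulting maps are the intended base changes. Once these are in place, the conclusion is immediate from stability of fibrations under base change and composition, with no further delicate model-category argument required.
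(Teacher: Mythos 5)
Your proof is correct, and it supplies an argument that the paper deliberately omits: the paper's entire ``proof'' of \lemref{lem:reedy} is a citation, observing that the statement is the dual of a lemma of Reedy, namely Lemma~7.2.15 of \cite{MCATL} (with Remark~7.1.10 there providing the dualization). What you have written is the standard factorization proof of Reedy's lemma, in its fibration/pullback form: setting $Q = \pullback{C'}{D'}{D}$ and $E = \pullback{B}{D'}{C'}$, the pasting law for pullbacks gives $E \cong \pullback{A'}{B'}{B}$ (pasting the front pullback square under the square defining $\pullback{A'}{B'}{B}$) and $E \cong \pullback{B}{D}{Q}$ (pasting the square defining $Q$ next to the square defining $\pullback{B}{D}{Q}$), so that $E \to A'$ is a base change of the fibration $f_{B}$ along $A' \to B'$, and $A \cong \pullback{B}{D}{C} \to \pullback{B}{D}{Q} \cong E$ is a base change of the fibration $C \to Q$ along the projection $\pullback{B}{D}{Q} \to Q$. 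Commutativity of the top face of the cube produces the factorization $f_{A} = (E \to A')\circ(A \to E)$, and commutativity of the left face guarantees that the map $A \to E$ arising from this factorization coincides with the base-change map just described, since both have the same projections to $B$ and to $C'$ and $E$ is a pullback; closure of fibrations under base change and composition then finishes the argument. So there is no gap --- your ``bookkeeping'' steps are exactly instances of the pasting law --- and the trade-off between the two treatments is clear: the paper stays short by outsourcing the lemma to \cite{MCATL}, while your argument makes this step self-contained, using nothing beyond pasting of pullbacks and the two stability properties of fibrations.
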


\begin{proof}
  This is the dual of a lemma of Reedy (see
  \cite{MCATL}*{Lem.~7.2.15 and Rem.~7.1.10}).
\end{proof}

\begin{proof}[Proof of \propref{prop:IndFibr}]
  We have a commutative diagram
  \begin{displaymath}
    \xymatrix@=.25em{
      {P_{i+1}} \ar[rr] \ar[dr] \ar[dd]
      && {Y_{G\alpha}} \ar[dr] \ar'[d][dd]\\
      & {P'_{i+1}} \ar[rr] \ar[dd]
      && {Y_{G\alpha}} \ar[dd]\\
      {\lim_{\cat A_{i+1}}\diag X} \ar[dr] \ar'[r][rr]
      && {\lim_{\cat A_{i+1}}\diag Y} \ar[dr]\\
      & {\lim_{\cat A'_{i+1}}\diag X} \ar[rr]
      && {\lim_{\cat A'_{i+1}}\diag Y}
    }
  \end{displaymath}
  in which the front and back squares are pullbacks (by definition), and so
  \lemref{lem:reedy} implies that it is sufficient to show that the
  map
  \begin{displaymath}
    \lim_{\cat A_{i+1}} \diag X \longrightarrow
    \pullback{\lim_{\cat A'_{i+1}}\diag X}
    {(\lim_{\cat A'_{i+1}}\diag Y)}{\lim_{\cat A_{i+1}}\diag Y}
  \end{displaymath}
  is a fibration; that is the statement of \lemref{lem:lastfib}.
\end{proof}

\subsection{Statement and proof of \propref{prop:MatchIso}}
\label{sec:PrfMatchIso}

The purpose of this section is to state and prove the following
proposition, which (along with \propref{prop:MatchFib} in
\secref{sec:PrfMatchFib}) completes the proof of
\thmref{thm:RightQuil}.

\begin{prop}
  \label{prop:MatchIso}
  Let $G\colon \cat C \to \cat D$ be a fibering Reedy functor and let
  $\diag X$ be a $\cat D$-diagram in a model category $\cat M$.  If
  $\alpha$ is an object of $\cat C$ for which there is an object
  $\alpha \to \gamma$ of $\matchcat{\cat C}{\alpha}$ (i.e., a
  non-identity map $\alpha \to \gamma$ in $\inv{\cat C}$) that $G$
  takes to an identity map in $\inv{\cat D}$, then the matching map
  $(G^{*}\diag X)_{\alpha} \to \match^{\cat C}_{\alpha}(G^{*}\diag X)$
  of $G^{*}\diag X$ (see \defref{def:InducedDiag}) at $\alpha$ is an
  isomorphism.
\end{prop}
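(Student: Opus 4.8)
The plan is to apply \propref{prop:DetectIso}: it suffices to show that the matching map $m\colon (G^{*}\diag X)_{\alpha} \to \match^{\cat C}_{\alpha}(G^{*}\diag X)$ induces a bijection $m_{*}\colon \cat M(W, \diag X_{G\alpha}) \to \cat M\bigl(W, \lim_{\matchcat{\cat C}{\alpha}} G^{*}\diag X\bigr)$ for every object $W$ of $\cat M$. By the universal property of the limit, the target is the set of cones, i.e.\ families $(f_{\sigma})$ indexed by the objects $\sigma\colon \alpha \to \beta$ of $\matchcat{\cat C}{\alpha}$, with $f_{\sigma}\colon W \to \diag X_{G\beta}$, subject to $f_{\sigma'} = (G\tau)_{*}f_{\sigma}$ whenever $\tau\colon \beta \to \beta'$ is a morphism $\sigma \to \sigma'$ of the matching category; under this identification $m_{*}(g)$ is the family $\bigl((G\sigma)_{*}g\bigr)_{\sigma}$. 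The hypothesis provides a distinguished object $\nu\colon \alpha \to \gamma$ of $\matchcat{\cat C}{\alpha}$ with $G\gamma = G\alpha$ and $G\nu = 1_{G\alpha}$, which I will use throughout.

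Injectivity of $m_{*}$ is immediate: the $\nu$-component of $m_{*}(g)$ is $(G\nu)_{*}g = g$, so $g$ is recovered from its image. This also pins down the only possible preimage of a family $(f_{\sigma})$, namely $g := f_{\nu}\in \cat M(W,\diag X_{G\gamma}) = \cat M(W,\diag X_{G\alpha})$; surjectivity then amounts to proving that $f_{\sigma} = (G\sigma)_{*}f_{\nu}$ for every object $\sigma\colon\alpha\to\beta$ of $\matchcat{\cat C}{\alpha}$.

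To prove this identity I would work inside the category $\invfact{\cat C}{\alpha}{G\sigma}$ of inverse $\cat C$-factorizations of $(\alpha, G\sigma\colon G\alpha \to G\beta)$ (note $G\sigma \in \inv{\cat D}$ since $G$ is a Reedy functor). I would consider the assignment sending an object $\bigl((\nu'\colon\alpha\to\gamma'),(\mu\colon G\gamma'\to G\beta)\bigr)$ to the element $\mu_{*}f_{\nu'}\in\cat M(W,\diag X_{G\beta})$, where $f_{\nu'}$ makes sense because $\nu'$ is an object of $\matchcat{\cat C}{\alpha}$. A morphism $\tau\colon\gamma'\to\gamma''$ in this category satisfies $\tau\nu' = \nu''$ and $\mu''G\tau = \mu'$; the first relation makes $\tau$ a morphism $\nu'\to\nu''$ of $\matchcat{\cat C}{\alpha}$, so $f_{\nu''} = (G\tau)_{*}f_{\nu'}$, whence $\mu''_{*}f_{\nu''} = (\mu''G\tau)_{*}f_{\nu'} = \mu'_{*}f_{\nu'}$. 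Thus the assignment is constant on each connected component. Because $G$ is a fibering Reedy functor, $\invfact{\cat C}{\alpha}{G\sigma}$ is empty or connected, and it is nonempty since it contains both $\bigl((\sigma\colon\alpha\to\beta),1_{G\beta}\bigr)$ and $\bigl((\nu\colon\alpha\to\gamma),(G\sigma\colon G\alpha\to G\beta)\bigr)$; hence it is connected and the assignment is globally constant. Evaluating at these two objects gives $f_{\sigma} = (1_{G\beta})_{*}f_{\sigma} = (G\sigma)_{*}f_{\nu}$, as required, and \propref{prop:DetectIso} then yields the isomorphism.

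The main obstacle is exactly the surjectivity identity $f_{\sigma} = (G\sigma)_{*}f_{\nu}$: there is in general no morphism of $\matchcat{\cat C}{\alpha}$ directly relating $\sigma$ and $\nu$, so cone compatibility alone does not connect them. The device of passing to $\invfact{\cat C}{\alpha}{G\sigma}$ and exhibiting $\sigma$ and $\nu$ as two of its objects is what lets the fibering (connectedness) hypothesis bridge them; this is the only place where that hypothesis enters the argument.
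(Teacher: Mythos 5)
Your proof is correct, and its core is genuinely different from (and substantially shorter than) the paper's. Both arguments share the same outer frame: reduce via \propref{prop:DetectIso} to a bijection on sets of maps $\cat M(W,-)$, identify maps into the matching object with cones over the matching diagram, and use the hypothesized kernel object $\nu\colon\alpha\to\gamma$ (with $G\nu = 1_{G\alpha}$) to produce the candidate inverse $g = f_{\nu}$; injectivity is immediate in both. The difference is in how the identity $f_{\sigma} = (G\sigma)_{*}f_{\nu}$ is established for an arbitrary object $\sigma$ of $\matchcat{\cat C}{\alpha}$. The paper introduces $G$-equivalence (\defref{def:Gequiv}) and controlled objects (\defref{def:controlled}), proves that agreement on a kernel object propagates to all controlled objects (\lemref{lem:controlled}), and then proves that every object of the matching category is controlled (\propref{prop:AllControlled}) by a delicate downward induction on $\degree(G\gamma)$ analyzing which zig-zags in $\invfact{\cat C}{\alpha}{Gf}$ can occur; it also uses connectedness of the $G$-kernel (\lemref{lem:IdSubcat} and \lemref{lem:IdDiag}) to see that the candidate map is well defined. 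You avoid all of this machinery: your observation that cone compatibility plus functoriality of $\diag X$ makes the assignment $\bigl((\nu',\mu)\bigr) \mapsto \mu_{*}f_{\nu'}$ invariant along morphisms (hence along zig-zags) of $\invfact{\cat C}{\alpha}{G\sigma}$, together with the fact that this category contains both $\bigl(\sigma, 1_{G\beta}\bigr)$ and $\bigl(\nu, G\sigma\bigr)$, lets the fibering hypothesis finish the argument in one step. Note also that you never need \lemref{lem:IdSubcat}: independence of the choice of kernel object is a special case of your identity, since $f_{\nu'} = (G\nu')_{*}f_{\nu} = f_{\nu}$ for any $\nu'$ in the kernel. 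Both proofs ultimately apply the fibering hypothesis to the same categories of inverse $\cat C$-factorizations; your invariance argument extracts the needed consequence directly, whereas the paper reconstructs it through the controlled-object induction, so your route is a genuine simplification with no loss of generality.
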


The  proof will require several preliminary definitions
and results.
\begin{defn}
  \label{def:Gkernel}
  The \emph{$G$-kernel at $\alpha$} is the full subcategory of the
  matching category $\matchcat{\cat C}{\alpha}$ with objects the
  non-identity maps $\alpha \to \gamma$ in $\inv{\cat C}$ that $G$
  takes to the identity map of $G\alpha$.
\end{defn}

If $\alpha \to \gamma$ is an object of the $G$-kernel at $\alpha$,
then the map $(G^{*}\diag X)_{\alpha} \to (G^{*}\diag X)_{\gamma}$ is
the identity map.  Note that the $G$-kernel at $\alpha$ is not usually
left cofinal in $\matchcat{\alpha}{\cat C}$.

\begin{lem}
  \label{lem:IdSubcat}
  Under the hypotheses of \propref{prop:MatchIso}, the nerve of the
  $G$-kernel at $\alpha$ is connected.
\end{lem}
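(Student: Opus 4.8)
The plan is to identify the $G$-kernel at $\alpha$ with the category $\invfact{\cat C}{\alpha}{1_{G\alpha}}$ of inverse $\cat C$-factorizations of the pair $(\alpha, 1_{G\alpha})$, where $1_{G\alpha}\colon G\alpha \to G\alpha$ is the identity map, viewed as a map in $\inv{\cat D}$. Once this identification is in place, connectivity follows immediately from the fibering hypothesis applied with $\beta = G\alpha$ and $\sigma = 1_{G\alpha}$, so the whole argument reduces to matching up the two categories.

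First I would unwind \defref{def:CFactors} for $\sigma = 1_{G\alpha}$: an object is a pair $\bigl((\nu\colon \alpha \to \gamma), (\mu\colon G\gamma \to G\alpha)\bigr)$ with $\nu$ a non-identity map in $\inv{\cat C}$ and $\mu$ a map in $\inv{\cat D}$ satisfying $\mu\circ G\nu = 1_{G\alpha}$. The key observation is a degree argument. Since $G$ is a Reedy functor, $G\nu$ lies in $\inv{\cat D}$, and both $G\nu$ and $\mu$ are maps in $\inv{\cat D}$ whose composite is the degree-preserving map $1_{G\alpha}$. A non-identity map of $\inv{\cat D}$ strictly lowers degree, so if $G\nu$ were not an identity we would have $\degree(G\gamma) < \degree(G\alpha)$, leaving no room for a map $\mu\colon G\gamma \to G\alpha$ in $\inv{\cat D}$ (the existence of such a map forces $\degree(G\alpha) \le \degree(G\gamma)$). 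Hence $G\nu = 1_{G\alpha}$, and then $G\gamma = G\alpha$ forces $\mu = 1_{G\alpha}$ as well. Thus the objects of $\invfact{\cat C}{\alpha}{1_{G\alpha}}$ are exactly the non-identity maps $\nu\colon \alpha \to \gamma$ in $\inv{\cat C}$ with $G\nu = 1_{G\alpha}$, i.e., the objects of the $G$-kernel.

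Next I would check that the morphisms match. A morphism in $\invfact{\cat C}{\alpha}{1_{G\alpha}}$ is a map $\tau\colon \gamma \to \gamma'$ in $\inv{\cat C}$ with $\tau\nu = \nu'$ and $\mu'\circ G\tau = \mu$; since $\mu = \mu' = 1_{G\alpha}$, the second condition just says $G\tau = 1_{G\alpha}$, which is automatic from $\tau\nu = \nu'$ together with $G\nu = G\nu' = 1_{G\alpha}$. These are precisely the morphisms of the $G$-kernel regarded as a full subcategory of $\matchcat{\cat C}{\alpha}$, so the two categories are isomorphic.

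Finally, since $G$ is a fibering Reedy functor (\defref{def:goodsub}) and $1_{G\alpha}\colon G\alpha \to G\alpha$ is a map in $\inv{\cat D}$, the nerve of $\invfact{\cat C}{\alpha}{1_{G\alpha}}$ is either empty or connected. The hypotheses of \propref{prop:MatchIso} supply a non-identity map $\alpha \to \gamma$ in $\inv{\cat C}$ that $G$ sends to $1_{G\alpha}$, so the $G$-kernel, and hence this factorization category, is nonempty; therefore its nerve is connected. The one step to get exactly right is the degree argument of the second paragraph, which is what collapses the factorization category down to the $G$-kernel; everything else is bookkeeping with the definitions.
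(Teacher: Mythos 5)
Your proof is correct and takes essentially the same route as the paper: the paper's proof also identifies the $G$-kernel at $\alpha$ with $\invfact{\cat C}{\alpha}{1_{G\alpha}}$ and invokes the fibering hypothesis. Your degree argument and morphism check simply make explicit the isomorphism that the paper asserts without detail, and you correctly supply the nonemptiness observation needed to upgrade ``empty or connected'' to ``connected.''
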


\begin{proof}
  Since $G$ is a fibering Reedy functor, the nerve of the category
  $\invfact{\cat C}{\alpha}{1_{G\alpha}}$ of inverse $\cat
  C$-factorizations of $(\alpha, 1_{G\alpha})$ is connected, and there
  is an isomorphism from the $G$-kernel at $\alpha$ to $\invfact{\cat
    C}{\alpha}{1_{G\alpha}}$ that takes the object $\alpha \to \gamma$
  to the object $\bigl((\alpha \to \gamma), (1_{G\alpha})\bigr)$.
\end{proof}

The matching object $\match^{\cat C}_{\alpha}(G^{*}\diag X)$ is the
limit of a $\matchcat{\cat C}{\alpha}$-diagram (which we will also
denote by $G^{*}\diag X$); we will refer to that diagram as the
\emph{matching diagram}.  The restriction of the matching diagram to
the $G$-kernel at $\alpha$ is a diagram in which every object goes to
$\diag X_{G\alpha} = (G^{*}\diag X)_{\alpha}$ and every map goes to
the identity map of $\diag X_{G\alpha}$, because if there is a
commutative triangle
\begin{displaymath}
    \xymatrix@=.8em{
      &{\alpha} \ar[dl]_{f} \ar[dr]^{f'}\\
      {\gamma} \ar[rr]_{\tau}
      && {\gamma'}
    }
\end{displaymath}
in $\inv{\cat C}$ in which $Gf = Gf' = 1_{G\alpha}$, then $G\tau \circ
1_{G\alpha} = 1_{G\alpha}$, and so $G\tau = 1_{G\alpha}$.  Together
with \lemref{lem:IdSubcat}, this implies the following.
\begin{lem}
  \label{lem:IdDiag}
  Under the hypotheses of \propref{prop:MatchIso}, the restriction
  of the matching diagram to the $G$-kernel at $\alpha$ is a connected
  diagram in which every object goes to $\diag X_{G\alpha}$ and every
  map goes to the identity map of $\diag X_{G\alpha}$.
\end{lem}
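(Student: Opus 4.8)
The plan is to assemble the statement from two facts that are already available, since \lemref{lem:IdDiag} is a bookkeeping step that packages the constancy computation of the preceding discussion together with the connectedness supplied by \lemref{lem:IdSubcat}. Recall that the matching object $\match^{\cat C}_{\alpha}(G^{*}\diag X)$ is the limit of the $\matchcat{\cat C}{\alpha}$-diagram (the \emph{matching diagram}) that sends an object $\nu\colon \alpha \to \gamma$ to $(G^{*}\diag X)_{\gamma} = \diag X_{G\gamma}$ and a morphism to the induced map of $\diag X$. The lemma asserts two things about the restriction of this diagram to the $G$-kernel at $\alpha$: that it takes the constant value $\diag X_{G\alpha}$ with identity structure maps, and that it is connected.

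First I would verify constancy on objects and morphisms, which is exactly the computation carried out just before the lemma. An object of the $G$-kernel is a non-identity map $\nu\colon \alpha \to \gamma$ in $\inv{\cat C}$ with $G\nu = 1_{G\alpha}$; in particular $G\gamma = G\alpha$, so the matching diagram sends it to $\diag X_{G\gamma} = \diag X_{G\alpha} = (G^{*}\diag X)_{\alpha}$. For a morphism $\tau\colon \gamma \to \gamma'$ of the $G$-kernel, sitting in the commutative triangle with legs $f\colon \alpha \to \gamma$ and $f'\colon \alpha \to \gamma'$ (each carried by $G$ to $1_{G\alpha}$), applying $G$ gives $G\tau\circ 1_{G\alpha} = 1_{G\alpha}$, hence $G\tau = 1_{G\alpha}$; therefore the map the matching diagram assigns to $\tau$ is $\diag X_{G\tau} = \diag X_{1_{G\alpha}} = 1_{\diag X_{G\alpha}}$.

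Finally, connectedness of the restricted diagram is exactly connectedness of its indexing category, the $G$-kernel at $\alpha$, and this is the content of \lemref{lem:IdSubcat}. Combining the two observations yields the claim. There is no genuine obstacle here: the only input with content is \lemref{lem:IdSubcat} (whose connectedness ultimately rests on the fibering hypothesis via the identification of the $G$-kernel with $\invfact{\cat C}{\alpha}{1_{G\alpha}}$), and the present lemma merely records the resulting description for use in the proof of \propref{prop:MatchIso}, where the limit of a connected diagram that is constant at $\diag X_{G\alpha}$ will be identified with $\diag X_{G\alpha}$ itself.
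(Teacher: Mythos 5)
Your proposal is correct and follows essentially the same route as the paper: the constancy on objects and morphisms is verified by the same triangle argument ($G\tau\circ 1_{G\alpha} = 1_{G\alpha}$ forces $G\tau = 1_{G\alpha}$, so the matching diagram sends $\tau$ to $1_{\diag X_{G\alpha}}$), and connectedness is imported directly from \lemref{lem:IdSubcat}. Nothing is missing.
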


We will prove \propref{prop:MatchIso} by showing that for every object
$W$ of $\cat M$ the matching map induces an isomorphism of sets of
maps
\begin{equation}
  \label{eq:MatchMap}
  \cat M \bigl(W, (G^{*}\diag X)_{\alpha}\bigr) \longrightarrow
  \cat M\bigl(W,\match^{\cat C}_{\alpha}(G^{*}\diag X)\bigr)
\end{equation}
(see \propref{prop:DetectIso}).  The matching object $\match^{\cat
  C}_{\alpha}(G^{*}\diag X)$ is the limit of the matching diagram, and
so maps from $W$ to $\match^{\cat C}_{\alpha}(G^{*}\diag X)$
correspond to maps from $W$ to the matching diagram.
\lemref{lem:IdDiag} implies that if we restrict the matching diagram
to the $G$-kernel at $\alpha$, then maps from $W$ to the restriction
of that diagram to the $G$-kernel at $\alpha$ correspond to maps from
$W$ to $(G^{*}\diag X)_{\alpha}$, and that fact allows us to define a
potential inverse to \eqref{eq:MatchMap}.  All that remains is to show
that our potential inverse is actually an inverse.

If $\alpha \to \beta$ and $\alpha \to \gamma$ are objects of the
matching category and there is a map $\tau\colon (\alpha \to \beta)
\to (\alpha \to \gamma)$ in the matching category, i.e., a commutative
diagram
\begin{displaymath}
  \xymatrix@=.6em{
    & {\alpha} \ar[dl] \ar[dr]\\
    {\beta} \ar[rr]_{\tau}
    && {\gamma \Comma}
  }
\end{displaymath}
then for every object $W$ of $\cat M$ and map from $W$ to the matching
diagram, the projection of that map onto $(\alpha \to \gamma)$ is
entirely determined by its projection onto $(\alpha\to\beta)$; we will
describe this by saying that the object $(\alpha\to\gamma)$ is
\emph{controlled} by the object $(\alpha\to\beta)$.  Similarly, if
there is a commutative triangle
\begin{displaymath}
  \xymatrix@=.6em{
    & {\alpha} \ar[dl] \ar[dr]\\
    {\gamma} \ar[rr]_{\tau}
    && {\gamma'}
  }
\end{displaymath}
in the matching category such that $G\tau$ is an identity map, then we
will say that the object $(\alpha\to\gamma)$ is \emph{controlled} by
the object $(\alpha\to\gamma')$ \emph{and} that the object
$(\alpha\to\gamma')$ is \emph{controlled} by the object $(\alpha\to
\gamma)$.  We will show by a downward induction on degree that all
objects of the matching category are controlled by objects of the
$G$-kernel at $\alpha$ (see \defref{def:controlled} and
\propref{prop:AllControlled}).

\begin{defn}
  \label{def:Gequiv}
  We define an equivalence relation on the set of objects of
  $\matchcat{\cat C}{\alpha}$, called \emph{$G$-equivalence at
    $\alpha$}, as the equivalence relation generated by the relation
  under which $f\colon \alpha \to \gamma$ is equivalent to $f'\colon
  \alpha \to \gamma'$ if there is a commutative triangle
  \begin{displaymath}
    \xymatrix@=.8em{
      &{\alpha} \ar[dl]_{f} \ar[dr]^{f'}\\
      {\gamma} \ar[rr]_{\tau}
      && {\gamma'}
    }
  \end{displaymath}
  with $G\tau$ an identity map.
\end{defn}
If $f$ and $f'$ are $G$-equivalent at $\alpha$, then $Gf = Gf'$, and
there is a zig-zag of identity maps connecting $\diag X_{f}$ and
$\diag X_{f'}$ in the matching diagram.

\begin{defn}
  \label{def:controlled}
  We define the set of \emph{controlled objects} $\{\alpha \to
  \gamma\}$ of the matching category $\matchcat{\cat C}{\alpha}$ by a
  decreasing induction on $\degree(G\gamma)$:
  \begin{enumerate}
  \item If $\alpha \to \gamma$ is an object of
    $\matchcat{\cat C}{\alpha}$ such that
    $\degree(G\gamma) = \degree(G\alpha)$ (i.e., if
    $G(\alpha\to\gamma) = 1_{G\alpha}$), then $\alpha \to \gamma$ is
    controlled.  (That is, all objects of the $G$-kernel at $\alpha$
    are controlled.)  Note that this initial step is non-empty, since
    we have assumed that the $G$-kernel at $\alpha$ is non-empty.
  \item If $0 \le n < \degree(G\alpha)$ and we have defined the
    controlled objects $\alpha \to \delta$ for $n < \degree(\delta)
    \le \degree(G\alpha)$, then we define an object $\alpha \to
    \gamma$ with $\degree(G\gamma) = n$ to be controlled if it is
    $G$-equivalent at $\alpha$ to an object $\alpha \to \gamma'$ that
    has a factorization $\alpha \to \delta \to \gamma'$ in $\inv{\cat
      C}$ such that $\alpha \to \delta$ is an object of
    $\matchcat{\cat C}{\alpha}$ that is controlled.
  \end{enumerate}
\end{defn}

\begin{ex}
  Let $G\colon \cat C \to \cat D$ be the fibering Reedy functor
  between Reedy categories as in the following diagram:
  \begin{displaymath}
    \xymatrix@R=1.5ex@C=1em{
      & {\cat C}  \ar[]+<4.5em,0ex>;[rrrrrr]+<-2.5em,0ex>^{G}
      &&&&&& {\cat D}\\
      & {\alpha} \ar[dr] \ar[dd] \ar[lddd]_{\sigma}
      &&&&&& {a} \ar[dddd]^{f}\\
      && {\beta} \ar[dl]\\
      & {\gamma} \ar[dd]^{\tau}\\
      {\delta} \ar[dr]_{\mu}\\
      & {\epsilon}
      &&&&&& {b}
    }
  \end{displaymath}
  where
  \begin{itemize}
  \item $\cat C$ has five objects, $\alpha$, $\beta$, $\gamma$,
    $\delta$, and $\epsilon$ of degrees $4$, $3$, $2$, $1$, and $0$,
    respectively, and the diagram commutes;
  \item $\cat D$ has two objects, $a$ and $b$ of degrees $1$ and $0$,
    respectively;
  \item $G\alpha = G\beta = G\gamma = a$ and $G$ takes the maps
    between them to $1_{a}$;
  \item $G\delta = G\epsilon = b$ and $G\mu = 1_{b}$; and
  \item $G\sigma = G\tau = f$.
  \end{itemize}
  Every object of $\matchcat{\cat C}{\alpha}$ is controlled:
  \begin{itemize}
  \item The objects $\alpha \to \beta$ and $\alpha \to\gamma$ are
    controlled because of the first part of \defref{def:controlled}.
  \item The object $\alpha \to \epsilon$ is controlled because it is
    $G$-equivalent at $\alpha$ to itself and it factors as $\alpha
    \to\gamma \to\epsilon$ with the object $\alpha \to \gamma$
    controlled.
  \item The object $\sigma$ is controlled because it is $G$-equivalent
    at $\alpha$ to $\alpha \to \epsilon$ and the latter map factors as
    $\alpha \to \gamma \to \epsilon$ where the object $\alpha \to
    \gamma$ is controlled.
  \end{itemize}
  If $\diag X$ is a $\cat D$-diagram in a model category $\cat M$,
  then the induced $\cat C$-diagram $G^{*}\diag X$ has
  \begin{displaymath}
    (G^{*}\diag X)_{\alpha} = (G^{*}\diag X)_{\beta} =
    (G^{*}\diag X)_{\gamma} = \diag X_{a}
    \qquad\text{and}\qquad
    (G^{*}\diag X)_{\delta} = (G^{*}\diag X)_{\epsilon} =
    \diag X_{b}\Comma
  \end{displaymath}
  and the matching object of $(G^{*}\diag X)$ at $\alpha$ is the limit
  of the diagram
  \begin{displaymath}
    \xymatrix@C=1.5em@R=2ex{
      && {\diag X_{a}} \ar[dl]^{1_{\diag X_{a}}}\\
      & {\diag X_{a}} \ar[dd]^{\diag X_{f}}\\
      {\diag X_{b}} \ar[dr]_{1_{\diag X_{b}}}\\
      & {\diag X_{b} \Semicolon}
    }
  \end{displaymath}
  that limit is isomorphic to $\diag X_{a}$, as guaranteed by
  \propref{prop:MatchIso}.
\end{ex}

The set of controlled objects has the following property.
\begin{lem}
  \label{lem:controlled}
  Under the hypotheses of \propref{prop:MatchIso}, if $W$ is an object
  of $\cat M$ and $h,k\colon W \to \match^{\cat C}_{\alpha}(G^{*}\diag
  X)$ are two maps to the matching object of $G^{*}\diag X$ at
  $\alpha$ whose projections onto at least one object of the
  $G$-kernel at $\alpha$ agree, then their projections onto every
  controlled object agree.
\end{lem}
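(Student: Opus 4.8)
The plan is to prove \lemref{lem:controlled} by a downward induction on $\degree(G\gamma)$ that mirrors the inductive definition of the controlled objects (\defref{def:controlled}), using the description of maps into the matching object as cones over the matching diagram. For a map $h\colon W \to \match^{\cat C}_{\alpha}(G^{*}\diag X)$ and an object $\alpha\to\gamma$ of $\matchcat{\cat C}{\alpha}$, write $h_{\gamma}\colon W\to \diag X_{G\gamma}$ for the projection of $h$ onto $\alpha\to\gamma$. Since the matching object is a limit, the family $(h_{\gamma})$ is a cone, so whenever $\tau\colon(\alpha\to\gamma)\to(\alpha\to\gamma')$ is a morphism of the matching category we have $h_{\gamma'} = \diag X_{G\tau}\circ h_{\gamma}$ (and likewise for $k$); this single cone identity is the workhorse of the entire argument.

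First I would dispatch the base case, where $\degree(G\gamma)=\degree(G\alpha)$, i.e. $\alpha\to\gamma$ lies in the $G$-kernel at $\alpha$. By \lemref{lem:IdDiag} the restriction of the matching diagram to the $G$-kernel sends every map to an identity map, so for any morphism $\tau$ of the $G$-kernel the cone identity gives $h_{\gamma'}=h_{\gamma}$ and $k_{\gamma'}=k_{\gamma}$; hence equality of projections is preserved along every morphism of the $G$-kernel in either direction. Since the nerve of the $G$-kernel is connected (\lemref{lem:IdSubcat}) and, by hypothesis, $h$ and $k$ agree on at least one of its objects, they agree on all of them. As the objects of the $G$-kernel are exactly the controlled objects of maximal $G$-degree, this anchors the induction.

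For the inductive step, suppose $\alpha\to\gamma$ is controlled with $\degree(G\gamma)=n<\degree(G\alpha)$ and that $h$ and $k$ agree on every controlled object of strictly larger $G$-degree. By \defref{def:controlled}, part~2, $\alpha\to\gamma$ is $G$-equivalent at $\alpha$ to some $\alpha\to\gamma'$ admitting a factorization $\alpha\to\delta\xrightarrow{\rho}\gamma'$ in $\inv{\cat C}$ with $\alpha\to\delta$ controlled. I would argue in two moves. First, by the remark following \defref{def:Gequiv}, $G$-equivalent objects are linked by a zig-zag of morphisms whose images under the matching diagram are identity maps, so running the cone identity along this zig-zag yields $h_{\gamma}=h_{\gamma'}$ and $k_{\gamma}=k_{\gamma'}$; it therefore suffices to compare the projections onto $\gamma'$. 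Second, $\rho$ is a morphism $(\alpha\to\delta)\to(\alpha\to\gamma')$ of the matching category, so the cone identity gives $h_{\gamma'}=\diag X_{G\rho}\circ h_{\delta}$ and $k_{\gamma'}=\diag X_{G\rho}\circ k_{\delta}$; since $\alpha\to\delta$ is controlled of strictly larger $G$-degree, the inductive hypothesis gives $h_{\delta}=k_{\delta}$, whence $h_{\gamma'}=k_{\gamma'}$ and hence $h_{\gamma}=k_{\gamma}$.

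The main obstacle I anticipate is the bookkeeping that makes this induction well-founded: I must confirm that the intermediate controlled object $\alpha\to\delta$ supplied by \defref{def:controlled} genuinely satisfies $\degree(G\delta)>n$, so that it is covered by the inductive hypothesis. This is where one uses that the connecting map $\rho\colon\delta\to\gamma'$ strictly lowers $G$-degree (equivalently, that $\alpha\to\delta$ was among the controlled objects already defined at an earlier stage of the inductive definition), together with the fact that $G$-equivalence preserves $G$-degree, since $G\tau=1$ forces $G\gamma=G\gamma'$ for the generating triangles. Once that degree bookkeeping is pinned down, everything else reduces to the routine manipulation of the cone identity for the matching limit carried out above.
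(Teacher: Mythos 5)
Your proof is correct and takes essentially the same route as the paper: the paper's entire proof is the single sentence that the lemma ``follows by a decreasing induction as in \defref{def:controlled}, using \lemref{lem:IdDiag} and \defref{def:controlled},'' and your argument is precisely that induction written out (base case via \lemref{lem:IdSubcat} and \lemref{lem:IdDiag}, inductive step via the cone identity along the $G$-equivalence zig-zag and the factorization through $\alpha\to\delta$). Your one anticipated obstacle---that $\degree(G\delta)>n$---is settled not by claiming $G\rho$ strictly lowers degree (it need not) but by the parenthetical reading you also give: the recursion in \defref{def:controlled} only permits $\alpha\to\delta$ to be an already-defined controlled object, hence one of strictly larger $G$-degree, so there is no gap.
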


\begin{proof}
  This follows by a decreasing induction as in
  \defref{def:controlled}, using \lemref{lem:IdDiag} and
  \defref{def:controlled}.
\end{proof}

That every object in the example above was controlled was not an
accident, as shown by the following result.

\begin{prop}
  \label{prop:AllControlled}
  Under the hypotheses of \propref{prop:MatchIso}, every object
  $f\colon \alpha \to \gamma$ of $\matchcat{\cat C}{\alpha}$ is
  controlled.
\end{prop}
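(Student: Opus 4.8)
The plan is to prove the proposition by the same decreasing induction on $\degree(G\gamma)$ that organizes \defref{def:controlled}. For the base case, if $\degree(G\gamma) = \degree(G\alpha)$ then $Gf\colon G\alpha \to G\gamma$ is a map of $\inv{\cat D}$ that preserves degree and is therefore an identity, so $f$ lies in the $G$-kernel and is controlled by the first clause of \defref{def:controlled}. For the inductive step I would fix an object $f\colon\alpha\to\gamma$ of $\matchcat{\cat C}{\alpha}$ with $\degree(G\gamma) = n < \degree(G\alpha)$ and assume, as inductive hypothesis, that every object of $\matchcat{\cat C}{\alpha}$ whose $G$-image has target of degree greater than $n$ is already controlled.

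The engine of the argument is the fibering hypothesis applied to the category $\invfact{\cat C}{\alpha}{Gf}$ of inverse $\cat C$-factorizations of $(\alpha, Gf)$, where $Gf\colon G\alpha \to G\gamma$ is viewed as a map of $\inv{\cat D}$. This category contains the object $\bigl((f\colon\alpha\to\gamma),(1_{G\gamma})\bigr)$, so it is nonempty and hence, by \defref{def:goodsub}, connected. The hypotheses of \propref{prop:MatchIso} also supply a non-identity map $e\colon\alpha\to\gamma_{0}$ in $\inv{\cat C}$ with $Ge = 1_{G\alpha}$; since then $G\gamma_{0} = G\alpha$, the pair $\bigl((e\colon\alpha\to\gamma_{0}),(Gf\colon G\gamma_{0}\to G\gamma)\bigr)$ is a second object of $\invfact{\cat C}{\alpha}{Gf}$, whose middle object $\gamma_{0}$ satisfies $\degree(G\gamma_{0}) = \degree(G\alpha) > n$. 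Connectedness then provides a finite zigzag of morphisms in $\invfact{\cat C}{\alpha}{Gf}$ joining these two objects, and I would write the successive middle objects as $\gamma = \delta_{0}, \delta_{1}, \ldots, \delta_{r} = \gamma_{0}$.

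The key observations are degree-theoretic. Every object $\bigl((\nu\colon\alpha\to\delta),(\mu\colon G\delta\to G\gamma)\bigr)$ has $\mu$ in $\inv{\cat D}$, so $\degree(G\delta)\ge n$; thus $n$ is the minimal $G$-degree occurring, attained at $\delta_{0}=\gamma$. Each morphism of the zigzag is a map $\tau$ of $\inv{\cat C}$, so it can only lower the $G$-degree of the middle object, and it preserves that degree exactly when $G\tau$ is an identity, in which case the defining triangle of the morphism is precisely a $G$-equivalence (\defref{def:Gequiv}) of the two middle maps. Let $j^{*}$ be the first index with $\degree(G\delta_{j^{*}}) > n$ (it exists since $\degree(G\delta_{r}) > n$). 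Then $\delta_{0},\ldots,\delta_{j^{*}-1}$ all sit at $G$-degree $n$, so the corresponding maps are all $G$-equivalent, whence $f$ is $G$-equivalent to $\nu_{j^{*}-1}\colon\alpha\to\delta_{j^{*}-1}$; and the morphism between $\delta_{j^{*}-1}$ and $\delta_{j^{*}}$, having ends of distinct $G$-degree, must point from $\delta_{j^{*}}$ to $\delta_{j^{*}-1}$, giving a factorization $\alpha\xrightarrow{\nu_{j^{*}}}\delta_{j^{*}}\xrightarrow{\tau}\delta_{j^{*}-1}$ in $\inv{\cat C}$ whose first leg $\nu_{j^{*}}$ is a non-identity object of $\matchcat{\cat C}{\alpha}$ with $\degree(G\delta_{j^{*}}) > n$, hence controlled by the inductive hypothesis. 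Thus $f$ is $G$-equivalent to a map factoring through a controlled object, so $f$ is controlled by the second clause of \defref{def:controlled}.

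The main obstacle, and the precise point at which the fibering hypothesis is indispensable, is producing the passage from the minimal-degree factorization $(f,1_{G\gamma})$ to a factorization at kernel level: without connectedness of $\invfact{\cat C}{\alpha}{Gf}$ there is no reason for $f$ to be $G$-equivalent to anything factoring through a higher-degree object. The remaining delicate point is the simultaneous bookkeeping of two degree functions, the $\cat C$-degree that governs membership in $\matchcat{\cat C}{\alpha}$ and the $G$-image degree $\degree(G\gamma)$ on which the induction runs, together with the two monotonicity facts used above: that $n$ is the minimal attainable $G$-degree and that morphisms of $\inv{\cat C}$ never raise it.
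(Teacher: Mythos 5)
Your proof is correct and follows essentially the same route as the paper's: the same decreasing induction on $\degree(G\gamma)$, the same appeal to the fibering hypothesis applied to $\invfact{\cat C}{\alpha}{Gf}$ (which contains $\bigl((f,1_{G\gamma})\bigr)$ and the object built from a $G$-kernel map), and the same zigzag analysis resting on the facts that non-identity maps of $\inv{\cat D}$ lower degree and that morphisms of the factorization category cannot raise the $G$-degree of the middle object. Your bookkeeping via the first index at which that $G$-degree exceeds $n$ is just a slightly cleaner packaging of the paper's argument that the cluster of objects $G$-equivalent to $f$ can only be reached by a morphism pointing into it from a strictly higher-degree, hence inductively controlled, object.
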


\begin{proof}
  We will show this by a decreasing induction on the degree of
  $G\gamma$ in $\cat D$, beginning with $\degree(G\alpha)$.  The
  induction is begun because the objects $f\colon \alpha \to \gamma$
  in $\matchcat{\cat C}{\alpha}$ with $\degree(G\gamma) =
  \degree(G\alpha)$ are exactly the objects of the $G$-kernel at
  $\alpha$, since a map in $\inv{\cat D}$ that does not lower degree
  must be an identity map.

  Suppose now that $0 \le n < \degree(G\alpha)$, that every object
  $\alpha \to \delta$ in $\matchcat{\cat C}{\alpha}$ with
  $\degree(G\delta) > n$ is controlled, and that
  $f\colon \alpha \to \gamma$ is an object of
  $\matchcat{\cat C}{\alpha}$ with $\degree(G\gamma) = n$.  We will
  show that there is a map $\tau\colon \epsilon \to \gamma'$ in
  $\invfact{\cat C}{\alpha}{Gf}$ from an object
  $\bigl((h\colon \alpha \to \epsilon), (G\epsilon \to G\gamma)\bigr)$
  with $\degree(\epsilon) > \degree(\gamma)$ to an object
  $\bigl((f'\colon \alpha \to \gamma'), (1\colon G\gamma' \to G\gamma'
  = G\gamma)\bigr)$ that is $G$-equivalent to $f$.  The induction
  hypothesis will then imply that $h\colon \alpha \to \epsilon$ is
  controlled, and since the composition
  $\alpha \xrightarrow{h} \epsilon \xrightarrow{\tau} \gamma'$ equals
  $f'\colon \alpha \to \gamma'$, this will imply that
  $f\colon \alpha \to \gamma$ is controlled.

  Consider the category $\invfact{\cat C}{\alpha}{Gf}$ of inverse
  $\cat C$-factorizations of $(\alpha, Gf\colon G\alpha \to G\gamma)$.
  We first show that if
  $\bigl((f'\colon \alpha \to \gamma'), (1_{G\gamma})\bigr)$ is an
  object of $\invfact{\cat C}{\alpha}{Gf}$ such that $f'$ is
  $G$-equivalent at $\alpha$ to $f$, and if that object is the domain
  of a map to an object
  $\bigl((h\colon \alpha \to \epsilon), (G\epsilon \to
  G\gamma)\bigr)$, then we must have
  $\degree(G\epsilon) = \degree(G\gamma)$ and that target object must
  actually be of the form
  $\bigl((f''\colon \alpha \to \gamma''), (1_{G\gamma})\bigr)$ where
  $f''$ is also $G$-equivalent at $\alpha$ to $f$.  This is because if
  $\tau\colon \gamma' \to \epsilon$ is a map in $\inv{\cat C}$ such
  that $G\tau$ is \emph{not} an identity map, then
  $\degree(G\epsilon) < \degree(G\gamma') = \degree(G\gamma)$, which
  is not possible because an identity map in a Reedy category cannot
  factor through a degree-lowering map.

  The category $\invfact{\cat C}{\alpha}{Gf}$ contains the object
  $\bigl((f\colon \alpha \to \gamma), (1_{G\gamma})\bigr)$ and, if
  $g\colon \alpha \to \delta$ is an object of the $G$-kernel at
  $\alpha$, then it also contains the object
  $\bigl((g\colon \alpha \to \delta), (Gf\colon G\alpha \to
  G\gamma)\bigr)$.  Since $G$ is a fibering Reedy functor, the nerve
  of the category $\invfact{\cat C}{\alpha}{Gf}$ is connected, and so
  there must be a zig-zag of maps in $\invfact{\cat C}{\alpha}{Gf}$
  connecting those two objects.  Since every map in
  $\invfact{\cat C}{\alpha}{Gf}$ with domain an object
  $\bigl((f'\colon \alpha \to \gamma'), (1_{G\gamma})\bigr)$ (where
  $f'\colon \alpha \to \gamma'$ is $G$-equivalent at $\alpha$ to
  $f\colon \alpha \to \gamma$) can have as a target only another such
  object, and the object
  $\bigl((g\colon \alpha \to \delta), (Gf\colon G\alpha \to
  G\gamma)\bigr)$ (with $g\colon \alpha \to \delta$ an object of the
  $G$-kernel at $\alpha$) is at the left end of the zig-zag, the
  zig-zag must look like the following:
  \begin{displaymath}
    \xymatrix@C=0.5em@R=2ex{
      & {\bullet} \ar[dl] \ar[dr]
      && {\left(\substack{(h\colon \alpha \to \epsilon),\\
            (G\epsilon \to G\gamma)}\right)} \ar[dl] \ar[dr]^{\tau}
      && {\bullet} \ar[dl]_-{\sim} \ar[dr]^-{\sim}\\
      {\left(\substack{(g\colon \alpha \to \delta),\\
            (Gf\colon G\alpha\to G\gamma)}\right)}
      && {\hspace{.25in}\bullet\hspace{.25in}}
      && {\left(\substack{(f'\colon \alpha \to \gamma'),\\
            (1_{G\gamma})}\right)}
      && {\left(\substack{(f\colon \alpha \to \gamma),\\(1_{G\gamma})}\right)}
    }
  \end{displaymath}
  That is, the rightmost few maps in the zig-zag can be maps that $G$
  takes to $1_{G\gamma}$ (labelled with ``$\sim$'' in the diagram),
  but at some point in the zig-zag there must be a map going to the
  right, from an object
  $\bigl((h\colon \alpha \to \epsilon), (G\epsilon \to G\gamma)\bigr)$
  of $\invfact{\cat C}{\alpha}{Gf}$ with $h$ \emph{not} $G$-equivalent
  at $\alpha$ to $f$ and a map $\tau\colon \epsilon \to \gamma'$ from
  that object to an object
  $\bigl((f'\colon \alpha \to \gamma'), (1_{G\gamma})\bigr)$ where
  $f'\colon \alpha \to \gamma'$ is $G$-equivalent at $\alpha$ to $f$.

  If we had $\degree(G\epsilon) = \degree(G\gamma)$, then
  $G\tau$ would be an identity map (and so $h$ would be
  $G$-equivalent to $f$) because there would be a commutative triangle
  \begin{displaymath}
    \xymatrix@=.8em{
      {G\epsilon} \ar[rr]^{G\tau} \ar[dr]
      && {G\gamma'} \ar[dl]^{1_{G\gamma'}}\\
      & {G\gamma'}
    }
  \end{displaymath}
  in which the map $G\epsilon \to G\gamma'$ is a map of $\inv{\cat D}$
  that does not lower degree and is thus an identity map.  Thus, the
  only way an object $\bigl((f'\colon \alpha \to \gamma'),
  (1_{G\gamma})\bigr)$ with $f'$ being $G$-equivalent to $f$ can
  connect via a zig-zag to an object $\bigl((h\colon \alpha \to
  \epsilon), (G\epsilon \to G\gamma)\bigr)$ with $h$ not
  $G$-equivalent to $f$ is by way of a map $\tau\colon \epsilon \to
  \gamma'$ from an object $\bigl((h\colon \alpha \to \epsilon),
  (G\epsilon \to G\gamma)\bigr)$ with $\degree(G\epsilon) >
  \degree(G\gamma)$, which (by the induction hypothesis) implies that
  $h\colon \alpha \to \epsilon$ is controlled.  In this case, the
  composition $\alpha \xrightarrow{h} \epsilon \xrightarrow{\tau}
  \gamma'$ equals $f'\colon \alpha \to \gamma'$, and so $f\colon
  \alpha \to \gamma$ is controlled.  This completes the induction.
\end{proof}

\begin{proof}[Proof of \propref{prop:MatchIso}]
  \propref{prop:DetectIso} implies that it is sufficient to show that
  for every object $W$ of $\cat M$ the matching map $(G^{*}\diag
  X)_{\alpha} \to \match^{\cat C}_{\alpha}(G^{*}\diag X)$ induces an
  isomorphism of the sets of maps
  \begin{equation}
    \label{eq:MatchIso}
    \xymatrix{
      {\cat M\bigl(W, (G^{*}\diag X)_{\alpha}\bigr)} \ar[r]^-{\iso}
      & {\cat M\bigl(W, \match^{\cat C}_{\alpha}(G^{*}\diag X)\bigr)
         \Period}
    }
  \end{equation}
  Let $W$ be an object of $\cat M$ and let $h\colon W \to \match^{\cat
    C}_{\alpha}(G^{*}\diag X)$ be a map.  If $\alpha \to \gamma$ is an
  object of $\matchcat{\cat C}{\alpha}$ that is in the $G$-kernel at
  $\alpha$, then $(G^{*}\diag X)_{(\alpha \to \gamma)} = (G^{*}\diag
  X)_{\gamma} = (G^{*}\diag X)_{\alpha}$, and so the projection of $h$
  onto $(G^{*}\diag X)_{(\alpha \to \gamma)}$ defines a map
  $\hhat\colon W \to (G^{*}\diag X)_{\alpha}$.  \lemref{lem:IdDiag}
  implies that the map $\hhat$ is independent of the choice of object
  of the $G$-kernel at $\alpha$.

  The composition
  \begin{displaymath}
    \xymatrix{
      {W} \ar[r]^-{\hhat}
      & {(G^{*}\diag X)_{\alpha}} \ar[r]
      & {\match^{\cat C}_{\alpha}(G^{*}\diag X)}
    }
  \end{displaymath}
  has the same projection onto $(G^{*}\diag X)_{(\alpha\to\gamma)}$ as
  the map $h\colon W \to \match^{\cat C}_{\alpha}(G^{*}\diag X)$;
  since every object of $\matchcat{\cat C}{\alpha}$ is controlled (see
  \propref{prop:AllControlled}), these two maps agree on every
  projection of $\match^{\cat C}_{\alpha}(G^{*}\diag X)$ (see
  \lemref{lem:controlled}), and so they are equal; thus, the map
  \eqref{eq:MatchIso} is a surjection. Since the composition of the
  matching map with the projection $\match^{\cat
    C}_{\alpha}(G^{*}\diag X) \to (G^{*}\diag X)_{(\alpha \to
    \gamma)}$ is $\diag X \circ G$ applied to $\alpha\to\gamma$, which
  is the identity map, $\hhat$ is the only possible lift to
  $(G^{*}\diag X)_{\alpha}$ of $h$, and so the map \eqref{eq:MatchIso}
  is also an injection, and so it is an isomorphism.
\end{proof}

\subsection{Proof of \thmref{thm:RtGdNec}}
\label{sec:RtGdNec}

We will begin by constructing the $\cat D$-diagram $\diag X$ whose
existence is asserted in \thmref{thm:RtGdNec}.  The construction is by
induction on the filtrations $\F^{n}\cat D$ of $\cat D$ (see
\defref{def:filtration}), and it will follow immediately that
$\diag X$ is a fibrant $\cat D$-diagram (see \propref{prop:FibD}).
\propref{prop:ProdInts} will then describe the diagram $\diag X$ in
more detail.

\propref{prop:MatchProd} describes the matching object
$\match^{\cat C}_{\alpha}(G^{*}\diag X)$ of the induced
$\cat C$-diagram $G^{*}\diag X$ at an object $\alpha$ of $\cat C$, and
then \propref{prop:NotFib} shows that the matching map
$(G^{*}\diag X)_{\alpha} \to \match^{\cat C}_{\alpha}(G^{*}\diag X)$
is not a fibration, which implies that $G^{*}\diag X$ is not fibrant.
This plan is illustrated in the following diagram:
\begin{equation}
  \label{eq:RtGdNecFlow}
  \vcenter{
    \xymatrix@C=1.5em{
   &  \text{\thmref{thm:RtGdNec}} \\
   \text{\propref{prop:FibD}}\ar@{=>}[ur]
      & \text{\propref{prop:NotFib}}\ar@{=>}[u] \\
       \text{\propref{prop:ProdInts}} \ar@{=>}[r]\ar@{=>}[ur]
      & \text{\propref{prop:MatchProd}}\ar@{=>}[u]
        }
  }
\end{equation}

Our $\cat D$-diagram $\diag X$ will be a diagram in the standard model
category of topological spaces.  Throughout its construction, the
reader should keep the square diagram from \exref{ex:NotGood} in mind.
In that example, the diagram $\diag X$ that we construct here is the
functor that sends each object in that square to the unit interval $I$
with all the maps going to the identity map, and
$G\colon \cat C \to \cat D$ is the inclusion of the diagram obtained
by removing the degree zero object $\beta$ from the square.

To construct the $\cat D$-diagram $\diag X$ we set the object
$\diag X_{\beta}$ (for a particular object $\beta$ of $\cat D$) equal
to the unit interval $I$ (see the construction below), and then
\propref{prop:ProdInts} shows that for every object $\gamma$ of
$\cat D$ the space $\diag X_{\gamma}$ is a product of copies of $I$.
We remark that there is nothing essential about the choice of the
space $I$; it could be replaced by any space $Y$ that is path
connected and has more then one point (see the proof of
\propref{prop:NotFib}).
 
We will define the diagram $\diag X$ inductively over the filtrations
$\F^{n}\cat D$ of $\cat D$ (see \defref{def:filtration} and
\propref{prop:ConstructFilt}).  To start this inductive construction,
since $G\colon \cat C \to \cat D$ is not a fibering Reedy functor,
there are objects $\alpha\in \Ob(\cat C)$ and $\beta\in \Ob(\cat D)$
and a map $\sigma\colon G\alpha \to \beta$ in $\inv{\cat D}$ such that
the nerve of the category of inverse $\cat C$-factorizations of
$(\alpha, \sigma)$ (see \defref{def:CFactors}) is nonempty and not
connected.  Let $n_{\beta}$ be the degree of $\beta$. We have two
cases:
\begin{itemize}
\item If $n_{\beta} = 0$, we begin by letting $\diag X\colon
  \F^{0}\cat D \to \Top$ take $\beta$ to the unit interval $I$ and all
  other objects of $\F^{0}\cat D$ to $*$ (the one-point space).

\item If $n_{\beta} > 0$, we begin by letting $\diag X\colon
  \F^{(n_{\beta})-1}\cat D \to \Top$ be the constant functor at $*$
  (the one-point space).  Then, to extend $\diag X$ from
  $\F^{(n_{\beta})-1}\diag D$ to $\F^{n_{\beta}}\diag D$, we let
  $\diag X_{\beta} = I$, the unit interval.  We factor
  $\latch_{\beta}\diag X \to \match_{\beta}\diag X$ as
  \begin{displaymath}
    \latch_{\beta}\diag X \longrightarrow I \longrightarrow
    \match_{\beta}\diag X
  \end{displaymath}
  where the first map is the constant map at $0 \in I$ and the second
  map is the unique map $I \to *$ (since $\diag X_{\gamma} = *$ is the
  terminal object of $\Top$ for all objects $\gamma$ of degree less
  than $n_{\beta}$, that matching object is $*$).  If $\gamma$ is any
  other object of $\cat D$ of degree $n_{\beta}$, we let $\diag
  X_{\gamma} = \match_{\gamma}\diag X$ and let $\latch_{\gamma}\diag X
  \to \diag X_{\gamma} \to \match_{\gamma}\diag X$ be the natural map
  followed by the identity map.
\end{itemize}
We now define $\diag X\colon F^{n}\cat D \to \Top$ for $n > n_{\beta}$
inductively on $n$ by letting $\diag X_{\gamma} = \match_{\gamma}\diag
X$ for every object $\gamma$ of degree $n$ and letting the
factorization $\latch_{\gamma}\diag X \to \diag X_{\gamma} \to
\match_{\gamma}\diag X$ be the natural map followed by the identity
map.

\begin{prop}
  \label{prop:FibD}
  The $\cat D$-diagram of topological spaces $\diag X$ is fibrant.
\end{prop}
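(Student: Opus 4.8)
The plan is to read off fibrancy directly from the Reedy characterization in \thmref{thm:RFib}. The terminal object of $\Top^{\cat D}$ is the constant diagram at the one-point space $*$, and its matching object at every $\gamma$ is again $*$; hence the relative matching map of the map $\diag X \to *$ at $\gamma$ is simply the matching map $\diag X_{\gamma} \to \match_{\gamma}\diag X$. So by \thmref{thm:RFib} it suffices to show that for every object $\gamma$ of $\cat D$ the matching map $\diag X_{\gamma} \to \match_{\gamma}\diag X$ is a (Serre) fibration in $\Top$. I will use only two elementary facts about the standard model structure on $\Top$: every map whose target is $*$ is a fibration (equivalently, every space is fibrant), and every identity map is a fibration.

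I would then run through the objects $\gamma$ according to how they enter the inductive construction, organized by $\degree(\gamma)$ relative to $n_{\beta} = \degree(\beta)$. For every $\gamma$ of degree $> n_{\beta}$, and for every $\gamma \neq \beta$ of degree $n_{\beta}$ in the case $n_{\beta} > 0$, the construction sets $\diag X_{\gamma} = \match_{\gamma}\diag X$ and takes the second map of the chosen factorization $\latch_{\gamma}\diag X \to \diag X_{\gamma} \to \match_{\gamma}\diag X$ to be the identity; thus the matching map is an identity map, hence a fibration. For $\gamma$ of degree $< n_{\beta}$ (relevant only when $n_{\beta} > 0$) every object in sight is $*$: the objects of $\matchcat{\cat D}{\gamma}$ are non-identity maps of $\inv{\cat D}$ out of $\gamma$, which strictly lower degree, so their targets have degree $< n_{\beta}$ and are sent by $\diag X$ to $*$. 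Hence $\match_{\gamma}\diag X = \lim_{\matchcat{\cat D}{\gamma}}\diag X = *$ and $\diag X_{\gamma} = *$, so the matching map is the identity of $*$.

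The only remaining object is $\beta$ itself. By the same computation the objects of $\matchcat{\cat D}{\beta}$ have targets of degree $< n_{\beta}$, all sent to $*$ (and when $n_{\beta} = 0$ the category $\matchcat{\cat D}{\beta}$ is empty, so the matching object is the terminal object); in either case $\match_{\beta}\diag X = *$. The matching map at $\beta$ is therefore the map $I \to *$, which is a fibration. Having verified the matching map at every object of $\cat D$, \thmref{thm:RFib} shows that $\diag X$ is fibrant. The only step that is not purely formal is the identification $\match_{\beta}\diag X = *$, but this is immediate once one observes that the matching category at $\beta$ involves only objects of strictly smaller degree, all of which were set equal to $*$; so I do not expect any genuine obstacle here.
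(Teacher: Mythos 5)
Your proof is correct and follows the same route as the paper's: both verify via \thmref{thm:RFib} that the matching map at $\beta$ is $I \to *$ and that the matching map at every other object is an identity, hence every matching map is a fibration. Your write-up merely makes explicit two points the paper leaves implicit --- that the relative matching map over the terminal diagram reduces to the matching map, and the degree-by-degree check that the construction forces identities away from $\beta$ --- so there is nothing to add.
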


\begin{proof}
  The matching map at the object $\beta$ of $\cat D$ is the map $I \to
  *$, which is a fibration, and the matching map at every other object
  of $\cat D$ is an identity map, which is also a fibration.
\end{proof}

We now give a more detailed description of the diagram $\diag X$.
\begin{prop}
  \label{prop:ProdInts}
  \leavevmode
  \begin{enumerate}
  \item For every object $\gamma$ in $\cat D$ the space $\diag
    X_{\gamma}$ is homeomorphic to a product of unit intervals, one
    for each map $\gamma \to \beta$ in $\inv{\cat D}$ (and so, for
    objects $\gamma$ for which there are no maps $\gamma \to \beta$ in
    $\inv{\cat D}$, the space $\diag X_{\gamma}$ is the empty product,
    and is thus equal to the terminal object, the one-point space
    $*$).
  \item Under the isomorphisms of part~1, if $\tau\colon \gamma \to
    \delta$ is a map in $\inv{\cat D}$, then the projection of $\diag
    X_{\tau}\colon \diag X_{\gamma} \to \diag X_{\delta}$ onto the
    factor $I$ of $\diag X_{\delta}$ indexed by a map $\mu\colon
    \delta \to \beta$ in $\inv{\cat D}$ is the projection of $\diag
    X_{\gamma}$ onto the factor $I$ of $\diag X_{\gamma}$ indexed by
    $\mu\tau\colon \gamma \to \beta$.
  \end{enumerate}
\end{prop}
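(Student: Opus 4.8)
The plan is to establish parts~1 and~2 together by induction on the degree of $\gamma$ in $\cat D$, working along the inductive construction of $\diag X$; the closing ``that is'' assertion is then just the functorial repackaging of these two statements.

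For the base of the induction I would dispose of the objects of degree at most $n_\beta$ directly from the construction. If the degree of $\gamma$ is less than $n_\beta$ then $\diag X_\gamma = *$, and since a map of $\inv{\cat D}$ cannot raise degree there are no maps $\gamma \to \beta$ in $\inv{\cat D}$, so the indexing set is empty and the empty product is indeed $*$. For $\gamma = \beta$ the only map $\beta \to \beta$ in $\inv{\cat D}$ is the identity, matching $\diag X_\beta = I$, and for any other $\gamma$ of degree $n_\beta$ there are again no maps $\gamma \to \beta$ in $\inv{\cat D}$ while $\match_\gamma \diag X$ is a limit of copies of $*$. In all these cases part~2 is vacuous because the coordinate sets involved are empty.

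The inductive step is the heart of the matter. Fix $\gamma$ of degree $n > n_\beta$; by construction $\diag X_\gamma = \match_\gamma \diag X = \lim_{\matchcat{\cat D}{\gamma}} \diag X$, and every object $\nu\colon \gamma \to \delta$ of $\matchcat{\cat D}{\gamma}$ has target of degree $< n$, so the inductive hypothesis identifies $\diag X_\delta$ with $\prod_{\mu \in \Hom_{\inv{\cat D}}(\delta,\beta)} I$ and, via part~2, identifies each structure map $\diag X_\lambda$ of the matching diagram with a coordinate reindexing. Writing a map from an arbitrary object $W$ of $\cat M$ into this limit as a cone, such a cone amounts to a family of maps $c_{\nu,\mu}\colon W \to I$ indexed by the pairs $(\nu,\mu)$ with $\nu\colon \gamma \to \delta$ a non-identity map of $\inv{\cat D}$ and $\mu\colon \delta \to \beta$ in $\inv{\cat D}$, subject to the compatibility forced by the reindexing maps. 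The key point is that a morphism of $\matchcat{\cat D}{\gamma}$ relates two such slots only when they have a common composite $\mu\nu \in \Hom_{\inv{\cat D}}(\gamma,\beta)$, and that for each fixed $\phi \in \Hom_{\inv{\cat D}}(\gamma,\beta)$ the slots $(\nu,\mu)$ with $\mu\nu = \phi$ are precisely the objects of the category $\invfact{\cat D}{\gamma}{\phi}$ of inverse $\cat D$-factorizations of $(\gamma,\phi)$ (i.e.\ \defref{def:CFactors} applied to the identity functor of $\cat D$). Since that category has the terminal object $\bigl((\phi\colon \gamma \to \beta),(1_\beta)\bigr)$ it is connected, and the compatibility relation makes $c_{\nu,\mu}$ constant along its morphisms; hence $c_{\nu,\mu}$ depends only on the composite $\phi = \mu\nu$. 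Thus a cone is exactly a free choice of one map $W \to I$ for each $\phi \in \Hom_{\inv{\cat D}}(\gamma,\beta)$, naturally in $W$, which by the universal property exhibits the homeomorphism $\match_\gamma \diag X \cong \prod_{\phi \in \Hom_{\inv{\cat D}}(\gamma,\beta)} I$ of part~1.

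Part~2 at $\gamma$ then follows at once: because the matching map $\diag X_\gamma \to \match_\gamma \diag X$ is the identity by construction, the structure map of a non-identity $\tau\colon \gamma \to \delta$ in $\inv{\cat D}$ is the projection of the limit onto the factor indexed by $\tau$, and tracing through the identification above shows its $\mu$-coordinate is the $\mu\tau$-coordinate, as claimed. Parts~1 and~2 record precisely the object values $\gamma \mapsto \Hom_{\inv{\cat D}}(\gamma,\beta)$ and the precomposition action $\mu \mapsto \mu\tau$ of maps of $\inv{\cat D}$, which is the displayed composition $\cat D \to \Set\op \to \Top$. I expect the one genuinely delicate step to be the limit computation in the inductive step: one must check that the only identifications among the coordinate slots are those forced within a single composite, and that no slot is collapsed, which is exactly what the connectedness (indeed the terminality of $\bigl((\phi),(1_\beta)\bigr)$) of each $\invfact{\cat D}{\gamma}{\phi}$ guarantees.
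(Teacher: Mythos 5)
Your proposal is correct and follows essentially the same route as the paper: a simultaneous induction on the filtration degree, with base case at degree $n_{\beta}$ and an inductive step identifying $\match_{\gamma}\diag X = \lim_{\matchcat{\cat D}{\gamma}}\diag X$ with $\prod_{\Hom_{\inv{\cat D}}(\gamma,\beta)} I$ by matching coordinate slots along composites $\gamma \to \beta$. The only difference is presentational: where you compute cones out of an arbitrary object $W$ and invoke the terminal object of each $\invfact{\cat D}{\gamma}{\phi}$ to see that slots are identified exactly along fibers of composition, the paper constructs explicit mutually inverse maps $p$ and $q$ and checks $pq$ and $qp$ are identities --- the same combinatorial fact in different packaging.
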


\begin{proof}
  We will use an induction on $n$ to prove both parts of the
  proposition simultaneously for the restriction of $\diag X$ to each
  filtration $\F^{n}\cat D$ of $\cat D$.  The induction is begun at $n
  = n_{\beta}$ because the only map in $\F^{n_{\beta}}\inv{\cat D}$ to
  $\beta$ is the identity map of $\beta$, the only object of
  $\F^{n_{\beta}}\inv{\cat D}$ at which $\diag X$ is not a single
  point is $\beta$, and $\diag X_{\beta} = I$.

  Suppose now that $n > n_{\beta}$, the statement is true for the
  restriction of $\diag X$ to $\F^{n-1}\cat D$, and that $\gamma$ is
  an object of degree $n$.  The space $\diag X_{\gamma}$ is defined to
  be the matching object $\match_{\gamma}\diag X =
  \lim_{\matchcat{\cat D}{\gamma}} \diag X$.  There is a discrete
  subcategory ${\cat E}_{\gamma}$ of the matching category
  $\matchcat{\cat D}{\gamma}$ consisting of the maps $\gamma \to
  \beta$ in $\inv{\cat D}$, and so there is a projection map
  \begin{displaymath}
    \match_{\gamma}\diag X =
    \lim_{\matchcat{\cat D}{\gamma}} \hspace{-0.5em}\diag X
    \longrightarrow \lim_{\cat E_{\gamma}} \diag X =
    \prod_{(\gamma \to \beta)\in\inv{\cat D}}
    \hspace{-1em}\diag X_{\beta} =
    \prod_{(\gamma \to \beta)\in\inv{\cat D}} \hspace{-1em}I \Period
  \end{displaymath}
  We will show that that projection map $p\colon \lim_{\matchcat{\cat
      D}{\gamma}} \diag X \to \prod_{(\gamma\to \beta)\in\inv{\cat D}}
  I$ is a homeomorphism by defining an inverse homeomorphism
  \begin{displaymath}
    q\colon \prod_{(\gamma\to \beta)\in\inv{\cat D}} \hspace{-1em}I
    \longrightarrow \lim_{\matchcat{\cat D}{\gamma}}
    \hspace{-0.5em}\diag X  \Period
  \end{displaymath}

  We define the map $q$ by defining its projection onto $\diag
  X_{(\tau\colon\gamma\to\delta)} = \diag X_{\delta}$ for each object
  $(\tau\colon\gamma \to \delta)$ of $\matchcat{\cat D}{\gamma}$.  The
  induction hypothesis implies that $\diag X_{\tau} = \diag
  X_{\delta}$ is isomorphic to $\prod_{(\delta \to \beta)\in\inv{\cat
      D}} I$, and we let the projection onto the factor indexed by
  $\mu\colon \delta \to \beta$ be the projection of $\prod_{(\gamma\to
    \beta)\in\inv{\cat D}} I$ onto the factor indexed by
  $\mu\tau\colon \gamma \to \beta$.  To see that this defines a map to
  $\lim_{\matchcat{\cat D}{\gamma}} \diag X$, let $\nu\colon \delta
  \to \epsilon$ be a map from $\tau\colon \gamma \to \delta$ to
  $\nu\tau\colon \gamma \to \epsilon$ in $\matchcat{\cat D}{\gamma}$
  (see \diagref{diag:ProdInts}).
  The induction hypothesis implies that the projection of the map
  $\diag X_{\nu}\colon \diag X_{\tau} = \diag X_{\delta} \to \diag
  X_{\nu\tau} = \diag X_{\epsilon}$ onto the factor of $\diag
  X_{\epsilon}$ indexed by $\xi\colon \epsilon \to \beta$ in
  $\inv{\cat D}$ is the projection of $\diag X_{\tau} = \diag
  X_{\delta}$ onto the factor indexed by $\xi\nu\colon \delta \to
  \beta$.
  \begin{equation}
    \label{diag:ProdInts}
    \vcenter{
      \xymatrix@C=1em{
        &{\gamma} \ar[dl]_{\tau} \ar[dr]^{\nu\tau}\\
        {\delta} \ar[rr]_{\nu}
        && {\epsilon} \ar[rr]_{\xi}
        && {\beta}
      }
    }
  \end{equation}
  Thus, the projection of the composition
  $\prod_{(\gamma\to\beta)\in\inv{\cat D}} I \to \diag X_{\tau} =
  \diag X_{\delta} \xrightarrow{\diag X_{\nu}} \diag X_{\nu\tau} =
  \diag X_{\epsilon}$ onto the factor indexed by $\xi\colon \epsilon
  \to \beta$ equals the projection of
  $\prod_{(\gamma\to\beta)\in\inv{\cat D}} I$ onto the factor indexed
  by $\xi\nu\tau\colon \gamma \to \beta$, which equals that same
  projection of the map $\prod_{(\gamma\to\beta)\in\inv{\cat D}} I \to
  \diag X_{\nu\tau: \gamma \to \epsilon} = \diag X_{\epsilon}$.  Thus,
  we have defined the map $q$.

  It is immediate from the definitions that $pq$ is the identity map
  of $\prod_{(\gamma\to\beta)\in\inv{\cat D}} I$.  To see that $qp$ is
  the identity map of $\lim_{\matchcat{\cat D}{\gamma}} \diag X$, we
  first note that the definitions immediately imply that the
  projection of $qp$ onto each $\diag X_{(\gamma \to \beta)} = \diag
  X_{\beta}$ equals the corresponding projection of the identity map
  of $\lim_{\matchcat{\cat D}{\gamma}} \diag X$.  If $\tau\colon
  \gamma \to \delta$ is any other object of $\matchcat{\cat
    D}{\gamma}$, then the induction hypothesis implies that $\diag
  X_{\tau} = \diag X_{\delta}$ is homeomorphic to the product
  $\prod_{(\delta\to\beta)\in\inv{\cat D}}I$.  Every $\mu\colon \delta
  \to \beta$ in $\inv{\cat D}$ defines a map $\mu_{*}\colon
  (\tau\colon \gamma\to\delta) \to (\mu\tau\colon \gamma \to \beta)$
  in $\matchcat{\cat D}{\gamma}$, and the induction hypothesis implies
  that the map $\diag X_{\mu}\colon \diag X_{\tau} = \diag X_{\delta}
  \to \diag X_{\mu\tau} = \diag X_{\beta} = I$ is projection onto the
  factor indexed by $\mu$.  Thus, for any map to $\lim_{\matchcat{\cat
      D}{\gamma}} \diag X$, its projection onto $\diag X_{\tau} =
  \diag X_{\delta}$ is determined by its projections onto the $\diag
  X_{(\gamma\to\beta)\in\inv{\cat D}}$; since $qp$ and the identity
  map agree on those projections, $qp$ must equal the identity map.
  This completes the induction for part~1.

  For part~2, for every map $\tau\colon \gamma \to \delta$ in
  $\inv{\cat D}$ the map $\diag X_{\tau}\colon \diag X_{\gamma} \to
  \diag X_{\delta}$ equals the composition
  \begin{displaymath}
    \diag X_{\gamma} \longrightarrow
    \lim_{\matchcat{\cat D}{\gamma}} \hspace{-0.5em}\diag X
    \longrightarrow \diag X_{\delta}
  \end{displaymath}
  where the first map is the matching map of $\diag X$ at $\gamma$ and
  the second is the projection from the limit $\lim_{\matchcat{\cat
      D}{\gamma}} \diag X \to \diag X_{(\tau\colon \gamma \to \delta)}
  = \diag X_{\delta}$ (this is the case for \emph{every} $\cat
  D$-diagram in $\cat M$, not just for $\diag X$).  Since the matching
  map at every object other than $\beta$ is the identity map, the map
  $\diag X_{\tau}\colon \diag X_{\gamma} \to \diag X_{\delta}$ is the
  projection $\lim_{\matchcat{\cat D}{\gamma}}\diag X \to \diag
  X_{(\tau\colon \gamma \to \delta)} = \diag X_{\delta}$.  The
  discussion in the previous paragraph shows that the projection of
  $\diag X_{\tau}\colon \diag X_{\gamma} \to \diag X_{\delta}$ onto
  the factor of $\diag X_{\delta}$ indexed by $\mu\colon \delta \to
  \beta$ is the projection of $\diag X_{\gamma}$ onto the factor
  indexed by $\mu\tau\colon \gamma \to \beta$.  This completes the
  induction for part~2.
\end{proof}

We now consider the diagram $G^{*}\diag X$ that $G\colon \cat C \to
\cat D$ induces on $\cat C$ from $\diag X$.
\begin{prop}
  \label{prop:MatchProd}
  The matching object $\match^{\cat C}_{\alpha} G^{*}\diag X =
  \lim_{\matchcat{\cat C}{\alpha}} G^{*}\diag X$ of the induced
  diagram on $\cat C$ at $\alpha$ is homeomorphic to a product of unit
  intervals indexed by the union over the maps $\tau\colon G\alpha \to
  \beta$ in $\inv{\cat D}$ of the sets of path components of the nerve
  of the category of inverse $\cat C$-factorizations of $(\alpha,
  \tau)$.  That is,
  \begin{displaymath}
    \match^{\cat C}_{\alpha} G^{*}\diag X \iso
    \prod_{(\tau\colon G\alpha\to\beta)\in\inv{\cat D}}
    \Biggl(\prod_{\pi_{0}\N(\invfact{\cat C}{\alpha}{\tau})}
      I \Biggr)
    \Period
  \end{displaymath}
\end{prop}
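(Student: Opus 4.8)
The plan is to use \propref{prop:ProdInts} to reduce the computation of this limit to a purely combinatorial statement about the factorization categories. By \propref{prop:ProdInts}, for every object $\gamma$ of $\cat D$ we have a homeomorphism $\diag X_{\gamma} \homeq I^{\Hom_{\inv{\cat D}}(\gamma, \beta)}$, natural in the sense that for $\tau\colon \gamma \to \delta$ in $\inv{\cat D}$ the map $\diag X_{\tau}$ sends the coordinate indexed by $\mu\colon \delta \to \beta$ to the coordinate indexed by $\mu\tau$. First I would specialize this to the matching diagram: the $\matchcat{\cat C}{\alpha}$-diagram $G^{*}\diag X$ whose limit is $\match^{\cat C}_{\alpha}G^{*}\diag X$ carries an object $\nu\colon \alpha \to \gamma$ to $\diag X_{G\gamma} \homeq I^{\Hom_{\inv{\cat D}}(G\gamma, \beta)}$ and a morphism $\tau\colon \gamma \to \gamma'$ (with $\nu' = \tau\nu$) to the reindexing map $\diag X_{G\tau}$, whose $\mu'$-coordinate, for $\mu'\colon G\gamma' \to \beta$, is the coordinate indexed by $\mu'(G\tau)$. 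Consequently a point of $\match^{\cat C}_{\alpha}G^{*}\diag X = \lim_{\matchcat{\cat C}{\alpha}} G^{*}\diag X$ is a family assigning to each pair $\bigl((\nu\colon \alpha \to \gamma), (\mu\colon G\gamma \to \beta)\bigr)$, with $\nu$ a non-identity map in $\inv{\cat C}$ and $\mu$ in $\inv{\cat D}$, a value $x_{(\nu,\mu)} \in I$, subject to the compatibility relation $x_{(\nu', \mu')} = x_{(\nu, \mu'(G\tau))}$ for every such $\tau$ and every $\mu'$.

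The key observation is that this indexing data is exactly the objects and morphisms of the factorization categories. A pair $\bigl((\nu\colon \alpha \to \gamma), (\mu\colon G\gamma \to \beta)\bigr)$ as above is precisely an object of $\invfact{\cat C}{\alpha}{\sigma}$ for the map $\sigma = \mu(G\nu)\colon G\alpha \to \beta$ in $\inv{\cat D}$ that it determines (see \defref{def:CFactors}); as $\sigma$ ranges over $\Hom_{\inv{\cat D}}(G\alpha, \beta)$ these objects partition the set of all such pairs. Moreover, given $\tau$ and a coordinate $\mu'$ as above, writing $\mu = \mu'(G\tau)$ one checks that $\mu(G\nu) = \mu'(G\tau)(G\nu) = \mu'(G\nu')$, so $(\nu,\mu)$ and $(\nu',\mu')$ factor the same $\sigma$ and $\tau$ is exactly a morphism $(\nu,\mu) \to (\nu',\mu')$ in $\invfact{\cat C}{\alpha}{\sigma}$; conversely every morphism of every $\invfact{\cat C}{\alpha}{\sigma}$ arises this way. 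Thus the compatibility relation says precisely that $x_{(-)}$ is constant along each morphism of each $\invfact{\cat C}{\alpha}{\sigma}$, hence constant on the path components of its nerve.

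It then follows that a point of $\match^{\cat C}_{\alpha}G^{*}\diag X$ is the same as a choice of a value in $I$ for each element of $\bigsqcup_{\sigma\colon G\alpha \to \beta \in \inv{\cat D}} \pi_{0}\N\bigl(\invfact{\cat C}{\alpha}{\sigma}\bigr)$, which is the asserted bijection. To upgrade this to a homeomorphism I would observe that $\lim_{\matchcat{\cat C}{\alpha}}G^{*}\diag X$ is the subspace of the product $I^{\{(\nu,\mu)\}}$ cut out by the (closed) equations $x_{(\nu',\mu')} = x_{(\nu,\mu)}$, and that a subspace of a product of intervals obtained by identifying coordinates within each class of an equivalence relation is homeomorphic, via projection to one representative per class with the diagonal as inverse, to the product of intervals indexed by the classes. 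I expect the main obstacle to be the bookkeeping of the second paragraph: verifying that the limit's compatibility constraints correspond to the morphisms of the factorization categories \emph{exactly}, including the variance of the reindexing maps and the invariance of $\sigma = \mu(G\nu)$ along morphisms (so that the indexing set really is the \emph{disjoint} union over $\sigma$). The remainder is the routine identification of a limit of sets, under the functor $I^{(-)}$ that converts colimits of sets into products of intervals, with its set of path components.
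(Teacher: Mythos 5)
Your proposal is correct and takes essentially the same route as the paper's own proof: both use \propref{prop:ProdInts} to exhibit the matching object as the subspace of a product of intervals indexed by all pairs $\bigl((\nu\colon \alpha \to \gamma), (\mu\colon G\gamma \to \beta)\bigr)$ (i.e., by the objects of all the categories of inverse $\cat C$-factorizations), cut out by coordinate identifications corresponding exactly to the morphisms of those categories, and then conclude by projecting onto one representative per equivalence class (path component of the nerve), with the diagonal providing the inverse homeomorphism. The only difference is presentational: the paper writes out the inverse map $q$ and the verification $pq = qp = \mathrm{id}$ explicitly, while you cite the same construction as a general fact about identifying coordinates in a product.
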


\begin{proof}
  Let $S = \coprod_{(\alpha \to \gamma)\in\Ob(\matchcat{\cat
      C}{\alpha})} \inv{\cat D}(G\gamma,\beta)$, the disjoint union
  over all objects $\alpha\to\gamma$ of $\matchcat{\cat C}{\alpha}$ of
  the set of maps $\inv{\cat D}(G\gamma,\beta)$.  An element of $S$ is
  then an ordered pair $\bigl((\nu\colon \alpha \to \gamma),
  (\mu\colon G\gamma \to \beta)\bigr)$ where $\nu\colon \alpha \to
  \gamma$ is an object of $\matchcat{\cat C}{\alpha}$ and $\mu\colon
  G\gamma \to \beta$ is a map in $\inv{\cat D}$, and is thus an object
  of the category of inverse $\cat C$-factorizations of the
  composition $(\alpha, G\alpha \xrightarrow{G\nu} G\gamma
  \xrightarrow{\mu} \beta)$, i.e., of $(\alpha, \mu\circ G\nu\colon
  G\alpha \to\beta)$.  Every object of the category of inverse $\cat
  C$-factorizations of every map $(\alpha, \tau\colon G\alpha \to
  \beta)$ in $\inv{\cat D}$ appears exactly once, and so the set $S$
  is the union over all maps $\tau\colon G\alpha \to \beta$ in
  $\inv{\cat D}$ of the set of objects of the category of inverse
  $\cat C$-factorizations of $(\alpha, \tau)$.

  \propref{prop:ProdInts} implies that for every object $\tau\colon
  \alpha \to \gamma$ in $\matchcat{\cat C}{\alpha}$ the space
  $(G^{*}\diag X)_{\tau} = (G^{*}\diag X)_{\gamma} = \diag
  X_{G\gamma}$ is a product of unit intervals, one for each map
  $G\gamma \to \beta$ in $\inv{\cat D}$, and so the product over all
  objects $\tau\colon \alpha\to\gamma$ of $\matchcat{\cat C}{\alpha}$
  of $(G^{*}\diag X)_{\tau} = (G^{*}\diag X)_{\gamma} = \diag
  X_{G\gamma}$ is homeomorphic to the product of unit intervals
  indexed by $S$, i.e.,
  \begin{displaymath}
    \prod_{(\alpha\to\gamma)\in\Ob(\matchcat{\cat C}{\alpha})}
    \hspace{-3em} (G^{*}\diag X)_{\gamma}
    \iso
    \prod_{S} I \Period
  \end{displaymath}
  The matching object $\match^{\cat C}_{\alpha} G^{*}\diag X$ is a
  subspace of that product.  More specifically, it is the subspace
  consisting of the points such that, for every map
  \begin{displaymath}
    \xymatrix@=.6em{
      & {\alpha} \ar[dl]_{\nu} \ar[dr]^{\nu'}\\
      {\gamma} \ar[rr]_{\tau}
      && {\gamma'}
    }
  \end{displaymath}
  in $\matchcat{\cat C}{\alpha}$ from $\nu\colon \alpha \to \gamma$ to
  $\nu'\colon \alpha \to \gamma'$ and every map $\mu'\colon G\gamma'
  \to \beta$ in $\inv{\cat D}$, the projection onto the factor indexed
  by $\bigl((\nu'\colon \alpha \to \gamma'), (\mu'\colon G\gamma' \to
  \beta)\bigr)$ equals the projection onto the factor indexed by
  $\bigl((\nu\colon \alpha \to \gamma), (\mu'\circ(G\tau)\colon
  G\gamma \to \beta)\bigr)$.

  Generate an equivalence relation on $S$ by letting $\bigl((\nu\colon
  \alpha \to \gamma), (\mu\colon G\gamma \to \beta)\bigr)$ be
  equivalent to $\bigl((\nu'\colon \alpha \to \gamma'), (\mu'\colon
  G\gamma' \to \beta)\bigr)$ if there is a map $\tau\colon \gamma \to
  \gamma'$ in $\inv{\cat C}$ such that $\tau\nu = \nu'$ and $\mu'\circ
  (G\tau) = \mu$, i.e., if there is a map in the category of inverse
  $\cat C$-factorizations of $(\alpha, \mu \circ (G\nu)\colon G\alpha
  \to \beta)$ from $\bigl((\nu\colon \alpha\to\gamma), (\mu\colon
  G\gamma \to \beta)\bigr)$ to $\bigl((\mu'\colon \alpha\to\gamma'),
  (\mu'\colon G\gamma' \to\beta)\bigr)$; let $T$ be the set of
  equivalence classes.  This makes two objects in the category of
  inverse $\cat C$-factorizations of a map equivalent if there is a
  zig-zag of maps in that category from one to the other, i.e., if
  those two objects are in the same component of the nerve, and so the
  set $T$ is the disjoint union over all maps $\tau\colon G\alpha \to
  \beta$ in $\inv{\cat D}$ of the set of components of the nerve of
  the category of inverse $\cat C$-factorizations of $(\alpha, \tau)$,
  i.e.,
  \begin{displaymath}
    T = \coprod_{(\tau\colon G\alpha\to\beta)\in\inv{\cat D}}
    \hspace{-1.5em}\pi_{0}\N\bigl(\invfact{\cat C}{\alpha}{\tau}\bigr)
    \Period
  \end{displaymath}

  Let $T'$ be a set of representatives of the equivalence classes $T$
  (i.e., let $T'$ consist of one element of $S$ from each equivalence
  class); we will show that the composition
  \begin{displaymath}
    \xymatrix{
      {\match^{\cat C}_{\alpha}G^{*}\diag X} \ar[r]^-{\subset}
      & {\prod_{S} I} \ar[r]^{p'}
      & {\prod_{T'} I}
    }
  \end{displaymath}
  (where $p'$ is the projection) is a homeomorphism.  We will do that
  by constructing an inverse $q\colon \prod_{T'} I \to \match^{\cat
    C}_{\alpha} G^{*}\diag X$ to the map $p\colon \match^{\cat
    C}_{\alpha} G^{*}\diag X \to \prod_{T'}I$ (where $p$ is the
  restriction of $p'$ to $\match^{\cat C}_{\alpha}G^{*}\diag X$).

  We first construct a map $q'\colon \prod_{T'} I \to \prod_{S} I$ by
  letting the projection of $q'$ onto the factor indexed by $s \in S$
  be the projection of $\prod_{T'}I$ onto the factor indexed by the
  unique $t \in T'$ that is equivalent to $s$.  The description above
  of the subspace $\match^{\cat C}_{\alpha}G^{*}\diag X$ of $\prod_{S}
  I$ makes it clear that $q'$ factors through $\match^{\cat
    C}_{\alpha}G^{*}\diag X$ and thus defines a map $q\colon
  \prod_{T'}I \to \match^{\cat C}_{\alpha} \diag X$.

  The composition $pq$ equals the identity of $\prod_{T'}I$ because
  the composition $p'q'$ equals the identity of $\prod_{T'}I$.  To see
  that the composition $qp$ equals the identity of $\match^{\cat
    C}_{\alpha}G^{*}\diag X$, it is sufficient to see that the
  projection of $qp$ onto the factor $I$ indexed by every element $s$
  of $S$ agrees with that of the identity map of $\match^{\cat
    C}_{\alpha}G^{*}\diag X$.  Since the projections of points in
  $\match^{\cat C}_{\alpha}G^{*}\diag X$ onto factors indexed by
  equivalent elements of $S$ are equal, and it is immediate that the
  projection of $\match^{\cat C}_{\alpha}G^{*}\diag X$ onto a factor
  indexed by an element of the set of representatives $T'$ agrees with
  the corresponding projection of $qp$, the projections for every
  element of $S$ must agree, and so $qp$ equals the identity of
  $\prod_{T'}I$.
\end{proof}

\begin{prop}
  \label{prop:NotFib}
  The diagram $G^{*}\diag X$ induced on $\cat C$ is not a fibrant
  $\cat C$-diagram.
\end{prop}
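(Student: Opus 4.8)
The plan is to apply \thmref{thm:RFib}: a $\cat C$-diagram is fibrant precisely when each of its matching maps is a fibration (this is the fibration condition of \thmref{thm:RFib} applied to the map from the diagram to the terminal diagram). It therefore suffices to produce a single object at which this fails, and the natural candidate is the object $\alpha$ supplied by the hypothesis that $G$ is not fibering. So I would study the matching map $(G^{*}\diag X)_{\alpha}\to\match^{\cat C}_{\alpha}G^{*}\diag X$ and show that it is not a fibration in $\Top$.

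The key step is to identify this map explicitly using the two preceding computations. By \propref{prop:ProdInts} the domain $(G^{*}\diag X)_{\alpha}=\diag X_{G\alpha}$ is the product $\prod_{(G\alpha\to\beta)\in\inv{\cat D}}I$ of unit intervals indexed by the maps $G\alpha\to\beta$ in $\inv{\cat D}$, while \propref{prop:MatchProd} identifies the codomain with
\begin{displaymath}
  \match^{\cat C}_{\alpha}G^{*}\diag X \iso
  \prod_{(\tau\colon G\alpha\to\beta)\in\inv{\cat D}}
  \Bigl(\prod_{\pi_{0}\N\invfact{\cat C}{\alpha}{\tau}} I\Bigr)\Period
\end{displaymath}
To compute the map itself I would recall that its projection onto the factor indexed by an object $\nu\colon\alpha\to\gamma$ of $\matchcat{\cat C}{\alpha}$ is the structure map $\diag X_{G\nu}$, and then invoke \propref{prop:ProdInts}(2): the coordinate of $\diag X_{G\nu}$ indexed by $\mu\colon G\gamma\to\beta$ is the coordinate of $\diag X_{G\alpha}$ indexed by $\mu\circ G\nu\colon G\alpha\to\beta$. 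Tracing this through the identification in \propref{prop:MatchProd}, the coordinate of the matching map indexed by a factorization of $\tau$ simply reads off the single $\tau$-coordinate of the domain, so under these homeomorphisms the matching map becomes the product over the maps $\tau\colon G\alpha\to\beta$ in $\inv{\cat D}$ of the diagonal maps $I\to\prod_{\pi_{0}\N\invfact{\cat C}{\alpha}{\tau}}I$.

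I would then conclude from the failure of the fibering condition. Since $G$ is not a fibering Reedy functor, the chosen map $\sigma\colon G\alpha\to\beta$ in $\inv{\cat D}$ has a category of inverse $\cat C$-factorizations whose nerve is nonempty and not connected, so $\pi_{0}\N\invfact{\cat C}{\alpha}{\sigma}$ has at least two elements. The corresponding diagonal factor $I\to\prod_{\pi_{0}\N\invfact{\cat C}{\alpha}{\sigma}}I$ is thus not surjective, and hence neither is the matching map. But the codomain is a product of intervals and so is path-connected, while the domain is nonempty; a path-lifting argument shows that any (Serre or Hurewicz) fibration with path-connected base and nonempty total space is surjective. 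A non-surjective map between such spaces therefore cannot be a fibration, so the matching map at $\alpha$ is not a fibration and $G^{*}\diag X$ is not fibrant.

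The step I expect to be the main obstacle is the bookkeeping in the second paragraph: carefully matching the domain's indexing (by maps $G\alpha\to\beta$) against the codomain's indexing (by path components of factorization categories) so that the matching map genuinely becomes a product of diagonals. Once that identification is secured, the path-connectedness observation makes the topological conclusion immediate.
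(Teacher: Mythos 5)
Your proposal is correct and takes essentially the same approach as the paper's proof: both reduce to showing that the matching map at $\alpha$ fails to be surjective onto the path-connected matching object computed in \propref{prop:MatchProd}, using \propref{prop:ProdInts} to see that the coordinates of the matching map indexed by factorizations of $\sigma$ lying in different components of the nerve must agree, so that no fibration is possible. The only difference is one of economy: where you work out the full identification of the matching map as a product of diagonals, the paper checks just two coordinates representing distinct path components, so that the projection onto the corresponding $I\times I$ factor visibly factors through the non-surjective diagonal $I \to I\times I$.
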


\begin{proof}
  We will show that the matching map $(G^{*}\diag X)_{\alpha} \to
  \match^{\cat C}_{\alpha} G^{*}\diag X$ of the induced $\cat
  C$-diagram at $\alpha$ is not a fibration.  Since the matching
  object $\match^{\cat C}_{\alpha} G^{*}\diag X$ is a product of unit
  intervals (see \propref{prop:MatchProd}), it is path connected, and
  so if the matching map were a fibration, it would be surjective.  We
  will show that the matching map is not surjective.

  Since $\sigma\colon G\alpha \to \beta$ is a map in $\inv{\cat D}$
  such that the nerve of the category of inverse $\cat
  C$-factorizations of $(\alpha, \sigma)$ is not connected, we can
  choose objects $(\nu\colon \alpha \to \gamma, \mu\colon G\gamma \to
  \beta)$ and $(\nu'\colon \alpha \to \gamma', \mu'\colon G\gamma' \to
  \beta)$ of that category that represent different path components of
  that nerve.  Since $\mu\circ (G\nu) = \mu'\circ (G\nu')$,
  \propref{prop:ProdInts} implies that the projection of the matching
  map onto the copies of $I$ indexed by those objects are equal, and
  so the projection onto the $I\times I$ indexed by that pair of
  components factors as the composition $\diag X_{\alpha} \to I \to
  I\times I$, where that second map is the diagonal map and is thus
  not surjective.
\end{proof}

\begin{proof}[Proof of \thmref{thm:RtGdNec}]
  This follows from \propref{prop:FibD} and \propref{prop:NotFib}.
\end{proof}

\subsection{Proof of \thmref{thm:MainCofibering}}
\label{sec:PrfCofibering}

Since $\cat M$ is complete, the right adjoint of $G^{*}$ exists and
can be constructed pointwise (see \cite{borceux-I}*{Thm.~3.7.2} or
\cite{McL:categories}*{p.~235}), and \thmref{thm:GoodisGood} implies
that $(G\op)^{*}\colon (\cat M\op)^{\cat D\op} \to (\cat M\op)^{\cat
  C\op}$ is a right Quillen functor for every model category $\cat
M\op$ if and only if $G\op$ is fibering (because every model category
$\cat N$ is of the form $\cat M\op$ for $\cat M = \cat N\op$).

\propref{prop:OpFiberingCofibering} implies that the functor $G\colon
\cat C \to \cat D$ is cofibering if and only if its opposite
$G\op\colon \cat C\op \to \cat D\op$ is fibering, and
\thmref{thm:GoodisGood} implies that this is the case if and only if
$(G\op)^{*}\colon (\cat M\op)^{\cat D\op} \to (\cat M\op)^{\cat C\op}$
is a right Quillen functor for every model category $\cat M\op$, which
is the case if and only if $G^{*}\colon \cat M^{\cat D} \to \cat
M^{\cat C}$ is a left Quillen functor for every model category $\cat
M$ (see \propref{prop:OpQuillen} and \propref{prop:OppositeReedy}).
\qed


\begin{bibdiv} 
  \begin{biblist}
  
  \bib{barwick}{article}{
   author={Barwick, Clark},
   title={On left and right model categories and left and right
   Bousfield localizations},
   journal={Homology, Homotopy Appl.},
   volume={12},
   date={2010},
   number={2},
   pages={245--320},
}

    \bib{borceux-I}{book}{
      author={Borceux, Francis},
      title={Handbook of categorical algebra. 1},
      series={Encyclopedia of Mathematics and its Applications},
      volume={50},
      note={Basic category theory},
      publisher={Cambridge University Press, Cambridge},
      date={1994},
      pages={xvi+345},
    }

    
    \bib{cosimplcalc}{article}{
    author={Eldred, Rosona},
    title={Cosimplicial models for the limit of the Goodwillie tower},
    journal={Algebr. Geom. Topol.},
    volume={13},
    date={2013},
    number={2},
    pages={1161--1182},
}

    \bib{MCATL}{book}{
       author={Hirschhorn, Philip S.},
       title={Model categories and their localizations},
       series={Mathematical Surveys and Monographs},
       volume={99},
       publisher={American Mathematical Society, Providence, RI},
       date={2003},
       pages={xvi+457},
    }

    
    \bib{diagn}{article}{
    author={Hirschhorn, Philip S.},
    title={The diagonal of a multicosimplicial object},
    journal={J. Homotopy Relat. Struct.},
    volume={12},
    date={2017},
    number={4},
    pages={971--992},
}

    \bib{FTHoLinks}{article}{
      author={Koytcheff, Robin},
      author={Munson, Brian A.},
      author={Voli{\'c}, Ismar},
      title={Configuration space integrals and the cohomology of the space
        of homotopy string links},
      journal={J. Knot Theory Ramifications},
      volume={22},
      date={2013},
      number={11},
      pages={1--73},
    }

    \bib{McL:categories}{book}{
       author={MacLane, Saunders},
       title={Categories for the working mathematician},
       note={Graduate Texts in Mathematics, Vol. 5},
       publisher={Springer-Verlag, New York-Berlin},
       date={1971},
       pages={ix+262},
    }

  \end{biblist}
\end{bibdiv}

\end{document}